\DeclareMathOperator{\rk}{rk}
\DeclareMathOperator{\Tor}{Tor}
\DeclareMathOperator{\Amp}{Amp}
\newcommand{\oo}{\mathcal{O}}
\newcommand{\bfa}{\mathbf{a}}
\newcommand{\QQ}{\mathbb{Q}}
\newcommand{\RR}{\mathbb{R}}
\newcommand{\CC}{\mathbb{C}}
\newcommand{\TT}{\mathbb{T}}
\newcommand{\bbS}{\mathbb{S}}
\newcommand{\ZZ}{\mathbb{Z}}
\newcommand{\NN}{\mathbb{N}}
\newcommand{\FF}{\mathbb{F}}
\newcounter{gex}
\newcommand{\rsl}[1]{\refstepcounter{gex} \label{#1}}
\newcommand{\rslsub}[1]{\rsl{#1} \subsubsection{No \arabic{gex}}}
\newcommand{\tsfrac}[2]{\textstyle\frac{#1}{#2}}
\newcommand{\half}{{\tsfrac{1}{2}}}
\newcommand{\quart}{{\tsfrac{1}{4}}}
\newcommand{\bbz}{\mathbb{Z}}
\newcommand{\bbr}{\mathbb{R}}
\newcommand{\bbc}{\mathbb{C}}
\newcommand{\bbo}{\mathbb{O}}
\newcommand{\bbp}{\mathbb{P}}
\newcommand{\bbrp}{\mathbb{R}^{+}}
\newcommand{\ie}{\emph{ie} }
\newcommand{\eg}{\emph{eg} }
\newcommand{\cf}{\emph{cf} }
\newcommand{\into}{\hookrightarrow}
\newcommand{\gtstr}{\gtwo--structure}
\newcommand{\gtmfd}{\gtwo--manifold}
\newcommand{\gtmetric}{\gtwo-metric}
\newcommand{\sob}[1]{L^p_{#1}}
\newcommand{\harm}{\mathcal{H}}
\newcommand{\lnorm}[1]{\Vert #1 \Vert_{L^{2}}}
\newcommand{\caln}{\mathcal{N}}
\newcommand{\calg}{\mathcal{G}}
\newcommand{\cala}{\mathcal{A}}
\newcommand{\sff}{\mathcal{Y}}
\newcommand{\fbb}{\mathcal{Z}}
\DeclareMathAlphabet{\df}{U}{eus}{m}{n}
\DeclareMathAlphabet{\matheur}{U}{eur}{m}{n}
\newcommand{\hkr}{\matheur{r}}
\newcommand{\hdg}{h}
\newcommand{\ncap}{R}
\newcommand{\nv}{v}
\newcommand{\nw}{w}
\newcommand{\cg}[1]{\ZZ/#1\ZZ}
\newcommand{\asm}{A}
\newcommand{\anglex}{\theta}
\newcommand{\anglen}{\vartheta}
\newcommand{\trivr}{\underline{\bbr}}
\newcommand{\trivc}{\underline{\bbc}}
\newcommand{\ogr}{\widetilde{Gr}}
\newcommand{\cyl}{\infty}
\DeclareMathOperator{\coker}{coker}
\DeclareMathOperator{\Sq}{Sq}
\renewcommand{\mod}{\textup{ mod }}
\def\l@part{\@tocline{-1}{6pt plus2pt}{0pt}{}{\bfseries}}
\newcommand{\setindent}[1]{
  \addtocontents{toc}{
  \protect\def\protect\l@section{\protect\@tocline{1}{0pt}{#1}{}{}}}}
\newcommand{\setsubindent}[1]{
  \addtocontents{toc}{
  \protect\def\protect\l@subsection{\protect\@tocline{1}{0pt}{#1}{}{}}}}
\newcommand{\ccell}[1]{\multicolumn{1}{c}{#1}}
\newcommand{\gen}[1]{\langle#1\rangle}
\newcommand{\inner}[1]{{<}#1{>}}
\newcommand{\linner}[1]{{<}#1{>}_{L^{2}}}
\newcommand{\contra}[1]{\textstyle\frac{\partial}{\partial #1}}
\newcommand{\tv}{\tilde \varphi}
\newcommand{\posp}{p_{1/2}}
\newcounter{mtheorem}
\numberwithin{equation}{section}
\newtheorem{theorem}[equation]{Theorem}
\newtheorem{lemma}[equation]{Lemma}
\newtheorem{prop}[equation]{Proposition}
\newtheorem{corollary}[equation]{Corollary}
\theoremstyle{definition}
\newtheorem{definition}[equation]{Definition}
\newtheorem{example}[equation]{Example}
\theoremstyle{remark}
\newtheorem{remark}[equation]{Remark}
\newtheorem*{remark*}{Remark}
\newcommand{\abs}[1]{\lvert#1\rvert}
\newcommand{\norm}[1]{\left\lvert#1\right\rvert}
\newcommand{\glr}[1]{\textup{GL$(#1,\R)$}}
\newcommand{\glrn}{\textup{GL$(n,\R)$}}
\newcommand{\SL}[1]{\textup{SL$(#1,\C)$}}
\newcommand{\unitaryn}{\textup{U$(n)$}}
\newcommand{\sunitary}[1]{\textup{SU$(#1)$}}
\newcommand{\sunitaryn}{\textup{SU$(n)$}}
\newcommand{\sorth}[1]{\textup{SO$(#1)$}}
\newcommand{\Sp}[1]{\textup{Sp$(#1)$}}
\newcommand{\orth}[1]{\textup{O$(#1)$}}
\newcommand{\spin}[1]{\textup{Spin$(#1)$}}
\newcommand{\sspin}{\textup{Spin}}
\newcommand{\bspin}{\textup{BSpin}}
\newcommand{\espin}{\textup{ESpin}}
\newcommand{\bgtwo}{\textup{BG}_2}
\newcommand{\sso}{\textup{SO}}
\newcommand{\bso}{\textup{BSO}}
\newcommand{\bsu}{\textup{BSU}}
\newcommand{\vol}{\operatorname{vol}}
\newcommand{\Proj}{\operatorname{Proj}}
\newcommand{\Pic}{\operatorname{Pic}}
\newcommand{\D}{\slashed{D}}
\newcommand{\debar}{\overline{\partial}}
\newcommand{\Hol}{\operatorname{Hol}}
\newcommand{\gdiv}{\operatorname{div}}
\newcommand{\Q}{\mathbb{Q}}
\newcommand{\quat}{\mathbb{H}}
\newcommand{\C}{\mathbb{C}}
\newcommand{\R}{\mathbb{R}}
\newcommand{\Z}{\mathbb{Z}}
\newcommand{\PP}{\mathbb{P}}
\newcommand{\CP}{\mathbb{P}}
\newcommand{\Sph}{\mathbb{S}}
\newcommand{\T}{\mathbb{T}}
\newcommand{\ra}{\rightarrow}
\newcommand{\Hom}{\operatorname{Hom}}
\newcommand{\imag}{\operatorname{Im}}
\newcommand{\real}{\operatorname{Re}}
\newcommand{\diag}{\operatorname{diag}}
\newcommand{\Imag}{\operatorname{Im}}
\newcommand{\Real}{\operatorname{Re}}
\newcommand{\Id}{\operatorname{Id}}
\newcommand{\hk}{hyper-Kähler\xspace}
\newcommand{\gtwo}{\ensuremath{\textup{G}_2}\xspace}
\title[\gtwo-geometry via semi-Fano 3-folds]{\gtwo-manifolds and
  associative submanifolds via semi-Fano $3$-folds}
\author[A.~Corti]{Alessio~Corti}
\author[M.~Haskins]{Mark~Haskins}
\author[J.~Nordström]{Johannes~Nordström}
\author[T.~Pacini]{Tommaso~Pacini}
\keywords{Differential geometry, Einstein and Ricci-flat manifolds, special and exceptional holonomy,  
noncompact Calabi-Yau manifolds, compact $\gtwo$-manifolds, Fano and weak Fano varieties, lattice polarised K3 surfaces, calibrated submanifolds, associative submanifolds, differential topology}
\setlist{leftmargin=*}
\begin{document}

\begin{abstract}
We provide a significant extension of the twisted connected sum construction of \gtmfd s, \ie 
Riemannian $7$-manifolds with holonomy group $\gtwo$, first developed by Kovalev; 
along the way we address some foundational questions at the heart of the twisted connected sum construction.
Our extension allows us to prove many new results about compact \gtmfd s and leads to new perspectives 
for future research in the area. Some of the main contributions of the paper are:
\begin{enumerate}
\item
We correct, clarify and extend several aspects of the K3 ``matching problem'' that occurs as a key
step in the twisted connected sum construction.
\item
We show that the large class of asymptotically cylindrical Calabi-Yau $3$-folds built from
semi-Fano 3-folds (a subclass of weak Fano 3-folds) can be used as components
in the twisted connected sum construction.
\item
We construct  many new topological types of compact $\gtwo$-manifolds by applying the twisted connected sum 
to asymptotically Calabi-Yau $3$-folds of semi-Fano type
studied in \cite{chnp1}.
\item
We obtain much more precise topological information about twisted connected sum \mbox{\gtwo-manifolds}; one application is the
determination for the first time of the diffeo\-morphism type of many compact $\gtwo$-manifolds.
\item
We describe ``geometric transitions'' between  $\gtwo$-metrics on different $7$-manifolds mimicking 
``flopping'' behaviour among semi-Fano $3$-folds and ``conifold transitions" between Fano and 
semi-Fano $3$-folds.
\item
We construct many $\gtwo$-manifolds that contain rigid compact associative $3$-folds. 
\item
We prove that many smooth $2$-connected $7$-manifolds can be realised  as 
twisted connected sums in numerous ways; by varying the building blocks matched 
we can vary the number of rigid associative $3$-folds constructed therein.
\end{enumerate}
The latter result leads to speculation that the moduli space of $\gtwo$-metrics on a given $7$-manifold may consist of 
many different connected components, and opens up many further questions for future study.
For instance,  the higher-dimensional gauge theory invariants 
proposed by Donaldson may provide ways to 
detect $\gtwo$-metrics on a given $7$-manifold that are not deformation equivalent.

\end{abstract}
\maketitle

\setcounter{secnumdepth}{1}

\section{Introduction}
In this paper
we construct a large number of new compact
$\gtwo$-manifolds, that is Riemannian 7-manifolds $(M,g)$ whose holonomy
group  is the compact exceptional Lie group $\gtwo$,
using the so-called \emph{twisted connected sum} construction; 
since any $\gtwo$-manifold is Ricci-flat this yields many Ricci-flat metrics on compact $7$-manifolds.
As an alternative to Joyce's original pioneering construction of 
compact \gtwo-manifolds via ``orbifold resolutions" \cite{joyce:g2,joyce:holonomybook},
Kovalev (based on a suggestion of Donaldson) developed the twisted connected sum construction 
\cite{kovalev:connectsums} as a way to obtain a compact $\gtwo$-manifold by combining a pair of 
(exponentially) asymptotically cylindrical (ACyl) Calabi-Yau $3$-folds.
Loosely speaking, this 
method seeks to construct $\gtwo$-manifolds that contain a sufficiently long almost cylindrical neck-like region; 
in this sense it resembles familiar ``stretching the neck'' constructions in a number of other geometric PDE problems.

Kovalev constructed about one hundred ACyl Calabi-Yau 3-folds for use in
twisted connected sums by starting from Fano 3-folds.
In \cite{chnp1} we exhibited at least several hundred thousand families of ACyl
Calabi-Yau $3$-folds built using weak Fano $3$-folds. We identify a subclass of
``semi-Fano'' 3-folds that have a well-behaved deformation theory, which we use
to prove that ACyl Calabi-Yau 3-folds obtained from semi-Fanos can be
``matched'' to form twisted connected sum \gtmfd s. In doing so we address some
errors in \cite{kovalev:connectsums} detailed below. Starting from semi-Fanos
instead of Fanos lets us construct many more examples of compact \gtmfd s, and
in many cases also makes it possible to exhibit compact associative submanifolds.

\subsubsection{Associative submanifolds}
$\gtwo$-manifolds have two natural classes of interesting calibrated submanifolds: 
$3$-dimensional \emph{associative} submanifolds and $4$-dimensional \emph{coassociative} submanifolds.
Constructing associative $3$-folds in compact $\gtwo$-manifolds is
difficult and relatively few examples are known; part of the difficulty is that---unlike 
that of its calibrated cousins: special Lagrangians or coassociatives---the  
deformation theory of compact associative $3$-folds  can be obstructed. 
In many of the $\gtwo$-manifolds we construct we can exhibit a finite number of
rigid---and therefore unobstructed---associative $3$-folds diffeomorphic to
$\Sph^1 \times \Sph^2$; the use of ACyl Calabi-Yau $3$-folds constructed from 
semi-Fano (rather than Fano) $3$-folds  is crucial here.
The associative $3$-folds we construct are the first compact associative
$3$-folds in compact $\gtwo$-manifolds that are proven to be rigid. 
Thus our work yields a large number of compact $\gtwo$-manifolds in which we can
attempt to ``count'' associative $3$-folds---in the spirit of the 
higher-dimensional invariants envisioned in the papers of Donaldson-Thomas 
\cite{donaldson:thomas} and Donaldson-Segal \cite{donaldson:segal}.
In a related aspect of this programme,
Walpuski \cite{walpuski12} has recently shown how one can construct
\gtwo-instantons that ``bubble'' at suitable rigid associative submanifolds, 
reversing the fundamental bubbling analysis of Tian \cite{tian:calibrated}.

\subsubsection{The topology of twisted connected sums}
We obtain detailed topological information about \gtwo-manifolds of twisted connected sum type
under certain mild extra assumptions on the building blocks used (which we show are satisfied in many cases).
Joyce and Kovalev  had been
able to compute the fundamental group and the Betti numbers of the
$7$-manifolds appearing in their constructions, 
but not further topological information; in particular not even the integral cohomology was known for their examples.
We compute the full integral cohomology of many of our 
\gtwo-manifolds including determining the torsion in $H^3$ and $H^4$
and also the characteristic class $p_1$,
using topological information about the %
ACyl Calabi-Yau $3$-folds from \cite{chnp1}. Calculating characteristic classes
of a manifold constructed by gluing can be quite difficult, but the twisting in
the twisted connected sum construction is sufficiently mild that we can get a
handle on $p_1$.
By distinguishing between examples with the same Betti numbers but different
torsion or different $p_1$ we are able to prove the existence of many new
compact \gtwo-manifolds. 

The twisted connected sum construction often yields \emph{$2$-connected} $7$-manifolds, 
whereas Joyce's ``orbifold resolution'' constructions typically yield $7$-manifolds with 
relatively large second Betti number (only a single example in Joyce's book
\cite{joyce:holonomybook} has $b^{2}=0$, see Remark \ref{R:joyce:g2}). 
Using the classification theory for $2$-connected $7$-manifolds developed by
Wall and Wilkens \cite{wilkens72}, the topological data we have computed
determines completely the diffeomorphism (or almost-diffeomorphism) type of
many of the smooth $7$-manifolds on which we
construct our $\gtwo$-holonomy metrics; these are the first compact
$\gtwo$-manifolds for which the diffeomorphism type is known. 
These smooth $7$-manifolds have simple topological realisations as connected
sums of a nontrivial $\Sph^3$-bundle over $\Sph^4$
with an appropriate number of copies of $\Sph^3 \times \Sph^4$.
This is one of very few instances of geometrically interesting $2$-connected $7$-manifolds 
for which the computations needed to determine the (almost) diffeomorphism classification 
have actually been carried out. 

Even in the simplest case where we use a pair of ACyl Calabi-Yau $3$-folds constructed from 
rank one Fano $3$-folds---as considered in Kovalev's original twisted connected sum construction---our 
refined topological results allow us to prove  a variety of new results. For instance in 
Section \ref{sec_g2mfds} we will show the following
\begin{itemize}
\item
The simplest matching between such ACyl Calabi-Yau $3$-folds 
leads to $2$-connected \gtwo-manifolds with torsion-free cohomology, 
in which case $b^3$ is the only Betti number we need to consider; 
$46$ different possible values of $b^{3}$ are realised this way.
\item
By distinguishing between examples with the same Betti numbers but different $p_1$ we show that 
at least $82$ different smooth $7$-manifolds are realised this way.
\item
Using the classification theory for $2$-connected $7$-manifolds we determine the homeomorphism 
type of all these $7$-manifolds and in the majority of cases  the diffeomorphism type.
\item
We exhibit a pair of \gtwo-manifolds whose underlying $7$-manifolds we prove to
be homeomorphic but not necessarily diffeomorphic; this raises the question
whether there exist different smooth structures on the same topological
$7$-manifold which each admit \gtwo-holonomy metrics. 
To determine the diffeomorphism types of this pair of $7$-manifolds requires
the calculation of an extension of the classical Eells--Kuiper invariant,
as will be discussed in \cite{7class}. We suspect that these 7-manifolds are
in fact diffeomorphic.
\item
Other possible ways to match such ACyl Calabi-Yau $3$-folds exist and lead to simply-connected $7$-manifolds 
with $H^2=0$ but with non-trivial torsion in $H^3$; at least $41$ other topological types 
of \gtwo-manifold arise in this way.
\end{itemize}
The last point illustrates a general phenomenon: it is often possible to arrange matching between the same
pair of building blocks in various ways and thereby to obtain topologically distinct $7$-manifolds from the same pair 
of blocks.

The same techniques applied to more general ACyl Calabi-Yau $3$-folds constructed from other 
Fano $3$-folds or more generally semi-Fano $3$-folds (or the Kovalev-Lee examples arising from 
non-symplectic involutions on K3s \cite{kovalev:lee}) allow us to construct a variety of new topo\-logical types of compact 
\gtwo-manifolds.  Many of the examples built using semi-Fano $3$-folds also contain compact rigid associative $3$-folds.

\subsubsection{Different \gtwo-metrics on the same manifold?}
In the opposite direction using the diffeomorphism classification of $2$-connected smooth $7$-manifolds we show
the following: there exist many $2$-connected smooth $7$-manifolds 
that arise as the underlying smooth manifold for $\gtwo$-manifolds of twisted connected sum-type 
but in several (sometimes many) different ways. 
In other words, 
the twisted connected sum of different pairs of ACyl Calabi-Yau $3$-folds can often yield the same underlying smooth $7$-manifold.
This phenomenon already occurs when matching pairs of ACyl Calabi-Yau $3$-folds constructed from 
rank one Fano $3$-folds as above. More generally, using ACyl Calabi-Yau $3$-folds constructed 
from semi-Fano $3$-folds we can construct diffeomorphic \gtwo-manifolds containing different numbers 
of obvious rigid associative $3$-folds.
These facts naturally suggest the following:

\smallskip
\noindent
\textbf{Question.} When do these $\gtwo$-metrics on the same $7$-manifold belong 
to different connected components of the moduli space of $\gtwo$-metrics?

\smallskip

While answering this question currently goes %
beyond the techniques we have available to us, it provides a further strong
impetus for developing the higher-dimensional enumerative invariants proposed
by Donaldson.
A more elementary tool for distinguishing between components of the moduli
space is to study the homotopy classes of \gtstr s; however, it appears that in
our examples of diffeomorphic \gtmfd s one can always choose the diffeomorphism
so that their \gtstr s are homotopic (\ie connected by a continuous
path of \gtstr s without any constraint on the torsion) \cite{cn}.

\subsubsection{Twisted connected sums and \hk rotations}
To describe some other features of the paper we need to recall the basic ingredients involved in the twisted 
connected sum construction.

In order to construct a metric $g$ with Riemannian holonomy the full group $\gtwo$ and not a smaller subgroup the underlying 
compact $7$-manifold $M$ must have finite fundamental group. Given a pair of ACyl Calabi-Yau $3$-folds $V_{+}$ and $V_{-}$ 
we need a way to glue the two non-compact $7$-manifolds $M_{+}=\Sph^{1}\times V_{+}$ and $M_{-}=\Sph^{1}\times V_{-}$
to get such a compact $7$-manifold. 
By construction the ends of our ACyl Calabi-Yau $3$-folds have the form $\R^{+} \times \Sph^{1} \times S_{\pm}$ where 
$S_{\pm}$  are smooth K3 surfaces. The obvious connected sum construction would yield a manifold with infinite 
fundamental group which could not admit a metric of full holonomy $\gtwo$. Instead we choose to identify the 
cross-section of our ends $\T^{2} \times S_{\pm}$ using a diffeomorphism which exchanges the two circle factors of ~$\T^{2}$. 
However, in order to get a well-defined $\gtwo$-structure on $M$ we also need to identify the two K3 surfaces $S_{\pm}$
using a special diffeomorphism $\hkr:S_+ \ra S_-$. The K3 surfaces $S_\pm$ inherit \hk structures from the
geometry at infinity of~$V_{\pm}$, which can be defined in terms of a
Ricci-flat metric and a triple of parallel complex structures
$I_\pm, J_\pm, K_\pm$. We need to construct a diffeomorphism $\hkr$ which is an
isometry and satisfies
\begin{equation*}
\hkr^{*}I_{-}= J_{+},\quad \hkr^{*}J_{-}= I_{+},
\quad \text{and hence} \quad \hkr^{*}K_{-} = -K_{+}.
\end{equation*}
We call such a map a \emph{\hk rotation}. Even given a plentiful supply of ACyl Calabi-Yau 3-folds
it is highly non-trivial to construct \emph{pairs} of $V_{\pm}$ for which there
is such a \hk rotation $\hkr$. 
However, as soon as we can construct a \hk rotation $\hkr$ for a pair of ACyl Calabi-Yau
$3$-folds $V_\pm$ we can use $\hkr$ to form a twisted connected sum $7$-manifold $M_\hkr$, and build on it
a closed $\gtwo$-structure which has small torsion. 
The perturbation theory for closed \mbox{$\gtwo$-structures} developed by Joyce 
(or the specific ACyl version proved by Kovalev) therefore shows that we can then 
choose an appropriate small perturbation to yield a metric with holonomy \gtwo 
on~$M_\hkr$. 

Thus two main steps are needed to implement the twisted connected sum construction:
\begin{enumerate}
\item
Construct (families of) exponentially ACyl Calabi-Yau structures on suitable smooth quasiprojective $3$-folds.
\item
Understand how to find ACyl Calabi-Yau $3$-folds for which there exists a \hk
rotation, within a given pair of families.
\end{enumerate}
We explain below in more detail how together with  \cite{chnp1} and \cite{hhn} this paper addresses 
problems with both steps (i) and (ii) in Kovalev's original paper \cite{kovalev:connectsums}---and
therefore puts the twisted connected sum construction on a firm foundation---and also extends 
substantially the settings in which solutions to (i) and (ii) can be constructed.

\subsubsection{Exponentially ACyl Calabi-Yau $3$-folds}
There are two ingredients, one analytic and one complex algebraic, for producing exponentially 
ACyl Calabi-Yau 3-folds. The analytic ingredient is to solve the complex Monge-Amp\`ere 
equation on suitable smooth quasiprojective varieties and to obtain sufficiently strong 
estimates for these solutions. 
The proof of the exponential asymptotics of solutions to the complex
Monge-Amp\`ere equation in \cite{kovalev:connectsums} is not valid, but
a complete, short self-contained proof of the existence of
exponentially asymptotically cylindrical Calabi-Yau metrics was given recently
in \cite{hhn}. With a suitable analytic existence theory in place 
the remaining complex algebraic task is to find a (large) supply of suitable quasiprojective varieties. 

\subsubsection{ACyl Calabi-Yau $3$-folds from Fano and weak Fano $3$-folds}

In \cite{kovalev:connectsums} Kovalev showed %
how to construct some suitable quasiprojective $3$-folds by blowing up special curves
(the smooth base locus of a sufficiently generic anticanonical pencil) in
smooth Fano $3$-folds and removing a smooth anticanonical divisor.
Recall a smooth \emph{Fano} $3$-fold $F$ is a smooth projective variety for which $-K_{F}$ is ample or positive.
There are exactly 105 deformation families of smooth Fano $3$-folds:
complex projective space $\CP^{3}$, smooth quadrics, cubics and quartics in $\CP^{4}$ 
being the simplest examples.  
We call the ACyl Calabi-Yau $3$-folds obtained this way ACyl Calabi-Yau 3-folds of  \emph{Fano type}.

A smooth \emph{weak Fano} $3$-fold is a smooth projective variety for which the
anticanonical bundle is big and nef (but not ample). 
Differential geometers are encouraged to think of a big and nef line bundle as the algebraic-geometric 
formulation of the line bundle admitting a hermitian metric whose curvature is  sufficiently semi-positive.
There are at least \emph{hundreds of thousands} of deformation families of smooth weak Fano $3$-folds 
and the topology of smooth weak Fano $3$-folds is less restrictive than for Fano $3$-folds; 
unlike the Fano case there is at present no classification theory for weak Fano $3$-folds, except 
under very special geometric assumptions, but many examples are known. 
In our paper \cite{chnp1} we proved that one can construct suitable quasiprojective 3-folds 
from any weak Fano $3$-fold (satisfying one further very mild assumption); 
combining this weak Fano construction with the analytic existence results from \cite{hhn} we
thereby increased the number of known ACyl Calabi-Yau $3$-folds from a few hundred to several hundred thousand.
We call these ACyl Calabi-Yau $3$-folds of \emph{weak Fano type}.

\subsubsection{Constructing \hk rotations}
With a plentiful supply of exponentially asymptotically cylindrical Calabi-Yau 3-folds now at hand,  
to complete the twisted connected sum construction it remains to solve (ii): 
find pairs of (families) of ACyl Calabi-Yau 3-folds for which there exists a \hk rotation.
In \cite{kovalev:connectsums} Kovalev developed an approach to proving the
existence of \hk rotations between pairs of ACyl Calabi-Yau $3$-folds of Fano
type. Unfortunately in almost all cases his argument relies on Lemma 6.47 in 
\cite{kovalev:connectsums} which is false. 
In the course of the current paper we revisit carefully the construction of \hk
rotations, to correct, clarify and extend the methods available.
In particular our results allow us to construct \hk rotations for many pairs of (families) of ACyl Calabi-Yau 3-folds
of so-called semi-Fano type---a large subclass of weak Fano 3-folds described below.
(After conversations with two of the present authors the final version of
Kovalev-Lee \cite{kovalev:lee} contains a similar construction of \hk rotations 
for ACyl Calabi-Yau 3-folds of Fano or ``nonsymplectic'' type). 

We should point out immediately that given a pair of deformation families of
ACyl Calabi-Yau $3$-folds $V_\pm$  there are typically
various ways to construct \hk rotations between the asymptotic ends $\R^+ \times \Sph^1 \times S_{\pm}$. 
The topology of the resulting $7$-manifold $M$ depends on the choice of the \hk rotation 
$\hkr \colon S_+ \to S_-$. 
At present we do not understand in a systematic way all possible different ways to match a given 
pair of (deformation families of) 
ACyl Calabi-Yau $3$-folds $V_{\pm}$; however, we will exhibit various examples where several different 
matchings are possible and lead to topologically distinct \gtwo-manifolds. 
In some simple cases we do understand essentially all different possible ways to match a given pair.

\subsubsection{Deformation theory for weak and semi-Fano 3-folds}
In Kovalev's original approach to the construction of \hk rotations between pairs of ACyl Calabi-Yau 3-folds 
of Fano type the deformation theory for pairs $(F,S)$ where $F \in \mathcal{F}$ a deformation
type of smooth Fano $3$-folds and $S \in \abs{-K_{F}}$ is a smooth
anticanonical divisor plays a crucial role. 
Our approach to the construction of \hk rotations also rests on a sufficiently good deformation 
theory: For pairs of ACyl Calabi-Yau $3$-folds 
of weak Fano (or Fano) type the construction of \hk rotations is possible  \emph{provided}  we have a 
sufficiently good deformation theory for pairs $(Y,S)$, where $Y \in \sff$, a given deformation type of 
\emph{weak Fano} $3$-folds, and $S \in \abs{-K_{Y}}$ is a smooth section.
In \cite{chnp1} we showed that the deformation theory of such pairs is well-behaved for the subclass 
of \emph{semi-Fano} $3$-folds.  A semi-Fano\footnote{There seems to be no established 
terminology for this particular subclass of weak Fano 3-folds, so the term \emph{semi-Fano} is our invention: 
it is intended to suggest that a semi-Fano 3-fold has \emph{semi-}small anticanonical morphism. 
Warning: semi-Fano has also been used to mean something even weaker than weak Fano, 
\ie a complex manifold for which $-K_Y$ is nef (but not necessarily big), but this terminology 
is not well-established.}
 $3$-fold is a weak Fano $3$-fold on which we
impose an extra assumption on the geometry of its anticanonical morphism, 
namely that it contracts no divisor to a point.
This assumption guarantees that certain cohomology vanishing theorems that are true for 
Fano $3$-folds (but which are false for general weak Fano $3$-folds) still hold for semi-Fano $3$-folds.
There are still hundreds of thousands of deformation families of semi-Fano $3$-folds. 

We prove that we can construct \hk rotations for pairs of ACyl Calabi-Yau
$3$-folds of semi-Fano type under the same sorts of conditions as for pairs
arising from (deformation types of) Fano $3$-folds. 
As we already explained, together with the constructions of \cite{chnp1} this 
immediately gives rise to many new twisted connected sum compact \gtwo-manifolds.
Using just ACyl Calabi-Yau 3-folds from Fano or semi-Fano $3$-folds of rank at most two
or from toric semi-Fano 3-folds we get at least 50 million matching pairs
(and probably many more) that produce 2-connected \gtmfd s.
On the other hand, the smooth classification results show that the
number of different topological types realised is much smaller,
so as mentioned earlier some smooth 7-manifolds must arise as 
twisted connected sums in many different ways.

\subsubsection{Rigid curves and rigid associative $3$-folds}
There is one further immediate advantage of generalising from ACyl Calabi-Yau 3-folds 
of Fano type to those of semi-Fano type.
If $Y$ is a Fano $3$-fold then $-K_{Y}\cdot C> 0$ for any algebraic curve $C$ 
and hence $C$ meets any anticanonical divisor $S \in \abs{-K_Y}$. However, 
semi-Fano $3$-folds can contain special complex curves $C$ for which $K_{Y}\cdot C =0$;
the weakening of $-K_{Y}$ being positive to sufficiently semi-positive is crucial here.
Moreover, in many cases $C$ is a smooth rational curve with normal bundle 
$\mathcal{O}(-1)\oplus \mathcal{O}(-1)$: in this case $C$ is infinitesimally rigid, 
\ie  has no infinitesimal (holomorphic) deformations. 

Precisely because these special $K_{Y}$-trivial curves are rigid and do not intersect 
anticanonical divisors we can use them to construct \emph{compact rigid} holomorphic 
curves in our ACyl Calabi-Yau $3$-folds. 
These rigid curves allow us to produce rigid associative $3$-folds
diffeomorphic to $\Sph^{1}\times \Sph^{2}$ in our twisted connected sum
$\gtwo$-manifolds. The key point is that we have related the deformation
problem for a holomorphic curve $C$ to that of an associative of product form
$\Sph^{1} \times C$. Algebraic geometry provides many tools to understand the
deformation theory of $C$; for a general associative $3$-fold we have no such
tools to understand its deformation theory.

\subsubsection{\gtwo-transitions}
In the geometry of Calabi-Yau $3$-folds, especially in some of their applications to String Theory, an important role 
is played by so-called \emph{geometric transitions}. The simplest and most important such transitions are 
\emph{flops} and \emph{conifold transitions}. These two types of transitions also appear in the context of weak 
Fano $3$-folds; many smooth weak Fano $3$-folds can be flopped to yield other smooth weak Fano $3$-folds 
(which typically are not deformation equivalent to the original weak Fano $3$-fold). 
However, unlike the Calabi-Yau setting where the condition $c_{1}=0$ is preserved, 
a conifold transition that begins with a Fano $3$-fold $F$ will yield only a weak Fano $3$-fold $Y$.
We can construct ACyl Calabi-Yau metrics on blocks constructed from both the Fano $F$ and the weak Fano $Y$, 
and then try to match both types of block to some other given (deformation family of) ACyl Calabi-Yau structure.
This gives rise to the idea of related \gtwo-manifolds or \emph{\gtwo-transitions.}
For the moment we present \gtwo-transitions as a convenient organisational principle that explains certain features 
of the geography of twisted connected sum \gtwo-manifolds. However, there is the future prospect of realising 
these \gtwo-transitions at the level of metric geometry; we explain some of the technical difficulties that would need 
to be overcome to achieve this.

\subsubsection{Connections to M-theory} 
\gtmfd s play a similar role in M-theory as Calabi-Yau 3-folds do in String
Theory. Two questions of significance for M-theory concern the existence of
coassociative K3-fibrations and singular \gtmfd s.

Any twisted connected sum \gtwo-manifold is K3-fibred---essentially because the building blocks $Z$ from which we construct our 
ACyl Calabi-Yau $3$-folds are K3-fibred. Generically the only singular fibres of a building block $Z$, and therefore 
of our \gtwo-manifolds, are $A_{1}$ singularities. 
Because of subtleties due to the singular fibres it is still unknown if these topological ``almost'' coassociative K3-fibrations can be 
made into coassociative K3-fibrations as expected in \cite{gukov:yau:zaslow,acharya:mirror}.

To obtain realistic particle physics (\ie non-abelian gauge groups and chiral fermions) from M-theory on \gtwo-manifolds 
it appears necessary to consider singular \gtwo-spaces with very particular kinds of singularity as explained in 
\cite{acharya:witten,acharya:mirror,acharya:gukov,atiyah:witten}. 
For some recent physical predictions from M-theory on \gtwo-spaces see \cite{acharya:kane:2010,acharya:kane2012,acharya:string}; 
see also \cite{baulieu, deboer,roiban,harvey:moore,pantev,acharya:dw} for some other aspects of M-theory on \gtwo-spaces.
In the present paper we consider only smooth compact 
\gtwo-manifolds (apart from the discussion in the \gtwo-transitions section where we discuss potential ways 
to realise singular \gtwo-manifolds as degenerate limits of our constructions).
There are potential extensions of the present constructions that might in some circumstances
allow the construction of blocks fibred by generically singular K3 fibres. 
However it is not yet clear that these could give rise to \gtwo-spaces with the sort of singularity structure apparently required.

It would be interesting to know: does the presence of torsion in $H^{3}$ or $H^{4}$ of a compact \gtwo-manifold have any 
significance in M-theory?  What if any significance do the \gtwo-transitions discussed in Section \ref{sec:close} 
have in M-theory? Does the existence of many potentially different \gtwo-metrics on the same 
smooth $7$-manifold have any M-theory interpretation?

\subsubsection{Structure of paper}We now describe the structure of the rest of the paper. 

Section \ref{sec:prelim} reviews basic facts about the group \gtwo, \gtwo-structures, \gtwo-holonomy metrics, 
the two natural calibrations on $\gtwo$-manifolds and the relations to Calabi-Yau $3$-folds, \hk K3 surfaces and 
the groups $\sunitary{2}$ and $\sunitary{3}$. 
We include this standard material for two reasons: 
to make the paper more accessible to readers with backgrounds in algebraic geometry or topology 
and also to establish the notation and the conventions we adopt in our paper. 
The reader familiar with the basics of $G_2$-holonomy metrics can safely skip most of this section.

Section \ref{sec:twisted_kovalev} introduces ACyl Calabi-Yau $3$-folds and
recalls how to construct them from certain algebro-geometric data that we call
a \emph{building block}. We recall Kovalev's \emph{twisted connected sum
construction}: it takes a pair of ACyl Calabi-Yau $3$-folds 
together with the specification of a so-called \hk rotation and produces a compact $7$-manifold $M$ with 
finite fundamental group and a $1$-parameter family of closed \gtwo-structures on $M$ with small torsion.
We explain how the known perturbation theory for closed \gtwo-structures with small torsion allow us to 
construct a $1$-parameter family of \gtwo-holonomy metrics on $M$. In summary, this section reduces the problem of 
finding twisted connected sum \gtwo-manifolds to finding \hk rotations between a given pair of  (deformation families) of 
ACyl Calabi-Yau $3$-folds.

Section \ref{sec:top} develops various tools to compute topological invariants of compact 
twisted connected sum \gtwo-manifolds building on results on the cohomology of the $6$-dimensional 
building blocks proved in \cite{chnp1}.
Theorem \ref{thm:g2topology} computes the integral cohomology groups of our 
twisted connected sum \gtwo-manifolds and proves under our assumptions that they are all simply-connected.
In general there can be torsion in $H^{3}(M)$ and $H^{4}(M)$; we establish various 
restrictions on the possible torsion linking form of general twisted connected sums. 
We review the almost-diffeomorphism classification of closed $2$-connected $7$-manifolds including cases 
in which almost-diffeomorphism can be replaced with diffeomorphism. 
We explain a simple sufficient condition for our twisted connected sum \gtwo-manifold $M$ 
to be $2$-connected with torsion-free $H^{4}(M)$ and then 
study the diffeomorphism and almost-diffeomorphism types of such $7$-manifolds. 
A key role is played by the divisibility of the first Pontrjagin class $p_{1}(M)$; 
in turn the divisibility of $p_{1}(M)$ is  closely related to the divisibility of the second Chern class $c_{2}$ of 
our building blocks, as studied in \cite{chnp1}. We apply all these results later to study the diffeomorphism 
type of concrete \gtwo-manifolds constructed in Section \ref{sec_g2mfds}, but our methods apply 
to twisted connected sum manifolds more generally.

Section \ref{sec:geom_assoc} deals with the construction of associative submanifolds in our 
twisted connected sum \gtwo-manifolds. After recalling some basic features of the geometry of 
associative $3$-folds we prove Theorem \ref{perturbthm}: this gives 
a general persistence result for a closed infinitesimally rigid associative $3$-fold $A$ under 
a small deformation of the \gtwo-structure. Theorem \ref{thm:genperturb} gives a generalisation 
in which we prove persistence of  multi-parameter families of closed associative $3$-folds 
in a multi-parameter torsion-free deformation of the original \gtwo-structure 
under a surjectivity assumption on the family.
These results are extensions of the deformation theory established by McLean
\cite[\S 5]{mclean}.
Next we consider the relation between associative $3$-folds 
in the product of a circle $\Sph^{1}$ and a Calabi-Yau $3$-fold and holomorphic curves in the Calabi-Yau. 
This relation is fundamental to the construction of associative $3$-folds in many of our twisted connected sum 
\gtwo-manifolds. Particularly important is the simple observation---see Lemma \ref{l:rigidity}---that $\Sph^{1}\times C$ 
is rigid as an associative $3$-fold if and only if $C$ is rigid as a holomorphic curve.
Putting together the results from the section we prove Proposition \ref{prop:cx_to_assoc}: 
each closed rigid holomorphic curve $C$ in one of our ACyl Calabi-Yau $3$-folds can be perturbed to a
compact rigid associative $3$-fold diffeomorphic to $\Sph^{1} \times C$ 
in our twisted connected sum \gtwo-manifold for all sufficiently long ``neck lengths''.
With a little more work, we also show how to produce closed associative
$3$-folds, including some non-rigid ones, in twisted connected sum
\gtwo-manifolds from certain closed special Lagrangian $3$-folds in our ACyl
Calabi-Yau $3$-folds.

Section \ref{sec:k3} deals with the so-called ``matching problem'', \ie the
construction of pairs of ACyl Calabi-Yau $3$-folds with a \hk rotation.
Our approach to solving the matching problem requires some well-known facts
about moduli spaces of lattice polarised K3 surfaces, the global Torelli
theorem in this context and some results from deformation theory proved
in \cite{chnp1}; we review these very briefly.
We describe in detail one general strategy which we call ``orthogonal gluing'' 
which  yields solutions to the matching problem in a large number of cases. 

A special case of ``orthogonal gluing'' is what we term ``primitive perpendicular gluing''. 
The benefit of ``primitive perpendicular gluing'' is that it is guaranteed to give a solution of the matching 
problem under the simple condition that the ranks of the Picard lattices 
of the deformation types $\sff_{\pm}$ of semi-Fano $3$-folds used to construct
the ACyl Calabi-Yau $3$-folds add up to at most 11. 
Primitive perpendicular gluing enables us to mass-produce  
\mbox{$2$-connected} twisted connected sum \gtwo-manifolds and to understand their homeomorphism and 
diffeomorphism type in many cases.

The more general ``orthogonal gluing'' produces manifolds $M$ with second Betti number $b^{2}(M) > 0$. 
It requires some sort of compatibility between the Picard lattices of the pair of semi-Fanos initially chosen 
and this does not always hold: see Example \ref{exa:no_pushout}. 
When it does hold, some further restrictions on the ranks of the Picard lattices allow the solution 
of the matching problem in this case: see Proposition \ref{prop:orth_gluing}.

In Section \ref{sec_g2mfds} we make examples of twisted connected sum \gtwo-manifolds---often containing 
compact rigid associative $3$-folds---and compute the topology of these examples. 
In this section we aim to give examples that illustrate the main points of what 
is achievable by the construction rather than to be systematic. 
Our examples are built mainly using specific ACyl Calabi-Yau $3$-folds of semi-Fano type
which we constructed recently in \cite{chnp1}. 
Most of the examples we construct use perpendicular or orthogonal gluing. 
However, to demonstrate that these are labour-saving devices rather than a necessity 
we also consider what we have termed ``handcrafted nonorthogonal gluing''. 
While this method applies in situations where ``orthogonal gluing'' fails it needs much more 
precise information about K3 moduli spaces and that information is usually very expensive to obtain. 
Therefore we give only a single example to illustrate the  method and its potential complexities.

In Section \ref{sec:close} we describe the more general possibilities and limitations 
of the construction and make some comments about the ``geography'' of examples 
that can be achieved by matching currently known pairs. 
The existing smooth classification theory for $7$-manifolds allows us to determine the diffeomorphism type of the majority of 
the $2$-connected twisted connected sums; to complete the diffeomorphism
classification in the remaining $2$-connected cases we would need to be able to
compute an extension of the classical Eells-Kuiper invariant. 

Moving beyond the $2$-connected world, 
we explain how to realise many simply-connected twisted connected sum \gtwo-manifolds with $\pi_{2}(M)$ 
isomorphic to $\Z$ or $\cg{k}$ and discuss the prospects for extending the smooth classification theory 
to cover these cases. 
Finally we discuss a way to organise various different twisted connected sum \gtwo-manifolds 
constructed by matching pairs of Fano or semi-Fano $3$-folds related via flops
or conifold transitions; 
by analogy we term these \emph{\gtwo-transitions}.
We close by discussing the prospects for proving more precise topological and metric statements 
about \gtwo-manifolds related by these \gtwo-transitions.

\subsection*{Acknowledgements}
The authors would like to thank Bobby Acharya, Kevin Buzzard, Paolo Cascini, 
Tom Coates, Diarmuid Crowley, Simon Donaldson, 
Igor  Dolgachev, Bert van Geemen,  Anne-Sophie Kaloghiros, Al Kasprzyk and Viacheslav Nikulin.
Computations related to toric semi-Fanos were performed in collaboration with Tom Coates and Al Kasprzyk
and were carried out on the Imperial College mathematics cluster and the Imperial College High Performance
Computing Service;  we thank Simon Burbidge, Matt Harvey, and Andy
Thomas for technical assistance. 
Part of these computations were performed on hardware supported by AC's EPSRC grant EP/I008128/1.
MH would like to thank the EPSRC for their continuing support of his research under Leadership Fellowship 
EP/G007241/1, which also provided postdoctoral support for JN. 
TP gratefully acknowledges the financial support provided by a Marie Curie European Reintegration Grant.

\section{Preliminaries: \gtwo and \sunitaryn{} geometry}
\label{sec:prelim}

In this section we collect some basic facts and definitions concerning the
linear algebra and geometry associated to the Lie groups \gtwo and \sunitaryn.
The material in this section is standard and the reader may find proofs of
various quoted facts in the articles by Bryant \cite{bryant:exceptionalholonomy}
and Harvey-Lawson \cite{harveylawson:calgeometry} and the books by Joyce
\cite{joyce:holonomybook} and Salamon \cite{salamon:holonomybook}. 
We include this material to establish our conventions and notation and to 
make the paper more self-contained and accessible to topologists and algebraic geometers.

\subsection*{The octonions, a cross product on $\R^7$ and the group \gtwo}
\label{ss:G2_algebra}

One way to define \gtwo is as the automorphism group of $\bbo$,
the normed algebra of octonions.
The automorphisms preserve the splitting $\bbo = \bbr \oplus \Imag \bbo$
and act trivially on $\bbr$, so can therefore be identified with a subgroup
of $\glr{7}$.
Since the inner product on $\Imag \bbo$ is defined in terms of the normed
algebra structure it is preserved by the automorphisms. We will see below that
the automorphisms also preserve orientation, so $\gtwo$ can be embedded in
$\sorth{7}$.

If we choose an isometry $\Imag \bbo \cong \bbr^{7}$ then we can define a
vector product on $\bbr^{7}$ by
\[ u \times v = \Imag uv . \]
The algebra structure on $\bbr \oplus \Imag \bbo$ can be recovered from the
vector product $\times$ and the standard inner product $g_{0}$ by
\[ (x,u)(y,v) = (xy - g_0(u,v), \, xv + yu + u \times v) . \]
An equivalent definition of \gtwo is therefore that it is the subgroup
of $\glr{7}$ that preserves both $g_{0}$ and $\times$.
From $g_{0}$ and $\times$ we can define the trilinear form
\begin{equation}
\label{multtableeq}
\varphi_{0}(u,v,w) = g_0(u \times v, w) .
\end{equation}
In fact this is alternating, so $\varphi_{0} \in \Lambda^{3}(\bbr^{7})^{*}$.
With a standard choice of isometry $\Imag \bbo \cong \bbr^7$
that we fix once and for all (our convention is the same as that used by
\eg Joyce \cite[\S 10]{joyce:holonomybook}) we can write
\begin{equation}
\label{eq:3form}
\varphi_{0} = dx^{123} + dx^{145} + dx^{167} + dx^{246}
 - dx^{257} - dx^{347} - dx^{356} .
\end{equation}
For any $\varphi \in \Lambda^3(\R^7)^*$ we can define a
form $vol_{\varphi} \in \Lambda^7(\R^7)^*$ in the following way:
For $v,w \in \R^7$ let
\[ B_{\varphi}(v,w) = \frac{1}{6} (v \lrcorner \varphi) \wedge
(w \lrcorner \varphi) \wedge \varphi . \]
$B_{\varphi}$ is a symmetric bilinear form on $\R^7$ with values in
$\Lambda^7(\R^7)^*$.
$B_{\varphi}$ induces a linear map
$K_{\varphi} : \R^{7}\to (\R^{7})^{*} \otimes \Lambda^{7}(\R^7)^{*}$, which has a determinant
$\det K_{\varphi} \in (\Lambda^{7}(\R^7)^{*})^{9}$.
If $K_\varphi \not= 0$ then we call $\varphi$ \emph{non-degenerate}, and
we can define a volume form $vol_\varphi$ and a symmetric bilinear form 
$g_{\varphi}$ on $\R^{7}$ by
\begin{subequations}
\label{eq:g2metric}
\begin{align}
(vol_{\varphi})^{9} &= \det K_{\varphi} , \\
g_{\varphi} \otimes vol_{\varphi} &= B_{\varphi},
\end{align}
\end{subequations}
(see Hitchin \cite[\S 7]{hitchin00}).
For $\varphi_{0}$ we can compute that $g_{\varphi_{0}} = g_{0}$, so the metric
can be recovered from~$\varphi_{0}$, and hence so can the vector
product $\times$.
Thus the stabiliser of $\varphi_{0}$ in $\glr{7}$ preserves $g_{0}$
and~$\times$, and must equal~\gtwo.
This gives yet another possible definition of \gtwo.
Since it is in terms of an alternating 3-form it is
a useful one for the purposes of differential geometry.

The set of 3-forms that are equivalent to $\varphi_0$, and whose associated
orientation, symmetric bilinear form and cross product are thus isomorphic
to the standard one, is in fact open in~$\Lambda^3 (\bbr^7)^*$.

\begin{prop}
\label{p:g2group}
\hfill
\begin{enumerate}
\item $\gtwo$ is a compact 2-connected Lie group of dimension $14$.
\item \label{it:su3}
The stabiliser in \gtwo of a non-zero vector in $\bbr^7$ is isomorphic
to \sunitary{3}.
\item \gtwo acts transitively on the unit sphere $\Sph^6 \subset \bbr^7$.
\item \label{it:openorbit}
The $\glr{7}$-orbit of $\varphi_0$ is open in $\Lambda^3(\bbr^7)^*$.
\end{enumerate}
\end{prop}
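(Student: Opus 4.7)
The plan is to establish the four claims in the order (iii), (ii), (i), (iv), deducing each from the previous where possible.

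First, for (iii), $\gtwo$ preserves the inner product $g_{0}$ on $\Imag \bbo \cong \bbr^{7}$ and so acts on $\Sph^{6}$. To show transitivity, for any two unit imaginary octonions $u,v$ I would construct an octonionic orthonormal basis starting from $u$ (and similarly from $v$) by the Cayley--Dickson doubling procedure: pick $u$, then a unit vector $u' \perp u$, then the product $uu'$, then another unit vector perpendicular to the span obtained so far, and finally all remaining products. The linear map sending the $u$-basis to the $v$-basis preserves the octonion multiplication by construction, so it lies in $\gtwo$ and moves $u$ to $v$.

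For (ii), fix a unit vector $v \in \Imag \bbo$ and let $H \subset \gtwo$ denote its stabiliser. Define $J \colon v^{\perp} \to v^{\perp}$ by $J(w) = v \times w$; the normed algebra identity gives $v \times (v \times w) = -w$ for $w \perp v$, so $J^{2} = -\mathrm{id}$. Since $H$ preserves $\times$ and $v$, it commutes with $J$ and preserves $g_{0}|_{v^{\perp}}$, placing $H \subseteq \unitary{3}$. To refine this to $\sunitary{3}$, I would observe that $\ast \varphi_{0}$ is also $H$-invariant, and that the restriction of $v \lrcorner \ast \varphi_{0}$ to $v^{\perp}$ is the real part of a non-zero complex $(3,0)$-form $\Omega$ with respect to $J$---a direct check from the explicit formula \eqref{eq:3form} for $\varphi_{0}$. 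Invariance of $\Omega$ forces $H \subseteq \sunitary{3}$, and equality follows by a dimension count once (i) is established, or by noting that any element of $\sunitary{3}$ extends to an algebra automorphism of $\bbo$ fixing $v$.

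For (i), parts (ii) and (iii) furnish a principal fibre bundle $\sunitary{3} \to \gtwo \to \Sph^{6}$, so $\dim \gtwo = 8 + 6 = 14$; compactness is immediate since $\gtwo$ is closed in the compact group $\sorth{7}$. The long exact homotopy sequence, combined with $\pi_{1}(\sunitary{3}) = \pi_{2}(\sunitary{3}) = 0$ and $\pi_{i}(\Sph^{6}) = 0$ for $i \leq 5$, forces $\pi_{1}(\gtwo) = \pi_{2}(\gtwo) = 0$, so $\gtwo$ is $2$-connected. Finally (iv) is a dimension count: the text has already identified $\gtwo$ as the $\glr{7}$-stabiliser of $\varphi_{0}$, so the orbit $\glr{7} \cdot \varphi_{0}$ has dimension $49 - 14 = 35 = \binom{7}{3} = \dim \Lambda^{3}(\bbr^{7})^{*}$ and is therefore open.

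The main obstacle is (ii): showing $H \subseteq \sunitary{3}$ rather than only $\unitary{3}$ requires extracting a canonical complex volume form from $\ast \varphi_{0}$, and the cleanest route to the equality $H = \sunitary{3}$ goes through the dimension of $\gtwo$ obtained in (i), creating a mild circularity that is best handled by an independent Lie algebra computation or by directly building the extension of $\sunitary{3}$-elements to octonion automorphisms.
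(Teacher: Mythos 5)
Your proposal is correct, but it takes a genuinely different route from the paper for parts (iii) and the order of deduction. The paper runs a tight dimension-counting argument: it observes $\dim \Lambda^3(\bbr^7)^* = 35$ and $\dim\glr{7} = 49$, so $\dim\gtwo \ge 14$ with equality if and only if the orbit is open; then, granting (ii), the orbit of $e_1$ has dimension $\ge 8$ but must lie in $\Sph^6$, forcing equality and simultaneously yielding (i), (iii) and (iv). You instead prove (iii) independently and constructively via Cayley--Dickson doubling (extending a unit imaginary octonion to a standard basis, so that the change of basis is an automorphism of $\bbo$), and then derive (i) from the fibration and (iv) from a clean dimension count. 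Your route is more explicit and elementary but leans more heavily on octonion algebra; the paper's is slicker but its logic (the ``with equality iff the orbit is open'' device) is a bit more subtle to parse.

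For (ii), your argument and the paper's are essentially the same: restrict $\varphi_0$ and $\psi_0$ to $e_1^\perp$ to exhibit $\omega_0$, $\Real\Omega_0$ and $\Imag\Omega_0$, giving $S \subseteq \sunitary3$, and then show the reverse inclusion from the product decomposition $\varphi_0 = dx^1\wedge\omega_0 + \Real\Omega_0$. You are right to flag a potential circularity in using $\dim\gtwo$ to get $H = \sunitary3$; the resolution you suggest (directly showing $\sunitary3$ preserves $\varphi_0$, i.e.\ extends to automorphisms fixing $v$) is exactly what the paper does, and it closes the gap cleanly without any circular dependence on (i). Two small checks worth writing out if you expand this: the identity $v\times(v\times w) = -w$ for $w \perp v$ follows from specialising \eqref{eq:cross_product_triple} to $u=v$, and the commutation of $H$ with $J$ uses both that $H$ fixes $v$ and that $H$ preserves the cross product.
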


\begin{proof}
Since $\dim \Lambda^3(\bbr^7)^* = 35$ and $\dim \glr{7} = 49$, we must
have $\dim \gtwo \geq 14$ with equality if and only if the orbit of $\varphi_0$
is open.

We will prove below that the stabiliser in $\gtwo$ of $e_1$
can be identified with \sunitary3 in a natural way.
Because $\dim \sunitary{3} = 8$, the $\gtwo$-orbit of $e_1$ must have dimension
$\geq 6$. Since the orbit is contained in $\Sph^6$ equality must hold.
Consequently the $\gtwo$-orbit of $e_1$ is exactly $\Sph^6$, all unit vectors
have isomorphic stabilisers, $\dim \gtwo$ is exactly 14, and the
$\glr{7}$-orbit of $\varphi_0$ is open. The fibration
$\sunitary{3} \to \gtwo \to \Sph^6$ shows that $\gtwo$ is 2-connected.
\end{proof}

\begin{remark*}
The set of non-degenerate 3-forms on $\bbr^7$ is in fact the union of four
connected components: two $\glr{7}$-orbits, each of which splits into two
components inducing opposite orientation. The orbit not containing $\varphi_0$
consists of those non-degenerate 3-forms whose induced bilinear form has
signature $(3,4)$.
\end{remark*}

The Hodge dual $* \varphi_0$ of $\varphi_0$ is a $4$-form $\psi_0$
\begin{equation}
\label{eq:4form}
\psi_0 = -dx^{1247} - dx^{1256} - dx^{1346} + dx^{1357} + dx^{2345} +
dx^{2367} + dx^{4567} .
\end{equation}
We can use $\psi_0$ and the metric to obtain an alternating vector-valued
$3$-form $\chi_0: \R^7 \times \R^7 \times \R^7 \ra \R^7$ defined by
\begin{equation}
\label{E:triple:cross}
g_{0}(u,\tfrac{1}{2}\chi_0(v,w,x)) = \psi_0(u,v,w,x) \quad
\text{for all \ } u,v,w,x \in \R^7.
\end{equation}
$\chi_{0}$ is not a proper triple cross-product on $\R^{7}$ in the sense that there exist orthonormal triples $(u,v,w)$ with 
$\chi_{0}(u,v,w)=0.$

\begin{remark}
\label{rem:psi}
The stabiliser of $\psi_0$ in \glr7 is the subgroup $\Z_2 \times \gtwo$,
where $\Z_2$ is generated by $-\Id$. We can therefore recover $\varphi_0$
from $\psi_0$, modulo orientation.
\end{remark}

\begin{lemma}
\label{L:cross:product}
For all $u, v, w \in \R^7$ 
\begin{subequations}
\begin{eqnarray}
\label{eq:cross_product_vol}
\|u\times v\|^2 & = & \|u\|^2\|v\|^2-g_{0}(u,v)^2,\\
\label{eq:cross_product_triple}
u\times(v\times w) + (u \times v) \times w & = & 
2g_{0}(u,w)v -g_{0}(u,v)w -g_{0}(w,v)u,\\
\label{eq:phi'n'chi}
\varphi_0(u,v,w)^2+\quart|\chi_0(u,v,w)|^2 & = &
\abs{u \wedge v \wedge w}^2.
\end{eqnarray}
\end{subequations}
\end{lemma}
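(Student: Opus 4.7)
The plan is to derive all three identities from the normed algebra structure of $\bbo = \R \oplus \Imag\bbo$. For imaginary $u, v$ the octonion product decomposes orthogonally as $uv = -g_0(u,v) + u\times v$, so multiplicativity of the norm, $|xy|^2 = |x|^2|y|^2$, immediately gives (a): $|u|^2|v|^2 = |uv|^2 = g_0(u,v)^2 + |u\times v|^2$. The other tool I need is the alternative law $(xx)y = x(xy)$.

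For (b), the alternative law combined with $u^2 = -|u|^2$ (for imaginary $u$) reads $u(uw) = -|u|^2 w$. Polarising in $u$ yields
\[ u(vw) + v(uw) = -2 g_0(u,v) w \]
for all imaginary $u, v, w$. Expanding $u(vw)$ via $vw = -g_0(v,w) + v\times w$ followed by $u \cdot (v\times w) = -g_0(u, v\times w) + u\times(v\times w)$, one finds that the imaginary part of $u(vw)$ equals $u\times(v\times w) - g_0(v,w) u$, and similarly for $v(uw)$. Extracting imaginary parts from the polarised identity therefore gives
\[ u\times(v\times w) + v\times(u\times w) = g_0(u,w) v + g_0(v,w) u - 2 g_0(u,v) w. \]
Swapping $v$ and $w$ and using antisymmetry of $\times$ to rewrite $w\times(u\times v) = -(u\times v)\times w$ converts this into (b).

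For (c), first specialise (b) to obtain the auxiliary identities
\[ u\times(u\times v) = g_0(u,v) u - |u|^2 v, \qquad (u\times v)\times v = g_0(u,v) v - |v|^2 u, \]
by setting $w = v$ (resp.\ $u = v$) and using $v\times v = 0$. Next, a direct computation on the standard basis in which $\varphi_0$ and $\psi_0$ take the normal forms \eqref{eq:3form} and \eqref{eq:4form} verifies that
\[ \tfrac12 \chi_0(u,v,w) = -(u\times v)\times w + g_0(u,w) v - g_0(v,w) u; \]
antisymmetry of the right-hand side in $(u,v,w)$ is an automatic consequence of (b). Now expand $|\tfrac12 \chi_0(u,v,w)|^2$ bilinearly, using (a) in the form $|(u\times v)\times w|^2 = (|u|^2|v|^2 - g_0(u,v)^2)|w|^2 - \varphi_0(u,v,w)^2$ and the auxiliary identities for the cross terms (\eg $g_0((u\times v)\times w, u) = g_0(u\times(u\times v), w) = g_0(u,v) g_0(u,w) - |u|^2 g_0(v,w)$). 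After collecting terms, $\varphi_0(u,v,w)^2$ appears with coefficient $-1$ and what remains is precisely the Gram determinant of $(u,v,w)$, which is $|u\wedge v\wedge w|^2$; this yields (c).

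The only step that is not a formal consequence of (a) and (b) is pinning down the explicit formula for $\chi_0$, which reduces to a finite basis calculation using \eqref{eq:3form} and \eqref{eq:4form}; all other computations are purely algebraic manipulations flowing from multiplicativity of the norm and alternativity.
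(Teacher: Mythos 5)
Your proof is correct. The paper itself does not give a proof of this lemma; it simply cites Bryant, Bolton--Woodward, and Harvey--Lawson for the three identities respectively. So you have supplied a genuinely self-contained argument where the paper offers only a bibliographic pointer, and your derivation --- multiplicativity of the octonion norm for (a), polarisation of the alternative law $u(uw)=(uu)w$ for (b), and the explicit formula for $\chi_0$ together with a Gram-determinant expansion for (c) --- is exactly the kind of proof one finds in those references. I checked the polarisation step, the sign bookkeeping in the swap $v\leftrightarrow w$ that converts the Jacobi-type identity into the stated form of (b), and the full expansion of $\bigl|\tfrac12\chi_0(u,v,w)\bigr|^2$; all terms collect to $|u\wedge v\wedge w|^2-\varphi_0(u,v,w)^2$ as you claim.

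One small improvement you might consider: the one step you flag as ``not a formal consequence of (a) and (b)'' --- the explicit formula
$\tfrac12\chi_0(u,v,w)=-(u\times v)\times w+g_0(u,w)v-g_0(v,w)u$ --- need not be a basis calculation at all. Writing the octonion associator $[u,v,w]=(uv)w-u(vw)$ and expanding both triple products via $xy=-g_0(x,y)+x\times y$ for imaginary $x,y$, the real parts cancel (since $g_0(u\times v,w)=g_0(u,v\times w)=\varphi_0(u,v,w)$) and applying (b) once gives
$\tfrac12[u,v,w]=(u\times v)\times w-g_0(u,w)v+g_0(v,w)u$,
so the formula is precisely $\chi_0=-[\,\cdot\,,\,\cdot\,,\,\cdot\,]$. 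Total antisymmetry of the associator in an alternative algebra then delivers the antisymmetry of $\chi_0$ for free, and the only coordinate input remaining is a single check on one triple to fix the overall sign against the convention \eqref{eq:4form}. This keeps the whole lemma flowing from norm multiplicativity and alternativity alone, which is aesthetically cleaner and removes the one ad hoc verification in your write-up.
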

\begin{proof}
See \cite[p. 540]{bryant:exceptionalholonomy}, \cite[2.2]{bolton:woodward} and 
\cite[Thm. IV.1.6]{harveylawson:calgeometry}
for proofs of  \eqref{eq:cross_product_vol}, \eqref{eq:cross_product_triple} and \eqref{eq:phi'n'chi} respectively.
\end{proof}

\subsection*{$G$-structures on vector spaces}
\label{ss:G_on_vector}
Let $V$ be a $n$-dimensional real vector space. Let $P$ denote the set of
ordered bases of $V$; equivalently, the set of isomorphisms
$\beta:\R^n\rightarrow V$. We call $P$ the set of frames of $V$. $P$ has a free
transitive right $\glrn$-action determined by composition of maps:
\begin{equation*}
 g\cdot\beta:=\beta\circ g.
\end{equation*}
We can thus think of $P$ as a principal $\glr{n}$-fibre bundle over a point. 

\begin{definition}
\label{d:G_structure}
Let $G$ be a subgroup of $\glrn$. A \textit{$G$-structure} on $V$ is a
$G$-subbundle of~$P$, \textit{i.e.} an orbit of the induced action of $G$ on
$P$. The space of all $G$-structures can be identified with the quotient space
$P/G$.
\end{definition}

The above definition makes it clear that if $H$ is a subgroup of $G$, an
$H$-structure automatically defines a $G$-structure. 

\subsubsection*{\gtstr s on a vector space}
The subgroups $G$ of interest in this paper arise as isotropy groups of
algebraic structures on $\R^n$. In such cases one can give an alternative
definition of $G$-structure, which we exemplify in the case $G = \gtwo$. 

\begin{definition}
\label{def:g2form}
Let $V$ be a real vector space of dimension $7$. %
We call $\varphi \in \Lambda^3 V^*$ a \emph{\gtstr} %
(or \emph{\gtwo-form}) if there is a linear isomorphism $V \cong \bbr^7$
identifying $\varphi$ with $\varphi_0$.
\end{definition}

Since $\gtwo \subset \sorth{7}$, a \gtstr{} on $V$ induces an inner product and
an orientation. This is equivalent to how a \gtwo-form defines an inner product
and orientation in \eqref{eq:g2metric}. We will often find it convenient to
restrict attention to those \gtstr s that agree with a given orientation.

\begin{definition}
\label{def:positive_3forms}
Let $V$ be a real oriented $7$-dimensional vector space.
We call $\varphi \in \Lambda^3 V^*$ a \emph{positive 3-form}
if there is an oriented linear isomorphism $V \cong \bbr^7$
identifying $\varphi$ with $\varphi_0$.
Let $\Lambda^3_+ V^* \subset \Lambda^3 V^*$ denote the set of positive forms.
\end{definition}

\noindent
Note that $\Lambda^3_+ V^*$ is open in $\Lambda^3 V^*$ by
\ref{p:g2group}\ref{it:openorbit}. By Remark \ref{rem:psi}, we
could study \gtstr s on an oriented vector space equivalently in terms of the
Hodge duals of the positive 3-forms.

\begin{remark*}
Our definition of `positive' agrees with that of Joyce
\cite{joyce:holonomybook}, while Hitchin \cite{hitchin00} uses `positive' where
we use `\gtwo-form'.
\end{remark*}

\subsubsection*{\sunitaryn-structures}
Let $z^1, \ldots, z^n$ be standard coordinates on $\bbc^n$, and
\begin{subequations}
\begin{eqnarray*}
\Omega_0 &=& \phantom{ab} dz^1 \wedge \cdots \wedge dz^n,\\
\omega_0 &=& {\textstyle\frac{i}{2}}(dz^{1} \wedge d \bar z^{1} + \cdots +
dz^{n} \wedge d \bar z^{n}).
\end{eqnarray*}
\end{subequations}
These are, respectively, the standard complex volume form and Kähler form,
and are invariant under the action of \sunitaryn. In fact, their stabiliser
in $\glr{2n}$ is precisely \sunitaryn. For $\Omega_0$ on its
own determines $\Lambda^{1,0}_\bbc(\bbc^n)^*$ (as the kernel of
$\alpha \mapsto \Omega_0 \wedge \alpha$) and hence the complex structure on
$\bbc^n$, so the stabiliser of $\Omega_0$ in $\glr{2n}$ is precisely
$\SL{n}$.

By analogy with Definition \ref{def:g2form}, we can think of any complex
$n$-form $\Omega$ that is $\glr{2n}$-equivalent to $\Omega_0$ (\ie any
decomposable form) as defining an $\SL{n}$-structure, and any pair
$(\Omega,\omega)$ of a decomposable complex $n$-form and a non-degenerate real
2-form such that
\begin{subequations}
\begin{align}
\label{eq:omega:11}
\Omega \wedge \omega &= 0 , \\
\label{eq:sun:vol}
(-1)^{\frac{n(n-1)}{2}} \left(\tfrac{i}{2}\right)^n \Omega \wedge
\overline{\Omega} &= \frac{\omega^{n}}{n!},
\end{align}
\end{subequations}
as an $\sunitaryn$-structure. \eqref{eq:omega:11} encodes that $\omega$
is $(1,1)$ with respect to the complex structure defined by $\Omega$, while 
\eqref{eq:sun:vol} is a normalisation condition that the natural volume forms
defined by $\omega$ and $\Omega$ are equal, or equivalently that
$\norm{\Omega} = 2^n$ (see Hitchin \cite[\S 2]{hitchin97}).

\subsubsection*{\sunitary{3}-structures}
We have a particular interest in the case of complex dimension three since
\sunitary{3} is the stabiliser in \gtwo of a vector in $\bbr^7$. Let us now
give the previously promised proof of this fact.

\begin{proof}[Proof of Proposition \ref{p:g2group}\ref{it:su3}]
Let $S$ be the stabiliser of the basis vector $e_1 \in \Sph^{6}\subset \R^{7}$. Since $\gtwo\subset\sorth{7}$,
$S$ maps the orthogonal complement $e_{1}^{\perp}$ to itself. $e_{1}^{\perp}$
can be identified with $\bbc^3$ by introducing complex coordinates
$z^{1} = x^{2}+ix^{3}$, $z^{2} = x^{4}+ix^{5}$, $z^{3} = x^{6}+ix^{7}$.
The action of $S$ on $\bbc^3$ evidently preserves the forms
\begin{subequations}
\begin{align*}
e_1 \lrcorner \varphi_0 = dx^{23} + dx^{45} + dx^{67} &= \omega_{0}, \\
\varphi_0|_{e_1^\perp} = dx^{246} - dx^{257} - dx^{347} - dx^{356} &=
\Real\Omega_0 , \\
-e_1 \lrcorner \psi_0 = -dx^{247} - dx^{256} - dx^{346} + dx^{357} &=
\Imag\Omega_0 ,
\end{align*}
\end{subequations}
so $S$ is contained in \sunitary{3}. Conversely
\begin{equation}
\label{eq:g2productmodel}
\varphi_0 = dx^1 \wedge \omega_0 + \Real \Omega_0
\end{equation}
implies that \sunitary3 preserves $\varphi_0$, so $S$ is precisely \sunitary3.
\end{proof}

It follows that any \sunitary{3}-structure on a real vector space $V$ of 
dimension 6 (together with a covector $dt$ on $\bbr$ defining orientation and
length) determines a \gtstr{} on $\bbr \oplus V$. Moreover we see from the
proof how to express the relationship between the structures in terms of the
forms. If the \sunitary{3}-structure on $V$ is defined by $(\Omega,\omega)$
then the induced \gtstr{} on $\bbr \oplus V$ has \gtwo-form
\begin{equation}
\label{eq:cylg2form}
\varphi = dt \wedge \omega + \Real \Omega.
\end{equation}
Similarly, the Hodge dual $4$-form $\psi$ of $\varphi$ takes the form 
\begin{equation}
\label{eq:4form:six:dims}
\psi = \half \omega^2 - dt \wedge \Imag \Omega.
\end{equation}
Another way to think of the relationship is that the orthogonal complement
to a unit vector $u$ in a vector space with \gtstr{} inherits (in addition to
the metric) two structures from the cross product:
using Lemma \ref{L:cross:product}, $I_u : v \mapsto u \times v$ defines an
orthogonal complex structure on~$u^\perp$, while the restriction/projection of
the cross product to $u^\perp$ defines a $I_u$-antilinear cross product which
is equivalent to a complex volume form (because the complex dimension is 3).
See also p. \pageref{ss:ass_vs_complex}.

\begin{remark}
\label{r:SL3_open_orbit}
Complex volume forms in dimension three have some special properties.
Hitchin \cite[\S 2]{hitchin00} explains that the stabiliser of
$\Real \Omega_0$ alone in $\mathrm{GL}_+(6, \bbr)$ is $\SL{3}$.
The $\glr{6}$-orbit of $\Real\Omega_0$ in $\Lambda^3(\bbr^6)^*$ is therefore
open by dimension counting: $\dim \glr{6} - \dim \SL{3} = 36-16 = 20 = \dim
\Lambda^3(\bbr^6)^*$.
For any 3-form $\alpha$ in this open set there is a unique real \mbox{3-form}
$\beta$ such that $\alpha + i\beta$ is decomposable and
the induced $\SL{3}$-structure has the standard orientation.
For a real vector space of dimension 6, an $\SL{3}$-structure is therefore
equivalent to a choice of orientation together with a 3-form equivalent to
$\Real\Omega_0$ (reversing the orientation while keeping the 3-form fixed
corresponds to replacing the complex structure by its conjugate).
\end{remark}

\subsubsection*{\sunitary{2}-structures}
\label{sss:su2}
The case of complex dimension two also plays an important role in the paper. 
Let $\omega^I_0 := \omega_0$ be the standard Kähler form on $\bbc^2$, and write
the holomorphic volume form $\Omega_0$ as $\omega^J_0 + i\,\omega^K_0$.
As suggested by the notation, $\omega^J_0$ and $\omega^K_0$ define
$g_{0}$-orthogonal complex structures $J$ and $K$ on $\bbr^4$ by the relations
$\omega^J_0(x,y) = g_0(Jx,y)$ and $\omega^K_0(x,y) = g_0(Kx,y)$.
In real coordinates $(x^i)$ where $z^1 = x^1 +ix^2$, $z^2 = x^3+ix^4$
\[ \omega^I_0 = dx^{12} + dx^{34}, \quad \omega^J_0 = dx^{13} - dx^{24},
\quad \omega^K_0 = dx^{14} + dx^{23}. \]
When we identify $\bbc^2$ with the quaternions $\quat$ by
$(x^1 + ix^2, x^3 + ix^4) \mapsto x^1 + ix^2 + jx^3 + kx^4$
the complex structures $I, J, K$ correspond to left multiplication
by the standard orthonormal triple $i, j, k$ of imaginary quaternions. This
identifies \sunitary2 with the automorphism group \Sp 1 of~$\quat$.
Furthermore, any unit imaginary quaternion defines an orthogonal complex
structure, so \sunitary2 preserves a whole $\Sph^2$ of complex structures.

We can therefore think of an \sunitary2-structure on a 4-dimensional vector
space in two different ways: either as a pair $(\omega, \Omega)$ as before,
or as a choice of an ordered triple of 2-forms $(\omega^I, \omega^J, \omega^K)$
equivalent to $(\omega^I_0, \omega^J_0, \omega^K_0)$, \ie satisfying
\begin{gather*}
(\omega^I)^2 = (\omega^J)^2 = (\omega^K)^2, \\
\omega^I \wedge \omega ^J = \omega^J \wedge \omega^K = \omega^K \wedge \omega^I
= 0 .
\end{gather*}
These two definitions of \sunitary2-structures are equivalent, setting
$\omega = \omega^I$ and $\Omega = \omega^J + i\,\omega^K$. However, the first
highlights a preferred complex structure $I$, while the second emphasises
the two-sphere of complex structures. We will switch back and
forth between these two points of view.

If we want to choose an \sunitary2-structure compatible with a particular
inner product and orientation we first choose $\omega^I$ in the $\Sph^2$ of
2-forms such that $(\omega^I)^2 =2 \vol$, and then $\omega^J$ among the
$\Sph^1$ of such forms that are perpendicular to $\omega^I$ (and $\omega^K$
is then determined by $K = IJ$). All in all, there is therefore an
\sorth3-family of \sunitary2-structures inducing the same inner product and
orientation.

\begin{remark*}
Any non-degenerate complex 2-form with square 0 on a real vector space of
dimension 4 is decomposable, and thus determines an $\SL{2}$-structure.
\end{remark*}

\subsection*{Calibrations in $\R^7$}
\label{subs:calibrations}
Let $(V,g)$ be an inner product space. A $k$-form $\alpha \in \Lambda^k V^*$
is said to be a \textit{calibration} if, for every oriented $k$-plane $\pi$
in $V$, we have $\alpha_{|\pi}\leq vol_\pi$. The oriented $k$-planes $\pi$ for
which $\alpha_{|\pi}= vol_\pi$ are said to be \textit{calibrated}.

A $\gtwo$-form $\varphi$ and its Hodge dual $\psi$ define calibrations with respect to the metric $g_{\varphi}$.
\begin{lemma}
\label{L:calibrations}
\hfill
\begin{enumerate}
\item The $3$-form $\varphi_{0}$ and the $4$-form $\psi_0=*\varphi_{0}$ defined
in \eqref{eq:3form} and \eqref{eq:4form} respectively are calibrations on
$(\R^7,g_{0})$.
\item If $u, v, w$ is an orthonormal triple of vectors in $\R^7$, then
$\varphi_{0}(u,v,w)=1$ if and only if $w=u \times v$.
\item If $u,v, w, x$ is an orthonormal quadruple of vectors in $\R^7$ then
$\psi_{0}(u,v,w,x)=1$ if and only if $u=\tfrac{1}{2} \chi_{0}(v,w,x)$.
\end{enumerate}
\end{lemma}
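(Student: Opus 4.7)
The plan is to reduce all three claims to pointwise inequalities on positively oriented orthonormal bases and then apply Cauchy--Schwarz together with the identities in Lemma \ref{L:cross:product}. For any oriented $k$-plane $\pi \subset \R^7$ with positively oriented orthonormal basis $(u_1,\ldots,u_k)$, an arbitrary $k$-form $\alpha \in \Lambda^k(\R^7)^*$ satisfies $\alpha|_\pi = \alpha(u_1,\ldots,u_k) \cdot vol_\pi$, so the calibration inequality $\alpha|_\pi \le vol_\pi$ is equivalent to $\alpha(u_1,\ldots,u_k) \le 1$ on every such frame. This remark lets us attack (1) through the equality statements (2) and (3) simultaneously.

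For claim (2) and the $\varphi_0$-half of (1), let $u,v,w$ be an orthonormal triple. By \eqref{eq:cross_product_vol}, $\|u\times v\|^2 = 1 - g_0(u,v)^2 = 1$, and since $\varphi_0$ is alternating the vector $u\times v$ is orthogonal to both $u$ and $v$. The definition \eqref{multtableeq} gives
\[
\varphi_0(u,v,w) = g_0(u\times v,w) \le \|u\times v\|\,\|w\| = 1
\]
by Cauchy--Schwarz, which establishes the calibration bound. Equality forces $w$ to be the unit vector $u\times v$, giving (2); the converse is immediate from $\|u\times v\|=1$.

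For claim (3) and the $\psi_0$-half of (1), let $v,w,x$ be orthonormal. From \eqref{E:triple:cross} we have $\psi_0(u,v,w,x) = g_0\!\bigl(u,\tfrac12 \chi_0(v,w,x)\bigr)$. The identity \eqref{eq:phi'n'chi} specialises to
\[
\tfrac14\,\abs{\chi_0(v,w,x)}^2 = \abs{v\wedge w\wedge x}^2 - \varphi_0(v,w,x)^2 = 1 - \varphi_0(v,w,x)^2 \le 1,
\]
so $\|\tfrac12\chi_0(v,w,x)\| \le 1$; alternatingness of $\psi_0$ forces $\chi_0(v,w,x)$ to be orthogonal to each of $v,w,x$. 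For any unit $u$ Cauchy--Schwarz now yields $\psi_0(u,v,w,x) \le 1$, proving $\psi_0$ is a calibration. Equality requires simultaneously $\|\tfrac12 \chi_0(v,w,x)\| = 1$ and $u = \tfrac12\chi_0(v,w,x)$; the second of these forces the first, yielding the characterisation in (3). Conversely, if $u = \tfrac12\chi_0(v,w,x)$ then $u,v,w,x$ are mutually orthogonal and $\psi_0(u,v,w,x) = \|u\|^2 = 1$, so the equality case is sharp.

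There is no substantial obstacle: the only non-obvious ingredient is that the norm of $\chi_0$ on an orthonormal triple is controlled by \eqref{eq:phi'n'chi}, and once this is in hand both calibration inequalities and their equality cases follow from Cauchy--Schwarz.
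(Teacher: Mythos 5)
Your proof is correct and follows essentially the same route as the paper's: reduce the calibration inequalities to an estimate on orthonormal frames, then apply Cauchy--Schwarz together with \eqref{eq:cross_product_vol} and \eqref{eq:phi'n'chi} to get both the bounds and the equality characterisations. The only difference is cosmetic (you spell out the reduction to orthonormal bases up front and add a couple of side observations about orthogonality of $u\times v$ and $\chi_0(v,w,x)$ to the triple that are not actually needed for the argument).
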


\begin{proof}
For any orthonormal quadruple $u, v, w, x \in \R^7$ using Cauchy-Schwarz,
\eqref{eq:cross_product_vol} and \eqref{eq:phi'n'chi} we have
\begin{equation}
\label{E:phi:comass}
\varphi_{0}(u,v,w) = g_{0}(u\times v, w) \le \norm{u\times v} \norm{w} = 1,
\end{equation}
and
\begin{equation}
\label{E:psi:comass}
\abs{\psi_{0}(u,v,w,x)} = \abs{g_{0}(u, \tfrac{1}{2} \chi_{0}(v,w,x))} \le
\norm{u} \norm{\tfrac{1}{2} \chi_{0}(v,w,x)} \le 1.
\end{equation}
If $w= u \times v$ then $\varphi_{0}(u,v,w)=g_{0}(u\times v, u\times v)=1$.
Conversely, if $\varphi_{0}(u,v,w)=1$, then equality must hold throughout
\eqref{E:phi:comass} and in particular in the Cauchy-Schwarz inequality. Hence
$w=\lambda u \times v$ for some $\lambda \in \R$. But
$1 = \varphi_{0}(u,v,\lambda u \times v) = \lambda g_{0}(u\times v, u \times v)
= \lambda$, hence we must have $w = u \times v$.

Similarly we have equality in \eqref{E:psi:comass} 
if and only if $u = \lambda \tfrac{1}{2} \chi_{0}(v,w,x)$ for some $\lambda \in
\R$ and $\norm{\tfrac{1}{2} \chi_{0}(v,w,x)}=1$. Hence equality holds in
\eqref{E:psi:comass} if and only if $u= \pm \tfrac{1}{2} \chi_{0}(v,w,x)$, and
clearly we have $\psi_{0}(\pm \tfrac{1}{2} \chi_{0}(v,w,x),v,w,x)=\pm 1$.
\end{proof}
\begin{definition}
An oriented $3$-plane $\pi$ in $\R^7$ calibrated by $\varphi_{0}$ is called an
\textit{associative} plane.
An oriented $4$-plane $\pi$ in $\R^7$ calibrated by $\psi_{0}$ is called a
\textit{coassociative} plane.
\end{definition}

\begin{lemma}\label{l:ass_characterization}\hfill
\begin{enumerate}
\item \label{it:ass_char}
A 3-plane $\pi$ is associative (for one choice of orientation) if and only
if $\chi_{0|\pi}=0$.
\item \label{it:unique_ass_ext}
Any 2-plane is contained in a unique associative 3-plane.
\end{enumerate}
\end{lemma}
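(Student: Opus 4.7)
The plan is to deduce both parts directly from Lemma \ref{L:cross:product} and Lemma \ref{L:calibrations}.

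For (i), I would pick any orthonormal basis $u,v,w$ of $\pi$, so that $|u\wedge v\wedge w|^2 = 1$; the identity \eqref{eq:phi'n'chi} then collapses to
\[ \varphi_0(u,v,w)^2 + \tfrac{1}{4}|\chi_0(u,v,w)|^2 \;=\; 1. \]
Since $\varphi_0|_\pi = \varphi_0(u,v,w)\,\mathrm{vol}_\pi$, the plane $\pi$ is calibrated by $\varphi_0$ in one of its two orientations precisely when $|\varphi_0(u,v,w)| = 1$, and by the displayed identity this happens if and only if $\chi_0(u,v,w) = 0$. The one point that needs a word of comment is that the conclusion ``$\chi_0|_\pi = 0$'' is a condition on the plane rather than on a chosen basis, but this is automatic because $\chi_0$ is an alternating trilinear form: vanishing on one basis of $\pi$ forces vanishing on every triple of vectors in $\pi$.

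For (ii), given a 2-plane $\pi$ with orthonormal basis $u,v$, my candidate extension is $w := u\times v$. By \eqref{eq:cross_product_vol} we have $|w| = 1$, and the alternating property of $\varphi_0$ gives $g_0(w,u) = \varphi_0(u,v,u) = 0$ and $g_0(w,v)=0$, so $\{u,v,w\}$ is orthonormal. Moreover $\varphi_0(u,v,w) = |u\times v|^2 = 1$, so Lemma \ref{L:calibrations} identifies $\pi' := \pi \oplus \R w$ as associative. For uniqueness, suppose $\pi''$ is any associative 3-plane containing $\pi$, and let $w'$ be a unit normal to $\pi$ inside $\pi''$; then $\{u,v,w'\}$ is orthonormal with $|\varphi_0(u,v,w')| = 1$, and the Cauchy--Schwarz equality case built into Lemma \ref{L:calibrations} forces $w' = \pm u\times v$, so $\pi'' = \pi'$.

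I do not anticipate any real obstacle here: the substantive identities are already recorded in Lemma \ref{L:cross:product}. The only auxiliary check worth noting is that the line $\R(u\times v)$ is independent of the choice of orthonormal basis of $\pi$, which follows from a short computation showing that an $\mathrm{SO}(2)$ change of basis on $(u,v)$ leaves $u\times v$ invariant while an $\mathrm{O}(2)\setminus\mathrm{SO}(2)$ change flips its sign — in either case the 3-plane $\pi\oplus\R(u\times v)$ is the same.
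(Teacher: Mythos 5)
Your argument is correct and follows essentially the same route as the paper: part (i) reads off directly from identity \eqref{eq:phi'n'chi} together with $\varphi_0$ being a calibration, and part (ii) constructs the extension as $\pi \oplus \R(u\times v)$ and uses the equality case in Lemma \ref{L:calibrations}(ii) to get uniqueness. The two brief supplementary remarks you add (that vanishing of an alternating 3-form on one basis of a 3-plane forces vanishing on the whole plane, and that the line $\R(u\times v)$ is basis-independent) are correct but the paper treats them as implicit.
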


\begin{proof}
\ref{it:ass_char} follows directly from \eqref{eq:phi'n'chi} and the fact that
$\varphi_0$ is a calibration. \\
\ref{it:unique_ass_ext} Let $\{u,v\}$ be an orthonormal basis for the
$2$-plane. Then $\{u,v,u\times v\}$ is an oriented orthonormal basis for an
associative $3$-plane. Suppose $\pi$ is any associative $3$-plane containing
the $2$-plane $\gen{u,v}_\R$. Then we can choose an oriented
orthonormal basis $\{u,v,w\}$ for $\pi$ extending $\{u,v\}$. Hence by Lemma
\ref{L:calibrations} we must have $w= u\times v$.
\end{proof}

\subsubsection{Relation to calibrations on $\bbc^3$}
There are also standard calibrations on $\bbc^n$, given by powers of the
standard Kähler form and real parts of normalised $(n,0)$-forms. The fact
that $\omega_0^k$ is a calibration for each $k$, and that the calibrated
subspaces are precisely the complex $k$-planes, is known as \emph{Wirtinger's
inequality}. The other type of calibration is described by the following lemma.

\begin{lemma}
\label{L:sl_calibrations}
\hfill
\begin{enumerate}
\item \label{it:sl_cal}
The $n$-forms $\Real (e^{i\theta}\Omega_0)$ are calibrations on
$(\bbc^n,g_{0})$ for each $\theta \in \bbr$.
\item \label{it:sl_char}
A real $n$-plane $L \subset \bbc^n$ is calibrated by $\Real \Omega_0$ (for one
choice of orientation) if and only if $\omega_{0|L} = \Imag \Omega_{0|L} = 0$.
\end{enumerate}
\end{lemma}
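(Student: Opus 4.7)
The plan is to reduce both statements to a single determinant inequality. Let $L \subset \bbc^n$ be an oriented real $n$-plane with oriented orthonormal basis $v_1, \ldots, v_n$, and form the complex $n \times n$ matrix $M = (v_1 \mid \cdots \mid v_n)$ having the $v_i$ as columns. Directly from $\Omega_0 = dz^1 \wedge \cdots \wedge dz^n$ one has $\Omega_0(v_1, \ldots, v_n) = \det_{\bbc} M$, so each assertion will reduce to analysing the complex number $\det_{\bbc} M$.

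The key step is the identity
\[ M^* M \;=\; I + iB, \qquad B_{ij} := \omega_0(v_i, v_j), \]
which follows from the standard decomposition of the Hermitian inner product on $\bbc^n$ as $\langle v, w \rangle_{\bbc} = g_0(v, w) + i\,\omega_0(v, w)$ together with the $g_0$-orthonormality of the basis. Since $B$ is real skew-symmetric, its spectrum consists of purely imaginary conjugate pairs $\pm i\mu_k$, so the spectrum of $M^* M$ is the real set $\{1 \pm \mu_k\}$. Positive semi-definiteness of $M^* M$ forces $|\mu_k| \leq 1$, and hence
\[ \abs{\det\nolimits_{\bbc} M}^2 \;=\; \det\nolimits_{\bbc}(M^*M) \;=\; \prod_k (1 - \mu_k^2) \;\leq\; 1, \]
with equality if and only if $B = 0$, i.e.\ $\omega_{0|L} = 0$.

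From this both parts follow immediately. For \ref{it:sl_cal}, $\Real(e^{i\theta}\Omega_0)(v_1, \ldots, v_n) \leq \abs{\Omega_0(v_1, \ldots, v_n)} \leq 1$, so $\Real(e^{i\theta}\Omega_0)$ is a calibration. For \ref{it:sl_char}, the equality $\Real \Omega_0(v_1, \ldots, v_n) = 1$ forces both $\abs{\det_{\bbc} M} = 1$ (equivalent to $\omega_{0|L} = 0$) and that $\det_{\bbc} M$ is real and positive. Conversely, if $\omega_{0|L} = 0$ then $\Omega_0(v_1, \ldots, v_n) = e^{i\phi}$ for some $\phi$; the further vanishing $\Imag \Omega_{0|L} = 0$ pins down $e^{i\phi} \in \{\pm 1\}$, and reversing the orientation of $L$ flips this sign, so precisely one orientation is calibrated.

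The only genuine technical care required is fixing the sign and conjugation conventions for the Hermitian inner product so that they are compatible with the conventions already set for $\omega_0$ and $\Omega_0$ earlier in the section, so that the key identity $M^*M = I + iB$ appears exactly as stated rather than with a sign twist. This is the step most susceptible to bookkeeping errors, but presents no real obstacle; the substance of the argument is the elementary linear-algebra computation of $\det(I+iB)$ for $B$ real skew-symmetric.
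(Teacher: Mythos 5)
Your proof is correct, and it takes the same strategy as the paper's: everything reduces to the inequality $\norm{\Omega_{0|L}} \leq 1$ with equality characterising Lagrangian planes. The paper simply asserts this inequality is ``easy to see'' and moves on; you actually prove it, via the clean observation that $M^*M = I + iB$ with $B$ real skew-symmetric (which is indeed compatible with the conventions earlier in the section, since $\langle v,w\rangle_{\bbc} = g_0(v,w) + i\,\omega_0(v,w)$ for the standard Hermitian form), so that $\det(M^*M) = \prod_k(1-\mu_k^2) \leq 1$. This is a standard linear-algebra route to Wirtinger-type estimates and is exactly the kind of argument the paper is implicitly invoking. The deductions of parts (i) and (ii) from the inequality are correct, including the handling of the orientation ambiguity in the converse of (ii).
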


\begin{proof}
It is easy to see that $\norm{\Omega_{0|L}} \leq 1$ for any real $n$-plane
$L \subset \bbc^n$, with equality if and only $\omega_{0|L} = 0$. Thus
$\norm{\Real(e^{i\theta}\Omega_0)_{|L}} \leq 1$ with equality if and only if
$L$ is Lagrangian and $\norm{\Imag(e^{i\theta}\Omega_0)_{|L}} = 0$.
\end{proof}

\noindent
Note that for each Lagrangian plane $L \subset \bbc^n$ there is a $\theta$
(unique modulo $2\pi$) such that $L$ is calibrated by
$\Real(e^{i\theta}\Omega_0)$.

\begin{definition}
We call the planes calibrated by $\Real(e^{i\theta}\Omega_0)$ \emph{special
Lagrangian with phase $\theta$}, or simply \emph{special Lagrangian} if
$\theta = 0$.
\end{definition}

Now consider $\bbc^3$ with its standard \sunitary3-structure
$(\Omega_0,\omega_0)$ as a hyperplane in
$\bbr^7 \cong \gen{e_1} \oplus \bbc^3$ with the standard product \gtstr{}
$\varphi_0 = dx^1 \wedge \omega_0 + \Real \Omega_0$ given in \eqref{eq:g2productmodel}.

\begin{lemma}
\label{L:linear:ass_vs_complex}
\hfill
\begin{enumerate}
\item \label{it:ass_vs_complex}
Let $\ell \subset \bbc^3$ be a real 2-plane. Then $\gen{e_1} \oplus \ell$ is
associative in $\bbr^7$ if and only if $\ell$ is a complex line.

\item \label{it:ass_vs_sl}
Let $L \subset \bbc^3$ be a real 3-plane. Then $L$ is associative in $\bbr^7$
if and only if $L$ is special Lagrangian.
\end{enumerate}
\end{lemma}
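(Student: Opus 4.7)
My plan is to exploit the explicit product decomposition \eqref{eq:g2productmodel}, which says that $\varphi_0 = dx^1 \wedge \omega_0 + \Real \Omega_0$ under the identification $\R^7 = \gen{e_1} \oplus \bbc^3$. Both parts reduce to a direct computation of $\varphi_0$ on an appropriate orthonormal frame, followed by invoking the calibration inequalities for $\omega_0$ (Wirtinger) and $\Real \Omega_0$ (special Lagrangian), already recorded in Lemma \ref{L:sl_calibrations}.

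For part \ref{it:ass_vs_complex}, since $e_1 \perp \bbc^3$, any orthonormal pair $\{u,v\}$ in $\ell \subset \bbc^3$ extends to an orthonormal triple $\{e_1, u, v\}$ in $\R^7$. Because $\Real \Omega_0$ has no $dx^1$ component, I compute
\begin{equation*}
\varphi_0(e_1, u, v) = (dx^1 \wedge \omega_0)(e_1, u, v) + (\Real \Omega_0)(e_1, u, v) = \omega_0(u, v).
\end{equation*}
By Lemma \ref{L:calibrations}, $\gen{e_1} \oplus \ell$ is associative for some orientation if and only if $|\omega_0(u,v)| = 1$, and by Wirtinger's inequality on $\bbc^3$ this happens if and only if $\ell$ is a complex line. (The sign of $\omega_0(u,v)$ just selects which of the two orientations makes the $3$-plane associative.)

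For part \ref{it:ass_vs_sl}, let $\{u, v, w\}$ be an oriented orthonormal basis of $L \subset \bbc^3$. Since none of $u,v,w$ has a $dx^1$ component, the term $dx^1 \wedge \omega_0$ vanishes on $(u,v,w)$ and I get
\begin{equation*}
\varphi_0(u, v, w) = (\Real \Omega_0)(u, v, w).
\end{equation*}
By Lemma \ref{L:calibrations}, $L$ is associative precisely when the right-hand side equals $1$; by Lemma \ref{L:sl_calibrations}\ref{it:sl_char}, this is exactly the condition that $L$ is special Lagrangian (with phase $0$). The only minor point to verify is that the orientation induced by the special Lagrangian calibration matches the one induced by $\varphi_0$, but this is automatic since both are determined by demanding the respective calibration to attain the value $+1$.

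No real obstacles are expected: the argument is essentially the observation that, in the product form \eqref{eq:g2productmodel}, the associative condition decouples into the Kähler condition on planes containing $e_1$ and the special Lagrangian condition on planes transverse to $e_1$. The only thing to watch is bookkeeping of orientations, which I handle by noting that the calibrations $\omega_0$, $\Real \Omega_0$ and $\varphi_0$ each single out an orientation on their calibrated planes, and these orientations are compatible through \eqref{eq:g2productmodel}.
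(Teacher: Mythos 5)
Your proof is correct and takes essentially the same approach as the paper: both use the product decomposition $\varphi_0 = dx^1 \wedge \omega_0 + \Real \Omega_0$ from \eqref{eq:g2productmodel} to reduce the restriction of $\varphi_0$ to $\omega_{0|\ell}$ in part (i) and to $\Real\Omega_{0|L}$ in part (ii), then invoke Wirtinger and Lemma~\ref{L:sl_calibrations}. The paper states the two restriction identities without computation while you spell out why the cross-terms vanish and add a remark on orientations, but the argument is the same.
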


\begin{proof}
\ref{it:ass_vs_complex} $\varphi_{0|{\gen{e_1} \oplus \ell}} = \omega_{0|\ell}$,
so $\gen{e_1} \oplus \ell$ is calibrated by $\varphi_0$ if and only if
$\ell$ is calibrated by $\omega_0$.
\ref{it:ass_vs_sl} $\varphi_{0|L} = \Real \Omega_{0|L}$, so $L$ is calibrated by
$\varphi_0$ if and only if $L$ is calibrated by $\Real \Omega_0$.
\end{proof}

We can also think of \ref{it:ass_vs_complex} the following way.
Let $V$ be a 7-dimensional vector space with a \gtstr, $u \in V$ a unit
vector, and consider the orthogonal complement $u^\perp$ with its induced
$\sunitary3$-structure \eqref{eq:cylg2form}.
The complex structure on $u^\perp$ is $I_u : v \mapsto u \times v$.
So for $v \in u^\perp$, the unique associative 3-plane in $V$ containing
both $u$ and $v$ is $\gen{u, v, I_u v}_\R$, which is the direct sum
of $\gen{u}$ and the unique complex line in $u^\perp$ containing $v$.

\subsection*{$G$-structures and manifolds with special holonomy}
\label{ss:G_on_manifold}
Let $M$ be a smooth $n$-dimensional manifold. Let $GL(M)$ denote the principal
$\glr{n}$-bundle of linear frames on $M$.

\begin{definition}
Let $G$ be a subgroup of $\glr n$. A $G$-structure on $M$ is a $G$-subbundle of
$GL(M)$. Equivalently, it is a smooth section of the quotient bundle $GL(M)/G$.
\end{definition}
\noindent
The $G$-structures of interest to us can equivalently be defined in terms of a
choice of special algebraic structure on $M$.

\subsubsection*{\gtstr s and manifolds with holonomy \gtwo}

\begin{definition}\label{def:G2_structure}
For an oriented manifold $M$ of dimension 7,
let $\Lambda^3_+T^*M \subset \Lambda^3T^*M$ be the smooth subbundle of positive
3-forms, in the sense of Definition \ref{def:positive_3forms}.
A \emph{\gtstr} on $M$ (compatible with its orientation) is a smooth
section of $\Lambda^3_+T^*M$, \ie a smooth 3-form $\varphi$ such that for
each $x \in M$ there is an oriented isomorphism
$(T_xM, \varphi) \cong (\bbr^7, \varphi_0)$.
\end{definition}

It follows from Proposition \ref{p:g2group}\ref{it:su3} that
$\Lambda^3_+T^*M$ is an open subset of $\Lambda^3T^*M$; in particular, any
small perturbation of a \gtstr{} $\varphi$ is again a \gtstr.

\begin{remark}
\label{rmk:g2exist}
The existence of \gtstr s on a manifold is a topological question. \gtwo is
simply connected by Proposition \ref{p:g2group}, so $\gtwo \into \sorth7$ lifts
to $\gtwo \into \spin7$, and any \gtstr{} induces a spin structure.
In fact, the converse also holds: a 7-manifold $M$ admits \gtstr s if and
only if it is orientable and spin (\cf Gray \cite[Theorem 3.2]{gray69}).

The reason is that \gtwo is the stabiliser of a non-zero element
in the (unique, real, rank 8) spin representation of $\spin7$. Therefore
on an oriented Riemannian 7-manifold with a spin structure, \gtstr s
compatible with the metric, orientation and spin structure (in other words,
\gtwo-subbundles of the given $\spin7$-bundle) correspond to
non-vanishing sections of the spinor bundle. Non-vanishing sections always
exist because the rank of the spinor bundle is greater than the dimension of
the base.
\end{remark}

A \gtstr{} $\varphi$ induces a Riemannian metric $g_{\varphi}$ on $M$,
and hence also a Levi-Civita connection $\nabla_{\varphi}$ and a Hodge star
$*_{\varphi}$.
We may drop the subscripts if the \gtstr{} is clear from the context.
The canonical 4-form $\psi = *\varphi$ is also important.

\begin{definition}
\label{torsionfreedef}
A \gtstr{} defined by a positive 3-form $\varphi$ is \emph{torsion-free}
if $\nabla_{\varphi} \varphi = 0$. 
\end{definition}

\begin{remark*}
There is a notion of the \emph{intrinsic torsion} of a $G$-structure on $M$ for
a general Lie subgroup $G \subseteq \glrn$ (see \eg Joyce
\cite[\S $2.6$]{joyce:holonomybook}).
A \gtstr{} has zero intrinsic torsion in this sense if and only if it
is torsion-free according to Definition \ref{torsionfreedef}.
\end{remark*}

It follows immediately from the definition of holonomy that
if $(M^{7},g)$ is a Riemannian manifold, then $\Hol(g)$ is a subgroup
of \gtwo if and only if there is a torsion-free \gtstr{}
$\varphi$ on $M$ such that $g = g_{\varphi}$.

\begin{definition}
A \emph{\gtmfd} is a manifold $M^{7}$ equipped with a torsion-free
\gtstr{} $\varphi$ and the associated Riemannian metric $g_{\varphi}$.
We say that $(M,\varphi)$ is a manifold \textit{with holonomy $\gtwo$} or
\textit{has holonomy $\gtwo$} if $\Hol(g_\varphi)=\gtwo$.
\end{definition}

Holonomy \gtwo is a much stronger condition on $M$ than the existence of a
\gtstr, involving the metric. For example, any such metric is
Ricci-flat (Salamon \cite[{Proposition~11.8}]{salamon:holonomybook}).
On the basis of Berger's classification of holonomy groups one can prove the
following, see Joyce \cite[p. 245]{joyce:holonomybook}.

\begin{prop}\label{prop:G2_topology}
A compact \gtmfd{} has holonomy \gtwo if and only if $\pi_1(M)$ is finite.
\end{prop}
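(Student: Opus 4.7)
The plan is to deploy three classical tools in tandem: the fact (recalled in the paper just above the statement) that any $\gtwo$-metric is Ricci-flat, the Cheeger--Gromoll splitting theorem for compact Ricci-flat manifolds, and Berger's classification of irreducible Riemannian holonomies. Throughout, since $\gtwo$ is connected by Proposition \ref{p:g2group}, for the universal covering $\tilde M \to M$ one has $\Hol(\tilde g) = \Hol^0(g)$, the identity component of $\Hol(g)$; this identification is the bridge between the upstairs (de Rham) argument and the downstairs statement.

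First I would handle the forward implication. Suppose $\Hol(g) = \gtwo$. Since $g$ is Ricci-flat, Cheeger--Gromoll applied to $\tilde M$ gives a Riemannian splitting $\tilde M = \R^k \times N$ with $N$ compact and simply-connected. The holonomy $\Hol(\tilde g) = \gtwo$ fixes the flat $\R^k$-summand of $T\tilde M$ pointwise, so it has a $k$-dimensional fixed subspace in $\R^7$. But by Proposition \ref{p:g2group}, $\gtwo$ acts transitively on $\Sph^6$ and hence fixes no non-zero vector; so $k = 0$, $\tilde M = N$ is compact, and $\pi_1(M)$ is finite.

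For the converse, suppose $\pi_1(M)$ is finite. Then $\tilde M$ is compact, simply-connected, and inherits a torsion-free $\gtwo$-structure from $\varphi$; in particular $\Hol(\tilde g) \subseteq \gtwo$ and $\tilde g$ is Ricci-flat. The de Rham decomposition theorem splits $\tilde M$ isometrically as a product $M_1 \times \cdots \times M_r$ in which each factor is compact, simply-connected, Ricci-flat, and has irreducible holonomy; trivial-holonomy factors are compact simply-connected flat manifolds, hence points, and can be discarded. Berger's list of irreducible Ricci-flat holonomies, namely $\ssu(m)$ in real dimension $2m \ge 4$, $\Sp{m}$ in real dimension $4m \ge 4$, $\gtwo$ in dimension $7$, and $\sspin(7)$ in dimension $8$, contains a single entry of odd dimension. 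Since $\dim \tilde M = 7$ is odd and $\sum_i \dim M_i = 7$, the only admissible configuration is $r = 1$ with $\Hol(\tilde g) = \gtwo$. Therefore $\Hol^0(g) = \gtwo$, which combined with $\Hol(g) \subseteq \gtwo$ yields $\Hol(g) = \gtwo$.

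The essential geometric content is a parity argument: in dimension $7$, Berger's restricted list of Ricci-flat irreducible holonomies forbids any non-trivial de Rham splitting of $\tilde M$, forcing $\tilde M$ itself to be irreducible and thus to have holonomy exactly $\gtwo$. This is where the $\gtwo$ situation differs markedly from, for example, $\sspin(7)$-holonomy in dimension $8$, where a priori splittings into lower-dimensional Calabi--Yau or hyper-Kähler pieces would have to be excluded by additional arguments. The only real bookkeeping difficulty is the $\Hol$ vs.~$\Hol^0$ distinction under passage to the universal cover, which is cleanly resolved by the connectedness of $\gtwo$ recorded in Proposition \ref{p:g2group}.
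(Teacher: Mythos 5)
Your overall strategy --- Cheeger--Gromoll for the forward direction, de~Rham decomposition plus Berger for the converse --- is the standard one; the paper does not in fact prove this proposition but cites the relevant page of Joyce's book, where essentially this reasoning appears. The forward implication is complete as written.

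The converse has a genuine, if small, gap. The list you call ``Berger's list of irreducible Ricci-flat holonomies'' ($\ssu(m)$, $\Sp{m}$, $\gtwo$, $\spin7$) is the list of special holonomies that \emph{force} Ricci-flatness; it is not a complete list of the irreducible holonomy groups a Ricci-flat manifold can a priori have. Berger's theorem allows an irreducible Ricci-flat de~Rham factor to have full holonomy $\sorth{n}$ --- no compact examples are known, but none are excluded by general theory --- so your parity count silently drops this case, in particular $\sorth7$ in dimension $7$ itself. The repair is short and makes the parity observation unnecessary: Ricci-flat implies flat in dimensions $2$ and $3$, and a compact simply-connected flat manifold is a point, so every non-trivial de~Rham factor of $\tilde M$ has dimension at least $4$; since the dimensions sum to $7$, there is exactly one factor, of dimension $7$, and by Berger (together with the observation that a Ricci-flat locally symmetric space is flat) its holonomy is $\sorth7$ or $\gtwo$. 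But $\dim\sorth7 = 21 > 14 = \dim\gtwo$, so $\sorth7 \not\subset \gtwo$ and only $\gtwo$ remains. This phrasing also sharpens your closing remark about $\spin7$: in dimension $8$ the dimension count does not isolate a single factor, and moreover $\sunitary4$ and $\Sp2$ sit inside $\spin7$ as irreducible proper subgroups from Berger's list, neither of which has a $\gtwo$-analogue; this is why the corresponding statement for $\spin7$-holonomy requires an additional $\hat{A}$-genus hypothesis.
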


Using Hodge theory and the decomposition of the exterior algebra of any
$\gtwo$-manifold into irreducible $\gtwo$-representations one can prove the
following additional restrictions on the topology of any compact
$\gtwo$-manifold $(M,\varphi,g)$ manifold with $\Hol(g)=\gtwo$.

\begin{prop}[\mbox{\cite[p. 246]{joyce:holonomybook}}]
\label{prop:p1}
Let $(M,\varphi,g)$ be a compact $\gtwo$-manifold with $\Hol(g)=\gtwo$, and $p_{1}(M) \in H^{4}(M;\Z)$ 
the first Pontrjagin class. Then 
\begin{enumerate}
\item
$(\alpha \cup \alpha \cup [\varphi])[M] <0$ for every nonzero $\alpha \in H^{2}(M;\R)$.
\item
$(p_{1}(M) \cup [\varphi])[M] < 0$. In particular $p_{1}(M) \neq 0$.
\end{enumerate}
\end{prop}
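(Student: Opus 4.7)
The plan is to combine Hodge theory with the $\gtwo$-representation-theoretic decomposition of $\Lambda^\bullet T^*M$. Recall that on a parallel \gtmfd{} the bundle of 2-forms splits orthogonally as $\Lambda^2 T^*M = \Lambda^2_7 \oplus \Lambda^2_{14}$, characterised pointwise by $\alpha \wedge \varphi = 2{*\alpha}$ for $\alpha \in \Lambda^2_7$ and $\alpha \wedge \varphi = -{*\alpha}$ for $\alpha \in \Lambda^2_{14}$. Because $\varphi$ is parallel, the Hodge Laplacian preserves this splitting, and the space of harmonic 2-forms decomposes as $\mathcal{H}^2 = \mathcal{H}^2_7 \oplus \mathcal{H}^2_{14}$.

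For part (i), replace $\alpha$ by its unique harmonic representative. The natural $\gtwo$-equivariant pointwise isomorphism $\Lambda^2_7 \cong T^*M$ is implemented by a parallel bundle map, so it commutes with the Laplacian and induces $\mathcal{H}^2_7 \cong \mathcal{H}^1$. Since $(M,g)$ is Ricci-flat, the Bochner formula implies that any harmonic 1-form is parallel; and since $\Hol(g)=\gtwo$ acts on $\R^7$ with no nonzero fixed vector (by Proposition \ref{p:g2group}), no nonzero parallel 1-form can exist. Hence $\mathcal{H}^2_7 = 0$ and $\alpha \in \mathcal{H}^2_{14}$, so pointwise
\begin{equation*}
\alpha \wedge \alpha \wedge \varphi = \alpha \wedge (\alpha \wedge \varphi) = -\alpha \wedge {*\alpha} = -|\alpha|^2 \vol_g.
\end{equation*}
As $\alpha \not\equiv 0$, integration gives $(\alpha \cup \alpha \cup [\varphi])[M] = -\|\alpha\|^2_{L^2} < 0$.

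For part (ii), represent $p_1(M)$ via Chern--Weil as $-\tfrac{1}{8\pi^2}[\tr(R \wedge R)]$, where $R$ is the Riemann curvature 2-form of the Levi-Civita connection. Parallelism of $\varphi$ forces $R$ to take values in $\mathfrak{g}_2 \subset \mathfrak{so}(TM)$; the metric identification $\mathfrak{so}(7) \cong \Lambda^2$ then places $R$ in $\Lambda^2 T^*M \otimes \Lambda^2_{14} T^*M$. The Riemann symmetry $R_{ijkl} = R_{klij}$ makes the operator $R:\Lambda^2 \to \Lambda^2_{14}$ self-adjoint, and self-adjointness together with the orthogonality of $\Lambda^2_7$ and $\Lambda^2_{14}$ forces $R$ to annihilate $\Lambda^2_7$, so in fact $R \in \Lambda^2_{14} T^*M \otimes \Lambda^2_{14} T^*M$. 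Writing $R = \sum_a R^a \otimes e_a$ in an orthonormal basis $\{e_a\}$ of $\mathfrak{g}_2$ (so $R^a \in \Lambda^2_{14}$), and using $\tr(e_a e_b) = -2\delta_{ab}$ together with the $\Lambda^2_{14}$ identity yields
\begin{equation*}
\tr(R \wedge R) \wedge \varphi = -2\sum_a R^a \wedge R^a \wedge \varphi = 2\sum_a R^a \wedge {*R^a} = 2\,|R|^2 \vol_g.
\end{equation*}
Thus $(p_1(M) \cup [\varphi])[M] = -\tfrac{1}{4\pi^2}\int_M |R|^2 \vol_g$; since $\Hol(g)=\gtwo$ is nontrivial the metric is not flat, so $R \not\equiv 0$ and the integral is strictly negative.

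The main technical subtlety is in part (ii): one must combine parallelism of $\varphi$ with the symmetry of $R$ to confine the curvature to $\Lambda^2_{14} \otimes \Lambda^2_{14}$, and then keep careful track of normalisation conventions in the Chern--Weil and trace-form computations. Once this is done, both parts follow from the sign calculations above together with the Bochner vanishing of harmonic 1-forms on a compact Ricci-flat manifold of full $\gtwo$ holonomy.
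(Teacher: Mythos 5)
Your proof is correct, and it follows the same line of argument as Joyce's (the paper simply cites \cite[p.~246]{joyce:holonomybook} and does not reprove the statement). For part (i) you reduce to $\mathcal{H}^2_7 = 0$ via the parallel isomorphism $\Lambda^2_7 \cong T^*M$, the Bochner argument on a Ricci-flat manifold, and the absence of a $\gtwo$-invariant vector in $\R^7$, then use the pointwise identity $\alpha\wedge\varphi=-{*\alpha}$ on $\Lambda^2_{14}$; for part (ii) you confine $R$ to $S^2(\Lambda^2_{14})$ using the Ambrose--Singer constraint together with the pair symmetry $R_{ijkl}=R_{klij}$, and apply the same $\Lambda^2_{14}$ identity to the Chern--Weil representative of $p_1$. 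Both steps and the sign bookkeeping are exactly what Joyce does.
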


By considering how $d\varphi$ and $d\psi$ are obtained
algebraically from $\nabla_\varphi \varphi$ one can deduce the following
characterisation of torsion-free \gtstr s.

\begin{theorem}[{\cite[Lemma $11.5$]{salamon:holonomybook}}]
\label{thm:gray}
A smooth positive 3-form $\varphi$ is torsion-free if and only if
$d\varphi = 0$ and $d^{*}_{\varphi}\varphi = 0$ (or equivalently $d\psi = 0$).
\end{theorem}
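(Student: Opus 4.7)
The plan is to use the decomposition of exterior forms under the $\gtwo$-action to show that the intrinsic torsion, equivalently $\nabla \varphi$, is determined algebraically by the pair $(d\varphi, d\psi)$. One direction is immediate: by Remark \ref{rem:psi}, $\psi = *_\varphi \varphi$ is recovered from $\varphi$ by a pointwise algebraic procedure that commutes with parallel transport, so $\nabla \varphi = 0$ forces $\nabla \psi = 0$, hence $d\varphi = 0$ and $d\psi = 0$. The work lies in the converse.

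For the converse I would first localise $\nabla \varphi$. Since $\nabla$ is the Levi-Civita connection of $g_\varphi$, one has $\nabla g_\varphi = 0$; moreover, the set of positive $3$-forms inducing a given metric is an $\sorth 7 / \gtwo$-orbit whose tangent space is the seven-dimensional irreducible $\gtwo$-summand $\Lambda^3_7 T^*M \subset \Lambda^3 T^*M$. Consequently $\nabla \varphi$ takes values in $T^*M \otimes \Lambda^3_7 T^*M$, and via the $\gtwo$-equivariant identification $\Lambda^3_7 T^*M \cong T^*M$ it defines the \emph{intrinsic torsion}, a section of $T^*M \otimes T^*M$. Under $\gtwo$ this splits as
\[
T^*M \otimes T^*M \cong \R \oplus T^*M \oplus S^2_0 T^*M \oplus \Lambda^2_{14} ,
\]
yielding four torsion components $(\tau_0, \tau_1, \tau_3, \tau_2)$ that together determine $\nabla \varphi$.

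The key step is to match these with the corresponding $\gtwo$-decompositions
\[
\Lambda^4 T^*M \cong \R \oplus T^*M \oplus S^2_0 T^*M , \qquad \Lambda^5 T^*M \cong T^*M \oplus \Lambda^2_{14} .
\]
The first-order operator $\varphi \mapsto (d\varphi, d\psi)$ is $\gtwo$-equivariant, so by Schur's lemma its restriction to each irreducible torsion summand is determined by finitely many scalars (one for each matching irreducible summand in the target; the irreducible type $T^*M$ appears twice in the target, once in $\Lambda^4$ and once in $\Lambda^5$). An explicit computation at the model point $(\R^7, \varphi_0, \psi_0)$ using $\varphi_0$ and $\psi_0$ as in \eqref{eq:3form}--\eqref{eq:4form} shows that each torsion component has nontrivial image in $\Lambda^4 T^*M \oplus \Lambda^5 T^*M$, so $(d\varphi, d\psi)$ recovers $\nabla \varphi$ as an explicit nonzero linear combination of the $\tau_i$.

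The theorem then follows: $d\varphi = 0 = d\psi$ forces all four torsion components to vanish, so $\nabla \varphi = 0$. The main obstacle is the algebraic bookkeeping of the irreducible $\gtwo$-decompositions together with verifying at the model point that no matching constant is zero, i.e., that no torsion component is invisible to $(d\varphi, d\psi)$. These verifications are classical and are carried out in detail in Salamon, Chapter 11.
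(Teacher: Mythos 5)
The paper does not prove this result but simply cites Salamon, so there is no in-text argument to compare against; your reconstruction is the standard Fernández--Gray / Salamon representation-theoretic proof. The structure is correct: $\nabla\varphi$ lies in $T^*M\otimes\Lambda^3_7$ because $\nabla g_\varphi=0$, this splits under $\gtwo$ into $\R\oplus T^*M\oplus S^2_0T^*M\oplus\Lambda^2_{14}$, and matching these against the $\gtwo$-decompositions of $\Lambda^4$ and $\Lambda^5$ together with the model-point check that no coupling constant vanishes (which you rightly defer to the cited source) gives the converse.
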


\begin{remark*} 
Given a Riemannian manifold whose holonomy is contained in the group \gtwo,
there may be several compatible torsion-free \gtstr s. But if the holonomy
is exactly \gtwo, then the torsion-free \gtstr{} is unique.

From the discussion of Remark \ref{rmk:g2exist}, a Riemannian manifold has
holonomy contained in \gtwo if and only if it admits a parallel spinor for
some spin structure.
Wang \cite{wang89} gives an explicit way to construct
a parallel positive $3$-form from a parallel spinor.
\end{remark*}

\begin{remark}
\label{rmk:joyce}
We call a $\gtwo$-structure defined by a closed positive $3$-form $\varphi$ a
\emph{closed $\gtwo$-structure}.
Joyce \cite[Thm. 11.6.1]{joyce:holonomybook} gave sufficient conditions under
which a closed $\gtwo$-structure with small enough torsion can be perturbed to
a torsion-free $\gtwo$-structure within its cohomology class.
\end{remark}

\subsubsection*{\sunitaryn-structures and Calabi--Yau manifolds}
Let $M$ be a real $2n$-dimensional manifold with an \sunitaryn-structure.
Then $M$ is equipped with an almost complex structure $I$, a real non-degenerate
2-form $\omega$ equivalent to a hermitian metric $g$, and an $(n,0)$-form
$\Omega$ of constant norm $2^n$.

If $d\Omega = 0$ then the complex structure is integrable, and $\Omega$ is
holomorphic. In particular, the canonical bundle of $M$ is trivial,
so $c_1(M) = 0 \in H^2(M;\bbz)$.
If also $d\omega=0$, then $M$ is a Kähler manifold. In particular
$\nabla\omega = 0$, so $\Hol(g) \subseteq \unitaryn$.
The fact that $\Omega$ is holomorphic of constant norm forces that also
$\nabla\Omega = 0$, so actually the holonomy must reduce further to
$\Hol(g) \subseteq \sunitaryn$.

\begin{definition}
We call an \sunitaryn-structure \emph{torsion-free} or a \emph{Calabi--Yau
structure} if $\nabla\Omega = \nabla\omega = 0$ with respect to the induced
metric. We call $M^{2n}$ equipped with a torsion-free \sunitaryn-structure
$(\Omega, \omega)$ and its associated metric a \emph{Calabi--Yau manifold}.
We say that $(M^{2n},\Omega,\omega)$ is a manifold \textit{with holonomy
\sunitaryn} or \textit{has holonomy \sunitaryn} if its holonomy is
exactly \sunitaryn.
\end{definition}

\begin{remark}
Yau's proof \cite{yau:CY} of the Calabi conjecture shows that any compact
Kähler manifold $M$ with $c_1(M) = 0 \in H^2(M; \bbr)$ admits Ricci-flat Kähler
metrics. Ricci-flat Kähler manifolds are also often referred to as Calabi--Yau
manifolds, which is not quite equivalent to our definition: the vanishing of
the Ricci curvature implies that the canonical bundle is flat so that the
restricted holonomy (\ie the group generated by parallel transport around
contractible closed curves in $M$, or equivalently the identity component of
$\Hol(g)$) is contained in \sunitaryn, but if $M$ is not simply connected then
there need not be any global holomorphic section.
\end{remark}

Now let $M^6$ be a manifold with an \sunitary3-structure $(g,I,\omega,\Omega)$.
Then the product manifold $\Sph^1 \times M$ has a natural product \gtstr.
The pointwise model \eqref{eq:cylg2form} shows that in terms of the forms
the \gtstr{} is given by
\begin{equation}
\label{eq:3form_vs_CY}
\varphi = d\anglex \wedge \omega + \Real \Omega,
\end{equation}
where $\theta$ is the natural variable on $\Sph^1$. The induced metric is the
product metric, and for any $v\in TM$, $\contra{\theta} \times v=Iv$.

\begin{lemma}
\label{L:G2_vs_CY}
If $(M^6,g,I,\omega,\Omega)$ is a Calabi-Yau 3-fold then the product manifold
$\Sph^1\times M$ with the above \gtstr{} is a \gtmfd.
\end{lemma}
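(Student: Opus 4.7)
The plan is to apply Theorem \ref{thm:gray}: it suffices to verify that the positive $3$-form $\varphi = d\theta \wedge \omega + \Real\Omega$ on $\Sph^1 \times M$ is closed and that its Hodge dual $\psi$ is closed.

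First I would explain why $\varphi$ is a genuine \gtstr. The pointwise model \eqref{eq:cylg2form} shows that at each point $(\theta,x)$ the triple consisting of $d\theta_{|\theta}$ together with the \sunitary{3}-structure $(\omega_x,\Omega_x)$ on $T_xM$ produces a tangent vector identification under which $\varphi$ is carried to the standard form $\varphi_0$, so $\varphi \in \Lambda^3_+T^*(\Sph^1 \times M)$ in the sense of Definition \ref{def:G2_structure}. Moreover the metric induced by $\varphi$ on the product, via \eqref{eq:g2metric}, is precisely the product of the circle metric on $\Sph^1$ and the Calabi--Yau metric $g$ on $M$, again by the pointwise analysis.

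Next I would show $d\varphi = 0$ and $d\psi=0$. Because $\omega$ and $\Omega$ are pulled back from $M$ and $d\theta$ is closed on $\Sph^1$,
\[
d\varphi = -\,d\theta \wedge d\omega + \Real(d\Omega) = 0,
\]
since the Calabi--Yau conditions give $d\omega = 0$ and $d\Omega = 0$. The Hodge dual of $\varphi$ for the product metric is given by \eqref{eq:4form:six:dims}, namely
\[
\psi = \tfrac{1}{2}\,\omega^2 - d\theta \wedge \Imag\Omega,
\]
and the same reasoning yields
\[
d\psi = \omega \wedge d\omega + d\theta \wedge d(\Imag\Omega) = 0.
\]
Theorem \ref{thm:gray} then gives that $\varphi$ is torsion-free, so $(\Sph^1\times M, \varphi)$ is a \gtmfd.

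No step is really difficult here; the only subtle point is being confident that the formula \eqref{eq:4form:six:dims} correctly computes $*\varphi$ with respect to $g_\varphi$, which is precisely the content of the pointwise analysis of \sunitary{3}-structures on a hyperplane given earlier in the preliminaries. An alternative, essentially equivalent, plan would be to note that $\nabla^g\omega = \nabla^g\Omega = 0$ on $M$ and $d\theta$ is parallel on $\Sph^1$, so $\varphi$ is parallel for the product Levi-Civita connection, which is the Levi-Civita connection of $g_\varphi$; parallel $\gtwo$-forms are automatically torsion-free.
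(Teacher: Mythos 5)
Your proof is correct. The paper in fact states Lemma \ref{L:G2_vs_CY} without proof, treating it as standard; your argument via Theorem \ref{thm:gray} (closedness of $\varphi$ and $\psi = *\varphi$ plus the formulas \eqref{eq:3form_vs_CY} and \eqref{eq:4form:six:dims}) is the natural one and is verified correctly, and your alternative argument — that $\varphi$ is parallel for the product Levi--Civita connection because $\omega$, $\Omega$ and $d\theta$ are each parallel, which directly gives torsion-freeness in the sense of Definition \ref{torsionfreedef} — is if anything cleaner, since it avoids invoking the Hodge-dual identity altogether.
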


\noindent
Observe that $\Sph^1\times M$ is not a manifold with holonomy \gtwo: its
holonomy equals $\Hol(M) \subseteq \sunitary3 \subset \gtwo$.

\subsubsection*{Hyper-Kähler $K3$ surfaces}
Recall that a \textit{K3 surface} is a smooth compact complex surface $(S,I)$ which is
simply connected and whose canonical bundle is holomorphically trivial,
\textit{i.e.} $\pi_1(S)=0$ and $K_S \simeq \mathcal{O}_S$.
By definition, $S$ has a non-vanishing holomorphic 2-form $\Omega$.
Siu \cite{Siu:K3_kaehler} proved that any K3 surface admits Kähler metrics,
and by Yau's solution to the Calabi conjecture there exists a unique Ricci-flat
Kähler metric $\omega$ in every K\"ahler class and thus Calabi--Yau structures
$(\omega, \Omega)$. \label{page:k3}
The pointwise considerations on p. \pageref{sss:su2} show that a
manifold with holonomy \sunitary2 = \Sp1 has an $\Sph^2$ of integrable complex
structures. A Calabi--Yau structure $(\omega, \Omega)$ compatible with the
metric corresponds to a choice of ordered oriented orthonormal triple $I, J, K$ in
this $\Sph^2$, \ie complex structures satisfying the usual quaternionic relations.
The structure, including the metric, can be recovered from the associated Kähler forms
$\omega^I, \omega^J, \omega^K$ via 
\[
\omega = \omega^I, \qquad 
\Omega = \omega^J +i\,\omega^K.
\] 
We call a K3 surface $S$ with the
structure $(\omega^I, \omega^J, \omega^K)$ a \emph{\hk K3 surface}.

Any two K3 surfaces are related by complex deformation. In particular,
there is up to diffeomorphism a unique K3 surface $S$.
$b_2(S) = 22$, and we will often refer to $H^2(S;\bbz)$ with its intersection
form as the \emph{K3 lattice} $L$. It is the unique even unimodular lattice 
of signature $(3,19)$, \ie
\begin{equation}
\label{eq:k3lattice}
L = 2E_8(-1) \perp 3\,U ,
\end{equation}
where $E_{8}$ denotes the unique even unimodular positive definite lattice of rank $8$ 
and $U$ the standard hyperbolic lattice.
We denote by $\orth{L}$ the group of isometries of the K3 lattice $L$.
A \emph{marking} of a complex K3 surface $(S, I)$ is an isometry $L \cong H^2(S; \ZZ)$.

\section{The twisted connected sum construction of \gtwo-manifolds}
\label{sec:twisted_kovalev}

In this section we describe the main steps of our construction of compact \gtmfd s.
Starting from suitable algebraic varieties we first construct asymptotically
cylindrical Calabi-Yau $3$-folds. Given a suitably compatible pair of such manifolds
we then form a ``twisted connected sum'' $7$-manifold by gluing. The procedure is
essentially the same as used by Kovalev \cite{kovalev:connectsums}, but as we
will describe we change the algebraic starting point to use
\emph{semi-Fano 3-folds} rather than Fano 3-folds. The issue of how to satisfy
the compatibility condition between the ACyl Calabi-Yau manifolds is discussed
in detail in \S \ref{sec:k3}.
Throughout this section all homology and cohomology groups
are over $\ZZ$ unless explicitly stated otherwise.

\subsection{Asymptotically cylindrical Calabi--Yau $3$-folds}
We begin with a review of the definition of asymptotically cylindrical
Calabi-Yau 3-folds and an analytic existence result; the latter reduces the
analytic problem of finding asymptotically cylindrical Calabi-Yau 3-folds to a
problem purely in complex projective geometry.

\begin{definition}
Let $(S^4,I_S,g_S,\omega_S,\Omega_S)$ be a \hk K3 surface. 
We call the complex $3$-fold $V_\cyl:= \bbrp \times \Sph^1 \times S$ endowed with the $\bbrp$-translation invariant 
Calabi-Yau structure 
\begin{equation}
\label{eq:cy3cyl}
\begin{aligned}
I_\cyl &:= I_{\C} + I_S, \\
g_\cyl &:= dt^2 + d\anglen^2 + g_S, \\ 
\omega_\cyl &:= dt \wedge d\anglen + \omega_S, \\
\Omega_\cyl &:= (d\anglen - i dt) \wedge \Omega_S ,
\end{aligned}
\end{equation}
(where $t$ and $\anglen$ denote the standard variables on $\R^+$ and $\Sph^1$)
a \emph{Calabi-Yau cylinder}.
The phase in the expression for $\Omega_\cyl$ is unimportant but has been
chosen to put \eqref{eq:hkcyl} in a convenient form. 
\end{definition}

\begin{definition} \label{def:ACCY3}
Let $(V,g,I,\omega,\Omega)$ be a complete Calabi-Yau $3$-fold. We say that $V$
is an \textit{asymptotically cylindrical} (or \emph{ACyl} for short)
Calabi-Yau $3$-fold if there exist (i) a compact set $K\subset V$, (ii) a
Calabi--Yau cylinder $V_\cyl$ and (iii) a diffeomorphism
$\eta: V_\cyl \rightarrow V {\setminus} K$ such that for
all $k\geq 0$, for some $\lambda>0$ and as $t\rightarrow \cyl$,
$$\eta^*\omega-\omega_\cyl=d\varrho,\ \mbox{ for some $\varrho$ such that }
|\nabla^k\varrho|=O(e^{-\lambda t})$$
$$\eta^*\Omega-\Omega_\cyl=d\varsigma,\mbox{ for some $\varsigma$ such that }
|\nabla^k\varsigma|=O(e^{-\lambda t})$$
where $\nabla$ and $|\cdot|$ are defined using the metric $g_\cyl$ on
$V_\cyl$. We will refer to $V_\cyl= \R^+ \times \Sph^1 \times S$ as the
\textit{asymptotic end} of~$V$ and to the \hk K3 surface
$(S,I_S,g_S,\omega_S,\Omega_S)$ as the \emph{asymptotic K3 surface} of $V$.
\end{definition}

\begin{remark*}
Our definition asks that $\eta^*\omega$ be cohomologous to $\omega_\cyl$ on
the asymptotic end of $V$. However, as long as
$|\eta^*\omega-\omega_\cyl| \rightarrow 0$, this is automatic. The main point
of the definition is thus to impose the existence of specific $\varrho$ and
$\varsigma$ with the stated rate of decay.

Since the complex structures on both $\R^+\times\Sph^1\times S$ and $V$ are
determined by the corresponding complex volume forms, similar estimates
automatically hold for $|\nabla^k(\eta^*I-I_\cyl)|$. The same is true for the
metrics. 
\end{remark*}

\begin{remark*}
We could consider a more general definition of an ACyl Calabi-Yau $3$-fold
in which the cross-section of the asymptotic cylinder is not a priori assumed to split as a product $\Sph^1 \times S$. 
Such ACyl Calabi-Yau $3$-folds do exist, but we are not yet able to use them to 
construct compact \gtwo-manifolds. See \cite{hhn} for further discussion of this and other related issues.
\end{remark*}

Our examples of ACyl Calabi-Yau 3-folds arise by application of the
following ACyl version of the Calabi-Yau theorem sharpening 
an earlier result of Tiau-Yau \cite[Thm 5.2]{tian:yau}.
The statement is taken from
\cite[Theorem 2.6]{chnp1}. For details of the proof see \cite{hhn}.

\begin{theorem}
\label{thm:acyl_limits}
Let $Z$ be a closed Kähler 3-fold with a morphism $f : Z \to \bbp^1$, with
a reduced smooth K3 fibre $S$ that is an anticanonical divisor, and
let $V = Z \setminus S$. If $\Omega_S$ is a non-vanishing holomorphic
2-form on $S$, $\omega_S$ a Ricci-flat Kähler metric
satisfying the normalisation condition \eqref{eq:sun:vol}, and
$[\omega_S] \in H^{1,1}(S)$ is the restriction of a Kähler class on $Z$,
then there is an ACyl Calabi-Yau structure $(\omega, \Omega)$ on $V$ whose
asymptotic limit on $\bbrp \times \Sph^1 \times S$ is the product structure
\eqref{eq:cy3cyl}.
\end{theorem}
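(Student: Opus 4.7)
The plan is to produce the Calabi--Yau structure on $V$ in three stages: construct a holomorphic volume form $\Omega$ on $V$ with the correct cylindrical asymptotics; construct an approximate Kähler form $\tilde\omega$ asymptotic to $\omega_\cyl$; then solve a complex Monge--Ampère equation to perturb $\tilde\omega$ to a genuine Ricci-flat Kähler form, and verify exponential decay of the correction.

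First, I would build $\Omega$. Since $S$ is anticanonical, there is a meromorphic section of $K_Z$ with a simple pole along $S$ and no other zeros or poles; equivalently, a holomorphic volume form on $V$ with logarithmic behaviour at $S$. Choosing this section so that its Poincaré residue on $S$ equals $\Omega_S$, and taking a tubular neighbourhood of $S$ with a holomorphic coordinate $w$ transverse to $S$, one sees that $\Omega = \frac{dw}{w}\wedge \Omega_S + (\text{higher order})$. Setting $w = e^{-(t + i\anglen)}$ identifies a punctured neighbourhood of $S$ with a half-cylinder $\R^+\times \Sph^1 \times S$ and yields $\Omega - \Omega_\cyl = O(e^{-\lambda t})$ for some $\lambda>0$, with analogous estimates on derivatives; this also fixes the natural complex structure $I_\cyl$ at infinity.

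Next, the approximate Kähler form. By hypothesis $[\omega_S]$ is the restriction of some Kähler class $[\kappa]$ on $Z$; after adding a small $i\partial\bar\partial$-correction to a representative we may assume $\kappa|_S = \omega_S$. In a tubular neighbourhood of $S$ with cylindrical coordinate as above, build the model $dt\wedge d\anglen + \omega_S$ and glue it to $\kappa$ on $Z\setminus S$ using a cutoff in $t$; thanks to the fact that $\kappa|_S = \omega_S$ and the normal holomorphic coordinate, the gluing error is $O(e^{-\lambda t})$ and produces a globally defined Kähler form $\tilde\omega$ on $V$ (positivity is maintained for the cylindrical neighbourhood chosen deep enough), satisfying $\tilde\omega - \omega_\cyl = d\varrho_0$ with exponential decay.

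Finally, one solves the complex Monge--Ampère equation
\begin{equation*}
(\tilde\omega + i\partial\bar\partial u)^3 = e^{F}\,\tilde\omega^3,
\end{equation*}
where $F$ is determined (up to a constant normalising \eqref{eq:sun:vol}) by $\tilde\omega^3$ and $\Omega\wedge\bar\Omega$, and satisfies $|\nabla^k F| = O(e^{-\lambda t})$ by the previous steps. The existence of a smooth bounded solution $u$ with all derivatives decaying and the fact that $\omega = \tilde\omega + i\partial\bar\partial u$ is Ricci-flat Kähler then follows from the ACyl Calabi--Yau theorem of Tian--Yau \cite{tian:yau}. Setting $\varsigma := d(u\,\text{terms})$ yields the required $d\varrho$, $d\varsigma$ with exponential decay of $\varrho$, $\varsigma$ and all their derivatives.

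The main obstacle is the exponential decay of $u$ (and hence of $\omega-\tilde\omega$) at the rate dictated by the indicial roots of the Laplacian on the cylinder $\R^+\times\Sph^1\times S$: Tian--Yau's original argument only guarantees decay, not the specific exponential rate required for Definition~\ref{def:ACCY3}. The standard route is to work in weighted Hölder or Sobolev spaces on the cylinder, apply the Fredholm theory of translation-invariant elliptic operators to the linearisation $-\half\Delta$, and then bootstrap from linear decay of the inhomogeneity to decay of $u$; this is precisely the analysis carried out in \cite{hhn}, which I would cite for the delicate step rather than reprove here.
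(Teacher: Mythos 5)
Your outline is essentially correct and follows the same approach as the references the paper points to (the statement is quoted from \cite{chnp1} and the proof is given in \cite{hhn}): construct $\Omega$ as a meromorphic volume form with a log pole along $S$ via the Poincaré residue, glue a cylindrical Kähler form to a global one on $Z$ using that $[\omega_S]$ extends, and then solve the complex Monge--Ampère equation in weighted spaces. One small point worth flagging: the existence of the holomorphic normal coordinate $w$ (hence the biholomorphism of a punctured neighbourhood of $S$ with $\Delta^*\times S$ up to exponentially small errors) is not generic for an anticanonical divisor with trivial normal bundle; it comes from pulling back a coordinate on $\bbp^1$ via the fibration $f$, which is exactly why the theorem requires $S$ to be a fibre of $f$ rather than merely an anticanonical K3 divisor. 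You use that implicitly but it deserves to be stated, since it is the hypothesis that distinguishes this statement from a weaker one. Also, the delicate step you defer to \cite{hhn} is a bit more subtle than a straight bootstrap from decay of the inhomogeneity: because the equation is fully nonlinear, the translation-invariant Fredholm theory has to be applied to a linearisation around a metric depending on the unknown $u$, so one first needs an a priori argument that $u$ tends to a constant and that the metric $\tilde\omega + i\partial\bar\partial u$ converges to the cylindrical model before the weighted elliptic estimates can be iterated to give the exponential rate.
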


\begin{remark*}
Arguments similar to Lemma \ref{lem:zpi1} below show that the hypotheses of Theorem \ref{thm:acyl_limits} 
imply $H_1(Z)$ finite and $H^{2,0}(Z) = 0$, so $Z$ must be projective.
\end{remark*}

In the statement above we use the fact that the fibration structure of $Z$ implies that $V:=Z \setminus S$ has an obvious
topological end $\bbrp \times \Sph^1 \times S$.
We call $(Z,S)$ a \emph{building block} if it satisfies some additional
topological conditions. These assumptions will simplify the calculation of the topological
invariants of $V$ in \S \ref{sec:top}.

\begin{definition}
  \label{dfng:BLOCK}
A \emph{building block} is a nonsingular algebraic \mbox{3-fold} $Z$ together
with a projective morphism $f\colon Z\to \PP^1$ satisfying the following
assumptions: 
\begin{enumerate}[leftmargin=*]
\item the anticanonical class $-K_Z\in H^2(Z)$ is
  primitive.
\item $S=f^\star (\infty)$ is a nonsingular K3 surface and $S\sim -K_Z$. 
\end{enumerate}
Identify $H^2(S)$ with the K3 lattice $L$ \eqref{eq:k3lattice}
(\ie choose a marking for $S$), and let $N$ denote the image of
$H^2(Z) \to H^2(S)$.
\begin{enumerate}[resume]
\item The inclusion $N\hookrightarrow L$ is primitive, that is,
  $L/N$ is torsion-free.
\item The group $H^3(Z)$---and thus also $H^4(Z)$---is torsion-free.
\end{enumerate}
\end{definition}
\begin{lemma}
\label{lem:zpi1}
\label{lem:zh20}
If $Z$ is a building block then
\hfill
\begin{enumerate}
\item $\pi_1(Z) = (0)$. In particular, $H^*(Z)$ and $H_*(Z)$ are torsion-free.
\item $H^{2,0}(Z) = 0$, so $N \subseteq \Pic S$.
\end{enumerate}
\end{lemma}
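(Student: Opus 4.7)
For (i), the key observation is that primitivity of $-K_Z\in H^2(Z;\ZZ)$ (axiom (i)) forbids multiple fibres of $f$: every scheme-theoretic fibre $f^{-1}(p)$ represents the class $[S]=-K_Z$, so if $f^{-1}(p)=mF_0$ with $m\geq 2$ and $F_0$ reduced, then $-K_Z=m[F_0]$ would be $m$-divisible. Stein factorisation together with connectedness of the smooth K3 fibre $S$ at $\infty$ shows that $f$ has connected fibres, and upper semicontinuity gives that the general fibre is a smooth K3, hence simply connected. The plan is then to invoke Kollár's theorem on fundamental groups of proper fibrations with connected fibres (the Nori--Kollár exact sequence): the image of $\pi_1$ of a general fibre inside $\pi_1(Z)$ is normal, with quotient isomorphic to the \emph{orbifold} fundamental group of the base $\PP^1$. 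With no multiple fibres the orbifold structure on $\PP^1$ is trivial, so this quotient is just $\pi_1(\PP^1)=1$, and since the general fibre is simply connected the image subgroup is also trivial; hence $\pi_1(Z)=1$. Torsion-freeness of $H^\ast(Z)$ and $H_\ast(Z)$ then follows by combining $H_1(Z)=\pi_1(Z)^{\mathrm{ab}}=0$ with axiom (iv) and Poincaré duality on the closed oriented $6$-manifold $Z$, via the universal coefficient theorem.

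For (ii), since $Z$ is now known to be compact Kähler and simply connected, $h^{0,1}(Z)=\half b_1(Z)=0$. Consider the structure sheaf sequence of the smooth divisor $S\sim -K_Z$,
\[
0\longrightarrow K_Z\longrightarrow \oo_Z\longrightarrow \oo_S\longrightarrow 0,
\]
noting that $\oo_Z(-S)=K_Z$. In the associated long exact sequence, the restriction of constants $H^0(Z,\oo_Z)=\CC\to H^0(S,\oo_S)=\CC$ is an isomorphism and $H^1(S,\oo_S)=0$ for the K3 surface $S$, so $H^1(Z,K_Z)\cong H^1(Z,\oo_Z)=0$. Serre duality then yields $H^2(Z,\oo_Z)\cong H^1(Z,K_Z)^\vee=0$, so $h^{0,2}(Z)=0$ and by Hodge symmetry $H^{2,0}(Z)=0$. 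Finally, the restriction $H^2(Z;\CC)\to H^2(S;\CC)$ is a morphism of pure Hodge structures, so the vanishing of $H^{2,0}(Z)=H^{0,2}(Z)$ forces every integral class in $N$ to land in $H^{1,1}(S)\cap H^2(S;\ZZ)=\Pic S$ by the Lefschetz $(1,1)$ theorem.

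The main obstacle is the proof of simple connectedness in (i): one has to appeal to a nontrivial theorem about fundamental groups of algebraic fibrations, and genuinely use the primitivity axiom on $-K_Z$ to rule out multiple fibres. Part (ii) then reduces to standard sheaf cohomology and Hodge theory once simple connectedness is granted.
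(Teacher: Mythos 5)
Your proof of part (ii) is essentially identical to the paper's: the paper also applies Serre duality and the long exact sequence of $0 \to K_Z \to \oo_Z \to \oo_S \to 0$, using $H^1(\oo_Z) = 0$ (which follows from $\pi_1(Z)=0$), to conclude $H^1(K_Z)=0$, hence $H^{2,0}(Z)=0$.

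For part (i), the paper does not give a proof but defers to a lemma in the companion paper \cite{chnp1}. You supply a self-contained argument. The chain of ideas is sound: primitivity of $-K_Z$ rules out multiple fibres (every scheme-theoretic fibre is linearly equivalent to $S$, so a fibre of the form $mF_0$ would make $-K_Z$ divisible by $m$); Stein factorisation plus the connected smooth K3 fibre over $\infty$ forces connected fibres (the intermediate finite cover of $\PP^1$ has degree one over $\infty$, hence degree one everywhere); smoothness is an open condition on the base and Ehresmann's theorem identifies all smooth fibres with the K3 surface $S$; the Nori--Kollár exact sequence for a proper fibration with connected fibres over a curve then gives $\pi_1(Z) \cong \pi_1^{\mathrm{orb}}(\PP^1)$ once the fibre's fundamental group dies, and with no multiple fibres the orbifold structure is trivial. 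This is a legitimate and fairly standard route to simple connectedness of such $3$-folds. Two minor quibbles: ``upper semicontinuity'' is not quite the right phrase — you mean that smoothness of $f$ is open, or that the fibres over an open set are diffeomorphic by Ehresmann — and one should cite the exact-sequence theorem with care (Nori's theorem in the form needed applies to proper surjective morphisms with connected fibres over a smooth curve). Neither affects the correctness of the argument. The conclusion about torsion-freeness of $H^*(Z)$ is correctly assembled from $H_1(Z)=0$, axiom (iv) of the definition of a building block, Poincar\'e duality, and universal coefficients.
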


\begin{proof}
(i) is \cite[Lemma 5.2]{chnp1}. For (ii), Serre duality
implies $H^{2,0}(Z) \cong H^1(K_Z)^*$, which vanishes by the long exact
sequence of
$0 \to K_Z \to \oo_Z \to \oo_S \to 0$ together with the fact that
$H^1(\oo_Z) \cong H^{1,0}(Z) = 0$.
\end{proof}

\begin{remark*}
$N \subset L$ inherits the structure of a lattice from the K3 lattice $L$. 
Because of \ref{lem:zh20}(ii) we call $N$ the \emph{polarising lattice} of the building block $Z$. 
The lattice $N$ plays a key role in this paper as we explain shortly. 
\end{remark*}

Further topological properties of building blocks are recalled in Section \ref{sec:top}.
For now it suffices to remark that $V=Z\setminus S$ is always simply-connected, 
so that any ACyl Calabi-Yau metric on $V$ has holonomy exactly $\sunitary3$. 
Most of the building blocks we use in this paper arise from \emph{semi-Fano}
3-folds, as we discuss below in Proposition \ref{prop:block_from_sf}. We say
such ACyl Calabi-Yau $3$-folds are of \emph{semi-Fano type}; see Definition
\ref{d:accy:sf} for a precise definition.

Examples of ACyl Calabi-Yau 3-folds have been constructed previously by
similar methods, using building blocks obtained from genuine Fanos by 
Kovalev \cite{kovalev:connectsums} or from K3s with non-symplectic involution 
by Kovalev-Lee \cite{kovalev:lee} (see Remark \ref{rmk:kl-def}).
We will call these ACyl Calabi-Yau $3$-folds of \emph{Fano type} and
\emph{non-symplectic type} respectively. 
While there are $105$ deformation families of smooth Fano $3$-folds and $75$
deformation classes of K3 surfaces with non-symplectic involution, 
deformation families of semi-Fano $3$-folds are much more plentiful and 
therefore so are ACyl Calabi-Yau $3$-folds of semi-Fano type.

\subsection{The gluing procedure}
We can now outline Kovalev's construction of compact \gtwo-manifolds by 
combining a pair of \emph{compatible} asymptotically cylindrical Calabi-Yau $3$-folds.
We call this the \emph{twisted connected sum} construction of compact \gtwo-manifolds 
and refer to the resulting \gtwo-manifolds as twisted connected sums. 
We emphasise at the outset that finding compatible pairs of asymptotically cylindrical Calabi-Yau $3$-folds
is perhaps the most involved part of the whole construction.

Let $V_\pm$ be two asymptotically cylindrical Calabi-Yau 3-folds with
structures $(g_\pm,I_\pm,\omega_\pm,\Omega_\pm)$. Then as in \eqref{eq:cy3cyl}
the asymptotic end of $V_\pm$ is of the form
$V_{\infty,\pm} = \bbrp \times \Sph^1 \times S_\pm$ where $S_\pm$ is the
asymptotic \hk K3 surface of $V_\pm$.
Using maps $\eta_\pm$ as in Definition \ref{def:ACCY3} to identify the ends
$V_{\cyl,\pm}$ with $\R^+\times\Sph^1\times S_\pm$, on each end we can write
\begin{gather*}
\omega_\pm = \omega_{\cyl,\pm} + d\varrho_\pm,\\
\Omega_\pm = \Omega_{\cyl,\pm} + d\varsigma_\pm.
\end{gather*}

Let $\rho=\rho(s):\R\rightarrow [0,1]$ denote a smooth function satisfying
$\rho(s) \equiv 0$ for $s \leq 0$ and $\rho(s) \equiv 1$ for $s\geq 1$.
For fixed $T \gg 0$, consider the same manifolds $V_\pm$ endowed with forms 
$\omega_{T,\pm}$, $\Omega_{T,\pm}$
obtained by the following perturbation on the ends:
\begin{subequations}
\label{eq:omega:T}
\begin{align}
\omega_{T,\pm} & := \omega_\pm-d(\rho(t-T+1)\varrho_\pm), \\
\Omega_{T,\pm} & := \Omega_\pm-d(\rho(t-T+1)\varsigma_\pm) . 
\end{align}
\end{subequations}
Both forms are closed and in the interval $t\in [T-1,T]$ they interpolate
between the ACyl $\sunitary{3}$-structure ($\omega_{\pm},\Omega_{\pm}$) on
$V_\pm$ and the product $\sunitary{3}$-structure
$(\omega_{\cyl,\pm},\Omega_{\cyl,\pm})$ on the ends $V_{\cyl,\pm}$.
The $C^k$ norms of $\omega_{T,\pm} - \omega_\pm$ and
$\Omega_{T,\pm} - \Omega_\pm$ are $O(e^{-\lambda T})$.

Now consider the product (asymptotically cylindrical) $7$-manifolds
$M_\pm=\Sph^1\times V_\pm$. We let $\anglex$ denote
the standard variable on the new $\Sph^1$ factor, reserving the notation
$\anglen$ for the copy of $\Sph^1$ contained in the ends of $V_\pm$. We endow
$\Sph^1\times V_\pm$ with the $3$-forms (\cf \eqref{eq:3form_vs_CY})
\[ \varphi_{T,\pm} := d\anglex\wedge\omega_{T,\pm} + \Real \Omega_{T,\pm} . \]
For $T$ large the forms $\varphi_{T,\pm}$ are small perturbations of the
\gtstr s on $\Sph^1\times V_\pm$ defined by the original Calabi-Yau structures
on $V_\pm$ as in \eqref{eq:3form_vs_CY}, so they are again \gtstr s. 

To form the twisted connected sum of $M_+$ and $M_-$ 
we require a certain compatibility condition of the pair of asymptotic %
K3 surfaces~$S_\pm$ of $V_\pm$.
The asymptotic limit of $V_\pm$ defines a Calabi-Yau structure $(\omega_\pm,
\Omega_\pm)$ on $S_\pm$ and a preferred complex structure
$I_\pm$ on $S_\pm$. However, recall from p.\pageref{page:k3} that $S_\pm$
admits an $\Sph^2$ of complex structures, and that setting
\begin{equation}
\label{eq:hkstr}
\omega_\pm = \omega^I_\pm, \quad \Omega_\pm = \omega^J_\pm + i \,\omega^K_\pm,
\end{equation}
defines a \hk structure $(\omega^I_\pm, \omega^J_\pm, \omega^K_\pm)$.
These are Kähler forms with respect to complex structures $I_\pm$, $J_\pm$ and
$K_\pm$ respectively; the special status of $I_\pm$ is reflected by the
ordering.
The compatibility condition we need for our pair of ACyl Calabi-Yau $3$-folds $V_\pm$ 
is the existence of the following special 
type of map between their asymptotic \hk K3 surfaces.
\begin{definition}
\label{def:hkrot}
  Consider two \hk K3 surfaces $S_\pm$. A map
  $\hkr : S_+ \rightarrow S_-$ is a \textit{\hk rotation} if $\hkr^*g_- = g_+$,
  $\hkr^*I_- = J_+$ and $\hkr^*J_- = I_+$; the \hk relationship $IJ=K$ then
  implies that $\hkr^*K_- = -K_+$.
  Equivalently, $\hkr^*\omega^I_- = \omega^J_+, \,\hkr^*\omega^J_- = \omega^I_+$
  and $\hkr^*\omega^K_- = -\omega^K_+$.
\end{definition}

As soon as we are given a pair of ACyl Calabi-Yau $3$-folds $V_{\pm}$ for which we can establish the existence of 
a \hk rotation $\hkr$   between the asymptotic \hk K3 surfaces $S_{\pm}$ then 
we can glue the two $7$-manifolds $M_\pm = \Sph^1\times V_\pm$
together by their ends, as follows. On the region defined by $t\in (T,T+1)$
consider the diffeomorphism
\begin{equation}
\label{eq:gluemap}
\begin{aligned}
F : \; \Sph^1 \times V_{\cyl,+} \cong
\Sph^1 \times \R^+ \times \Sph^1 \times S_+ \; & \longrightarrow \;
\Sph^1 \times \R^+ \times \Sph^1\times S_- \cong \Sph^1 \times V_{\cyl,-}, \\
(\anglex, \, t, \, \anglen, \, x) \; &
\longmapsto (\anglen, \, T+1-t, \, \anglex, \, \hkr(x)) .
\end{aligned}
\end{equation}
Notice that by \eqref{eq:omega:T} we are working on regions where
$(\Omega_{T,\pm},\omega_{T,\pm})$ are the standard product structures
\eqref{eq:cy3cyl}.
Thus, using \eqref{eq:hkstr}, the \gtstr s on these regions can be written
\begin{equation}
\label{eq:hkcyl}
\begin{aligned}
\varphi_{T,\pm} &= d\anglex\wedge\omega_{\cyl,\pm} + \Real \Omega_{\cyl,\pm} \\
& = d\anglex\wedge dt \wedge d\anglen + d\anglex\wedge\omega^I_\pm +
d\anglen\wedge\omega^J_\pm + dt \wedge \omega^K_\pm .
\end{aligned}
\end{equation}
The compatibility condition for $\hkr$ given in \ref{def:hkrot} implies immediately that
$F^*\varphi_{T,-} = \varphi_{T,+}$. Now truncate each $\Sph^1\times V_\pm$ at
$t = T+1$ to form a pair of compact manifolds $M_\pm(T)$ with boundaries
$\Sph^1 \times \Sph^1 \times S_\pm$. Using $F$ we can glue these manifolds
together at the boundary to form a `twisted connected sum'
$M_\hkr = M_+(T) \cup_F M_-(T)$.
This is a smooth compact $7$-manifold (independent of $T$ up to diffeomorphism
but depending on the choice of the \hk rotation $\hkr$), which admits a closed
\gtwo-structure $\varphi_{T,\hkr}$ defined by setting its restriction
to $M_\pm(T)$ to equal $\varphi_{T,\pm}$. 
With respect to the metric of $\varphi_{T,\hkr}$, $M_\hkr$ contains an
approximately cylindrical neck of length $\sim 2T$.
The torsion of $\varphi_{T,\hkr}$ (which is measured by $d^*\varphi_{T,\hkr}$
according to Theorem \ref{thm:gray}) is $O(e^{-\lambda T})$.
Kovalev \cite[Theorem 5.34]{kovalev:connectsums} uses this to prove that for
$T$ sufficiently large there are nearby torsion-free \gtstr s (one could also
apply more general results of Joyce, see Remark \ref{rmk:joyce}).

\begin{theorem}
\label{thm:g2glue}
Let $(V_\pm,\omega_\pm,\Omega_\pm)$ be two asymptotically cylindrical
Calabi-Yau 3-folds whose asymptotic ends are of the form
$\bbrp \times \Sph^1 \times S_\pm$ for a pair of \hk K3 surfaces $S_\pm$,
and suppose there exists  a \hk rotation $\hkr : S_+ \to S_-$.
Define closed \gtstr s $\varphi_{T,\hkr}$ on the twisted connected sum
$M_\hkr$ as above. For sufficiently large $T$ there is a torsion-free
perturbation of $\varphi_{T,\hkr}$ within its cohomology class.
\end{theorem}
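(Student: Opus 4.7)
The plan is to derive Theorem \ref{thm:g2glue} by verifying that $\varphi_{T,\hkr}$ satisfies the hypotheses of an abstract perturbation result for closed $\gtwo$-structures with small torsion, either the specific ACyl version proved by Kovalev \cite[Thm.~5.34]{kovalev:connectsums} or the one of Joyce mentioned in Remark \ref{rmk:joyce}. There are essentially three things to check in sequence: that $\varphi_{T,\hkr}$ is a well-defined smooth closed $\gtwo$-structure; that its torsion is $O(e^{-\lambda T})$; and that the analytic hypotheses of the perturbation theorem hold uniformly in $T$.

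For the first step, I would use the explicit formula \eqref{eq:hkcyl} on the gluing region $t\in(T-1,T+1)$, where both $\omega_{T,\pm}$ and $\Omega_{T,\pm}$ coincide with the cylindrical model thanks to the cut-off \eqref{eq:omega:T}. The defining conditions of a \hk rotation in Definition \ref{def:hkrot}, together with the fact that $F$ swaps the two circle factors in $\Sph^{1}\times\Sph^{1}$ and reverses the cylinder coordinate, give $F^{*}\varphi_{T,-}=\varphi_{T,+}$ immediately: the $d\anglex\wedge dt\wedge d\anglen$ term matches because $t\mapsto T+1-t$ reverses sign and the circles are swapped, while the three $\omega^{I,J,K}$ terms match precisely because $\hkr^{*}I_{-}=J_{+}$, $\hkr^{*}J_{-}=I_{+}$ and $\hkr^{*}K_{-}=-K_{+}$. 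Hence $\varphi_{T,\pm}$ glue to a smooth 3-form on $M_{\hkr}$, closed because $d\omega_{T,\pm}=d\Omega_{T,\pm}=0$, and positive because it is a small perturbation of a genuine $\gtwo$-form by openness of $\Lambda^{3}_{+}$ (Proposition \ref{p:g2group}\ref{it:openorbit}).

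Second, I would estimate the torsion using Theorem \ref{thm:gray}, which reduces it to bounding $d\psi_{T,\hkr}$ (equivalently $d^{*}\varphi_{T,\hkr}$). Outside the neck the forms $\omega_{T,\pm},\Omega_{T,\pm}$ agree with the ACyl Calabi-Yau data $\omega_{\pm},\Omega_{\pm}$, so the structure is torsion-free there. On the neck the exponential decay built into Definition \ref{def:ACCY3} and the cut-off in \eqref{eq:omega:T} give $\|\omega_{T,\pm}-\omega_{\cyl,\pm}\|_{C^{k}}+\|\Omega_{T,\pm}-\Omega_{\cyl,\pm}\|_{C^{k}}=O(e^{-\lambda T})$, and since $\psi$ is a smooth algebraic function of $\varphi$ it follows that $\|d^{*}\varphi_{T,\hkr}\|_{C^{k}}=O(e^{-\lambda T})$ uniformly.

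Finally I would invoke the perturbation result: one looks for a torsion-free $\gtwo$-structure of the form $\varphi_{T,\hkr}+d\eta$ for a small 2-form $\eta$, so that the cohomology class is preserved. Rewriting the torsion-free condition $d*_{\varphi}\varphi=0$ and linearising produces an elliptic operator on $\eta$ whose principal part is (up to lower order) the Hodge Laplacian on 2-forms. The hard part is the uniform analytic control as $T\to\infty$: one needs a right inverse to this operator with norm bounded polynomially in $T$ on the complement of its kernel. This is standard on the compact pieces but on the stretched neck requires weighted estimates using that the translation-invariant Laplacian on $\Sph^{1}\times\Sph^{1}\times S$ is Fredholm on exponentially weighted Sobolev spaces with weight avoiding its indicial roots; this is precisely the analytic content of Kovalev's work \cite[\S\S 4--5]{kovalev:connectsums}. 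Combined with the $O(e^{-\lambda T})$ torsion estimate, a Banach contraction mapping argument then produces the desired $\eta$ for all $T$ sufficiently large.
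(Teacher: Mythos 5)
Your proposal is correct and follows essentially the same route as the paper: verify that the cut-off forms glue to a smooth closed $\gtwo$-structure on $M_\hkr$ via the \hk rotation condition, estimate the torsion as $O(e^{-\lambda T})$ using Theorem \ref{thm:gray} and the exponential decay built into Definition \ref{def:ACCY3}, and then cite Kovalev's perturbation theorem (or Joyce's). The paper treats this theorem as an immediate consequence of the construction preceding it together with the citation of \cite[Theorem~5.34]{kovalev:connectsums}, whereas you also sketch the inner workings of that cited result (weighted Sobolev estimates on the stretched neck, uniform right inverse, contraction mapping); that extra detail is accurate but does not change the method.
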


Whenever the $V_\pm$ in the theorem have holonomy $\sunitary3$,
\cite[Proposition 2.15]{hhn} implies that their fundamental groups are finite
and generated by the $\Sph^1$ factors in the cylindrical ends. The Van Kampen
theorem implies $\pi_1(M_\hkr) \cong \pi_1(V_+) \times \pi_1(V_-)$ is finite,
so the holonomy of the metric defined by the torsion-free \gtstr{} on $M_\hkr$
is exactly \gtwo by Proposition \ref{prop:G2_topology}. 
Any ACyl Calabi-Yau $3$-fold $V$ of semi-Fano or Fano type is
simply connected and therefore twisted connected sums using them are also simply connected.
The 74 deformation families of ACyl Calabi-Yau $3$-folds of non-symplectic
type are also simply connected \cite[Lem 4.2]{kovalev:lee}.

\subsection{ACyl Calabi-Yau $3$-folds from semi-Fano $3$-folds}
It remains to explain how we can construct ACyl Calabi-Yau $3$-folds suited 
to the twisted connected sum construction from \emph{semi-Fano} $3$-folds. 
To this end we now recall from \cite[\S 4]{chnp1} the definition and a few of the basic properties of 
semi-Fano $3$-folds; we refer the reader to \cite{chnp1} for proofs of the facts recalled here 
and for a much more comprehensive treatment 
of semi-Fano $3$-folds, including relevant algebro-geometric background.

A semi-Fano $3$-fold is a particular type of \emph{weak Fano} $3$-fold, a generalisation 
of a Fano $3$-fold in which the positivity of $-{K_{Y}}$ is replaced with a sufficiently strong 
notion of semi-positivity.
\begin{definition}
\label{dfng:weak_fano}
A \emph{weak Fano \mbox{$3$-fold}} is a nonsingular projective complex
\mbox{$3$-fold} $Y$ such that the
anticanonical sheaf $-K_Y$ is a nef and big line bundle, 
\ie $-K_{Y} \cdot C \ge 0$ for any compact algebraic curve $C \subset Y$ 
and $(-K_{Y})^{3}>0$. 
For any weak Fano $3$-fold $Y$ the integer $(-K_{Y})^{3}$ is an even integer which we write $2g-2$; 
$(-K_{Y})^{3}=2g-2$ is called the \emph{anticanonical degree} of $Y$ 
and $g$ the \emph{genus} of $Y$.

The \emph{index} of a weak Fano $3$-fold $Y$ is the integer $r=\gdiv{c_1(Y)}$, 
\ie the greatest divisor of $c_{1}(Y) \in H^{2}(Y)$.
\end{definition}
From the classification of Fano $3$-folds we know that there are exactly $105$ deformation families 
of smooth Fano $3$-folds. For weak Fano $3$-folds we still know that there are only finitely many deformation 
families. However,  there are many more deformation 
families of weak Fano $3$-folds as explained in \cite{chnp1} 
and a classification of all weak Fano $3$-folds looks a long way off.

If $Y$ is a weak Fano $3$-fold then for $n$ sufficiently large the linear system $\abs{-nK_{Y}}$ is basepoint-free. 
It follows that 
\[
R(Y,-K_Y):=\oplus_{n\geq 0} H^0(Y;-nK_Y)
\] 
is a finitely generated ring called the \emph{anticanonical ring} of $Y$. 
We call the birational morphism $\varphi\colon Y \to X:=\Proj R(Y,-K_Y)$ attached
to $|{-}K_Y|$ the \emph{anticanonical morphism} of $Y$ and $X$ the
\emph{anticanonical model} of~$Y$.
$X$ is a singular Fano $3$-fold with mild (at worst Gorenstein canonical)
singularities and $\varphi \colon Y \to X$ is a crepant resolution of $X$, \ie $\varphi^*K_X=K_Y$.

Conversely, if $Y$ is a projective crepant resolution
$\varphi\colon Y \to X$ of a Fano $3$-fold $X$ with Gorenstein canonical
singularities then $Y$ is a weak Fano $3$-fold whose anticanonical model
is~$X$. In other words, one way to exhibit weak Fano $3$-folds is to find
projective crepant resolutions of Gorenstein canonical Fano $3$-folds.
For instance a sufficiently general quartic $X \subset \CP^{4}$ that contains a
projective plane $\Pi$ is a suitable singular Fano $3$-fold; 
$X$ has exactly $9$ singular points, all ordinary nodes contained in $\Pi$
and admits a projective crepant (in fact small) resolution $\varphi\colon Y
\to X$, obtained by blowing up the plane $\Pi$: $Y$ is a weak Fano $3$-fold which we use 
later in the paper---see Example \ref{exg:quartic_w_plane}.

A key fact about any smooth weak Fano 3-fold $Y$ is that a general anticanonical divisor
$S \in \abs{-K_{Y}}$ is a nonsingular K3 surface.  From now on we make the following extra assumption about all the 
weak Fano $3$-folds we will use in this paper.
\smallskip

\noindent
\textbf{Assumption:}
the linear system $\abs{-K_Y}$ contains 
two nonsingular members $S_0, S_\infty$ intersecting transversally.

\smallskip
\noindent
The few weak Fano $3$-folds for which this assumption is not satisfied are classified: 
see \cite[\S 4]{chnp1} and references therein for further details.

Under the assumption above a generic pencil in $\abs{-K_Y}$ has a base locus which is a smooth curve 
(of genus $g=g(Y)$).
Hence from $Y$ we can construct a smooth projective $3$-fold $Z$ fibred over $\CP^{1}$ by 
(generically) smooth anticanonical K3 fibres by blowing up the base locus of a generic
pencil $\abs{S_{0},S_{\infty}} \subset \abs{-K_{Y}} $:
see \cite[Proposition 4.24]{chnp1}.
Therefore by Theorem \ref{thm:acyl_limits} we can construct ACyl Calabi-Yau structures on $V:=Z\setminus S$. 

However, for the purposes of this paper  it is convenient to restrict to ACyl Calabi-Yau structures obtained from 
a subclass of weak Fano $3$-folds which we call \emph{semi-Fano} $3$-folds. 
There is still a large number of deformation families of semi-Fano $3$-folds.
\begin{definition}
\label{dg:semifano}
Let $Y$ be a weak Fano $3$-fold and $\varphi\colon Y \ra
  X$ its anticanonical morphism. If $\varphi$ is \emph{semi-small}, we
  call $Y$ a \emph{semi-Fano} $3$-fold, \ie the anticanonical morphism 
  $\varphi \colon Y \to X$ can contract divisors to curves, or curves to points, but not divisors to points.
\end{definition}
\noindent
From any semi-Fano $3$-fold $Y$ satisfying our assumption above 
we can obtain a building block.
\begin{prop}[{\cite[Props. 4.24 \& 5.7]{chnp1}}] 
\label{prop:block_from_sf}
Let $Y$ be a semi-Fano 3-fold with $H^{3}(Y)$ torsion-free, $|S_0,S_\infty| \subset |{-}K_Y|$ a generic
pencil with (smooth) base locus~$C$, $S \in |S_0,S_\infty|$
generic, and $Z$ the blow-up of $Y$ at $C$.
Then $S$ is a smooth K3 surface, its proper transform in $Z$ is isomorphic
to $S$, and $(Z,S)$ is a building block in the sense of Definition
\ref{dfng:BLOCK}. Furthermore
\begin{enumerate}
\item \label{it:polar}
the image $N$ of $H^2(Z) \to H^2(S)$ equals that of $H^2(Y) \to H^2(S)$;
\item \label{it:cones}
$\Amp_Y \subseteq \Amp_Z$, where $\Amp_Y$ and $\Amp_Z$ denote the \emph{images}
in $N_\bbr \subseteq H^{1,1}(S)$ of the Kähler cones of $Y$ and $Z$;
\item $H^2(Y) \to H^2(S)$ is injective (and $K = 0$ in \eqref{eq:k_def}).
\end{enumerate}
\end{prop}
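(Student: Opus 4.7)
The plan is to combine standard blowup formulas for cohomology and Kähler cones with the specific geometry of the anticanonical pencil on $Y$, deferring the Lefschetz-type statements to the vanishing theorems for semi-Fanos established in \cite{chnp1}. The blowup $\pi \colon Z \to Y$ along the smooth base curve $C = S_0 \cap S_\infty$ has exceptional divisor $E$, and the pencil $|S_0,S_\infty|$ lifts to a morphism $f \colon Z \to \PP^1$ whose fibres are the proper transforms of its members. Standard manipulations give $-K_Z = \pi^*(-K_Y) - E$, and the fibre $S := f^{-1}(\infty)$ is the proper transform $\tilde S_\infty$ of $S_\infty$, lying in the class $\pi^*(-K_Y) - E = -K_Z$. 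Since $C$ is a Cartier divisor in the smooth surface $S_\infty$, the blowup of $S_\infty$ along $C$ is an isomorphism; hence $\tilde S_\infty \cong S_\infty$ is a smooth K3 surface, and $\tilde S_\infty \cap E = C$. This verifies the ``furthermore'' portion of the statement as well as conditions (i)--(ii) of Definition \ref{dfng:BLOCK}.

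Next, the blowup decompositions $H^2(Z) = \pi^*H^2(Y) \oplus \bbz[E]$ and $H^3(Z) = H^3(Y) \oplus H^1(C)$ handle the cohomology. Since $C$ is smooth, $H^1(C)$ is torsion-free, so condition (iv) of Definition \ref{dfng:BLOCK} follows from the torsion-freeness of $H^3(Y)$. For part (i) of the proposition I would compute the image of $H^2(Z) \to H^2(S)$ using the identification $\tilde S_\infty \cong S$: pulled-back classes $\pi^*\gamma$ restrict to $\gamma|_S$, and for the remaining generator one has $[E]|_S = [C] = (-K_Y)|_S$, the last equality because $C = S_0 \cap S_\infty$ is cut out on $S_\infty$ by $S_0 \sim -K_Y$. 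So $[E]|_S$ already lies in the image of $H^2(Y) \to H^2(S)$, proving (i).

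For part (ii), given a Kähler class $\alpha$ on $Y$, I would test the candidate
\[\beta_\varepsilon := \pi^*\bigl(\alpha + \varepsilon(-K_Y)\bigr) - \varepsilon [E] \in H^{1,1}(Z;\bbr).\]
Since $-K_Y$ is nef, $\alpha + \varepsilon(-K_Y)$ remains Kähler on $Y$ for every $\varepsilon > 0$, and the standard fact that the pullback of a Kähler class minus a small positive multiple of $[E]$ is Kähler on the blowup along a smooth centre yields $\beta_\varepsilon$ Kähler on $Z$ for small enough $\varepsilon > 0$. The computation
\[\beta_\varepsilon|_S = \alpha|_S + \varepsilon (-K_Y)|_S - \varepsilon [C] = \alpha|_S\]
shows that $\alpha|_S \in \Amp_Z$, establishing (ii).

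The remaining assertions, part (iii) of the proposition and primitivity of $N \hookrightarrow L$ (condition (iii) of Definition \ref{dfng:BLOCK}), are the main obstacle and require genuine input from the theory of semi-Fanos. For a genuine Fano both would follow from the Lefschetz hyperplane theorem applied to the ample divisor $S \sim -K_Y$, but for a semi-Fano $-K_Y$ is only big and nef, so classical Lefschetz is unavailable. Semi-smallness of the anticanonical morphism $\varphi \colon Y \to X$ is precisely the hypothesis that lets one recover the Lefschetz-type conclusions: Kawamata-Viehweg vanishing applied to $0 \to \oo_Y(K_Y) \to \oo_Y \to \oo_S \to 0$ yields the Hodge-theoretic injectivity, and the control that semi-smallness provides over divisors contracted by $\varphi$ is enough to upgrade this to the required integral injectivity and primitivity. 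For the detailed verification I would cite \cite[Propositions 4.24 and 5.7]{chnp1}.
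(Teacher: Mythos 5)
The paper does not supply its own proof of this proposition: it is attributed to \cite[Props.~4.24 \& 5.7]{chnp1} in the theorem's optional argument and imported wholesale. So there is no in-paper argument to compare against, and the best I can do is assess your sketch on its own terms and against what the cited reference would need to provide.

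Your outline is sound. The key geometric observations are all in place: the blowup formulas $-K_Z = \pi^*(-K_Y) - [E]$, $H^2(Z) = \pi^*H^2(Y) \oplus \bbz[E]$, $H^3(Z) = H^3(Y) \oplus H^1(C)$; the fact that blowing up a Cartier divisor $C \subset S_\infty$ in the surface $S_\infty$ is an isomorphism, so $\tilde S_\infty \cong S_\infty$; the identity $[E]|_S = [C] = (-K_Y)|_S$ which reduces the image of $H^2(Z)$ in $H^2(S)$ to the image of $H^2(Y)$, giving (i); and the corrected Kähler class $\pi^*(\alpha + \varepsilon(-K_Y)) - \varepsilon[E]$, which is Kähler on $Z$ for small $\varepsilon$ (since $\pi^*(-K_Y)$ is nef and can be added to the Kähler class $\pi^*\alpha - \varepsilon[E]$) and restricts on $S$ exactly to $\alpha|_S$, giving (ii). You rightly identify (iii) of the proposition and primitivity of $N \into L$ (condition (iii) of Definition \ref{dfng:BLOCK}) as the genuinely non-formal parts needing the semi-Fano vanishing results of \cite{chnp1}, and defer to that reference.

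One small gap: when you assert that your first paragraph verifies ``conditions (i)--(ii) of Definition \ref{dfng:BLOCK},'' condition (i) --- primitivity of $-K_Z$ in $H^2(Z)$ --- is never actually addressed. It does follow, and trivially: in the decomposition $H^2(Z) = \pi^*H^2(Y) \oplus \bbz[E]$, the class $-K_Z = \pi^*(-K_Y) - [E]$ has $[E]$-coefficient $-1$, hence is primitive regardless of whether $-K_Y$ is primitive in $H^2(Y)$. This is a one-line check, but it is a separate check, and your sketch skips it.
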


\begin{definition}
\label{d:accy:sf}
We will refer to a building block $(Z,S)$ arising from Proposition \ref{prop:block_from_sf} as a 
\emph{building block of semi-Fano type}. By Theorem \ref{thm:acyl_limits} we can obtain ACyl Calabi-Yau structures $(\omega,\Omega)$ on $V:=Z \setminus S$ and we call $(V,\omega,\Omega)$ an ACyl Calabi-Yau $3$-fold of \emph{semi-Fano type}.
\end{definition}

\begin{remark}
\label{rmk:cones}
The significance of \ref{prop:block_from_sf}\ref{it:cones} is that
Theorem \ref{thm:acyl_limits} ensures that exactly %
the classes in $\Amp_Z$ can be represented by the asymptotic limit of
an ACyl Calabi-Yau Kähler form $\omega$ on~$V$.
Note that $\Amp_Y$ and $\Amp_Z$ are typically proper subcones of the Kähler
cone of $S$, even when $Y$ is Fano (\cf Remark \ref{rmk:ampfamily}).
We need to pay attention to this in the
matching argument in \S \ref{sec:k3}.
\end{remark}

Sometimes one can get different building blocks from the same semi-Fano by blowing
up base loci of non-generic anticanonical pencils (\cf Examples
\ref{exg:P3_deg}, \ref{exg:2conics}, \ref{exg:toricV22_b}). In this case extra
work is required both to check that the topological conditions of a building
block are satisfied, and to apply the matching arguments from \S \ref{sec:k3}.
To avoid ambiguity, the term semi-Fano type will always refer to blow-ups
of generic pencils as in Proposition \ref{prop:block_from_sf}, and we will
warn explicitly in the few cases where we use non-generic pencils.

\begin{remark*}
We do not know any example of a semi-Fano $3$-fold $Y$ with torsion in $H^{3}(Y)$, 
but cannot in general prove $H^{3}(Y)$ is torsion-free: see \cite[\S 5]{chnp1} for further remarks in this direction.
This assumption is used to prove that $H^{3}(Z)$ is torsion-free as required in Definition \ref{dfng:BLOCK}(iv).
Note that this condition is only used in order to simplify the calculation of
the full integral cohomology. Dropping it would not affect the
more crucial matching arguments, but in the absence of known examples with
torsion in $H^3(Z)$ we do not concern ourselves with this generality.

If $Z$ is obtained---in the manner of Proposition \ref{prop:block_from_sf}---from a weak Fano $Y$ which is not semi-Fano 
then the natural map $H^{2}(Y) \to H^{2}(S)$ cannot be injective, since
the class of any contracted divisor lies in the kernel.
It is also not clear that the map has to have primitive image; in particular
$Z$ might not be a building block in the sense of Definition \ref{dfng:BLOCK}
because property (iii) could fail. 
\end{remark*}

By varying the choice of semi-Fano $3$-fold $Y$ within its deformation type  
$\sff$,  the choice of generic pencil $\abs{S_0,S_\infty} \subset \abs{-K_Y}$ and the choice of a 
generic $S \in \abs{S_0,S_\infty}$ we can obtain families of ACyl Calabi-Yau structures on the 
same smooth $6$-manifold $V$. Varying the ACyl Calabi-Yau structure on $V$ this way allows us to obtain 
different asymptotic \hk K3 surfaces~$S$.

This observation is crucial when we come to construct pairs of compatible ACyl Calabi-Yau $3$-folds. 
Given a \emph{fixed} pair of ACyl Calabi-Yau $3$-folds $V_\pm$ in general it will not be possible to construct 
any \hk rotation $\hkr$ between the asymptotic K3 surfaces $S_\pm$. However, for ACyl Calabi-Yau $3$-folds 
of semi-Fano type we will prove that in many cases it is possible to deform the pair of ACyl Calabi-Yau 
structures on the $3$-folds $V_\pm$ as above, so that within these deformation families a compatible pair does exist.
To achieve this it is important to understand which \hk K3 surfaces can arise 
as the asymptotic K3 surface of ACyl Calabi-Yau $3$-folds of semi-Fano type.

If $Y$ is a semi-Fano 3-fold then Proposition
\ref{prop:block_from_sf}\ref{it:polar} shows that the polarising lattice $N$ of
a semi-Fano type block obtained from $Y$ is isomorphic to $H^2(Y)$.
Therefore the asymptotic K3 surface $S$ of an ACyl Calabi-Yau $3$-fold of
semi-Fano type obtained from any deformation of $Y$ has a primitive 
sublattice isomorphic to $\Pic{Y}=H^{2}(Y)$ in $\Pic{S}$. 

So $\rk{\Pic{S}} \ge b^{2}(Y)$, whereas a generic (projective) K3 surface $S$
has $\rk{\Pic{S}}=1$.  In other words, 
the larger $b^2(Y)$ is the more special the K3 surfaces that can arise 
as asymptotic K3 surfaces obtained from a fixed deformation type $\sff$ of semi-Fanos via Proposition \ref{prop:block_from_sf}.
The moduli theory of K3 surfaces whose Picard group contains a given sublattice $N$---so-called
 \emph{lattice polarised K3 surfaces}---is well-understood and was reviewed in our previous paper 
\cite{chnp1}. We will need to know that the generic lattice polarised K3 surface of a given type occurs
as the asymptotic \hk K3 surface of some ACyl Calabi-Yau structure obtained from 
a given deformation type $\sff$ of semi-Fano $3$-folds via Proposition \ref{prop:block_from_sf}.
The proof of this fact relies on semi-Fano $3$-folds enjoying a better deformation theory 
than general weak Fano $3$-folds: see \cite[\S 6]{chnp1}. 
The improved deformation theory uses the stronger cohomology vanishing theorems available for semi-Fano $3$-folds.

We will explain the above more precisely when we explain how to construct
compatible pairs of ACyl Calabi-Yau $3$-folds $V_{\pm}$ by orthogonal matching in Section \ref{sec:k3}. 

\begin{remark}
\label{rmk:kl-def}
Another kind of building blocks was defined by Kovalev and Lee
\cite{kovalev:lee} from K3 surfaces $S$ with non-symplectic involution,
\ie with an involution $\tau$ acting as $-1$ on $H^{2,0}(S)$.
In Nikulin's classification, %
1 of the 75 families of non-symplectic involutions
acts freely. In the other 74 cases, resolving the singular set of
$(S \times \CP^1)/(\tau,-1)$ by blow-up defines a simply-connected building
block $Z$, which we say is of \emph{non-symplectic type}.

The polarising lattice of $Z$ is the $\tau$-invariant part $N$ of $H^2(S)$.
$N$ characterises $\tau$ in the sense that a generic
$N$-polarised K3 admits an equivalent involution. The matching
arguments we will use for families of semi-Fano blocks can therefore also
be used for families of non-symplectic blocks.
The image $\Amp_Z \subset H^{1,1}(S)$ of the Kähler cone of $Z$
is the full Kähler cone of $S$ \cite[Prop.~4.1]{kovalev:lee}.
$\rk K$ (as defined in \eqref{eq:k_def}) is twice the number of fixed
components of $\tau$, so at least 2 \cite[(4.3)]{kovalev:lee}.
\end{remark}

\subsection{Semi-Fano $3$-folds from nodal Fano $3$-folds}
While our general theory will allow us to find compatible pairs of ACyl Calabi-Yau $3$-folds 
of semi-Fano type, most of the specific semi-Fano $3$-folds we use to build concrete \gtwo-manifolds in this paper
satisfy additional properties which we now describe.

An important special class of semi-Fano $3$-folds are those for which 
the anticanonical morphism $\varphi \colon Y \to X$ is not just semi-small but \emph{small}, \ie 
contracts only finitely many curves.
A special case---and for this paper by far the most important case---is when $X$ is a \emph{nodal Fano} $3$-fold, 
\ie $X$ has only finitely many singular points 
each (locally analytically) equivalent to the $3$-fold ordinary double point. 
In this case any small resolution $Y$ of $X$  replaces each node with a smooth rational curve $\CP^{1}$ 
with normal bundle $\oo(-1) \oplus \oo(-1)$. Most of the semi-Fano $3$-folds
$Y$ we consider in detail in this paper will arise from projective small
resolutions of nodal Fano $3$-folds.
\begin{remark*}
When the semi-Fano $Y$ arises as a projective small resolution $\varphi \colon Y \to X$ 
of a nodal Fano $X$ then each exceptional curve $C$ of $\varphi$
gives rise to a \emph{compact} rigid curve $C$ in the ACyl Calabi-Yau $3$-fold $V=Z\setminus S$ constructed 
from $Y$ using Proposition \ref{prop:block_from_sf}.
These compact rigid curves in $V$ will allow us to construct compact rigid associative $3$-folds 
in twisted connected sum \gtwo-manifolds built using $V$.
\end{remark*}
Whenever the anticanonical morphism $\varphi$ of a semi-Fano $3$-fold $Y$ is small we have the following additional features:
\begin{enumerate}
\item
The anticanonical model $X$ is a Fano $3$-fold with Gorenstein \emph{terminal} (and therefore isolated) singularities.
\item
The small projective morphism $\varphi \colon Y \to X$ can be \emph{flopped}.  
Flopping yields other smooth semi-Fano $3$-folds $Y'$ with the same anticanonical model $X$ 
and whose anticanonical morphism $\varphi' \colon Y' \to X$ is also small.
\item
$X$ is smoothable by a flat deformation and hence is a degeneration of a nonsingular Fano $3$-fold $X_{t}$. 
In particular, the Picard ranks and the Fano indices of $X$ and $X_{t}$ are equal.
\end{enumerate}

The topologies of the smooth $3$-folds $Y$ and $X_t$ and the singular $3$-fold $X$ are closely related. 
The following is explained in much greater detail in our previous paper \cite{chnp1}. 
In the current paper we will need some of the facts below in our discussion of \emph{\gtwo-transitions} in 
Section \ref{sec:close} but not elsewhere in the paper.

Since $X$ is singular in general it need not satisfy Poincar\'e duality.  One way to define the \emph{defect} of $X$ is 
as the following measure of failure of Poincar\'e duality on $X$, 
\begin{equation}
\label{e:defect}
\sigma(X):=\rk H_4(X)-\rk H^2(X).
\end{equation}
The existence of a \emph{projective} small resolution $\varphi \colon Y \to X$ can be shown to force the defect $\sigma(X)$ 
to be positive. Also for any small resolution $\varphi \colon Y \to X$ we have
\begin{equation}
\label{e:b2:res}
b^2(Y) = b^2(X) + \sigma(X) = b^2(X_t) + \sigma(X).
\end{equation}

In particular, if we start from a smooth Fano $X_t$, degenerate to the singular
Fano $X$ and then resolve to obtain the smooth semi-Fano $3$-fold $Y$ then
necessarily $b^2(Y) > b^2(X_t)$.
For instance, if $Y$ is a small resolution of a 
generic quartic containing a plane $\Pi$ then one can show that $\sigma(X)=1$,  $b^2(X_t)=b^2(X)=1$ and hence $b^2(Y)=2$.
Therefore %
the asymptotic K3 surfaces in ACyl Calabi-Yau $3$-folds
of semi-Fano type $Y$ are more special than those in
ACyl Calabi-Yau $3$-folds of Fano type $X_t$. 
One can interpret this as saying that finding an ACyl Calabi-Yau $3$-fold compatible with 
an ACyl Calabi-Yau $3$-fold of semi-Fano type $Y$ should be harder than 
finding one compatible with an ACyl Calabi-Yau $3$-fold of Fano type~$X_{t}$.

If $X$ is a nodal $3$-fold with $e$ nodes and defect $\sigma$ 
one can show that the third Betti numbers $b^3$ of $Y$, $X$ and $X_t$ are related as follows
\begin{equation}
\label{e:b3:res}
b^3(X) =  b^3(X_t) + \sigma - e, \qquad b^3(Y) = b^3(X_t) - 2e + 2 \sigma.
\end{equation}
Since one always has $\sigma \le e$ the second equation shows that $b^{3}(Y)\le b^{3}(X_{t})$.

To summarise, in passing from the smooth Fano $X_{t}$ to the smooth semi-Fano $Y$ 
$b^{2}$ must increase whereas $b^{3}$ typically decreases. We will discuss the significance of these facts 
for \mbox{\gtwo-manifolds} arising as twisted connected sums of ACyl Calabi-Yau $3$-folds of semi-Fano type 
in Section \ref{sec:close}.

\section{Topology of the \gtwo-manifolds}
\label{sec:top}

In this section, we collect some tools to compute topological
invariants of \gtmfd s that are obtained by gluing
asymptotically cylindrical Calabi-Yaus. All homology and cohomology
groups in this section are over $\ZZ$ unless explicitly stated
otherwise. 

\subsection{Cohomology of the building blocks}

Here we recall notation and some computations of cohomology
groups from \cite[\S 5]{chnp1}. 
First recall the definition of a building block from \ref{dfng:BLOCK}.
We denoted there by $N$ the image of $H^2(Z) \to H^2(S) = L$.
We regard $N$ as a lattice with the quadratic form inherited from $L$.
In examples, $N$ is almost never unimodular,
so the natural inclusion $N\hookrightarrow N^\ast$ is not an isomorphism.
We write
\[
T = N^\perp = \{ l \in L \mid \inner{l, n} = 0 \ \  \forall\; n\in N \} .
\]
($T$ stands for ``transcendental''; in examples, $N$ and $T$ are the
Picard and transcendental lattices of a lattice polarized K3 surface.) Using $N$ primitive and $L$ unimodular we find \mbox{$L/T\simeq N^*$}.

Let $V=Z\setminus S$, and write $H=H^2(V)$.
Since the normal bundle of $S$ in $Z$ is trivial, there is an inclusion
$S \into V$ well-defined up to homotopy. We let
\begin{equation}
\label{eq:k_def}
\rho \colon H \to L \quad
\text{the natural restriction map, and}
\quad
K=\ker (\rho) .
\end{equation}
It follows from (ii) of the following lemma that the image of $\rho$
equals $N$.

\begin{lemma}
  \label{lemg:Z&V}
Let $f\colon Z\to \PP^1$ be a building block. Then:
\begin{enumerate}
\item $\pi_1(V)=(0)$ and $H^1(V)=(0)$;
\item \label{itg:h2}
the class $[S]\in H^2(Z)$ fits in a split exact sequence
\[(0)\to \ZZ\overset{[S]}{\longrightarrow} H^2(Z)\to H^2(V)\to (0),\]
hence $H^2(Z)\simeq\ZZ[S]\oplus H^2(V)$, and the restriction homomorphism $H^2(Z)\to L$ factors through
$\rho\colon H^2(V)=H\to L$;
\item 
there is a split exact sequence
\[
(0) \to H^3(Z) \to H^3(V) \to T \to (0),
\]
hence $H^3(V)\simeq H^3(Z)\oplus T$;
\item \label{itg:h4}
there is a split exact sequence
\[
(0) \to N^\ast \to H^4(Z)\to H^4(V)\to (0),
\]
hence $H^4(Z)\simeq H^4(V)\oplus N^\ast$;
\item
$H^5(V) = (0)$. 
\end{enumerate}
\end{lemma}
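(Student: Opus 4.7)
The plan is to derive all five statements from the Gysin long exact sequence attached to the open inclusion $V \hookrightarrow Z$. Because $S = f^{-1}(\infty)$ is a fibre of $f$, its normal bundle in $Z$ is trivial; the Thom isomorphism $H^k(Z,V) \cong H^{k-2}(S)$ then converts the long exact sequence of the pair $(Z,V)$ into
\[
\cdots \to H^{k-2}(S) \xrightarrow{i_*} H^k(Z) \xrightarrow{j^*} H^k(V) \to H^{k-1}(S) \xrightarrow{i_*} H^{k+1}(Z) \to \cdots ,
\]
where $i \colon S \hookrightarrow Z$ and $j \colon V \hookrightarrow Z$ are the inclusions. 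I would then read off the lemma degree by degree.

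For (i), the primitivity of $[S] = -K_Z$ in the free abelian group $H^2(Z)$ (Definition \ref{dfng:BLOCK}) forces $i_* \colon H^0(S) = \ZZ \to H^2(Z)$ to be injective, so $H^1(V) = 0$. The analogous Gysin sequence in homology, using that cup product with the primitive class $[S]$ makes $H^4(Z) \to \ZZ$ surjective (thanks to the perfection of the Poincaré pairing on $Z$, valid because $H^3(Z)$ is torsion-free), gives $H_1(V) = 0$. To upgrade this to $\pi_1(V) = 0$ I would apply van Kampen to $Z = U \cup V$, where $U$ is a tubular neighbourhood of $S$ and $U \cap V \simeq S \times \Sph^1$: since $S$ is simply-connected, the only new generator in $\pi_1(V)$ relative to $\pi_1(Z) = 0$ is the meridian around $S$, so $\pi_1(V)$ is cyclic, hence equals $H_1(V) = 0$.

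The higher degree statements come from the Gysin sequence in degrees $k = 2, 3, 4, 5$ together with the vanishings $H^1(S) = H^3(S) = 0$ and $H^5(Z) = 0$ (the latter from $H_1(Z) = 0$). Statement (ii) needs the observation $i^*[S] = c_1(N_{S/Z}) = 0$, which forces the restriction $H^2(Z) \to L$ to factor through $H^2(V)$, giving $\rho$. The crucial input for (iii) and (iv) is the identification $\ker\bigl(i_* \colon L \to H^4(Z)\bigr) = T$; I would obtain this from the projection formula $\langle \alpha, i_* \beta \rangle_Z = \langle i^*\alpha, \beta \rangle_S$ combined with perfection of the pairing $H^2(Z) \times H^4(Z) \to \ZZ$, which realises $i_*$ as Poincaré-dual to the transpose of $i^*$ and so makes its kernel $N^\perp = T$. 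The identification $L/T \cong N^*$ then follows from unimodularity of $L$ together with primitivity of $N \hookrightarrow L$: any functional on $N$ extends to $L$, and is then represented by a unique element of $L$.

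Each of the three resulting short exact sequences splits because its right-hand term is finitely generated and torsion-free: $H^2(V) = H^2(Z)/\ZZ[S]$ is free because $[S]$ is primitive, $T$ is free as a subgroup of the free lattice $L$, and for $H^4(V)$ the splitting in (ii) together with the perfect pairing identifies $H^4(Z) \cong \ZZ[S]^* \oplus H^2(V)^*$, after which a further splitting $H^2(V) = N \oplus K$ (possible since $N$ is free) shows that $\imag (i_*) = \{0\} \oplus N^*$ is a direct summand. Finally (v) is immediate, since $i_* \colon H^4(S) = \ZZ \to H^6(Z) = \ZZ$ sends fundamental class to fundamental class and is therefore an isomorphism. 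The main obstacle in this plan is not any single computation but the careful bookkeeping needed to pin down $\ker i_* = T$ and the splitting in (iv), both of which rest on juggling the Poincaré and Thom duality identifications.
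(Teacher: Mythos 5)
Your Gysin-sequence strategy for parts (ii)--(v) is sound and is essentially the dual packaging of the sequence the paper works with: you use the long exact sequence of the pair $(Z,V)$ together with the Thom isomorphism $H^k(Z,V)\cong H^{k-2}(S)$, while the paper sets up the long exact sequence of the pair $(Z,S)$, identified with $H^k_{cpt}(V)\to H^k(Z)\to H^k(S)$. The two are interchangeable here, and your identifications of $\ker(i_*\colon L\to H^4(Z))$ with $T$ via the projection formula and unimodularity, of $L/T$ with $N^*$, and your splitting arguments are all correct. The minor point that would need care is the identification of the summand of $H^4(Z)$ hit by $i_*$ with the $N^*$ factor of $\ZZ[S]^*\oplus N^*\oplus K^*$, but you indicate the right computation (pair $i_*\beta$ against $[S]$, against classes in $K$, and against classes mapping into $N$).

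The genuine gap is in the proof of $\pi_1(V)=0$. Van Kampen applied to $Z=U\cup V$, with $U$ a tubular neighbourhood of $S$, gives $\pi_1(Z)\cong\pi_1(V)/\langle\langle\mu\rangle\rangle$, where $\mu$ is the meridian and $\langle\langle\mu\rangle\rangle$ denotes its \emph{normal closure}. From $\pi_1(Z)=0$ you conclude that $\pi_1(V)$ is normally generated by $\mu$, but you then assert that $\pi_1(V)$ is therefore cyclic, and this is precisely where the argument breaks down. A group that is normally generated by a single element need not be cyclic, and the subsequent step ``hence equals $H_1(V)=0$'' only works once cyclicity (or at least abelianness) is established. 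For instance, any nontrivial finite simple group is normally generated by a single element and has trivial abelianisation, so the combination ``normally generated by one element'' plus ``trivial $H_1$'' does not force triviality. To close this gap one needs an additional geometric input that makes the meridian effectively central, or a completely different argument for $\pi_1(V)=0$ (the literature typically exploits the K3-fibration $f\colon V\to\bbc$ and the simple connectivity of the generic fibre, or a Zariski--van Kampen/Nori-type argument tailored to this situation). As written, your proposal proves $H_1(V)=0$ correctly, but $\pi_1(V)=0$ is not established.
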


\begin{corollary}
\label{corg:V&S^1(S)}
 Let $f\colon Z \to \PP^1$ be a building block. Since the normal %
 bundle of $S$ in $Z$ is trivial, we get a natural
 inclusion of $S\times \bbS^1\subset V$. Denote by $\bfa^0\in
 H^0(\bbS^1)$, $\bfa^1\in H^1(\bbS^1)$ the standard generators. 
The natural restriction homomorphisms:
\[
\beta^m\colon H^m(V) \to H^m(S\times \bbS^1)=\bfa^0H^m(S)\oplus \bfa^1H^{m-1}(S)
\] 
are computed as follows:
\begin{enumerate}
\item $\beta^1 =0$;
\item $\beta^2 \colon H^2(V) \to H^2(S\times \bbS^1)=\bfa^0H^2(S)$ is the
  homomorphism $\rho \colon H \to L$;
\item $\beta^3\colon H^3(V)\to H^3(S\times \bbS^1)=\bfa^1H^2(S)$ is
  the composition $H^3(V) \twoheadrightarrow T \subset L$;
\item \label{itg:4} the natural surjective restriction homomorphism
  $H^4(Z)\to H^4(S)=\ZZ$ factors through
  $\beta^4\colon H^4(V)\to H^4(S\times \bbS^1)=\bfa^0H^4(S)=\ZZ$, and
  there is a split exact sequence:
\[
(0) \to K^\ast \to H^4(V)\overset{\beta^4}{\longrightarrow} H^4(S)\to (0) .
\]
\end{enumerate}
\end{corollary}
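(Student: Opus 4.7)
The plan is to treat all four parts uniformly via the Thom/Gysin long exact sequence of the pair $(Z,V)$. Since the normal bundle of $S$ in $Z$ is trivial, the Thom isomorphism gives $H^m(Z,V) \cong H^{m-2}(S)$, so the long exact sequence reads
\[
\cdots \to H^{m-2}(S) \xrightarrow{i_!} H^m(Z) \to H^m(V) \xrightarrow{\partial} H^{m-1}(S) \to H^{m+1}(Z) \to \cdots .
\]
The geometric input is that the inclusion $S \times \bbS^1 \hookrightarrow V \hookrightarrow Z$ is homotopic to $i \colon S \hookrightarrow Z$ composed with the projection $S \times \bbS^1 \to S$. Under the Künneth decomposition $H^m(S \times \bbS^1) = \bfa^0 H^m(S) \oplus \bfa^1 H^{m-1}(S)$, this forces the pullback $H^m(Z) \to H^m(S \times \bbS^1)$ to land in the $\bfa^0$-summand and coincide there with $i^*$, and identifies the $\bfa^1$-component of $\beta^m$ with the connecting map $\partial$.

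Part (i) is immediate from $H^1(V)=0$ (Lemma \ref{lemg:Z&V}(i)). For (ii), $H^1(S)=0$ so $\beta^2$ lives entirely in $\bfa^0 H^2(S)=L$, and combining the factorisation above with Lemma \ref{lemg:Z&V}\ref{itg:h2} identifies $\beta^2$ with $\rho$. For (iii), $H^3(S)=0$ so $\beta^3$ lives entirely in $\bfa^1 H^2(S)=L$ and equals $\partial$; exactness (together with $H^1(S)=0$) gives $\ker\beta^3 = H^3(Z)$, matching the quotient description of Lemma \ref{lemg:Z&V}(iii), and $\mathrm{image}\,\beta^3 = \ker(i_!\colon L \to H^4(Z))$. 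The main obstacle here will be identifying this kernel with $T$: since $H^*(Z)$ is torsion-free (Lemma \ref{lem:zpi1}(i)) and $Z$ is a closed oriented 6-manifold, Poincaré duality makes $H^4(Z) \cong H^2(Z)^*$ and turns $i_!$ into the adjoint of $i^*\colon H^2(Z) \to L$ with respect to the unimodular intersection form on $L$; its kernel is then $(\mathrm{image}\,i^*)^\perp = N^\perp = T$, and the resulting injection $T \hookrightarrow L$ is the defining inclusion.

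For (iv), vanishing of $H^3(S)$ again puts $\beta^4$ in $\bfa^0 H^4(S)=\ZZ$, and $i^*i_! = 0$ (cup product with the vanishing Euler class of the trivial normal bundle) ensures that $i^*\colon H^4(Z) \to \ZZ$ descends through $H^4(V)$ to $\beta^4$. Surjectivity follows from primitivity of $[S] \in H^2(Z)$ (Definition \ref{dfng:BLOCK}(i)), which yields $\xi \in H^4(Z)$ with $\xi \cdot [S] = 1$. To compute $\ker\beta^4$ I would use the same Poincaré duality as in (iii) to write $H^4(Z) = H^* \oplus \ZZ\xi$, where $H^* = \{\phi : \phi([S])=0\} = H^2(V)^*$; the subgroup $i_! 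L$ then lies entirely in $H^*$ (since $i_!\alpha$ vanishes on $[S]$, using that $i^*[S]$ is the trivial normal bundle class) and equals $N^* = L/T$ by unimodularity of $L$. Dualising the short exact sequence $0 \to K \to H \xrightarrow{\rho} N \to 0$ of free abelian groups gives $0 \to N^* \to H^* \to K^* \to 0$, so combining with Lemma \ref{lemg:Z&V}\ref{itg:h4} yields $H^4(V) = H^4(Z)/i_! L \cong K^* \oplus \ZZ\xi$, under which $\beta^4$ is projection to the second factor with kernel $K^*$; the sequence splits because $K^*$ is a free abelian group.
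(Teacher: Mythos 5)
Your proof is correct and uses the approach the paper indicates (just after the statement, where it cites the long exact sequences \eqref{eq:zrel} and \eqref{eq:vrel}): you take the Gysin sequence of the pair $(Z,V)$---which, via the Thom isomorphism for the trivial tubular neighbourhood, is exactly \eqref{eq:zrel} with $H^k_{cpt}(V)\cong H^{k-2}(S)$---and combine it with the Künneth decomposition of $H^*(S\times\bbS^1)$ and Poincaré duality on $Z$ to identify $\ker(i_!)=T$ and $\mathrm{im}(i_!)\cong N^*$. The duality bookkeeping in parts (iii) and (iv), including the push-pull identities $i^*i_!=0$ (vanishing Euler class) and $\langle i_!\alpha,\beta\rangle=\langle\alpha,i^*\beta\rangle$, is carried out correctly.
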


Lemma \ref{lemg:Z&V} and Corollary \ref{corg:V&S^1(S)} are closely related
to the long exact sequences for cohomology of $Z$ relative to $S$ and $V$
relative to its boundary $\Sph^1 \times S$, respectively.
\begin{gather}
\label{eq:zrel}
H^k_{cpt}(V) \to H^k(Z) \to H^k(S) \to H^{k+1}_{cpt}(V)\\
\label{eq:vrel}
H^k_{cpt}(V) \to H^k(V) \stackrel{\beta^k}{\to} H^k(\Sph^1 \times S)
\stackrel{\partial}{\to} H^{k+1}_{cpt}(V)
\end{gather}
In particular note that $H^4_{cpt}(V) \into H^4(Z)$.
Also $H^4_{cpt}(V) \cong N^* \oplus K^*$, where the term $N^* \cong L/T$ is
the image of $H^3(\Sph^1 \times S)$ under $\partial$. Its image in $H^4(Z)$ is
precisely the $N^*$ appearing in \ref{lemg:Z&V}\ref{itg:h4}.

\subsection{Cohomology of the 7-manifolds}
\label{sub:g2coh}

We are interested in smooth $7$-manifolds $M$ constructed as follows. Start
with two building blocks $(Z_+, S_+)$, $(Z_-, S_-)$ and a \hk rotation
$\hkr \colon S_+ \to S_-$.
Let $\bbS(S_\pm) = S_\pm \times \bbS_\pm^1 \subset V_\pm$ denote the unit
normal bundles of $S_\pm$ in $Z_\pm$. We glue $M_+ = V_+ \times \bbS_-^1$
with $M_- = V_- \times \bbS_+^1$ identifying the ends via the diffeomorphism
of $\bbS(S_+)\times \bbS_-^1= S_+ \times \TT^2$ with
$\bbS(S_-) \times \bbS_+^1 = S_- \times \TT^2$ that identifies $S_+$ with
$S_-$ by the \hk rotation $\hkr$ and exchanges the two factors of $\TT^2$
(see \eqref{eq:gluemap}). For the purposes of this section $\hkr$ is fixed and,
using this identification, we let $S$ denote $S_+ = S_-$.

We now compute the cohomology groups of $M$ in terms of the cohomology
groups of $Z_\pm$, the restrictions $\rho_\pm \colon H_\pm \to L$, their
kernels $K_\pm$ and their images $N_\pm \subset L$, which are primitive
sublattices by assumption. Our main tool is the
Mayer-Vietoris exact sequence for the decomposition $M = M_+ \cup M_-$ along
the common intersection $W = S \times \bbS_+^1\times \bbS_-^1$:
\begin{equation}
  \label{eq:mayer_vietoris}
H^{m-1}(M_+) \oplus H^{m-1}(M_-) \to H^{m-1}(W) \stackrel{\delta}{\to} H^m (M)
\stackrel{\rho^m}{\to} H^m(M_+)\oplus H^m(M_-) \stackrel{\gamma^m}{\to} H^m (W)
\end{equation}
We write $\gamma^m =
\gamma_+^m \oplus \gamma_-^m \colon H^m(M_+) \oplus H^m(M_-)\to H^m(W)$. 

Lemma \ref{lemg:Z&V} shows that $H^m(M_\pm)$, thus $\mbox{Im}(\rho^m)$, is
torsion-free. Sequence \eqref{eq:mayer_vietoris} thus yields isomorphisms
\begin{equation}\label{eq:M_splits}
H^m(M)\simeq \mbox{Im}(\rho^m)\oplus \ker(\rho^m)\simeq \ker(\gamma^m)\oplus \coker(\gamma^{m-1}).
\end{equation} 
The key task is thus to describe the homomorphisms $\gamma^m$.
\begin{lemma}
\label{lem:mvmaps}
  Let $Z_\pm \to \PP^1$ be building blocks; let $M_\pm$ and $M$ be as above.
  We use the self-explanatory notation:
  \begin{align*}
    H^m(M_+) &= \bfa_-^0 H^m(V_+) \oplus \bfa_-^1H^{m-1}(V_+)\\
    H^m(M_-) &= \bfa_+^0 H^m(V_-) \oplus \bfa_+^1H^{m-1}(V_-)
  \end{align*}
and
\[
H^m(W) = \bfa_+^0 \bfa_-^0H^m(S) \oplus \bfa_+^1 \bfa_-^0 H^{m-1}(S)
\oplus \bfa_+^0 \bfa_-^1 H^{m-1}(S) \oplus \bfa_+^1 \bfa_-^1 H^{m-2}(S).
\]
 The homomorphisms $\gamma^m \colon H^m(M_+) \oplus H^m(M_-) \to H^m(W)$
 that occur in the Mayer-Vietoris sequence are computed as follows:
 \begin{enumerate}
 \item $H^1(M_+) \oplus H^1(M_-)=\bfa_-^1 H^0(V_+)\oplus \bfa_+^1 H^0(V_-)$, \\
	$H^1(W) = \bfa_+^0 \bfa_-^1 H^0(S)\oplus \bfa_+^1 \bfa_-^0 H^0(S)$, and 
\[
\gamma^1=
\begin{pmatrix}
  \mathbf{1} & 0\\0 & \mathbf{1}
\end{pmatrix}
\colon H^0(V_+) \oplus H^0(V_-) \to H^0(S) \oplus H^0(S)
\]
   is the natural isomorphism.
 \item $H^2(M_+)\oplus H^2(M_-)=\bfa_-^0 H_+ \oplus \bfa_+^0H_-$, \\
   $H^2(W)=\bfa_+^0\bfa_-^0 H^2(S)\oplus
   \bfa_+^1\bfa_-^1H^0(S)=L\oplus \ZZ[S]$, and 
\[
\gamma^2=
\begin{pmatrix}
  \rho_+ & \rho_-\\0 & 0
\end{pmatrix}
\colon H_+\oplus H_- \to L\oplus \ZZ[S].
\] 
 \item \label{it:mv3} $H^3(M_+)\oplus H^3(M_-)=\bfa_-^0H^3(V_+)\oplus \bfa_-^1
   H^2(V_+)\oplus \bfa_+^0H^3(V_-)\oplus \bfa_+^1H^2(V_-)$, \\
   $H^3(W)=\bfa_+^1\bfa_-^0H^2(S)\oplus \bfa_+^0\bfa_-^1H^2(S)$, and
\[
\gamma^3=
\begin{pmatrix}
  \beta^3_+&0&0&\rho_-\\0&\rho_+&\beta^3_-&0
\end{pmatrix}
\colon H^3(V_+)\oplus H_+\oplus H^3(V_-) \oplus H_- \to L\oplus L;
\]
\item $H^4(M_+)\oplus H^4(M_-)=\bfa_-^0H^4(V_+)\oplus
  \bfa_-^1H^3(V_+)\oplus \bfa_+^0H^4(V_-)\oplus \bfa_+^1 H^3(V_-)$, \\
  $H^4(W)=\bfa_+^0\bfa_-^0H^4(S)\oplus \bfa_+^1\bfa_-^1
  H^2(S)=H^4(S)\oplus L$, and
\[
\gamma^4=
\begin{pmatrix}
  \beta_+^4&0&\beta_-^4&0&\\0&\beta_+^3&0&\beta_-^3
\end{pmatrix}
\colon H^4(V_+)\oplus H^3(V_+) \oplus H^4(V_-)\oplus H^3(V_-)\to
H^4(S)\oplus L.
\]
 \end{enumerate}
\end{lemma}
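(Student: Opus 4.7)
The plan is to reduce the computation of each $\gamma^m_\pm$ to the restriction maps $\beta^\bullet_\pm$ already computed in Corollary \ref{corg:V&S^1(S)} via a direct Künneth argument.

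First, I observe that each inclusion $W \hookrightarrow M_\pm$ factors as a product. The $S$-factor of $W$ embeds into $V_\pm$ as the natural inclusion $S \times \bbS^1_\pm \hookrightarrow V_\pm$ of the boundary near infinity (the circle running in the $\bbS^1_\pm$ direction), while the remaining circle $\bbS^1_\mp$ of $W$ is the obvious factor of $M_\pm = V_\pm \times \bbS^1_\mp$. On the $-$ side this factorisation implicitly uses the \hk rotation $\hkr \colon S_+ \to S_-$ to identify the single $S$ appearing in $W$ with $S_- \subset V_-$; since $\hkr$ is an isometric (hence orientation-preserving) diffeomorphism, the induced identification $H^\bullet(S_+) = H^\bullet(S_-) = L$ is well-defined. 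Under the Künneth decompositions displayed in the lemma, the map $\gamma^m_\pm$ therefore acts as $\beta^\bullet_\pm \otimes \Id_{H^*(\bbS^1_\mp)}$, where $\beta^\bullet_\pm$ is the restriction map computed in Corollary \ref{corg:V&S^1(S)} applied to $V_\pm$.

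Second, I substitute the explicit form of $\beta^m_\pm$ from the Corollary: $\beta^0$ is the identity, $\beta^1 = 0$, $\beta^2 = \rho_\pm$, and $\beta^3$ is the composition $H^3(V_\pm) \twoheadrightarrow T_\pm \subset L$ landing inside the $\bfa^1_\pm$-part of $H^3(S \times \bbS^1_\pm)$. Using $H^1(S) = H^3(S) = 0$, many Künneth summands of $H^m(W)$ vanish, and each of the four displayed matrices then falls out by tracking which source summand lands in which surviving target summand. For example, in the case $m = 3$ the non-zero summands of $H^3(W)$ are $\bfa^1_+\bfa^0_- L$ and $\bfa^0_+\bfa^1_- L$; a class $\bfa^0_-\alpha$ with $\alpha \in H^3(V_+)$ maps into the first via $\beta^3_+$, a class $\bfa^1_-\alpha$ with $\alpha \in H_+$ maps into the second via $\beta^2_+ = \rho_+$, and the statements for $\gamma^3_-$ are the mirror image. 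The cases $m = 1, 2, 4$ are strictly analogous; in the $m = 2$ and $m = 4$ cases one additionally observes that the $\bfa^1_+\bfa^1_- H^{m-2}(S)$ summand of $H^m(W)$ can only be reached via Künneth-$\bfa^1$ factors from both sides, which forces it to be in $\ker \gamma^2$ (for $m=2$) and to be hit by $\beta^3_\pm$ (for $m=4$).

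There is no serious mathematical obstacle: the whole argument is an exercise in Künneth, Corollary \ref{corg:V&S^1(S)} and careful bookkeeping of the two sets of circle generators $\bfa^i_+, \bfa^i_-$. The one point requiring care is keeping the implicit identification $\hkr^* \colon H^\bullet(S_-) \to H^\bullet(S_+) = L$ consistent throughout, so that both copies of $L$ appearing in $H^m(W)$ refer to the same abstract lattice and the two rows of the matrix for $\gamma^3$ (and of $\gamma^4$) are compared in a well-defined way. The sublattices $N_\pm$ and $T_\pm$ sitting inside the common $L$ depend genuinely on the choice of $\hkr$, but the formal matrix expressions do not.
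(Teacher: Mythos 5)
Your proof is correct and takes essentially the same approach the paper summarizes as ``Straightforward'': the inclusions $W \hookrightarrow M_\pm$ are products of the boundary inclusion $S \times \bbS^1_\pm \hookrightarrow V_\pm$ with the identity on the remaining circle factor, so by Künneth $\gamma^m_\pm$ is $\beta^\bullet_\pm \otimes \Id_{H^*(\bbS^1_\mp)}$, and substituting the formulas of Corollary~\ref{corg:V&S^1(S)} together with $H^1(S)=H^3(S)=0$ yields the displayed matrices. The only minor blemish is the aside that $\hkr$ being isometric implies it is orientation-preserving --- it is orientation-preserving, but that follows from the definition of a hyper-Kähler rotation rather than from being an isometry, and in any case only the fact that $\hkr$ is a diffeomorphism is needed to identify $H^\bullet(S_+) \cong H^\bullet(S_-)$.
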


\begin{proof}
  Straightforward.
\end{proof}

\begin{theorem}
\label{thm:g2topology}
\hfill
  \begin{enumerate}
  \item \label{it:h1m} $\pi_1(M)=(0)$ and $H^1(M)=(0)$;
  \item \label{it:h2m}
    $H^2(M) = \ker \Bigl[H_+\oplus H_-\to N_+ + N_-\Bigr]$, in
    other words, %
    $H^2(M)\simeq (N_+\cap N_-)\oplus K_+\oplus K_-$;
  \item \label{it:h3m}
    $H^3(M) \simeq \ZZ[S]\oplus (L/_{N_+ + N_-}) \oplus (N_-\cap T_+)
    \oplus (N_+\cap T_-)\oplus H^3(Z_+)\oplus H^3(Z_-)\oplus K_+\oplus K_-;$
  \item \label{it:h4m}
    $H^4(M)\simeq H^4(S)\oplus (T_+\cap T_-)\oplus
     (L/_{N_- + T_+})\oplus (L/_{N_+ + T_-}) \oplus H^3(Z_+) \oplus H^3(Z_-)
     \oplus K_+^\ast \oplus K_-^\ast$. 
  \end{enumerate}
\end{theorem}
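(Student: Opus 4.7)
The plan is to run the Mayer--Vietoris sequence \eqref{eq:mayer_vietoris} for the decomposition $M = M_+ \cup M_-$ meeting in $W = S \times \T^2$, using the explicit description of the maps $\gamma^m$ provided by Lemma \ref{lem:mvmaps}. By Lemma \ref{lemg:Z&V} every $H^m(M_\pm)$ is torsion-free, so each $\ker(\gamma^m) = \mathrm{im}\,\rho^m$ is torsion-free and the short exact sequence
\[
0 \to \coker(\gamma^{m-1}) \to H^m(M) \to \ker(\gamma^m) \to 0
\]
splits, as recorded in \eqref{eq:M_splits}. The computation of $H^*(M)$ thus reduces to computing $\ker(\gamma^m)$ and $\coker(\gamma^{m-1})$ via elementary linear algebra in terms of the maps $\rho_\pm, \beta^3_\pm, \beta^4_\pm$ and the lattices $N_\pm, T_\pm, K_\pm$.

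For \ref{it:h1m}, simple-connectedness of $M$ follows from Van Kampen (as was already noted after Theorem \ref{thm:g2glue}): by Lemma \ref{lemg:Z&V}(i) each $V_\pm$ is simply connected, so $\pi_1(M_\pm) = \ZZ$ is generated by the external $\Sph^1$-factor; under the swap in the gluing map \eqref{eq:gluemap} this generator becomes null-homotopic on the other side, so the amalgamated product is trivial. The statement $H^1(M) = 0$ also drops out of Mayer--Vietoris, since $\gamma^1$ is an isomorphism by Lemma \ref{lem:mvmaps}(i) and $\coker(\gamma^0) = 0$ because all pieces are connected.

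For \ref{it:h2m}--\ref{it:h4m}, I apply Lemma \ref{lem:mvmaps} and unwind the kernels and images. From Lemma \ref{lem:mvmaps}(ii), $\ker(\gamma^2) = \{(\alpha,\beta)\in H_+ \oplus H_- : \rho_+(\alpha) + \rho_-(\beta) = 0\}$ sits in a split exact sequence $0 \to K_+ \oplus K_- \to \ker(\gamma^2) \to N_+ \cap N_- \to 0$ (split because $N_+ \cap N_- \subseteq L$ is free), and $\mathrm{im}(\gamma^2) = (N_+ + N_-) \oplus 0 \subseteq L \oplus \ZZ[S]$, giving $\coker(\gamma^2) = L/(N_+ + N_-) \oplus \ZZ[S]$. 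For $\gamma^3$ the two rows in Lemma \ref{lem:mvmaps}(iii) impose independent constraints on the pairs $(\xi_+,\beta)$ and $(\alpha,\xi_-)$; using $\mathrm{im}\,\beta^3_\pm = T_\pm$ and $\ker\beta^3_\pm = H^3(Z_\pm)$ one sees
\[
\ker(\gamma^3) \simeq H^3(Z_+) \oplus H^3(Z_-) \oplus K_+ \oplus K_- \oplus (T_+ \cap N_-) \oplus (N_+ \cap T_-),
\]
while $\mathrm{im}(\gamma^3) = (T_+ + N_-) \oplus (N_+ + T_-) \subseteq L \oplus L$, whence $\coker(\gamma^3) = L/(T_+ + N_-) \oplus L/(N_+ + T_-)$. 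For $\gamma^4$ the analogous analysis using $\mathrm{im}\,\beta^4_\pm = H^4(S) = \ZZ$ and $\ker\beta^4_\pm = K^\ast_\pm$ yields $\ker(\gamma^4) \simeq K^\ast_+ \oplus K^\ast_- \oplus H^4(S) \oplus H^3(Z_+) \oplus H^3(Z_-) \oplus (T_+ \cap T_-)$. Substituting these into \eqref{eq:M_splits} gives the claimed descriptions of $H^2, H^3$ and $H^4$.

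The main difficulty is organisational rather than conceptual: one must track carefully through the labellings by $\bfa^i_\pm$ in Lemma \ref{lem:mvmaps} which summand of each $H^m(M_\pm)$ maps to which part of $H^m(W)$, and verify that every intermediate quotient lies in a torsion-free group so that the splittings are automatic at the integral level. Once the bookkeeping is done, all four assertions follow by direct inspection.
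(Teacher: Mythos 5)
Your proof follows exactly the same route as the paper's: the Mayer--Vietoris sequence for $M = M_+ \cup M_-$ along $W = S \times \T^2$, the splitting \eqref{eq:M_splits} (valid because each $H^m(M_\pm)$ is torsion-free by Lemma \ref{lemg:Z&V}), the explicit matrices $\gamma^m$ from Lemma \ref{lem:mvmaps}, and the identification of each $\ker\gamma^m$ and $\coker\gamma^{m-1}$ in lattice-theoretic terms. The computations of kernels and cokernels are the ones the paper carries out, and the Van Kampen argument for $\pi_1(M)=(0)$ is the same one-liner (your additional explanation of why the two circle generators die is correct and just spells out what the paper leaves implicit).
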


\begin{proof}
Since $\pi_1(V_\pm) = (0)$, the van Kampen theorem for the decomposition
$M=M_+\cup M_-$ along the common intersection $W=S\times \TT^2$
immediately implies that $\pi_1(M)=(0)$.

We know that $\gamma^0$ is surjective and $\gamma^1$
  injective, hence \ref{it:h1m}. Since $\gamma^1$ is surjective, $H^2(M)=\ker
  (\gamma^2)=\ker \left(H_+{\oplus}H_- \to N_+ {+} N_- \right)$.
  Thus, we have an exact sequence:
\[
0\to K_+\oplus K_-\to H^2(M)\to N_+\cap N_- \to (0),
\]
 which is split since $N_+\cap N_-$ is torsion-free \ref{it:h2m}.
 To show \ref{it:h3m} note first that, from the description of $\gamma^2$, it
 is clear that
\[
\coker (\gamma^2)=\ZZ[S]\oplus (L/_{N_+ + N_-}).
\]
 Now $\ker (\gamma^3)$ is a direct sum of two pieces
\[
\ker_\pm =
\ker \Bigl[   \begin{pmatrix} \beta^3_\pm & \rho_\mp
\end{pmatrix}\colon H^3(V_\pm)\oplus H_\mp \to L\Bigr] .
\]
 Each of these kernels is computed by a split exact sequence:
\[
(0) \to H^3(Z_\pm)\oplus K_\mp \to \ker_\pm \to N_\mp\cap T_\pm \to (0)
\]
 and \ref{it:h3m} follows from \eqref{eq:M_splits}.
To show \ref{it:h4m} note first that, from the
 description of $\gamma^3$, it is clear that
\begin{equation}
\label{eq:mvimage}
\coker (\gamma^3) = (L/_{N_+ + T_-})\oplus (L/_{N_- + T_+}).
\end{equation}
Now $\ker (\gamma^4)$ is the direct sum of two pieces
\[
\ker \Bigl[   \begin{pmatrix}
    \beta^4_+ & \beta^4_-
  \end{pmatrix}\colon H^4(V_+)\oplus H^4(V_-) \to H^4(S)\Bigr]\oplus
\ker \Bigl[   \begin{pmatrix}
    \beta^3_+ & \beta^3_-
  \end{pmatrix}\colon H^3(V_+)\oplus H^3(V_-) \to L\Bigr].
\]
 The first of these kernels is isomorphic to $H^4(S)\oplus K_+^\ast
 \oplus K_-^\ast$; the second is isomorphic to $(T_+\cap T_-)\oplus
 H^3(Z_+)\oplus H^3(Z_-)$, and \ref{it:h4m} again follows from
 \eqref{eq:M_splits}.
 \end{proof}

\begin{remark}
\label{rmk:mvfactor}
If $[\alpha] \in H^k_{cpt}(V_\pm)$ then
$\bfa_\mp^0[\alpha] \in H^k_{cpt}(M_\pm)$ can be pushed forward to a class
in $H^4(M)$. Denote this map by $i_\pm : H^k_{cpt}(V_\pm) \to H^k(M)$.
Then the restriction of $\delta : H^3(W) \to H^4(M)$ to
$\bfa_\pm^1 \bfa_\mp^0 H^2(S)$ equals
$i_\pm \circ \partial_\pm : H^3(\Sph^1 \times S) \to H^4(M)$.
In the expression \eqref{eq:mvimage} for $\imag \delta$, we can identify
$L/_{N_\mp + T_\pm}$ as the image of $i_\pm \circ \partial_\pm$.
\end{remark}

\subsubsection{Torsion of the cohomology}

From Theorem \ref{thm:g2topology} we can immediately identify the torsion
part of the cohomology.

 \begin{corollary}
 \label{cor:torsion}
 \hfill
   \begin{enumerate}
   \item \label{it:h3t} $\Tor H^3(M) \simeq  \Tor (L/_{N_+ + N_-})$;
   \item \label{it:h4t}
   $\Tor H^4(M) \simeq  \Tor (L/_{N_- + T_+})\oplus \Tor (L/_{N_+ + T_-})$.
   \end{enumerate}
 \end{corollary}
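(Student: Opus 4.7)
The plan is to simply read off the torsion parts from the splittings of $H^3(M)$ and $H^4(M)$ given by Theorem~\ref{thm:g2topology}, by checking that every summand except the lattice quotients $L/(N_++N_-)$, $L/(N_-+T_+)$, $L/(N_++T_-)$ is a priori torsion-free.

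First I would catalogue which summands are manifestly torsion-free. The factor $\ZZ[S]$ (and $H^4(S) \cong \ZZ$) is free. The intersections $N_+\cap N_-$, $N_\pm\cap T_\mp$ and $T_+\cap T_-$ are sublattices of the K3 lattice $L$, so they are finitely generated free abelian groups. The groups $H^3(Z_\pm)$ are torsion-free by assumption (iv) in Definition~\ref{dfng:BLOCK}. The kernels $K_\pm \subseteq H^2(V_\pm)$ are subgroups of finitely generated free abelian groups (recall $H^2(V_\pm)$ is torsion-free by Lemma~\ref{lemg:Z&V}), so they are free, and consequently the dual lattices $K_\pm^\ast = \Hom(K_\pm, \ZZ)$ are also free.

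Next I would observe that the remaining summands in Theorem~\ref{thm:g2topology} are precisely the cokernels $L/(N_++N_-)$ in $H^3(M)$ and $L/(N_-+T_+) \oplus L/(N_++T_-)$ in $H^4(M)$. These are finitely generated abelian groups of rank $22 - \rk(N_++N_-)$, etc., and they can carry torsion (indeed, we make this quite explicit in examples later). Since torsion is additive under direct sums and since all other summands are torsion-free, applying $\Tor$ to the decompositions in Theorem~\ref{thm:g2topology}\ref{it:h3m} and \ref{it:h4m} yields the claimed expressions.

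There is no real obstacle here: the corollary is a bookkeeping consequence of the explicit decomposition already established. The only point to be a little careful about is the torsion-freeness of $K_\pm^\ast$ and of the various sublattices of $L$, all of which follows immediately from their description as subgroups of free abelian groups (or, in the case of $H^3(Z_\pm)$, from part of the definition of a building block).
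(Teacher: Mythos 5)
Your proof is correct and is essentially the paper's argument: the paper treats the corollary as an immediate consequence of Theorem~\ref{thm:g2topology}, and your check that every summand other than the lattice quotients $L/(N_++N_-)$, $L/(N_-+T_+)$, $L/(N_++T_-)$ is torsion-free (sublattices of $L$ are free, $H^3(Z_\pm)$ is torsion-free by Definition~\ref{dfng:BLOCK}(iv), $K_\pm$ is a subgroup of the torsion-free $H^2(V_\pm)$, and $K_\pm^*$ is a dual hence free) is exactly the bookkeeping the paper implicitly relies on.
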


While it is not of central importance to our application, let us also determine
the \emph{torsion linking form} on $\Tor H^4(M)$. Recall that for a closed
oriented manifold $M$ of odd dimension $2n-1$, the torsion linking form can be
interpreted as follows: Let $\alpha$, $\beta$ be cocycles representing
torsion elements in $H^n(M)$. Then $k\alpha = d\gamma$ for some
$(n{-}1)$-cocycle $\gamma$ and $k \geq 2$. The mod~$k$ reduction of $\gamma$
represents a class in $H^{n-1}(M; \cg{k})$, independent of the choices
up to addition of integral classes
(it is a pre-image of $[\alpha]$ under the Bockstein map associated to
$0 \to \ZZ \to \ZZ \to \cg{k} \to 0$). Therefore %
the linking form
$b([\alpha], [\beta]) = \frac{1}{k}(\gamma \cup \beta)[M] \in \QQ/\ZZ$
is well-defined. $b$~is symmetric if $n$ is even, and skew-symmetric
if $n$ is odd.

\begin{lemma}
In \ref{it:h4t}, the two summands are isotropic with respect to the
torsion-linking form.
\end{lemma}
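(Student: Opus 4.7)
The plan is to compute the torsion linking form via a Bockstein lift, and then exploit the Künneth decomposition of $W = S \times \Sph^1_+ \times \Sph^1_-$. For any $\alpha \in \Tor H^4(M;\ZZ)$, a Bockstein lift $\hat\alpha \in H^3(M;\QQ/\ZZ)$ exists, and the linking form can be written as $b(\alpha,\beta) = \langle \hat\alpha \cup \beta, [M]\rangle \in \QQ/\ZZ$. The essential technical tool is the standard Mayer-Vietoris module identity $u \cup \delta(v) = \delta(\iota^\ast u \cup v)$ for the inclusion $\iota \colon W \into M$, valid with mixed $\ZZ$ and $\QQ/\ZZ$ coefficients at the cochain level.

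For the first summand $\Tor(L/(N_- + T_+))$ in \ref{it:h4t}, I would represent $\alpha = \delta(\tilde\alpha)$ and $\beta = \delta(\tilde\beta)$ with $\tilde\alpha, \tilde\beta \in \bfa_+^1 \bfa_-^0 L \subset H^3(W)$, using the identification from the proof of Theorem \ref{thm:g2topology}. Supposing $k\tilde\alpha = \gamma^3(\eta)$ with $\eta = (\eta_+, \eta_-)$, the block structure of $\gamma^3$ in Lemma \ref{lem:mvmaps}(iii) lets me arrange $\eta_+ \in \bfa_-^0 H^3(V_+)$ and $\eta_- \in \bfa_+^1 H^2(V_-)$. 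Corollary \ref{corg:V&S^1(S)}(iii) then places $\eta_+|_W \in \bfa_+^1 \bfa_-^0 T_+$ and $\eta_-|_W \in \bfa_+^1 \bfa_-^0 N_-$, both sitting inside the Künneth piece $\bfa_+^1 \bfa_-^0 L$. A Bockstein lift $\hat\alpha$ can then be built by Mayer-Vietoris with $\QQ/\ZZ$ coefficients from the pair $(\eta_+/k, \eta_-/k) \bmod \ZZ$ (which glues because its image in $H^3(W;\QQ/\ZZ)$ is $\tilde\alpha \bmod \ZZ = 0$). Consequently $\hat\alpha|_W$ lies in $\bfa_+^1 \bfa_-^0 (L \otimes \QQ/\ZZ) \subset H^3(W;\QQ/\ZZ)$.

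The decisive step is then immediate: both $\hat\alpha|_W$ and $\tilde\beta$ carry a factor of $\bfa_+^1 \in H^1(\Sph^1_+)$, so under the Künneth decomposition their cup product vanishes in $H^6(W;\QQ/\ZZ)$ because $\bfa_+^1 \cup \bfa_+^1 \in H^2(\Sph^1_+)=0$. The module identity then yields $\hat\alpha \cup \beta = \delta(\hat\alpha|_W \cup \tilde\beta) = 0$ in $H^7(M;\QQ/\ZZ)$, and hence $b(\alpha,\beta) = 0$. The second summand $\Tor(L/(N_+ + T_-))$ is isotropic by the symmetric argument, interchanging the roles of $\bfa_+$ and $\bfa_-$. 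The main delicate point I would need to check carefully is in the middle paragraph: that the Bockstein lift can indeed be arranged so $\hat\alpha|_W$ sits entirely in the intended Künneth summand, which requires a cochain-level refinement of the Mayer-Vietoris analysis used to prove Theorem \ref{thm:g2topology}, together with the well-definedness (independence of choices) of the linking pairing.
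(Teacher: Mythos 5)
Your proof is correct and works, but it takes a formally different route from the paper's. The paper works directly at the cochain level: it exhibits a specific cocycle representative $S\alpha$ of the image of $\alpha$ (pulled back from $V_+$ and supported on $M_+$), constructs an explicit prederivative $P\alpha$ with $d(P\alpha)=kS\alpha$ by gluing $\bfa_-^0(k\alpha_+-\psi_+)$ on $M_+$ with $\bfa_+^1\psi_-$ on $M_-$, and then evaluates $\langle P\alpha\cup S\alpha',[M]\rangle$ directly, concluding it vanishes because $S\alpha'$ is zero on $M_-$ while on $M_+$ both cochains pull back from the six-dimensional $V_+$, so their degree-$7$ cup product already vanishes there. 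You instead stay mostly at the cohomology level: you organize everything via the Mayer--Vietoris connecting map $\delta$, build the Bockstein lift $\hat\alpha$ abstractly by gluing $(\eta_+/k,\eta_-/k)\bmod\ZZ$, and invoke the $H^*(M)$-module property of $\delta$ to reduce the pairing to a cup product $\iota^*\hat\alpha\cup\tilde\beta$ on $W$, which dies because both classes sit in the $\bfa_+^1$-Künneth piece. Your ``decisive'' vanishing thus uses $(\bfa_+^1)^2=0$ on $W$, whereas the paper uses the pullback-from-$V_+$ (i.e.\ $\bfa_-^0$) structure on $M_+$ together with a dimension count; these are complementary manifestations of the same underlying product geometry.

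Two comments on the trade-offs. First, your approach is somewhat cleaner conceptually, but you are right to flag that the assertion that your Mayer--Vietoris class $\hat\alpha$ is actually a Bockstein preimage of $\alpha$ needs a cochain-level check: a priori $\tilde\beta(\hat\alpha)-\alpha$ only lies in $\ker\rho^4=\imag\delta$, and pinning it to zero requires essentially the same cochain bookkeeping that the paper carries out for $P\alpha$. Once you do that, you recover that $[\hat\alpha|_W]$ equals $-[\eta_+|_W/k]\bmod\ZZ$ up to sign, which lands in $\bfa_+^1\bfa_-^0(L\otimes\QQ/\ZZ)$ as you claim, and then the module-identity step goes through (modulo Koszul signs, which are irrelevant for vanishing). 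Second, the paper's more explicit route has a side benefit that your formulation would have to work harder for: the explicit $P\alpha$ is immediately reusable to compute the cross-pairing between the two summands, yielding the closed formula $b([\alpha],[\beta])=\tfrac1k\inner{t,\beta}$ given in the remark following the lemma.
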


\begin{proof}
Using Remark \ref{rmk:mvfactor}, we can describe the elements of the
$L/_{N_- + T_+}$ summand in $H^4(M)$ as follows.
Given an element in $L$, take a closed cochain $\alpha \in C^2(S;\ZZ)$
representing it. Extend $\bfa_+^1 \alpha$ to $\alpha_+ \in C^3(V_+;\ZZ)$.
Then $d\alpha_+$ has compact support in $V_+$, and represents
$\partial_+ [\bfa_+^1 \alpha] \in H^4_{cpt}(V_+)$.
Further, $\bfa_-^0 d\alpha_+$ can be extended to a cochain on $M$.
The extension $S\alpha \in C^4(M;\ZZ)$ is
closed, and the image of $[\alpha]$ in the $L/_{N_+ + T_-}$ summand
in $H^4(M)$ is $[S\alpha]$.

Now suppose that $[\alpha]$ represents a torsion element of $L/_{N_- + T_+}$,
\ie $k[\alpha] \in N_- + T_+$ for some $k \geq 2$. To compute the torsion
linking of the image $[S\alpha] \in H^4(M)$ with some other torsion
element we need to identify a prederivative of $kS\alpha$
that is defined on all of $M$. By Lemma \ref{lem:mvmaps}\ref{it:mv3},
$k[\alpha] \in N_- + T_+$ means that there
are $[\psi_+] \in H^3(V_+)$ and $[\psi_-] \in H^2(V_-)$ such that
$\bfa_+^1 \bfa_-^0 k[\alpha] =
\bfa_+^1 \bfa_-^0 [\psi_+]_{|W} + \bfa_-^0[\psi_-]_{|W}$.
The representatives $\psi_+ \in C^3(V_+;\ZZ)$ and $\psi_- \in C^2(V_-;\ZZ)$
can be chosen so that
$\bfa_+^1 \bfa_-^0 k\alpha =
\bfa_+^1 \bfa_-^0 \psi_{+|W} + \bfa_-^0\psi_{-|W}$.
Then we can define $P\alpha \in C^3(M;\bbz)$ by setting the restrictions to
$M_+$ and $M_-$ to be $\bfa_-^0(k\alpha_+ - \psi_+)$ and $\bfa_+^1\psi_-$
respectively.
Since $k \bfa_-^0 d \alpha_+ = d(\bfa_-^0(k\alpha_+ - \psi_+))$ on $M_+$
while $\psi_-$ is closed, $d(P\alpha) = kS\alpha$. 

Thus, if $[\alpha']$ represents some other torsion element of $L/_{N_+ + T_-}$
then the torsion linking of their images $[S\alpha]$ and $[S\alpha']$ in
$H^4(M)$ is given by the integral of the cup product of the mod~$k$ cocycles
$P\alpha$ and $S\alpha'$, %
which vanishes: $S\alpha'$ is zero on $M_-$, while the restrictions of both
cochains to $M_+$ are pull-backs from $V_+$. Hence the $\Tor L/_{N_+ + T_-}$
summand is isotropic.
\end{proof}

\noindent
In fact, one can see from the proof that the torsion linking form is given by
the following natural pairing
\begin{align*}
\Tor (L/_{N_- + T_+}) \times& \Tor (L/_{N_+ + T_-}) \to \QQ/\ZZ \\
([\alpha] ,& [\beta]) \mapsto b([\alpha], [\beta])  :
\end{align*}
We can write $k\alpha = n + t$ for some $k \geq 1, \; n \in N_-, \; t \in T_+$. 
$\inner{t, \beta} = - \inner{n, \beta} \mod k$ is independent of choice
of $n$ and $t$ because $\beta \perp N_- + T_+$, and it is also independent
of the choices of representatives $\alpha, \beta \in L$. Therefore
$b([\alpha], [\beta]) = \frac{1}{k}\inner{t,\beta} \in \QQ/\ZZ$ is
well-defined.

\begin{remark*}
If $H^3(Z)$ is not torsion-free, then Corollary \ref{corg:V&S^1(S)} remains
true, except that
$0 \to \bar K \to H^4(V) \overset{\beta^4}{\longrightarrow} H^4(S) \to 0$,
where there are natural isomorphisms $K \cong \Hom(\bar K, \ZZ)$ and
$\Tor \bar K \cong \Tor H^4(Z)$.
Theorem \ref{thm:g2topology} remains true too except that appearances of
$K_\pm^*$ should be replaced by $\bar K_\pm$ (but proving that the short exact
sequences used in the proof split becomes a bit more complicated).
Then $\Tor H^4(M)$ contains summands $\Tor H^4(Z_\pm) \oplus \Tor H^3(Z_\pm)$.
The torsion linking form on these terms is the obvious one.
\end{remark*}

\begin{remark}
\label{rmk:tor_form}
The fact that the torsion of $H^4(M)$ always splits as a direct sum of two
subgroups isotropic under the torsion-linking form means that for manifolds
of this twisted connected sum type, the isomorphism class of the
torsion-linking form is determined by the isomorphism class of $H^4(M)$.
\end{remark}

\subsubsection{Gluing classes in $H^4(Z_\pm)$}

The Mayer--Vietoris theorem says that if we try to glue a pair of classes in
$H^4(M_+)$ and $H^4(M_-)$ having the same image in $H^4(W)$ to a class in
$H^4(M)$ then there is an ambiguity given by the image of
$\delta : H^3(W) \to H^4(M)$. However, in this particular construction there
\emph{is} an unambiguous way to glue a pair of classes in $H^4(Z_+)$ and
$H^4(Z_-)$, which will be important for describing the characteristic classes
of $M$.
Define
\[ H^4(Z_+) \oplus_0 H^4(Z_-) \; = \;
\left\{ ([\alpha_+], [\alpha_-]) \in H^4(Z_+) \oplus H^4(Z_-) :
[\alpha_+]_{|S} = [\alpha_-]_{|S} \in H^4(S) \right\} . \]

\begin{definition}
\label{def:y}
We define a map
\[ Y : H^4(Z_+) \oplus_0 H^4(Z_-) \to H^4(M) \]
as follows. Recall that $S = f_\pm^{-1}(\infty)$ for a fibration
$f_\pm \colon Z_\pm \to \PP^1$.
Let $\Delta \subset \PP^1$ be a trivialising neighbourhood of $\infty$ for
$f_\pm$,
and let $U_\pm = f_\pm^{-1}(\Delta) \cong \Delta \times S \subset Z_\pm$.
($\Delta^* \times S$ correspond to the cylindrical ends
$\bbrp \times \Sph^1 \times S$ of $V_\pm$, mapping 
$\bbrp \times \Sph^1 \to \Delta^*$ by
$(t,\anglen) \mapsto z = e^{-t-i\anglen}$).
Let $p_\pm : U_\pm \to S$ be the projection for the local trivialisation.
For $([\alpha_+], [\alpha_-]) \in H^4(Z_+) \oplus_0 H^4(Z_-)$, let $[\beta]$ be
their common image in $H^4(S)$. Then we may choose the cocycles 
$\alpha_\pm \in C^4(Z_\pm;\bbz)$ so that the restriction of $\alpha_\pm$ to
$U_\pm$ equals $p_\pm^*\beta$. The pull-backs of $\alpha_\pm$ to
$\Sph^1 \times V_\pm$ have the same restriction to the cylindrical end, and
patch to a cocycle on $M$. We set $Y([\alpha_+], [\alpha_-])$ to be the
class represented by that cocycle.
\end{definition}

Let $N'_\mp$ be the image of $N_\mp$ in $N_\pm^* = L/T_\pm$.
Recall from Lemma \ref{lemg:Z&V}\ref{itg:h4} that $N_\pm^* \into H^4(Z)$.
The image lies in the kernel of restriction to $V$ and hence also of
restriction to~$S$, so $N_\pm^* \into H^4(Z_+) \oplus_0 H^4(Z_-)$.

\begin{lemma}
\label{lem:ymv}
$Y : H^4(Z_+) \oplus_0 H^4(Z_-) \to H^4(M)$ maps onto the terms
\[ H^4(S) \oplus (L/_{N_- + T_+}) \oplus (L/_{N_+ + T_-})
\oplus K_+^\ast \oplus K_-^\ast  \subseteq H^4(M) . \]
in the expression \ref{thm:g2topology}\ref{it:h4m} for $H^4(M)$,
with kernel $N'_+ \oplus N'_-$.
\end{lemma}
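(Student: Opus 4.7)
The plan is to verify the claims by tracking the map $Y$ through the Mayer--Vietoris description of $H^4(M)$ given in the proof of Theorem \ref{thm:g2topology}\ref{it:h4m}. First I would check that $Y$ is well-defined: the construction depends on choices of representative cocycles $\alpha_\pm$ restricting to a common $p_\pm^*\beta$ on $U_\pm$, and on the trivialisation $U_\pm\cong \Delta\times S$; any two choices differ by coboundaries or by cochains supported away from the gluing region, so induce cohomologous cocycles on~$M$. Next I would identify $\rho^4\circ Y$: unwinding the construction, the restriction of $Y([\alpha_+],[\alpha_-])$ to $M_\pm$ is the pullback of $\alpha_\pm|_{V_\pm}$, giving
\[
\rho^4 Y([\alpha_+],[\alpha_-]) = \bigl(\bfa_-^0[\alpha_+|_{V_+}],\,\bfa_+^0[\alpha_-|_{V_-}]\bigr) \in \bfa_-^0 H^4(V_+)\oplus \bfa_+^0 H^4(V_-)\subset \ker\gamma^4 .
\]

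For surjectivity onto the stated summands I would argue in two parts. The $H^4(S)\oplus K_+^*\oplus K_-^*$ summand of $\ker\gamma^4$ comes from the factor $H^4(V_+)\oplus H^4(V_-)$ via Corollary \ref{corg:V&S^1(S)}\ref{itg:4}; since the long exact sequence \eqref{eq:zrel} together with $H^5(S)=0$ makes $H^4(Z_\pm)\twoheadrightarrow H^4(V_\pm)$ surjective and pairs with matching restriction to $H^4(S)$ are dense enough to hit all of $\ker\gamma^4$ of this form, these summands lie in $\imag Y$. For the summands $L/_{N_-+T_+}\oplus L/_{N_++T_-}$, note that by \eqref{eq:zrel} $H^4_{cpt}(V_\pm)\hookrightarrow H^4(Z_\pm)$ precisely as those classes restricting to $0$ in $H^4(S)$; for a pair $([\alpha_+],0)$ of this form, $Y([\alpha_+],0)$ agrees by construction with $i_+[\alpha_+]$ in the sense of Remark \ref{rmk:mvfactor}, which maps onto $L/_{N_-+T_+}$ via the factor $N_+^*\cong L/T_+\subset H^4_{cpt}(V_+)$. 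Symmetrically one gets $L/_{N_++T_-}$. Finally, $Y$ cannot reach the summands $(T_+\cap T_-)\oplus H^3(Z_+)\oplus H^3(Z_-)$ because these come entirely from the $\bfa_-^1 H^3(V_+)\oplus \bfa_+^1 H^3(V_-)$ part of $\ker\gamma^4$, to which $\rho^4 Y$ is orthogonal.

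For the kernel, suppose $Y([\alpha_+],[\alpha_-])=0$. Applying $\rho^4$ forces $[\alpha_\pm|_{V_\pm}]=0$, so by Lemma \ref{lemg:Z&V}\ref{itg:h4} the class $[\alpha_\pm]$ lies in the subgroup $N_\pm^*\hookrightarrow H^4(Z_\pm)$. On this subgroup $Y$ factors through
\[
N_+^*\oplus N_-^* \;=\; L/T_+\oplus L/T_- \;\longrightarrow\; L/_{N_-+T_+}\oplus L/_{N_++T_-} \;\hookrightarrow\; H^4(M),
\]
where the first arrow is the pair of natural quotients and the inclusion into $H^4(M)$ is as the direct summand $\imag\delta$ in \eqref{eq:mvimage}. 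The kernel of the first arrow is exactly $N'_-\oplus N'_+$ by definition of $N'_\mp$ as the image of $N_\mp$ in $L/T_\pm$, which gives the claimed kernel $N'_+\oplus N'_-$.

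The main technical point to be careful about is step~4: one must match the identification of $N_+^*\subset H^4(Z_+)$ coming from Lemma \ref{lemg:Z&V}\ref{itg:h4} with the image of $\partial:H^3(\Sph^1\times S)\to H^4_{cpt}(V_+)$ used in Remark \ref{rmk:mvfactor}, so that the composite $i_+\circ\partial_+$ agrees with $Y$ restricted to $N_+^*\oplus 0$. Both constructions arise from the same connecting map for the pair $(Z_+,S)$ (respectively $(V_+,\partial V_+)$) via the natural identification $H^4_{cpt}(V_+)\hookrightarrow H^4(Z_+)$ from \eqref{eq:zrel}, so this compatibility is essentially formal once the cocycle representatives are chosen as in the construction of~$Y$.
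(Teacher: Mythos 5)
Your argument is correct and follows essentially the same path as the paper's proof, just written out piece by piece rather than assembled into the single commutative diagram the paper uses: both rely on the splitting $H^4(Z_\pm)\cong H^4_{cpt}(V_\pm)\oplus H^4(S)$ from \eqref{eq:zrel}, the identification $H^4_{cpt}(V_\pm)\cong N_\pm^*\oplus K_\pm^*$, and the comparison with $i_\pm\circ\partial_\pm$ from Remark \ref{rmk:mvfactor}. Two small imprecisions worth fixing: the surjectivity of $H^4(Z_\pm)\to H^4(V_\pm)$ should be cited from Lemma \ref{lemg:Z&V}\ref{itg:h4} rather than deduced from \eqref{eq:zrel}, and the phrase ``dense enough to hit all of $\ker\gamma^4$ of this form'' should be replaced by the actual argument (any $(v_+,v_-)$ with $\beta^4_+v_+=\beta^4_-v_-$ lifts to $H^4(Z_+)\oplus_0 H^4(Z_-)$ because the lifts automatically have matching restrictions to $H^4(S)$, as $H^4(Z_\pm)\to H^4(S)$ factors through $\beta^4_\pm$ by Corollary \ref{corg:V&S^1(S)}\ref{itg:4}).
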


\begin{proof}
It follows from \eqref{eq:zrel} that
$0 \to H^4_{cpt}(V_\pm) \to H^4(Z_\pm) \to H^4(S) \to 0$ is split exact.
Hence so is 
\[ 0 \to H^4_{cpt}(V_+) \oplus H^4_{cpt}(V_-) \to H^4(Z_+) \oplus_0 H^4(Z_-)
\to H^4(S) \to 0. \]
Recall that $H^4_{cpt}(V_\pm) \cong N_\pm^* \oplus K_\pm^*$, where
$N_\pm^* = \imag \partial_\pm$. The result follows from identifying
$N_\pm'$ as the kernel of $i_\pm : \imag \partial_\pm \to \imag \delta$ in
Remark \ref{rmk:mvfactor} and considering the commutative diagram

\xymatrix@C=-8mm{
H^3(\Sph^1 \times S) \oplus H^3(\Sph^1 \times S)
\ar[rr]^{\hspace{5mm}\partial_+ \oplus \partial_-} \ar[dd]_{\cong} & &
H^4_{cpt}(V_+) \oplus H^4_{cpt}(V_-) \ar[rr] \ar[dd]_{i_+ + i_-}
\ar@{_{(}->}[rd] & & H^4(V_+) \oplus H^4(V_-) \ar@{^{(}->}[dd] \\
& \hspace{2.5cm} & & H^4(Z_+) \oplus_0 H^4(Z_-) \ar[ur] \ar[ld]_Y \\
H^3(T^2 \times S) \ar[rr]^{\delta} & & H^4(M) \ar[rr]^{\rho^4} &
& H^4(M_+) \oplus H^4(M_-)%
}

\noindent where the top row is the direct sum of the sequences \eqref{eq:vrel}
of relative cohomology for $V_+$ and~$V_-$, and the bottom row is the
Mayer--Vietoris sequence \eqref{eq:mayer_vietoris}.
\end{proof}

\subsection{Characteristic classes of twisted connected sums}
\label{ss:p1calc}

We now consider how to determine the characteristic classes of a twisted
connected sum in terms of related data on the building blocks $Z_\pm$.
We begin with a summary of the characteristic classes of 
relevance for a closed oriented spin 7-manifold $M$.

\subsubsection{Oriented characteristic classes}

The characteristic classes of (the tangent bundle of) an oriented 7-manifold
$M$ are the Stiefel--Whitney classes $w_2(M), \ldots, w_7(M)$ and the first
Pontrjagin class $p_1(M)$.
First we want to show that all the Stiefel--Whitney classes vanish for any 
oriented spin 7-manifold and hence that 
the only oriented characteristic class of interest for a \gtmfd{} is $p_1(M)$.
We will use some standard facts about characteristic classes, which can
be found in Milnor--Stasheff \cite{MS}.
First of all, for any vector bundle $E \to M$ the Stiefel--Whitney class
$w_k(E) \in H^k(M;\cg{2})$ can be determined from $\{w_{2^i}(E) : 2^i \leq k\}$
using the Steenrod square operations
$\Sq^k : H^i(M; \cg{2}) \to H^{i+k}(M;\cg{2})$ \cite[8B]{MS}, \eg
\begin{equation}
\label{eq:w3}
w_3 = \Sq^1 w_2 + w_1 w_2 .
\end{equation}
Hence all Stiefel--Whitney classes of an oriented rank 7 bundle are determined
algebraically by $w_2$ and $w_4$.
Further, Wu's formula \cite[Theorem 11.14]{MS} expresses the Stiefel--Whitney
classes of a closed $n$-dimensional manifold $M$ as
\begin{equation}
\label{eq:wu}
w_k = \sum_{i=0}^k \Sq^{k-i}v_i ,
\end{equation}
where the Wu class $v_k(M)$ can be defined as the Poincar\'e dual to
$\Sq^k \colon H^{n-k}(M; \cg{2}) \to H^n(M; \cg{2})$.
Applying \eqref{eq:wu} recursively, we find for any closed oriented
manifold that ${v_1 = w_1 = 0}$, $v_2 = w_2$, combining with \eqref{eq:w3} gives
$v_3 = 0$, and
\begin{equation}
v_4 = w_4 + w_2^2
\end{equation}
since $\Sq^2 a = a^2$ for any $a \in H^2(M; \cg{2})$. Because $\Sq^k$ vanishes
on $H^i(M; \cg{2})$ for $i < k$, Wu classes above the middle dimension always
vanish (\cf \cite[page 132]{MS}), so $w_4 = w_2^2$ for any closed orientable
7-manifold $M$. If $M$ is spin, then $w_2 = 0$, and hence all other Stiefel--Whitney
classes vanish too.

\subsubsection{Spin characteristic classes}

The Stiefel--Whitney and Pontrjagin classes are all stable, \ie they are
invariant under addition of trivial bundles. The stable characteristic classes
of an oriented vector bundle are pull-backs of elements of $H^*(\bso)$ under a
classifying map $M \to \bso$, where $\bso$ is the classifying space for the
stable special orthogonal group $\sso = \lim_{n \to \infty} \sorth{n}$.
If the vector bundle is spin then the classifying map can be lifted to
$\bspin$, and we can possibly define further characteristic classes by
considering $H^*(\bspin)$. 
$\bso$ and $\bspin$ have isomorphic cohomology groups over $\Q$ or mod $p$ with
$p$ an odd prime, but over $\Z$ and mod~2 there is extra subtlety that was
first studied by Thomas \cite{thomas62}; however, for our purposes we will only
be interested in the first nonzero (integral) class in $H^{4}(\bspin)$ 
and we can give a direct and elementary description of this class as follows.
The stable spin group $\sspin$ is \mbox{2-connected}, with
$\pi_3(\sunitary{2}) \cong \pi_3(\sspin)$, realised by the inclusion
$\sunitary{2} \into \spin{4}$ given by the standard rank 2 complex
representation. The homotopy long exact sequence of the fibration
$\sspin \into \espin \to \bspin$, where $\espin$ is contractible, shows that
$\bspin$ is 3-connected with $\pi_4(\bspin) \cong \ZZ$. By the Hurewicz theorem
$H^4(\bspin) \cong \ZZ$, and we denote the stable characteristic class
corresponding to a choice of generator by $\posp$.

The following well-known lemma implies that if there is no 2-torsion in
$H^4(M)$ then $\posp(M)$ is determined from the Pontrjagin class $p_{1}(M)$.
Since we are mostly concerned with the case when $H^4(M)$ is torsion-free,
for simplicity we choose to phrase our subsequent main discussion in terms of
$p_1(M)$, addressing the refinements concerning $\posp(M)$ in supplementary
remarks.

\begin{lemma}[{\cf \cite[(1.5),(1.6)]{thomas62}, \cite[Lemma 2.4]{cadek08}}]
\label{lem:posp_div}
For any spin bundle, 
$p_1 = 2\posp$ and $w_4 = \posp \mod 2$.
\end{lemma}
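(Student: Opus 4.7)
The plan is to verify both identities universally on $\bspin$, by pulling back to $\bsu(2)$ via the map $\bsu(2) \to \bspin$ induced by the inclusion $\sunitary{2} \into \sspin$ from the standard rank~2 complex representation, and observing that this pullback is injective in degree~$4$.

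First I would establish that the natural map $\bsu(2) \to \bspin$ induces an isomorphism on $H^4$ (both integrally and mod~2). As recalled before the statement, $\sspin$ is 2-connected and the generator of $\pi_3(\sspin) \cong \Z$ is realised by $\sunitary{2} \subset \sspin$. Hence $\pi_3(\sunitary{2}) \to \pi_3(\sspin)$ is an isomorphism, so by the homotopy long exact sequences of the fibrations $G \to EG \to BG$ the map $\pi_4(\bsu(2)) \to \pi_4(\bspin)$ is an isomorphism $\Z \to \Z$. Since both classifying spaces are 3-connected, Hurewicz and universal coefficients give $H^4(\bsu(2); A) \cong H^4(\bspin; A) \cong A$ for $A = \Z$ or $\Z/2$, and the map on $H^4$ is an isomorphism in both cases. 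Therefore any identity between elements of $H^4(\bspin; A)$ can be tested on $\bsu(2)$.

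Next I would compute both sides after pullback to $\bsu(2)$. For a rank~2 complex bundle with structure group $\sunitary{2}$, one has $c_1 = 0$, and the standard formula for the Pontrjagin class of the underlying real bundle yields
\[
p_1 = c_1^2 - 2c_2 = -2c_2 \in H^4(\bsu(2);\Z).
\]
The pullback of $\posp$ to $\bsu(2)$ is a generator of $H^4(\bsu(2); \Z) \cong \Z \cdot c_2$, so equals $\pm c_2$; by choosing the sign convention for the generator $\posp \in H^4(\bspin;\Z)$ appropriately we may arrange that it pulls back to $-c_2$. The identity $p_1 = 2\posp$ then holds in $H^4(\bsu(2); \Z)$, and by injectivity of $H^4(\bspin;\Z) \to H^4(\bsu(2);\Z)$ it holds in $H^4(\bspin; \Z)$.

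For the mod~2 statement, the total Stiefel--Whitney class of the realification of a complex bundle is the mod~2 reduction of its total Chern class, so $w_4$ pulls back to $c_2 \bmod 2$ on $\bsu(2)$. Meanwhile $\posp$ pulls back to $-c_2$, which mod~2 equals $c_2$. Hence $w_4$ and $\posp \bmod 2$ agree in $H^4(\bsu(2); \Z/2)$, and again by injectivity they agree in $H^4(\bspin; \Z/2)$. The main (minor) obstacle is just the bookkeeping of signs and the verification that $\bsu(2) \to \bspin$ really does induce an isomorphism on $H^4$; once this is in place, the identities are universal consequences of the classical formulae for characteristic classes of $\sunitary{2}$-bundles.
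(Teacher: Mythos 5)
Your proposal is correct and follows essentially the same approach as the paper: reduce to $\bsu(2)$ via the isomorphism $H^4(\bspin) \cong H^4(\bsu(2))$, then apply the classical identities $p_1 = c_1^2 - 2c_2$ and $w_4 = c_2 \bmod 2$ for complex bundles with $c_1 = 0$. The only (cosmetic) difference is that you re-derive the $H^4$ isomorphism from the homotopy-theoretic facts, whereas the paper asserts it directly having already laid out those facts in the preceding paragraph.
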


\begin{proof}
Because $H^4(\bspin) \cong H^4(\bsu(2))$, it suffices to prove that the
relations hold for $SU(2)$-bundles; these are complex rank 2 bundles
with $c_1 = 0$, so they are spin because $w_2 = c_1 \mod 2$.
The generator for $H^4(\bsu(2))$ is $c_2$ so, fixing the sign of $\posp$, any
$\sunitary{2}$-bundle has $\posp = -c_2$. On the other hand,
$p_1 = -2c_2 + c_1^2$ \cite[Corollary 15.5]{MS} and
$w_4 = c_2 \mod 2$ \cite[14B]{MS} for any complex vector bundle. 
\end{proof}
Since as we explained above $w_{4}=0$ for any closed spin 7-manifold we deduce the following:
\begin{corollary}
\label{cor:p_half}
If $M$ is a closed spin 7-manifold then $\posp(M)$ is even, and hence $p_1(M)$
is divisible by 4.
\end{corollary}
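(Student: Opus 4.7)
The plan is a direct consequence of Lemma \ref{lem:posp_div} combined with the Stiefel--Whitney identity for closed oriented 7-manifolds established in the paragraph immediately preceding the corollary. First I would observe that Wu's formula, as applied just above, gives $w_4(M) = w_2(M)^2$ for any closed oriented 7-manifold. Since $M$ is spin, $w_2(M) = 0$, so $w_4(M) = 0 \in H^4(M;\mathbb{Z}/2)$.

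Next, the second relation of Lemma \ref{lem:posp_div} says $w_4 = \posp \mod 2$, so the mod~2 reduction of $\posp(M) \in H^4(M;\mathbb{Z})$ vanishes. The kernel of the reduction map $H^4(M;\mathbb{Z}) \to H^4(M;\mathbb{Z}/2)$ is exactly the image of multiplication by $2$, as one reads off from the Bockstein long exact sequence associated to $0 \to \mathbb{Z} \stackrel{2}{\to} \mathbb{Z} \to \mathbb{Z}/2 \to 0$. Therefore $\posp(M) = 2\alpha$ for some $\alpha \in H^4(M;\mathbb{Z})$, which is what we mean by $\posp(M)$ being even. Combining this with the first relation $p_1 = 2\posp$ of Lemma \ref{lem:posp_div} yields $p_1(M) = 4\alpha$, so $p_1(M)$ is divisible by 4.

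There is no real obstacle here: both ingredients ($w_4 = w_2^2$ in dimension 7 and the two relations of Lemma \ref{lem:posp_div}) are already in hand, and the only minor point to spell out is the Bockstein argument that translates ``$\posp \mod 2 = 0$'' into ``$\posp$ is twice an integral class''.
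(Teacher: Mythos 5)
Your argument is correct and follows exactly the same route the paper takes: the paragraph preceding the corollary establishes $w_4(M)=w_2(M)^2=0$ for a closed spin $7$-manifold via Wu's formula, and the corollary is then deduced immediately from the two relations in Lemma \ref{lem:posp_div}. Your added Bockstein remark just makes explicit the (routine) step that vanishing mod~$2$ means divisibility by $2$ in integral cohomology.
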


\begin{remark*}
For an oriented vector bundle $E \to M$ with $w_2(E) = 0$,
$\posp(E)$ is independent of the choice of spin structure
\cite[page 4]{cadek08}.
\end{remark*}

\begin{remark*}
We used above that $w_1(E) = 0$ if and only $E$ is orientable, and that if also
$w_2(E) = 0$ then $E$ is spin. Another interpretation is that $w_1$ and $w_2$
are the successive obstructions to trivialising $E$ over the 1- and
2-skeleta, respectively, of the base manifold. 
There is a similar obstruction-theoretic interpretation of the vanishing of $\posp$ 
which we now explain, though we will make no subsequent use of this fact.
The relation of $\posp(E)$ of a spin vector bundle
$E$ to the generator of $\pi_3(\sspin)$
means that it is the primary obstruction to stable trivialisability of $E$ in
the sense of Steenrod \cite[Definition 35.3]{steenrod51}: that $\sspin$ is 2-connected implies that $E$ is always stably trivialisable over the 3-skeleton of the
base manifold, and $\posp(E)$ is the obstruction to stable trivialisability
over the 4-skeleton. Because $\sspin$ has $\pi_4 = \pi_5 = \pi_6 = 0$, there are
no further obstructions until degree 8, so the tangent bundle of a spin
7-manifold $M$ is stably trivial if and only if $\posp(M) = 0$. The relation
between obstruction classes and Pontrjagin classes was determined by
Kervaire \cite{kervaire59}.
\end{remark*}

There are no further spin characteristic classes of interest in degree
below 8, not even if we consider unstable classes of a rank 7 spin bundle, 
\ie we consider classes obtained by pullback from $H^{*}(\bspin(7))$ and not only $H^{*}(\bspin)$.
Since we are studying manifolds with \gtstr s we could also consider
characteristic classes defined by elements of $H^*(\bgtwo)$, but there are
no classes of interest beyond $\posp$ of the associated spin bundle.
$H^*(\bspin(7))$ and $H^*(\bgtwo)$ are both described by Gray
\cite[Theorem 3.4]{gray69}.

\subsubsection{Computing $p_1$ of twisted connected sums}

The restrictions $p_1(\Sph^1 \times V_\pm)$ of $p_1(M)$ to
$\Sph^1 \times V_\pm$ do not determine $p_1(M)$ since the Mayer-Vietoris
boundary map $H^3(W) \to H^4(M)$ is non-trivial. Another point of view is that
the isomorphism class of a vector bundle on $M$ is not determined by the
isomorphism classes of its restrictions to $V_+$ and $V_-$: it also depends
on (the homotopy class of) the isomorphism one uses to glue the bundles
together on the overlap.
However, it turns out that we can determine $p_1(M)$ from $p_1(Z_\pm)$,
using the map $Y$ from Definition \ref{def:y}.

Recall %
that $p_1(Z)= -2c_2(Z) + c_1(Z)^2$ for any complex manifold $Z$. 
If $Z$ is a building block then $c_1(Z)^2 = 0$, so $p_1(Z) = -2c_2(Z)$.
The image of $c_2(Z_\pm)$ in $H^4(S)$ is $c_2(S)$, so in particular
$(c_2(Z_+), c_2(Z_-)) \in H^4(Z_+) \oplus_0 H^4(Z_-)$,
and $Y(c_2(Z_+), c_2(Z_-))$ is defined.

\begin{prop}
\label{prop:p1y}
Let $M$ be a twisted connected sum of the building blocks $Z_+$ and $Z_-$.
Then
\[ p_1(M) = -2Y(c_2(Z_+), c_2(Z_-)) . \]
\end{prop}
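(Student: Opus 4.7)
The plan is to represent $p_1(M)$ by a Chern--Weil form built from a carefully chosen connection on $TM$, and show that the resulting 4-cocycle is precisely the one produced from $(c_2(Z_+), c_2(Z_-))$ by Definition \ref{def:y}. Since $p_1 = c_1^2 - 2c_2$ for any complex vector bundle, and $c_1(Z_\pm)^2 = 0$ (the class $[S] = -K_Z$ is a fibre class of $f_\pm \colon Z_\pm \to \PP^1$ and hence squares to zero), one has $p_1(Z_\pm) = -2c_2(Z_\pm)$; the factor of $-2$ in the conclusion then comes for free.

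First I would set up local product structures for $TM$. Since $V_\pm$ is open in $Z_\pm$, $TM|_{M_\pm} \cong \underline{\R} \oplus \pi_\pm^* TZ_\pm$, where $\pi_\pm \colon M_\pm = \Sph^1 \times V_\pm \to Z_\pm$ is the projection. On the trivialising neighbourhood $U_\pm \cong \Delta \times S$ of $\infty$, the map $f_\pm$ further gives $TZ_\pm|_{U_\pm} \cong p_\pm^* TS \oplus \underline{\bbc}$. Combined, on the neck region $\Sph^1 \times (U_\pm \setminus S)$ the bundle $TM$ is identified with $\underline{\R}^3 \oplus q_\pm^*TS$, where the three trivial factors correspond to the two $\Sph^1$-directions and the $\R^+$-direction. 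Crucially, the gluing diffeomorphism $F$ of \eqref{eq:gluemap} merely permutes (with a sign reversal on one) the three trivial factors and identifies the $TS$-factors via $\hkr_*$.

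Next, I would make coherent choices of connections. Pick a connection $\nabla_S$ on $TS_+$, transport it to $TS_-$ via the diffeomorphism $\hkr$, and extend to connections $\nabla_\pm$ on $TZ_\pm$ whose restrictions to $U_\pm$ equal $p_\pm^*\nabla_S \oplus d$, where $d$ is the flat connection on $\underline{\bbc}$. Equip $TM|_{M_\pm}$ with $d \oplus \pi_\pm^*\nabla_\pm$. By construction, under the identifications in the previous paragraph these agree on the overlap and thus define a global connection on $TM$. The resulting Chern--Weil form $c_2(TM)$ restricts on $M_\pm$ to $\pi_\pm^* c_2(\nabla_\pm)$, and $c_2(\nabla_\pm)$ equals $p_\pm^* c_2(\nabla_S) = p_\pm^* c_2(S)$ on $U_\pm$ since the $\underline{\bbc}$-summand is flat.

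Finally, I would match up with $Y$. The closed forms $\alpha_\pm := c_2(\nabla_\pm)$ on $Z_\pm$ have the product form $p_\pm^* c_2(S)$ on $U_\pm$ and both represent $c_2(Z_\pm)$; they therefore provide cocycles of exactly the kind required by Definition \ref{def:y}, with common restriction $\beta = c_2(S)$. The cocycle produced from this data by Definition \ref{def:y} is precisely the global Chern--Weil form $c_2(TM)$ built above, so $c_2(TM) = Y(c_2(Z_+), c_2(Z_-))$ in $H^4(M)$. Multiplying by $-2$ yields $p_1(M) = -2 c_2(TM) = -2Y(c_2(Z_+), c_2(Z_-))$. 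The main obstacle is the compatibility verification in the third paragraph: one must ensure that the extensions $\nabla_\pm$ can be chosen so that the two pulled-back connections on $TM$ literally agree across the neck under $F$. This is possible because $\hkr$ identifies $TS_+$ with $TS_-$ as smooth bundles and a single $\nabla_S$ can be used on both sides; should one prefer to avoid the exact match one can interpolate by a partition of unity and appeal to the fact that Chern--Weil representatives only depend on the connection up to exact forms, without affecting the cohomology class.
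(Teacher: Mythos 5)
Your overall plan is sound and is in fact close to the alternative Chern--Weil argument sketched in a remark following the paper's proof, but the central compatibility claim has a real gap. Specifically, the assertion that the connections $d\oplus\pi_\pm^*\nabla_\pm$ ``agree on the overlap'' is false, and the reason is precisely the twist that the paper's proof goes to considerable lengths to handle. Your identification of the three ``trivial factors'' of $TM$ on the neck uses the cylindrical coordinates $(\anglex,t,\anglen)$, relative to which the gluing diffeomorphism $F$ is indeed a constant permutation-with-sign. But the flat connection $d$ in $\nabla_\pm|_{U_\pm}=p_\pm^*\nabla_S\oplus d$ is flat \emph{relative to the trivialisation of $T\Delta$ by $\partial_z$}, and the change of frame from $\partial_z$ to the cylindrical frame $\{\partial_t,\partial_\anglen\}$ involves the nowhere-zero but winding function $z=e^{-t-i\anglen}$ (one has $\partial_t - i\partial_\anglen = -2z\,\partial_z$). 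Consequently, in the cylindrical trivialisation, the pullback of $\nabla_+$ carries a nontrivial connection $1$-form on the $T\Delta^*$-directions, which mixes $dt$ and $d\anglen$; after pulling back the $M_-$-side through $F$ the analogous $1$-form mixes $dt$ and $d\anglex$, and in addition the $\partial_\anglex$-factor on the $M_+$-side (where your connection is literally flat) is glued to the $\partial_{\anglen'}$-factor on the $M_-$-side (where it is not). The two connections therefore disagree, and the ``global connection on $TM$'' you wish to build does not exist.

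The partition-of-unity escape hatch you propose in the last paragraph does not repair this. Interpolating the two connections gives a form representing $p_1(M)$, but the $Y$-construction (Definition~\ref{def:y}) is not applied to an arbitrary representative of $(c_2(Z_+),c_2(Z_-))$: it requires cocycles whose restriction to the \emph{entire} collar $U_\pm$ equals $p_\pm^*\beta$, and the resulting class in $H^4(M)$ genuinely depends on this choice at the cocycle level via the Mayer--Vietoris boundary term, not merely on the classes $[\alpha_\pm]$. The interpolated Chern--Weil form on the neck is not pulled back from either $Z_\pm$, so you cannot identify it with a $Y$-output by merely observing that the cohomology classes on $M_\pm$ are unchanged.

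The resolution used by the paper in its Chern--Weil remark is that the twist contributes nothing to the Pontrjagin \emph{form}: one takes the cylindrical metric (or connection) $g_\pm$ on $V_\pm$, whose pullbacks to $\Sph^1\times V_\pm$ do patch honestly under $F$ because $\hkr$ is an isometry of $g_S$, and separately a metric $g'_\pm$ on $Z_\pm$ that is a product on the collar $\Delta\times S$. Although $g_\pm$ and $g'_\pm$ differ on the overlap by exactly the winding you need to worry about, one has $p_1(g_\pm) = p_1(g_S) = p_1(g'_\pm)$ there, because the $2$-dimensional factor of a Riemannian product contributes nothing to the degree-$4$ form $p_1$. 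This supplies the needed identity $p_1(g_\pm)=p_1(g'_\pm)|_{V_\pm}$ as forms, and hence expresses the global Chern--Weil representative of $p_1(M)$ as the $Y$-output of $(p_1(Z_+),p_1(Z_-))$. (The paper's main proof takes a different route: it introduces an auxiliary bundle $R_\pm$ with $R_\pm\oplus\trivc\cong TZ_\pm\oplus[-S]$ to ``untwist'' the gluing, exploiting that $\pi_1(SU(2))=0$ and that $p_1([-S])=0$, and then works with classifying maps so that the conclusion holds on the nose with integral coefficients, which is essential for the applications to the diffeomorphism classification.) To fix your write-up you should drop the claim that the connections agree, and instead compare the curvature $4$-forms directly using the product structure, or adopt the $R_\pm$ device if you want the integral statement.
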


\begin{proof}
We need to find a suitable cocycle representing $p_1(M)$.
Let $E_7(\bbr)$ be the tautological bundle over
$\bso(7) = \ogr_7(\bbr^{\infty})$, the Grassmannian of oriented 7-planes.
A classifying map for $TM$ is a $g : M \to \ogr_7(\bbr^{\infty})$ such that
there is a vector bundle isomorphism $G : TM \to g^* E_7(\bbr)$.
By definition, there is a cocycle
$\wp_1 \in C^4(\ogr_7(\bbr^{\infty}); \ZZ)$ such that
$p_1(M) = [g^* \wp_1]$ for any classifying map $g$.

Consider $Z_\pm$ as the union of $V_\pm = Z_\pm \setminus S$ and
$U_\pm \cong \Delta \times S$, and define a vector bundle $R_\pm$ over $Z_\pm$
by gluing $TV_\pm$ and $TU_\pm$ as follows: on the overlap
$\bbrp \times \Sph^1 \times S \cong \Delta^* \times S$,
$(t,\anglen) \mapsto z = x+iy = e^{-t-i\anglen}$, 
map $TS$ to $TS$ by the
identity, and $T(\bbr \times \Sph^1)$ to $T\Delta^*$ by
$\contra{t} \mapsto \contra{x}$, $\contra{\anglen} \mapsto \contra{y}$.
Identifying a complex vector bundle with the $(1,0)$-part of its
complexification, this is the complex linear map
$\contra{t} - i \contra{\anglen} \mapsto \contra{z}$.
In contrast, $TZ_\pm$ is formed by gluing $TV_\pm$ and $TU_\pm$ by the
derivative of $(t,\anglen) \mapsto z$, which maps
$\contra{t} - i \contra{\anglen} \mapsto z\contra{z}$.
For comparison, if we glue the trivial complex line bundle $\trivc$ over
$V_\pm$ to $\trivc$ over $U_\pm$ by $u \mapsto z^{-1}u$ over
$\Delta^* \times S$, then the result is $[-S]$, the line bundle over $Z_\pm$
with divisor $-S$. Now
\[ R_\pm \oplus \trivc \; \cong \; TZ_\pm \oplus [-S] , \]
because both bundles are the result of gluing $TV_\pm \oplus \trivc$ to
$TU_\pm \oplus \trivc$ by homotopic maps; at $(z,p) \in \Delta^* \times S$, the
difference of the gluing maps
sends $(v,w,u) \in TS \oplus T\Delta \oplus \trivc$ to $(v, zw, z^{-1}u)$, and
any $\Delta^* \to SU(2)$ is homotopic to a constant since $SU(2)$ is
simply-connected.
Because $p_1$ is additive and $p_1([-S]) = [-S]^2= 0$, we find
\[ p_1(R_\pm) = p_1(Z_\pm) = -2c_2(Z_\pm) . \]

Let $f : S \to Gr_2(\bbc^\infty)$ be a classifying map for the complex vector
bundle $TS$, with an isomorphism $F : TS \to f^*E_2(\bbc)$. Identifying
$\bbc \oplus \bbc^\infty \cong \bbc^\infty$ embeds
$Gr_2(\bbc^\infty) \into Gr_3(\bbc^\infty)$, and
$f^* E_3(\bbc) \cong \trivc \oplus f^*E_2(\bbc)$.
Let $f_\pm : Z_\pm \to Gr_3(\bbc^\infty)$ be a classifying map for $R_\pm$ such
that $f_{\pm|U_\pm} = f \circ p_\pm$ and the restriction of the isomorphism
$F_\pm : R_\pm \cong f_\pm^*E_3(\bbc)$ to $U_\pm$ maps the $TS$ factor
of $R_{\pm|U_\pm} = TU_\pm$ to $f^*E_2(\bbc)$ by $F$, and the $T\Delta$ factor
to $\trivc$ by the identity map.

Let $g_\pm : V_\pm \to Gr_3(\bbc^\infty)$ and
$G_\pm : TV_\pm \to g_\pm^*E_3(\bbc)$ be the restriction of $f_\pm$ and
$F_\pm$. The gluing map in the definition of $R_\pm$ has been chosen
precisely to ensure that,
restricted to the cylindrical end $\bbrp \times \Sph^1 \times S$,
$G_\pm$ maps $T(\bbrp \times \Sph^1) \to \trivc$ by
$u (\contra{t} - i \contra{\anglen}) \mapsto u$
(and $TS \to f^*E_2(\bbc)$ by $F$).
Finally, define $g : M \to \ogr_7(\bbr^\infty)$ by patching the
compositions
\[ \Sph^1 \times V_\pm \to V_\pm \stackrel{g_\pm}{\to}
Gr_3(\bbc^\infty) \to \ogr_7(\bbr^\infty) ; \]
this is possible because on the neck region $\bbr \times T^2 \times S$, both
$g_+$ and $g_-$ equal the composition of $f$ with projection to $S$.
Further, on the neck region the restrictions
$G_\pm : T(\bbr \times T^2 \times S) \to g^*E_7(\bbr) \cong
\trivr^3 \oplus f^* E_4(\bbr)$ are both translation-invariant, and differ by
a \emph{constant} rotation of the $\trivr^3$ factor. By picking a
path from this rotation to the identity we can interpolate between $G_+$ and
$G_-$ to define an isomorphism $G : TM \to g^*E_7(\bbr)$.
Hence $g$ is a classifying map for $TM$, and
\[ p_1(M) = [g^*\wp_1] =
Y([f_+^*\wp_1], [f_-^*\wp_1]) = Y(p_1(Z_+), p_1(Z_-)) . \qedhere \]
\end{proof}

\begin{remark*}
If $K_\pm = 0$, then Corollary \ref{corg:V&S^1(S)}(iv) implies that
$H^4(V_\pm) \cong H^4(S) \cong \ZZ$, and $c_2(V_\pm)$ is completely
determined by the fact that the restriction of $c_2(V_\pm)$ to $S$ is
$c_2(S) \cong \chi(S) = 24$. Thus considering
$p_1(\Sph^1 \times V_\pm) = -2c_2(V_\pm)$ instead of $c_2(Z_\pm)$ really
does lose a lot of information. 
\end{remark*}

\begin{remark}
Note that $c_1(R_\pm) = 0$; indeed the gluing map in the construction matches
the non-vanishing complex 3-forms $\Omega_\pm$ and 
$dz \wedge (\omega_\pm^J + i\omega_\pm^K)$ over $V_\pm$ and $U_\pm$.
In particular $R_\pm$ is a spin bundle, and its spin characteristic class
$\posp(R_\pm)$ equals $-c_2(R_\pm) = -c_2(Z_\pm)$. Carrying out the proof
of Proposition \ref{prop:p1y} using classifying maps to appropriate versions
of $\bspin$ and $\bsu$ therefore proves the more refined statement that
$\posp(M) = -Y(c_2(Z_+), c_2(Z_-))$.
\end{remark}

\begin{remark*}
If we work with real coefficients then the relation
$p_1(M) = Y(p_1(Z_+), p_1(Z_-))$ is more conveniently proved using
Chern--Weil theory. It is clear how to define $Y$ as a map on de Rham
cohomology $H^4_{dR}(Z_+) \oplus_0 H^4_{dR}(Z_-) \to H^4_{dR}(M)$.
For a Riemannian metric $g$ on~$M$, a certain quadratic polynomial function of
the curvature of $g$ defines a differential form $p_1(g) \in \Omega^4(M)$
representing $p_1(M) \in H^4_{dR}(M)$.

Let $g_S$ be a metric on $S$, and $g_\pm$ a metric on $V_\pm$ that equals
$dt^2 + d\anglen^2 + g_S$ on the cylindrical end.
Let $g'_\pm$ be a metric on $Z_\pm$ that equals $g_\pm$ outside a neighbourhood
of $S$ and is a product metric on $\Delta \times S$, equal to $|dz^2| + g_S$
near $S$. Then $p_1(g'_\pm) = p_1(|dz^2|) + p_1(g_S) = p_1(g_S)$ on
$\Delta \times S$, so the differential forms $p_1(g_\pm)$ and
$p_1(g'_\pm)_{|V_\pm}$ are equal.
Finally let $g$ on $M$ be a patching of $d\anglex^2 + g_\pm$ on
$\Sph^1 \times V_\pm$. Then
$p_1(M) = [p_1(g)] = Y([p_1(g'_+)], [p_1(g'_-)]) = Y(p_1(Z_+), p_1(Z_-))$.
\end{remark*}

\subsection{Smooth type of connected-sum \gtmfd s}

Many of the \gtmfd s we construct in this paper are $2$-connected; 
in this case we can compute classifying topological invariants and in many 
cases determine the diffeomorphism type of the underlying smooth $7$-manifold.
These are the first compact manifolds with holonomy $\gtwo$ for which 
the diffeomorphism type of the underlying $7$-manifold has been determined.
We will see in \S \ref{sec_g2mfds} that in many cases we can get $7$-manifolds
with the same invariants by taking the twisted connected sum of completely
unrelated pairs of building blocks, and can thus construct different metrics
with holonomy $\gtwo$ on the same underlying smooth $7$-manifold.
Judicious choices of pairs of building blocks allow us to vary the number of
compact associative $3$-folds we can exhibit in different \gtwo-holonomy
metrics on the same smooth $7$-manifold.

Let us first review the classification theory of smooth 2-connected
7-manifolds; we concentrate on the simplest case, namely where the cohomology is torsion-free.
Lemma \ref{l:2connected:g2} gives sufficient conditions on a twisted connected sum manifold $M$
to ensure that $M$ is $2$-connected with torsion-free cohomology, and therefore the 
classification theory discussed below applies to $M$.

\subsubsection{Almost-diffeomorphism classification of smooth closed
2-connected 7-manifolds}
Two smooth manifolds $M, N$ are \emph{almost-diffeomorphic} if there is a
homeomorphism $M \to N$ that is smooth away from a finite set of points; 
this is equivalent to $M$ being diffeomorphic to $N \# \Sigma$ for some
homotopy sphere~$\Sigma$. Recall that by the $h$-cobordism theorem, any
homotopy sphere in dimension $n > 4$ is an exotic sphere, \ie a smooth manifold
homeomorphic to $\Sph^n$; under connected sums the homotopy spheres form a finite abelian group denoted $\Theta_n$.
The group $\Theta_7$ of exotic 7-spheres is $\cg{28}$. 
It turns out that the classification of smooth $2$-connected $7$-manifolds is
the same up to homeomorphism as up to almost-diffeomorphism; in particular
there are at most 28 smooth structures on any 2-connected topological
7-manifold.

Let $M$ be a smooth connected closed 7-manifold that is 2-connected, \ie
$\pi_1(M)$ and $\pi_2(M)$ are trivial. Then $H_1(M) \cong H_2(M) = 0$ by the
Hurewicz theorem, so \mbox{$H^1(M) = H^2(M) = 0$} by universal coefficients and
$H^5(M) = H^6(M) = 0$ by Poincar\'e duality. So apart from
\mbox{$H^0(M) \cong H^7(M) \cong \bbz$} the only non-vanishing cohomology groups are $H^3(M)$, which is
torsion-free, and $H^4(M)$, whose free part is isomorphic to $H^3(M)$.
If $H^4(M)$ is torsion-free then all the information about
the cohomology of $M$ reduces to the integer $b^{3}(M) = b^{4}(M)$.

Another invariant of $M$ is the first Pontrjagin class $p_1(M) \in H^4(M)$.
If $H^4(M)$ is torsion-free %
then the position of $p_1(M)$ in $H^4(M)$ up to isomorphism is determined by
the greatest divisor $\gdiv{p_1(M)} \in \NN$; recall from
Corollary \ref{cor:p_half} that this always divisible by 4.
For our purposes, the following special case of the classification results
of Wilkens \cite[Theorem 3]{wilkens72} will suffice.

\begin{theorem}
\label{thm:torfreeclass}
Smooth closed 2-connected 7-manifolds $M$ with $H^4(M)$ torsion-free are
classified up to almost-diffeomorphism by the isomorphism class of the pair
$(H^4(M), p_1(M))$,
or equivalently by the non-negative integers $b^{4}(M)$ and $\gdiv{p_1(M)}$. 
Moreover, any pair of non-negative integers of the form $(k,4m)$ is realised as $k=b^{4}(M)$ and $4m=\gdiv{p_{1}(M)}$ 
for some smooth closed $2$-connected $7$-manifold $M$.
\end{theorem}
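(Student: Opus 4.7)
The plan is to split the proof into a classification step and a realisation step. For the classification, I would invoke directly Wilkens' Theorem 3 of \cite{wilkens72}, which classifies closed smooth 2-connected 7-manifolds up to almost-diffeomorphism by an isomorphism class of cohomological data including $H^4(M)$ and the class $p_1(M) \in H^4(M)$, with a torsion-linking refinement when $H^4$ has torsion. Under the assumption that $H^4(M)$ is torsion-free, the torsion-linking refinement is vacuous and the pair $(H^4(M), p_1(M))$ is, up to isomorphism, simply a finitely generated free abelian group with a distinguished element; this is classified by the rank $b^4(M)$ together with the greatest integer dividing $p_1(M)$. That $\gdiv p_1(M)$ is automatically a multiple of $4$ is Corollary \ref{cor:p_half}, so the admissible pairs are exactly those of the form $(k, 4m)$ with $k, m \geq 0$.

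For the realisation, I must exhibit for each admissible pair a smooth closed 2-connected 7-manifold with the prescribed invariants. The natural building blocks are the linear $S^3$-bundles over $S^4$, classified by $\pi_3(\textup{SO}(4)) \cong \ZZ \oplus \ZZ$. With a suitable choice of basis, the bundle determined by $(h,j)$ has Euler number $h+j$ and first Pontrjagin class of total space equal to $\pm 2(h-j)$ times a generator of $H^4$, following Milnor. Since $S^3$ and $S^4$ are both simply connected and $\pi_2(S^3)=0$, every such total space is automatically 2-connected, and when $h+j = 0$ the long exact homotopy sequence of the fibration yields $H^3 \cong H^4 \cong \ZZ$. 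Taking $h = m$ therefore gives a 2-connected 7-manifold $M_m$ with $b^4 = 1$ and $p_1 = \pm 4m$.

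To handle the general pair $(k, 4m)$ with $k \geq 1$, I would form the connected sum $M_m \# (k-1)(S^3 \times S^4)$, which is again 2-connected, has $b^4 = k$, and has the same $p_1$ as $M_m$ since $p_1(S^3 \times S^4) = 0$ and $p_1$ behaves additively under connected sum. The boundary case $k = 0$, $m = 0$ is realised by $S^7$. The main obstacle is bookkeeping rather than any deep mathematical difficulty: one has to match conventions carefully for the generators of $\pi_3(\textup{SO}(4))$ so that the Pontrjagin class of the total space really achieves \emph{every} multiple of $4$ (rather than, say, only multiples of $8$), and one has to verify that Wilkens' invariants reduce as claimed in the torsion-free case. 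Both points are standard in the exotic-sphere literature, so the proof is essentially a careful citation.
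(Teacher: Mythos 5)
Your proposal is correct and coincides with the paper's approach: the paper also cites Wilkens' Theorem 3 from \cite{wilkens72} for the classification (observing that the torsion-linking refinement is vacuous when $H^4$ is torsion-free, so the data reduces to the rank $b^4(M)$ and the divisibility $\gdiv p_1(M)$, which is a multiple of $4$ by Corollary \ref{cor:p_half}), and the realisation in Remark \ref{r:2connected:7mfd} is precisely via $\Sph^3$-bundles over $\Sph^4$ of Euler number zero connected-summed with copies of $\Sph^3 \times \Sph^4$. The only difference is a choice of generators for $\pi_3(\sorth{4})$ — you use Milnor's $(h,j) \mapsto u^h v u^j$ presentation with $p_1(\xi_{h,j}) = \pm 2(h-j)$, while the paper uses the basis $\rho(u)v = uvu^{-1}$, $\sigma(u)v = uv$ so that $p_1(M_{m,0}) = 4m$ — but the two conventions differ by an integral change of basis of $\Z \oplus \Z$ and produce the same family of bundles, so the arithmetic matches and both exhaust all multiples of $4$.
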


By Novikov \cite{novikov65} rational Pontrjagin classes are natural under
homeomorphisms. In the absence of torsion in $H^4$, so are the integral
classes, \ie $p_1(M) = f^*p_1(N)$ for any homeomorphism $f \colon M \to N$.
Since the classifying almost-diffeomorphism invariants are
also invariant under homeomorphism, it follows that the classification
up to homeomorphism is the same.

\begin{remark}
\label{R:h4:torsion}
When $H^4(M)$ has torsion, the invariants in Theorem \ref{thm:torfreeclass}
need to be amended. Instead of $p_1(M)$, one should use the spin characteristic
class $\posp(M) \in H^4(M)$. %
The torsion-linking form
$b : TH^4(M) \times TH^4(M) \to \QQ/\ZZ$ (defined following \ref{cor:torsion})
is another obvious invariant; Wilkens showed that the isomorphism class of the
triple $(H^4(M), b, \posp(M))$ classifies $M$ up to almost-diffeomorphism when
$H^4(M)$ has no 2-torsion.
Crowley \cite[Theorem~B]{crowley01} showed that when $H^4(M)$ has 2-torsion
one obtains classifying invariants by replacing $b$ with one of 2 possible
``families of quadratic refinements''.
(All triples of invariants are realised subject only to the constraint that
$\posp(M)$ is divisible by 2.)
\end{remark}

\begin{remark}
\label{R:homotopy_class}
\cite[Theorem 6.11]{crowley01} can be paraphrased as stating that
smooth closed 2-connected 7-manifolds are classified up to homotopy equivalence
by the mod 24 reduction of the invariants in the almost-diffeomorphism
classification. When $H^4(M)$ has no 2-torsion this is the tensor
product of the triple $(H^4(M), b, \posp(M))$ with $\ZZ/24\ZZ$. 
In the case when $H^4(M)$ is torsion-free, this means that $M$ is classified
up to homotopy equivalence by the pair $b^4(M)$ and $\gdiv p_1(M) \mod 48$.
\end{remark}

\subsubsection*{Concrete realisations of $2$-connected smooth $7$-manifolds}
We can give concrete descriptions of many $2$-connected smooth $7$-manifolds
using $\Sph^{3}$-bundles over $\Sph^{4}$ and connected sums thereof.
The trivial bundle $\Sph^{3}\times \Sph^{4}$ gives a $2$-connected $7$-manifold
with torsion-free cohomology; clearly, it has $H^{3}(M)=H^{4}(M)= \Z$ and
vanishing first Pontrjagin class $p_{1}(M)$ (since $\Sph^{3}\times \Sph^{4}$ is
parallelisable). The $k$-fold connected sum $k(\Sph^{3}\times \Sph^{4})$ gives a
$2$-connected $7$-manifold with $H^{3}(M)=H^{4}(M)=\Z^{k}$ with $p_{1}(M)=0$ 
(since connected sums of stably parallelisable manifolds are stably
parallelisable, and Pontrjagin classes are stable). 

Via the usual `clutching' construction for bundles over a sphere, equivalence
classes of linear $\Sph^{3}$-bundles over $\Sph^{4}$ are in one-to-one
correspondence with $\pi_{3}(\sorth{4}) \cong \Z \oplus \Z$. Convenient generators for $\pi_{3}(\sorth{4})$ are given by
\[
\rho(u) v = uvu^{-1}, \quad \sigma(u)v = uv;
\]
here we have identified $\Sph^{3}$ with the unit quaternions and composition denotes quaternionic multiplication.
Identifying the pair of integers $(m,n)$ with the element $m \rho +n \sigma \in \pi_{3}(\sorth{4})$  hence determines
a real rank $4$ vector bundle $\xi_{m,n}$ over $\Sph^{4}$ and its corresponding $3$-sphere bundle 
$M_{m,n}:=S(\xi_{m,n}) \ra \Sph^{4}$, with projection map $\pi$.

By the homotopy long exact sequence of a fibration, any $\Sph^{3}$-bundle over
$\Sph^{4}$ is $2$-connected. Together with the fact that
$H^{4}(M_{m,n}) = \ZZ/n\ZZ$ (using the Gysin sequence and that the Euler number
of the bundle is $e(\xi_{m,n})=n$) this determines all the homology groups of
the bundle.
For the $\Sph^{3}$-bundles $M_{m,0}$ with Euler number $0$ we have
(\cf~Crowley and Escher \cite[Fact 3.1]{crowley03}) 
\[
H^{3}(M_{m,0}) \cong H^{4}(M_{m,0}) \cong  \pi^{*}H^{4}(\Sph^{4}) \cong \Z; \qquad \quad p_{1}(M_{m,0}) = 4m \kappa_{4} \in \Z;
\]
where $\kappa_{4}:= \pi^{*}\iota_{4}\in \pi^{*}H^{4}(\Sph^{4})$ is the generator of $H^{4}(M_{m,0}) \cong \Z$ 
and $\iota_{4}$ denotes a generator of $H^{4}(\Sph^{4}) \cong \Z$.

\begin{remark}
\label{r:2connected:7mfd}
The connected sum $M^{k}_{m}:= M_{m,0} \, \# \, (k{-}1)(\Sph^{3}\times \Sph^{4})$
is a $2$-connected smooth $7$-manifold with torsion-free cohomology,
$b^{3}(M_{m}^{k})= b^{4}(M_{m}^{k})=k$ and $\gdiv{p_{1}(M_{m}^{k})} = 4m$; 
taking a further connected sum with any exotic $7$-sphere $\Sigma \in \Theta_{7} \cong \ZZ/28\ZZ$ 
yields another $2$-connected $7$-manifold with the same invariants which may or may not be (oriented) 
diffeomorphic to~$M^{k}_{m}$.
\end{remark}

\subsubsection{Almost-diffeomorphism to diffeomorphism classification}
In general, finding the number of (oriented) diffeomorphism classes in the
almost diffeomorphism class of a $2$-connected $7$-manifold $M$ is equivalent to identifying the \emph{inertia subgroup}
\[
I(M) \subseteq \Theta_7 := \{\Sigma \in \Theta_{7} | \,
M \# \Sigma \text{\ is oriented-diffeomorphic to}\ M\} . \] 
\begin{theorem}[{\cite[Theorem 1]{wilkens75}}]
\label{thm:inertia}
Let $M$ be a closed 2-connected 7-manifold. If $H^4(M)$ has no 2- or 7-torsion
and $d$ is the greatest divisor of $p_1(M)$, then the inertia subgroup
$I(M) \subseteq \Theta_{7}$ consists of the
elements of $\Theta_7$ divisible by $d/8$. (If $p_1(M)$ is a torsion element
then we interpret $d$ to be 0, and $I(M)$ is trivial.)
\end{theorem}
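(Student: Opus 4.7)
The plan is to refine the almost-diffeomorphism classification of Theorem \ref{thm:torfreeclass} by one extra invariant---the Eells--Kuiper $\mu$-invariant---and to compute explicitly the ambiguity with which this invariant is defined on~$M$. Since $M\#\Sigma$ always has the same $H^4$ and $p_1$ as~$M$, Theorem \ref{thm:torfreeclass} already guarantees that $M\#\Sigma$ is almost-diffeomorphic to~$M$; so $\Sigma\in I(M)$ is equivalent to the statement that every diffeomorphism invariant finer than almost-diffeomorphism agrees on $M$ and $M\#\Sigma$.

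First I would recall the Eells--Kuiper invariant $\mu(N)\in\QQ/\ZZ$ of a closed spin 7-manifold $N$: for any spin 8-manifold $W$ with $\partial W=N$ one defines $\mu(N)$ as a specific $\QQ$-linear combination of $\operatorname{sign}(W)$ and $p_1(W)^2[W]$, reduced mod~$\ZZ$ modulo the indeterminacy produced by choosing a different coboundary. On homotopy 7-spheres, $\mu$ recovers the Kervaire--Milnor isomorphism $\Theta_7\cong\cg{28}$, sending a chosen generator of $\Theta_7$ to $1/28$, and it is additive under connected sum: $\mu(M\#\Sigma)=\mu(M)+\mu(\Sigma)$. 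The indeterminacy $\mathcal A(M)\subset\QQ/\ZZ$ is then the key computation: given two spin coboundaries $W,W'$ of $M$, their union $X=W\cup_M(-W')$ is a closed spin 8-manifold, and the difference of candidate $\mu$-values is the same $\QQ$-linear combination evaluated on~$X$. This contribution splits into an intrinsic piece (coming from closed spin 8-manifolds and generating the full subgroup $\tfrac{1}{28}\ZZ/\ZZ$) and an extrinsic piece controlled by the intersection pairing on $H^4(W)$ restricted to the image of $H^4(M)\hookrightarrow H^4(W)$. The hypothesis $d=\gdiv p_1(M)$ combined with Lemma \ref{lem:posp_div} (giving $p_1=2\posp$) and the integrality of the $\hat A$-genus on closed spin 8-manifolds makes the extrinsic piece contribute exactly the subgroup generated by $(d/8)/28$, so that $\mathcal A(M)=\langle (d/8)/28\rangle\subseteq\QQ/\ZZ$. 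Thus $M\#\Sigma\cong M$ iff $\mu(\Sigma)\in\mathcal A(M)$, i.e.\ iff $[\Sigma]\in\cg{28}$ is a multiple of $d/8$.

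The opposite inclusion---that every multiple of $d/8$ is actually realised as $M\#\Sigma\cong M$---is handled by an explicit handle-theoretic construction. Since $M$ is 2-connected, $H_3(M)$ is free and, by Hurewicz and general position (available since $2\cdot 3<7$), any primitive class is realised by an embedded $\Sph^3\hookrightarrow M$ with trivial normal bundle; choosing the class correctly with respect to $d=\gdiv p_1(M)$, one obtains a tubular neighbourhood $\Sph^3\times D^4\subset M$ across whose boundary one can twist by an element of $\pi_0\operatorname{Diff}(\Sph^3\times\Sph^3)$ lying in the image of the $J$-homomorphism to produce $M\#\Sigma$. The subgroup realised this way is exactly the multiples of $d/8$ in~$\Theta_7$. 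The main obstacle throughout is pinning down the precise constant $d/8$, which requires the Pontrjagin-number integrality statements together with the no-2-or-7-torsion hypothesis on $H^4(M)$---the latter ensuring that one does not need the quadratic refinement discussed in Remark \ref{R:h4:torsion}.
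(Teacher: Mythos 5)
This theorem is cited from \cite[Theorem~1]{wilkens75}; the paper gives no proof of its own, so there is no paper-proof to compare against. Your plan---detect the inertia group via an Eells--Kuiper $\mu$-invariant and compute its indeterminacy---is conceptually the approach the authors themselves allude to in Remark~\ref{rmk:EK}, but as written it has a genuine gap at the very first step.

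To make sense of ``$p_1(W)^2[W]$'' for a spin coboundary $W$ of $M$, one must first lift $p_1(W)\in H^4(W;\QQ)$ to $H^4(W,\partial W;\QQ)$; by the long exact sequence of the pair $(W,\partial W)$, such a lift exists if and only if $p_1(W)|_M = p_1(M)$ vanishes in $H^4(M;\QQ)$, i.e.\ if and only if $p_1(M)$ is a torsion class. But the theorem is needed precisely when $p_1(M)$ is non-torsion---indeed, by Proposition~\ref{prop:p1}(ii), $p_1(M)$ is never torsion for a compact $\gtwo$-manifold. So there is no ``candidate $\mu$-value'' at all for a single coboundary, and the difference of candidate values you intend to evaluate on $X=W\cup_M(-W')$ is not available. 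This is exactly the complication flagged in Remark~\ref{rmk:EK}: the classical Eells--Kuiper invariant is defined only when $b^4(M)=0$, and its extension to $p_1$ non-torsion requires either fixing an auxiliary lift of some cohomology class and carefully tracking the additional indeterminacy this introduces (as in~\cite{7class}), or a different, more bordism-theoretic formulation---which is closer to what Wilkens actually does. Your phrase about an ``extrinsic piece controlled by the intersection pairing on $H^4(W)$ restricted to the image of $H^4(M)\hookrightarrow H^4(W)$'' gestures at the right ingredient (though note the natural restriction map runs $H^4(W)\to H^4(M)$, not the other way), but the argument as structured cannot get off the ground without first confronting the non-existence of the lift. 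The realisation half is also underspecified: you would need to explain why the Gromoll-type twist across a tubular neighbourhood $\Sph^3\times D^4\subset M$ hits exactly the subgroup $(d/8)\Theta_7$ and no more, which requires correlating the resulting change of smooth structure both with the element of $\Theta_7$ produced and with the evaluation of $p_1(M)$ on the embedded sphere.
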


\noindent
So, for example, if $\gcd(p_1(M), 8{\cdot}28)$ divides 8 then $I(M) = \Theta_7$
and any manifold almost-diffeomorphic to $M$ is actually diffeomorphic to $M$.
If there is torsion in $H^4(M)$ then one can still say that
$I(M) \subseteq (d_\pi/4)\Theta_7$ where $d_\pi$ is the greatest divisor of
$p_1(M)$ modulo torsion \cite[Corollary to Proposition 5]{wilkens75},
but the precise value of $I(M)$ may depend on the torsion linking form
\cite{7class}.

If $M$ has holonomy \gtwo then, by Proposition \ref{prop:p1}(ii),
$p_1(M)$ is never a torsion class even if $H^4(M)$ has torsion.
\subsubsection{Application to twisted connected sums}

We now consider compact \gtmfd s $M$ constructed as a
twisted connected sum from a pair of building blocks $Z_+, Z_-$ 
from the point-of-view of their diffeomorphism and almost-diffeomorphism type.
To begin with we deduce from our results on the cohomology of twisted connected
sum manifolds a simple sufficient condition for $M$ to be 2-connected and for
$H^4(M)$ to be torsion-free. Combined with our calculation of $p_1(M)$ we can
then apply the classification Theorem \ref{thm:torfreeclass}.

\begin{lemma}[$2$-connected twisted connected sums with torsion-free $H^{4}$]\hfill
\label{l:2connected:g2}
\begin{enumerate}
\item If $K_\pm = 0$ (\ie $H^2(V_\pm) \to H^2(S)$ is injective; recall
\eqref{eq:k_def}), $N_+ \cap N_- = 0$ and the inclusion
$N_+ + N_- \subset L$ is primitive then $M$ is 2-connected.
\item If $N_+ \perp N_-$, then $H^4(M)$ is torsion-free.
\end{enumerate}
\end{lemma}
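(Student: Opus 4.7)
The plan is to deduce both parts directly from the cohomology computation in Theorem \ref{thm:g2topology} and Corollary \ref{cor:torsion}, together with elementary lattice arguments.

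For (i), we already know from Theorem \ref{thm:g2topology}\ref{it:h1m} that $\pi_1(M) = 0$, so by Hurewicz it suffices to show $H_2(M) = 0$. Since $H_1(M) = 0$, the universal coefficient theorem gives
\[
H^2(M) \cong \Hom(H_2(M), \ZZ), \qquad \Tor H^3(M) \cong \Tor H_2(M),
\]
so $H_2(M) = 0$ if and only if both $H^2(M) = 0$ and $\Tor H^3(M) = 0$. The first condition is immediate from Theorem \ref{thm:g2topology}\ref{it:h2m} together with the assumptions $K_\pm = 0$ and $N_+ \cap N_- = 0$. For the second, Corollary \ref{cor:torsion}\ref{it:h3t} reduces the question to showing $L/(N_+ + N_-)$ is torsion-free, which is exactly the primitivity hypothesis on the inclusion $N_+ + N_- \subset L$.

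For (ii), by Theorem \ref{thm:g2topology}\ref{it:h4m} we have $H^4(M)$ expressed as a direct sum whose summands $H^4(S)$, $T_+ \cap T_-$, $H^3(Z_\pm)$ and $K_\pm^\ast$ are all manifestly torsion-free (using that $H^3(Z_\pm)$ is torsion-free by Definition \ref{dfng:BLOCK}). By Corollary \ref{cor:torsion}\ref{it:h4t} it remains to show that $L/(N_- + T_+)$ and $L/(N_+ + T_-)$ are torsion-free. The key observation is that $N_+ \perp N_-$ is equivalent to $N_- \subseteq T_+$ (and symmetrically $N_+ \subseteq T_-$), so under this hypothesis $N_- + T_+ = T_+$ and $N_+ + T_- = T_-$. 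Thus we are reduced to showing that $L/T_\pm$ is torsion-free, which follows from the fact that $L$ is unimodular and $N_\pm \subset L$ is a primitive sublattice: the pairing then induces an isomorphism $L/T_\pm \cong N_\pm^\ast$, which is free.

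The only point requiring a moment's care is the identification $L/T_\pm \cong N_\pm^\ast$ for primitive $N_\pm$ inside the unimodular lattice $L$; this is standard, but I would record it as a short lemma. Once that is in place both parts of the lemma are one-line consequences of Theorem \ref{thm:g2topology} and Corollary \ref{cor:torsion}, so there is no serious obstacle.
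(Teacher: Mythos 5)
Your proof is correct and follows essentially the same route as the paper's: both parts are read off from Theorem \ref{thm:g2topology} and Corollary \ref{cor:torsion}. You simply make explicit two steps the paper leaves implicit — the universal-coefficient argument reducing $H_2(M)=0$ to $H^2(M)=0$ and $\Tor H^3(M)=0$, and in (ii) the identification $L/T_\pm\cong N_\pm^*$ (which the paper records immediately after introducing $T$, just before Lemma \ref{lemg:Z&V}) — but the substance is identical.
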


\begin{proof}\hfill
\begin{enumerate}
\item We know from Theorem~\ref{thm:g2topology} that $\pi_1(M) = 0$.
Theorem \ref{thm:g2topology}\ref{it:h2m} implies that $H^2(M) = 0$
and Corollary \ref{cor:torsion}\ref{it:h3t} that $H^3(M)$ is torsion-free.
So $\pi_2(M) \cong H_2(M) = 0$.

\item Follows from \ref{cor:torsion}\ref{it:h4t}. \qedhere
\end{enumerate}
\end{proof}

The twisted connected sum construction relies on being able to find pairs of suitably compatible 
ACyl Calabi-Yau $3$-folds $V_{\pm}=Z_{\pm}\setminus S_{\pm}$. 
We will often refer to finding such compatible 
pairs as \emph{solving the matching problem}. 
We will see (\cf Proposition \ref{prop:orth_gluing}) that the easiest way to
find solutions to the matching problem involves
\begin{itemize}
\item
using building blocks of semi-Fano type  which automatically (%
Proposition \ref{prop:block_from_sf}) have $K = 0$;
\item
applying %
results of Nikulin \cite{nikulin:quadratic}
to embed the orthogonal direct sum $N_{+} \perp N_{-}$ primitively in the K3
lattice $L$ (``primitive perpendicular gluing'').
\end{itemize}
This will allow us to obtain a large class of examples of compact \gtmfd s
that are 2-connected and have $H^4(M)$ torsion-free.
When $K_\pm = 0$ and $N_+ \perp N_-$, Theorem \ref{thm:g2topology} implies that
$$b^{3}(M) = b^{4}(M) = b^{3}(Z_+) + b^{3}(Z_-) + 23.$$
So by Theorem \ref{thm:torfreeclass}, to understand the almost-diffeomorphism
type of such $M$ it remains only to determine the divisibility of $p_1(M)$.

\begin{remark*}
If $M$ is $2$-connected but $H^{4}(M)$ has torsion then we could still apply
the almost-diffeomorphism classification theory of Wilkens and Crowley as in
Remark \ref{R:h4:torsion}.
Recall from Remark \ref{rmk:tor_form} that the isomorphism class of the torsion-linking form of 
a twisted connected sum \gtwo-manifold is determined by the isomorphism class of $H^{4}(M)$.
Hence for $2$-connected twisted connected sums the 
isomorphism class of the pair $(H^4(M), p_1(M))$ is sufficient to determine the
almost-diffeomorphism class, except possibly when $H^{4}(M)$ has 2-torsion.
\end{remark*}

\begin{remark}
\label{R:joyce:g2}
Of all the \gtmfd s constructed by Joyce's orbifold desingularisation methods
\cite{joyce:g2, joyce:holonomybook} only one example has $b^{2}=0$;
in particular, none of the other Joyce \gtmfd s are $2$-connected.
Since the diffeomorphism classification of general simply-connected smooth
$7$-manifolds is still unsolved, the determination of the diffeomorphism type
of Joyce's \gtmfd s remains a challenge.
The example with $b^2 = 0$ is found in \cite[Thm 12.5.7]{joyce:holonomybook},
and has $b^{3}=215$. It is in fact topologically a twisted connected sum
of blocks of the type described in the following remark.
\end{remark}

\begin{remark}
\label{R:k3:involution:g2}
The non-symplectic type blocks described in Remark \ref{rmk:kl-def} 
always have $\rk K \ge 2$ in \eqref{eq:k_def}.
Hence by Theorem \ref{thm:g2topology}(ii)
any twisted connected sum \gtmfd{} $M$ constructed using at least one
such building block has $b^{2}(M)\ge 2$; in particular, the
diffeomorphism classification of such twisted connected sum \gtmfd s also remains
open. The non-trivial $K$ of these blocks arises from resolving singularities by
blow-ups; in some cases it is possible to desingularise by smoothing
instead to obtain blocks with $K = (0)$.
While the details are beyond the present scope, Joyce's example with $b^2 = 0$
can be seen to be recovered topologically by using such blocks
from K3s with non-symplectic involution with fixed lattice $U(2)$, \ie double
covers of $\PP^1 \times \PP^1$ branched over a smooth curve of bidegree $(4,4)$.
\end{remark}

Let $N'_\mp$ be the image of $N_\mp$ in $N_\pm^* = L/T_\pm \subset H^4(Z)$
as before.
From Proposition \ref{prop:p1y} and Lemma \ref{lem:ymv} we immediately deduce

\begin{corollary}
\label{cor:p1gcd}
Let $M$ be a twisted connected sum of the building blocks $Z_+$ and $Z_-$.
Then
\[ \gdiv{p_1(M)} = 2\gcd(c_2(Z_+) \mod N'_-, \; c_2(Z_-) \mod N'_+) . \]
In particular any common divisor of $2c_2(Z_+)$ and $2c_2(Z_-)$ also divides
$p_1(M)$, and if $N_+ \perp N_-$ then
\[ \gdiv{p_1(M)} = 2\gcd(c_2(Z_+), c_2(Z_-)) . \]
\end{corollary}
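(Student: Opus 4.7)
The plan is to combine Proposition \ref{prop:p1y}, which gives $p_1(M) = -2Y(c_2(Z_+), c_2(Z_-))$, with Lemma \ref{lem:ymv} describing the kernel and image of~$Y$. By Theorem \ref{thm:g2topology}\ref{it:h4m}, the complement of $\imag Y$ inside $H^4(M)$ is $(T_+ \cap T_-) \oplus H^3(Z_+) \oplus H^3(Z_-)$, which is torsion-free by our building block assumptions. Consequently the divisibility of any element of $\imag Y$ in $H^4(M)$ agrees with its divisibility inside $\imag Y$. Since $Y$ has kernel $N'_+ \oplus N'_-$, this reduces the problem to computing the divisibility of $(c_2(Z_+), c_2(Z_-))$ in the quotient $(H^4(Z_+) \oplus_0 H^4(Z_-))/(N'_+ \oplus N'_-)$.

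Write $d_\pm$ for the divisibility of $c_2(Z_\pm) \mod N'_\mp$ in $H^4(Z_\pm)/N'_\mp$, and set $d = \gcd(d_+, d_-)$. The inequality $\gdiv [(c_2(Z_+), c_2(Z_-))] \le d$ is immediate from projecting to each factor. For the reverse inequality, the key observation is that Corollary \ref{corg:V&S^1(S)}\ref{itg:4} gives a surjection $H^4(Z_\pm) \to H^4(S) \cong \ZZ$, and this descends to a surjection $H^4(Z_\pm)/N'_\mp \to H^4(S)$ because $N'_\mp \subset H^4_{cpt}(V_\pm)$ maps trivially to $H^4(S)$. Choose $\alpha_\pm \in H^4(Z_\pm)/N'_\mp$ with $d_\pm \alpha_\pm = c_2(Z_\pm) \mod N'_\mp$; then necessarily $\alpha_\pm|_S = c_2(S)/d_\pm$. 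Scaling by $d_\pm/d$ produces $\beta_\pm = (d_\pm/d)\alpha_\pm$ with $d\beta_\pm = c_2(Z_\pm) \mod N'_\mp$ and $\beta_+|_S = c_2(S)/d = \beta_-|_S$, so $(\beta_+, \beta_-)$ lifts to a $d$-th part inside the fibered-product quotient. This yields $\gdiv p_1(M) = 2d$, which is the main assertion.

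The two consequences follow easily. Any common divisor of $c_2(Z_+)$ and $c_2(Z_-)$ also divides the reductions mod $N'_\mp$, hence divides $d_\pm$ and so divides~$d$; multiplying by $2$ shows that any common divisor of $2c_2(Z_+)$ and $2c_2(Z_-)$ divides $\gdiv p_1(M)$. For the perpendicular case $N_+ \perp N_-$, we have $N_\mp \subseteq N_\pm^\perp = T_\pm$, so the image $N'_\pm$ of $N_\pm$ in $L/T_\mp$ vanishes; then $d_\pm = \gdiv c_2(Z_\pm)$ and the formula specialises to $\gdiv p_1(M) = 2\gcd(c_2(Z_+), c_2(Z_-))$. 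The only step that requires any care is the lifting argument for the fibered product, which is handled precisely by the surjectivity of restriction to $H^4(S)$.
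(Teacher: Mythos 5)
Your argument is correct and is precisely the deduction the paper leaves implicit (the paper's proof text is just ``From Proposition \ref{prop:p1y} and Lemma \ref{lem:ymv} we immediately deduce''). You correctly identify the two facts that make the deduction go through: (a) $\imag Y$ is a direct summand of $H^4(M)$ with torsion-free complement $(T_+ \cap T_-) \oplus H^3(Z_+) \oplus H^3(Z_-)$, so divisibility of $Y(c_2(Z_+),c_2(Z_-))$ computed inside $\imag Y \cong (H^4(Z_+)\oplus_0 H^4(Z_-))/\ker Y$ agrees with its divisibility in $H^4(M)$; and (b) the surjection to $H^4(S)\cong\ZZ$ lets you rescale the $d_\pm$-th roots so that they agree over $H^4(S)$, which is exactly the condition for the pair to lie in the fibred product. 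The perpendicular specialisation and the ``in particular'' clause then follow as you say. One minor point of hygiene: in the last paragraph, passing from ``common divisor of $c_2(Z_\pm)$ divides $d$'' to ``common divisor of $2c_2(Z_\pm)$ divides $2d$'' is not literally just multiplication by $2$; the clean way is to note $H^4(Z_\pm)$ is torsion-free so $\gdiv 2c_2(Z_\pm)=2\gdiv c_2(Z_\pm)$, and $\gdiv c_2(Z_\pm)\mid d_\pm$, whence any common divisor $m$ of the $2c_2(Z_\pm)$ divides $2d_\pm$ for both signs and therefore divides $2d$. This does not affect correctness.
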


\noindent
Here the `greatest common divisor' of $c_2(Z_+)$ and $c_2(Z_-)$ should simply
be interpreted as the greatest integer by which both are divisible in the
respective $\ZZ$-modules $H^4(Z_\pm)$ (and $H^4(Z_\pm) \mod N'_\mp$).

For the building blocks used in this paper, we already computed the greatest
divisors of $c_2(Z)$ in \cite{chnp1}, see Table \ref{tableg:blocks}.
In examples of twisted connected sums where $N_+$ and $N_-$ are not
perpendicular (so that $N'_\pm$ are non-trivial), we need more detailed
information about $c_2(Z)$.
When $N'_\pm$ is primitive, corresponding to $H^4(M)$ being torsion-free,
\cite[Lemma 5.18]{chnp1} can be applied to give the information we
need for building blocks constructed from semi-Fanos using Proposition
\ref{prop:block_from_sf}. (In general, it is a little easier to compute
$\gdiv p_1(M)$ modulo the torsion in~$H^4(M)$.)

\begin{lemma}
\label{lem:p1range}
$\gdiv p_1(M) \in \{ 4, 8, 12, 16, 24, 48 \}$
for any twisted connected-sum \gtmfd{}~$M$.
\end{lemma}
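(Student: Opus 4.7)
The plan is to combine three earlier results to pin down the divisibility. First, I would invoke Corollary \ref{cor:p_half}: since $M$ is a closed spin $7$-manifold, $4 \mid \gdiv p_1(M)$, so the candidate values lie in $4\NN$. Second, Corollary \ref{cor:p1gcd} gives
\[ \gdiv p_1(M) = 2\gcd(c_2(Z_+) \mod N'_-, \; c_2(Z_-) \mod N'_+), \]
so the task reduces to bounding the two factors appearing in this gcd.

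The crux is to exhibit a natural quotient under which each factor maps to $24 \in \ZZ$, which forces the gcd to divide $24$. To this end I would observe that for any building block the fibre $S = f^*(\infty)$ has trivial normal bundle in $Z$ (being the pullback of the fibre of $\mathcal{O}_{\PP^1}(1)$ over $\infty$), so by adjunction $c(TZ)|_S = c(TS)$ and in particular $c_2(Z)|_S = c_2(S) = \chi(S) = 24$. Thus the restriction map $H^4(Z_\pm) \to H^4(S) \cong \ZZ$ sends $c_2(Z_\pm)$ to $24$.

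Next I would check that this restriction factors through $H^4(Z_\pm)/N'_\mp$. By Lemma \ref{lemg:Z&V}\ref{itg:h4} and Corollary \ref{corg:V&S^1(S)}\ref{itg:4}, the restriction factors as $H^4(Z_\pm) \twoheadrightarrow H^4(V_\pm) \twoheadrightarrow H^4(S)$, so it vanishes on $N^*_\pm \subseteq H^4(Z_\pm)$. Since $N'_\mp$ is by definition the image of $N_\mp$ in $N^*_\pm = L/T_\pm$, we have $N'_\mp \subseteq N^*_\pm$, and the map descends to $H^4(Z_\pm)/N'_\mp \to \ZZ$ sending $c_2(Z_\pm) \mod N'_\mp$ to $24$. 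Any common divisor of the two mod-$N'_\mp$ classes therefore divides $24$, and hence $\gdiv p_1(M) \mid 48$.

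Intersecting the two constraints $4 \mid \gdiv p_1(M)$ and $\gdiv p_1(M) \mid 48$ leaves precisely the positive divisors of $48$ that are multiples of $4$, namely $\{4, 8, 12, 16, 24, 48\}$. There is no substantive obstacle: the only piece that is not purely formal is the adjunction computation $c_2(Z)|_S = 24$, which in turn rests only on the triviality of the normal bundle of a fibre in any building block.
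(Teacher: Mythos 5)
Your proof is correct, but it takes a more circuitous route than the paper's, which is more direct and slightly more general. You establish the upper bound $\gdiv p_1(M) \mid 48$ by invoking Corollary~\ref{cor:p1gcd} and then tracking $c_2(Z_\pm)$ through the quotient $H^4(Z_\pm)/N'_\mp$ into $H^4(S)\cong\ZZ$. The paper instead uses the K3 cross-section of the neck directly: $S$ sits in $M$ with trivial (rank $3$) normal bundle, so $TM|_S \cong TS \oplus \underline\RR^3$ and $p_1(M)|_S = p_1(S) = -2c_2(S) = -48$, whence $\gdiv p_1(M)$ divides $48$ without invoking the full gluing computation. Both arguments ultimately hinge on the same geometric fact (the K3 surface has trivial normal bundle and $c_2(S)=24$), but the paper's route bypasses Corollary~\ref{cor:p1gcd} entirely, which makes it shorter and makes clear that the bound $\gdiv p_1(M)\mid 48$ holds for \emph{any} closed $7$-manifold containing a K3 surface with trivial normal bundle, not only twisted connected sums. (Your first step via Corollary~\ref{cor:p_half} matches the paper; the paper also notes the alternative derivation of $4\mid p_1(M)$ from Corollary~\ref{cor:p1gcd} plus evenness of $c_2(Z)$, which you implicitly rediscover.) One minor loose phrase: the normal bundle of $S$ in $Z$ is trivial because $S$ is a smooth fibre of the submersion $f\colon Z\to\PP^1$, so $N_{S/Z}\cong f^*T_\infty\PP^1$ is pulled back from a point; your parenthetical ``pullback of the fibre of $\oo_{\PP^1}(1)$'' gestures at the same conclusion but is phrased imprecisely.
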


\begin{proof}
Since $M$ is spin, $p_1(M)$ is divisible by 4 according to
Corollary \ref{cor:p_half} (we can also deduce this from
Corollary \ref{cor:p1gcd} and $c_2(Z)$ being even for any building
block $Z$ \cite[Lemma 5.10]{chnp1}).
On the other hand, $M$ contains a K3 surface $S$ with trivial normal bundle,
so the image of $p_1(M)$ in $H^4(S) \cong \bbz$ is 
$p_1(S) = -2c_2(S) \cong -2\chi(S) = -48$.
\end{proof}

\begin{remark*}
The examples in Table \ref{tab:rank1} show that the restrictions in
Lemma \ref{lem:p1range} are the only constraints on the possible greatest
divisors of $p_1$ of twisted connected sum \gtmfd s.
\end{remark*}

\begin{remark*}
Together with Remark \ref{R:homotopy_class}, the lemma above implies that a
pair of 2-connected twisted connected sum \gtmfd s with torsion-free $H^4$ are
almost-diffeomorphic if and only if they are homotopy equivalent.
\end{remark*}

\begin{corollary}
\label{c:inertia:g2}
For a $2$-connected twisted connected sum \gtmfd{} $M$ with $H^{4}(M)$
torsion-free either 
\begin{enumerate}
\item
The inertia group $I(M)=\Theta_{7}$ and hence the almost diffeomorphism class of $M$ consists of a single 
diffeomorphism class; this holds when $\gdiv{p_{1}(M)} \in \{4,8,12,24\}$; or 
\item
The inertia group $I(M)$ consists of all even elements in $\Theta_{7} \simeq \ZZ/28\ZZ$ and hence 
the almost diffeomorphism class of $M$ contains exactly two diffeomorphism classes; 
this holds when  $\gdiv{p_1(M)} \in \{16, 48\}$.
\end{enumerate}
In particular, knowing only $b^{4}(M)$ determines the diffeomorphism type of $M$ up to $8$ possibilities.
\end{corollary}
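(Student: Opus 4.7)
My plan is to apply Wilkens' Theorem \ref{thm:inertia} directly, using that $H^4(M)$ torsion-free certainly has no 2- or 7-torsion. The hypothesis that $M$ be 2-connected with $H^4(M)$ torsion-free ensures we are exactly in the setting where $I(M)$ is determined by the single integer $d := \gdiv p_1(M)$, as the subgroup of $\Theta_7 \cong \cg{28}$ ``consisting of elements divisible by $d/8$''. The main input beyond this theorem is Lemma \ref{lem:p1range}, which restricts $d$ to the six values $\{4,8,12,16,24,48\}$; thus the whole proof reduces to a short finite case analysis.

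For each such $d$, I would interpret the subgroup $I(M) \subseteq \cg{28}$ as the image of the map $\cg{28} \to \cg{28}$ given by ``multiplication by $d/8$'', noting that this is well-defined because $28 \cdot (d/8) = 7d/2$ is an integer (since $4\mid d$). Concretely, $I(M)$ is the cyclic subgroup generated by $d/8$ (interpreted rationally) after clearing denominators: for $d=8$ we get all of $\Theta_7$; for $d = 24$, the subgroup generated by $3$ in $\cg{28}$, which is all of $\Theta_7$ since $\gcd(3,28)=1$; for $d=4$ and $d=12$, where $d/8 \in \{1/2, 3/2\}$, taking even multiples of the fractional generator already produces all of $\cg{28}$, so $I(M)=\Theta_7$ again. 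For $d=16$ we get the subgroup generated by $2 \in \cg{28}$, namely the even elements, of index $2$; and for $d = 48$ the subgroup generated by $6$ has the same image as that generated by $\gcd(6,28)=2$, again the even elements. This establishes cases (i) and (ii).

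For the final count, recall from Theorem \ref{thm:torfreeclass} that the almost-diffeomorphism class of such $M$ is determined by the pair $(b^4(M), d)$, so once $b^4(M)$ is fixed there are at most six almost-diffeomorphism classes, one for each admissible value of $d$. Within each class the number of diffeomorphism types is $|\Theta_7/I(M)|$, which by the case analysis equals $1$ for the four values $d\in\{4,8,12,24\}$ and $2$ for the two values $d\in\{16,48\}$. The total number of possible diffeomorphism types with a given $b^4(M)$ is therefore $4\cdot 1 + 2 \cdot 2 = 8$, as claimed.

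The hardest step is really just bookkeeping the interpretation of ``divisible by $d/8$'' when $d/8$ is a non-integer rational, since Wilkens' theorem is stated without elaborating on this point; I would include a brief clarifying remark that multiplication by $d/8$ makes sense as an endomorphism of $\cg{28}$ precisely because $d$ is always divisible by $4$ (Corollary \ref{cor:p_half}), and that what matters is the cyclic subgroup generated, which is computed by reducing $d/8$ modulo $28$ after rescaling.
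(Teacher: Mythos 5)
Your proposal is correct and follows the same route as the paper, which simply invokes Theorem \ref{thm:inertia} and Lemma \ref{lem:p1range} and leaves the six-case computation in $\cg{28}$ implicit; you have just spelled it out. One small quibble: for $d \in \{4,12\}$ the map ``multiplication by $d/8$'' is \emph{not} a well-defined endomorphism of $\cg{28}$ (one cannot halve an arbitrary residue mod $28$), but your actual computation --- take the integral multiples of $d/8$ and reduce mod $28$ --- is the right reading of Wilkens' statement and gives the correct subgroups, so the conclusion and the final $4\cdot 1 + 2\cdot 2 = 8$ count stand.
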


\begin{proof}
Follows immediately from  Theorem \ref{thm:inertia} and Lemma \ref{lem:p1range}.
\end{proof}

\begin{remark}
\label{rmk:EK}
Eells and Kuiper \cite{eells62} defined a $\cg{28}$ valued invariant for
(in particular) closed simply-connected spin 7-manifolds $M$ with $b^4(M) = 0$
(\ie $H^4(M)$ finite). This invariant classifies the elements of $\Theta_7$,
and can be used to detect the connected sum action of $\Theta_7$ and thus
distinguish between the diffeomorphism types in an almost-diffeomorphism class.
This invariant can be generalised to the case when $b^4(M) > 0$, in such a way
that it distinguishes between all smooth structures on $M$ when $M$ is
2-connected, but is more complicated to define and compute when $p_1(M)$ is
not a torsion class \cite{7class}.
\end{remark}

\section{Construction of associative submanifolds}
\label{sec:geom_assoc}

Let $(M,g)$ be a Riemannian manifold. A $k$-form $\alpha$ on $M$ is 
a \textit{calibration} if $d\alpha=0$ and, for all $x\in M$ and every
oriented $k$-plane $\pi$ in $T_xM$, we have $\alpha_{|\pi}\leq \vol_\pi$. An
oriented submanifold $i:\asm\hookrightarrow M$ is
\textit{calibrated} if, for all $x\in\asm$, $\pi_x:=i_*(T_x\asm)$  
attains the equality:  $\alpha_{|\pi_x}= vol_{\pi_x}$.
The fundamental property of any calibrated submanifold is that 
it minimises volume in its homology class
\cite[Thm. II.4.2]{harveylawson:calgeometry}.

It follows from Lemma \ref{L:calibrations} %
that, on any \gtwo-manifold $(M,\varphi)$, the (parallel) $3$-form $\varphi$ is
a calibration. The corresponding calibrated $3$-dimensional submanifolds are
known as \textit{associative}.

In this section we explain that if the ACyl Calabi--Yau 3-folds $V_\pm$ used
in the twisted connected-sum construction of \gtmfd s $M$ described in
\S \ref{sec:twisted_kovalev} contain appropriate \emph{compact} calibrated
submanifolds, then these will give rise to associative submanifolds of $M$.
More precisely, if $C \subset V_\pm$ is a holomorphic curve, then
$\Sph^1 \times C$ is an associative in $\Sph^1 \times V_\pm$, and if
$L \subset V_\pm$ is special Lagrangian, then $\{\anglex\} \times L$ is
associative. We will prove that under certain conditions it is possible to
perturb these to manifolds that are associative with respect to the
torsion-free \gtstr{} on $M$, when the neck-length parameter in the
construction is sufficiently large.

\subsection*{Geometry of associative submanifolds}
\label{ss:ass_subs}
This subsection recalls basic features of the geometry of associative
submanifolds.
Let $\asm$ be an associative submanifold in a \gtwo-manifold $(M,\varphi)$.
Let $N\asm$ denote the normal bundle of $A$. 
\begin{lemma}\label{l:normal_bundle_trivial}
  The normal bundle $N\asm$ of an associative submanifold $\asm$
  is differentiably trivial.
\end{lemma}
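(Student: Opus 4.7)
The plan is to exhibit an explicit global orthonormal frame of $N\asm$ built out of a parallelization of $T\asm$ together with one nowhere-vanishing normal vector field; the whole construction is powered by the $\gtwo$-cross product. First I would invoke Stiefel's theorem that any oriented $3$-manifold is parallelizable, so that $T\asm$ admits a global orthonormal frame $e_1, e_2, e_3$; because $T_x\asm$ is associative it is closed under the cross product (Lemma \ref{L:calibrations}), and the frame can then be arranged so that $e_3 = e_1 \times e_2$ pointwise. Next I would check that the cross product sends $T_x\asm \otimes N_x\asm$ into $N_x\asm$: for $u \in T_x\asm$, $v \in N_x\asm$ and any $w \in T_x\asm$,
\[
g(u \times v, w) = \varphi(u, v, w) = -\varphi(u, w, v) = -g(u \times w, v) = 0
\]
since $u \times w \in T_x\asm$ is orthogonal to $v$. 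Finally, $N\asm$ is an oriented rank-$4$ bundle over a $3$-manifold, so its Euler class lies in $H^4(\asm; \ZZ) = 0$ and one can pick a nowhere-vanishing section $v_1 \in \Gamma(N\asm)$ of unit length.

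The main claim I would then verify is that $\{v_1,\, e_1 \times v_1,\, e_2 \times v_1,\, e_3 \times v_1\}$ is a global orthonormal frame of $N\asm$. Unit length is immediate from $|e_i \times v_1|^2 = |e_i|^2 |v_1|^2 - g(e_i, v_1)^2 = 1$, and orthogonality to $v_1$ follows from $g(v_1, e_i \times v_1) = \varphi(v_1, e_i, v_1) = 0$ by the alternating property of $\varphi$. For mutual orthogonality with $i \ne j$, the triple cross-product identity of Lemma \ref{L:cross:product} applied to $(u, v, w) = (e_i, e_j, v_1)$ (all of $g(e_i, e_j), g(e_i, v_1), g(e_j, v_1)$ vanish) yields $e_i \times (e_j \times v_1) = -(e_i \times e_j) \times v_1$, and therefore
\[
g(e_i \times v_1, e_j \times v_1) = -g(e_i \times (e_j \times v_1), v_1) = g((e_i \times e_j) \times v_1, v_1) = \varphi(e_i \times e_j, v_1, v_1) = 0
\]
once more by $\varphi$ being alternating.

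The only slightly delicate step is the orthogonality computation just displayed, but it is a short manipulation of the identities already recorded in Lemma \ref{L:cross:product}; the rest is a direct consequence of the $\gtwo$-cross-product structure and classical topology of oriented $3$-manifolds. A cleaner but less explicit route is also available: by Remark \ref{rmk:g2exist} the manifold $M$ is spin, so $TM|_\asm$ is spin; combined with $T\asm$ trivial and the Whitney sum formula this forces $w_2(N\asm) = 0$, and an oriented rank-$4$ spin vector bundle over a $3$-manifold is automatically trivial because the sole obstruction to its triviality, in $H^2(\asm; \pi_1(\sorth 4)) = H^2(\asm; \ZZ/2)$, is precisely $w_2$.
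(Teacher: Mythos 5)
Your proof follows essentially the same route as the paper: parallelize $T\asm$ (oriented $3$-manifold), pick a nowhere-vanishing normal section $\nv$ (rank $4 >$ dim $3$), and use the cross product $T\asm \times N\asm \to N\asm$ to produce the frame $\nv, e_1\times\nv, e_2\times\nv, e_3\times\nv$. The paper simply asserts linear independence of this frame, whereas you spell out the orthonormality computations via the identities of Lemma~\ref{L:cross:product}, and your closing spin-theoretic alternative ($w_2(N\asm)=0$ forces triviality of an oriented rank-$4$ bundle over a $3$-manifold) is a correct supplementary observation not in the paper.
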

\begin{proof}
Since $NA$ is a rank $4$ real vector bundle over a $3$-dimensional base 
it admits at least one global nowhere-vanishing section $\nv$.
Since $\asm$ is 3-dimensional and orientable its tangent bundle is differentiably
  trivial, \textit{i.e.} it admits three global linearly independent
  sections $e_i$. Lemma \ref{L:calibrations}
shows that the cross product on $M$ defines an operation
$T\asm\times N\asm \rightarrow N\asm$. Thus $\nv,
  e_1\times\nv,e_2\times\nv,e_3\times\nv$ are global
  linearly independent sections of $N\asm$.
\end{proof}
Let $\nabla$ denote the
Levi-Civita connection defined by the metric $g$ on $M$. Recall that for $x\in
\asm$ the projections $T_xM\rightarrow T_x\asm$ and
$T_xM\rightarrow N_x\asm$ corresponding to the orthogonal splitting
$T_xM=T_x\asm\oplus N_x\asm$ define connections on the bundles
$T\asm$, $N\asm$. When necessary we will distinguish these via the
notation $\nabla^\top$, $\nabla^\perp$. 

The cross product $T\asm \times N\asm \to N\asm$ gives the normal bundle
a Clifford bundle structure. Together with the connection $\nabla^\perp$
this defines a natural \emph{Dirac operator}
$\D:\Gamma(N\asm)\rightarrow \Gamma(N\asm)$.
For $\nv \in \Gamma(N\asm)$ we can express $\D \nv$ as follows.
For any $x\in\asm$ let $e_1,e_2,e_3$ denote a positive orthonormal basis
of $T_xM$, and let 
\begin{equation}
\label{eq:dirac}
\D\nv(x) := \sum_{i=1}^3 \, e_i \times (\nabla^\perp_{e_i}\nv) .
\end{equation}
$\D$ is a first order differential operator. One
can check that it is elliptic and formally self-adjoint, \textit{i.e.}
$$\int_\asm \inner{\D\nv,\nw} \, dvol = \int_\asm \inner{\nv, \D\nw} \, dvol.$$

\begin{remark*}
$\D$ is in fact a twisted Dirac operator, in the sense that
$N\asm \otimes_\bbr \bbc$ is isomorphic as a Clifford bundle to a twisted
spinor bundle $S \otimes E$. For the relation $T\asm \cong \Lambda^2_+N\asm$
implies that for any spin structure $P$ on $\asm$ (which exists because
$\asm$ is 3-dimensional and orientable) there is a lift of the
$\sorth4$-structure of $N\asm$ to a $\spin4$-structure, so that $P$ is
associated via the projection of $\spin4 \cong \spin3 \times \spin3$
to one factor. Then $N\asm \otimes_\bbr \bbc$ is the tensor product
of the two vector bundles associated to the spin representations of $\spin4$,
and one of these is the spinor bundle $S$ associated to $P$.
See McLean \cite[\S 5]{mclean} and Lawson-Michelsohn \cite{lawson89}.
\end{remark*}

The Dirac operator plays an important role in the deformation theory of
associative submanifolds.
Given the \gtstr{} $\varphi$, we can define a global vector-valued $3$-form
$\chi$ on $M$ modelled on \eqref{E:triple:cross}:
\[ g(u, \half \chi(v,w,z)) = \psi(u,v,w,z) \quad \textrm{for all }
u, v, w, z \in T_xM , \]
where $\psi = *\varphi$. Then
$\asm \subset M$ is associative if and only if the normal vector field
$F(\asm,\varphi) = \chi(T\asm) \in \Gamma(N\asm)$ vanishes, where $T\asm$ is
interpreted as a simple unit norm section of $\Lambda^3 TM$ over $\asm$.
Recall that we can parametrise the deformations of $\asm$ as follows. Let $exp$
denote the exponential map on $M$. Then all (small) deformations of $\asm$, up
to reparametrisation, can be obtained as $\asm_\nv = i_\nv(\asm)$
for some $\nv \in \Gamma(N\asm)$ close to the zero section, where
$ i_\nv:\asm\rightarrow M$ is defined by 
\[ \ i_\nv(x):=exp_x(\nv(x)).  \]
Given $\nv$, $F(\asm_\nv)$ defines a section of $N\asm_\nv$.
In other words, if we let $\caln$ be the vector bundle over $\Gamma(N\asm)$
whose fibre over $\nv$ is $\Gamma(N\asm_\nv)$, then $F$ is a
section of $\caln$.

The associative deformations of $\asm$ are parametrized by the zero set of
$F$ in a small neighbourhood $U$ of the zero section in $\Gamma(N\asm)$.
We say that $\asm$ is \textit{isolated} if $F^{-1}(0)=\{0\}$, \textit{i.e.}
if there do not exist other associative submanifolds attainable as small
deformations of $\asm$.

Because $F(0) = 0$, the differential $DF_0 : \Gamma(N\asm) \to \Gamma(N\asm)$
is defined naturally (without any connection on $\caln$), and it is precisely
equal to $\D$ (see \cite[\S 5]{mclean} or \cite[Theorem 2.1]{gayet10}).
We call the kernel of $\D$ the \emph{infinitesimal deformation space} of
$\asm$, and say that $\asm$ is \emph{rigid} if this space vanishes.

We could attempt to study the set $F^{-1}(0)$ via the Implicit Function Theorem.
It is first necessary to pass to the Banach space completions of the relevant
spaces and maps, \eg using Sobolev spaces. If $\asm$ is closed then the
standard theory of elliptic operators shows that $\D$ extends to a Fredholm
operator. It follows that if $\asm$ is rigid then it is also isolated.
As $\D$ is formally self-adjoint it has index 0, and the
\emph{obstruction space} $\coker \D$
vanishes if and only if $\asm$ is rigid. Therefore we can use the Implicit
Function Theorem to prove smoothness of the deformation space of a closed
associative only when the space is in fact discrete.

\subsection*{Persistence of associatives}

We prove that any rigid associative submanifold $A$ will persist under small
deformations of the ambient \gtstr. %

\begin{theorem}
\label{perturbthm}
Let $\asm$ be a closed associative in a \gtmfd{} $(M,\varphi)$.
If $\ker \D = 0$ then for any small deformation of the \gtstr{},
there is a unique small deformation of $\asm$
which is associative with respect to the new \gtstr.
\end{theorem}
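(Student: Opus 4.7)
The plan is to deduce the theorem from the Banach-space implicit function theorem applied to a map whose zero set parametrises associative deformations of $\asm$ as the $\gtstr$ varies. First I would extend the section $F$ to allow the ambient $\gtstr$ to vary. Using the normal exponential map, identify a tubular neighbourhood of $\asm$ in $M$ with a neighbourhood of the zero section in $N\asm$. For any small $\nv \in \Gamma(N\asm)$ the submanifold $\asm_\nv = i_\nv(\asm)$ has normal bundle $N\asm_\nv$ which can be canonically identified with $N\asm$ via parallel transport along normal geodesics. Under this identification, for any \gtstr\ $\varphi'$ in a small neighbourhood $\mathcal{U}$ of $\varphi$, the defect $F(\asm_\nv, \varphi')$ pulls back to a section $\tilde F(\nv, \varphi') \in \Gamma(N\asm)$, defining a smooth map
\[
\tilde F : W \times \mathcal{U} \longrightarrow \Gamma(N\asm),
\]
where $W$ is a neighbourhood of $0$ in $\Gamma(N\asm)$, and satisfying $\tilde F(0,\varphi)=0$.

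Next I would pass to suitable Sobolev (or Hölder) completions so that $\tilde F$ extends to a smooth map of Banach manifolds, say $\tilde F : L^p_{k+1}(N\asm) \times \mathcal{U} \to L^p_k(N\asm)$ with $p,k$ large enough that Sobolev multiplication and embedding behave. At $\varphi'=\varphi$, $\tilde F(\cdot,\varphi)$ is precisely the associative deformation map whose zero set was analysed by McLean \cite[\S 5]{mclean}; his calculation identifies the partial derivative $D_1 \tilde F_{(0,\varphi)}$ with the Dirac operator $\D$ of \eqref{eq:dirac}. (Smooth dependence of $\tilde F$ on $\varphi'$ and computation of the $\varphi'$-derivative are routine once $\tilde F$ is written out in local trivialisations of $N\asm$.)

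Since $\asm$ is closed, $\D$ is elliptic and formally self-adjoint, hence Fredholm of index $0$. The hypothesis $\ker \D = 0$ combined with self-adjointness forces $\coker \D = 0$, so $\D : L^p_{k+1}(N\asm) \to L^p_k(N\asm)$ is a Banach space isomorphism. The implicit function theorem then produces a smooth map $\varphi' \mapsto \nv(\varphi')$ on a neighbourhood of $\varphi$ in $\mathcal{U}$, with $\nv(\varphi)=0$, such that $\tilde F(\nv(\varphi'),\varphi')=0$ and $\nv(\varphi')$ is the unique small $L^p_{k+1}$-section with this property. Standard elliptic regularity upgrades $\nv(\varphi')$ to a smooth section of $N\asm$, so $\asm_{\nv(\varphi')}$ is a smooth associative submanifold with respect to $\varphi'$, and uniqueness among small deformations is the uniqueness clause of the implicit function theorem.

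There is no genuine obstacle once McLean's identification of the linearisation with $\D$ is in hand: self-adjointness and index zero make $\ker \D = 0$ equivalent to invertibility, which is all that the implicit function theorem requires. The only steps demanding care are the choice of function-space setting and the canonical identification of $\Gamma(N\asm_\nv)$ with $\Gamma(N\asm)$, both of which are standard in calibrated geometry deformation theory.
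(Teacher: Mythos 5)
Your proof is correct and takes essentially the same route as the paper's: identify nearby submanifolds with small normal sections, trivialise the bundle of normal sections over that neighbourhood, pass to Sobolev completions, identify the linearisation with $\D$ via McLean, use self-adjointness and index zero to convert $\ker \D = 0$ into invertibility, apply the implicit function theorem, and finish with elliptic regularity. The only cosmetic difference is that you parametrise the deformation of $\varphi$ by an open neighbourhood in the space of \gtstr s while the paper takes a one-parameter family; this changes nothing.
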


When we apply Theorem \ref{perturbthm} we will often first replace $M$ by
an open neighbourhood of $A$ in order to avoid regions where the \gtstr{}
has torsion.
Even if the obstruction space $\ker \D$ is non-zero, $\asm$ may be
``unobstructed in a family''.
Infinitesimal deformations of the \gtstr{} on $M$ correspond simply to
3-forms, and so the derivative of $\varphi' \mapsto F(\asm,\varphi')$
at $\varphi$ is a map $\Omega^3(M) \to \Gamma(N\asm)$.
Let $R_{A,\varphi} : \Omega^3(M) \to \ker \D$ denote the composition with
the projection $\Gamma(N\asm) \to \coker \D \cong \ker \D$.

\begin{theorem}
\label{thm:genperturb}
Let $\asm$ be a closed associative in a \gtmfd{} $(M,\varphi)$,
and $\{\varphi_s : s \in \calg\}$ an \mbox{$m$-dimensional}
family of deformations of $\varphi$ such that
$R_{A,\varphi} : T\calg \to \ker \D$
is an isomorphism. %
Then there is a
ball $B \subset \bbr^m$, a family of perturbations $A_b$ of $A$ parametrised
by $b \in B$ (a smooth function $A \times B \to M$) and
$f : B \to \calg$, such that each $A_b$ is associative with respect to~$f(b)$.
The same conclusion holds with $\calg$ replaced by any sufficiently small
deformation to a family of \gtstr s $\calg'$.
\end{theorem}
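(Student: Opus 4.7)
\medskip

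\noindent
The plan is to combine a Lyapunov–Schmidt reduction with the Implicit Function Theorem in the manner of parametric (or equivariant) transversality, starting from the setup already used in the discussion preceding Theorem \ref{perturbthm}. Work in suitable Banach completions (for instance $L^p_k$-sections of $N\asm$ for $k$ large enough) so that the parametric section $\Phi(\nu,s) = F(\asm_\nu,\varphi_s)$, defined on a small neighbourhood of $(0,0)$ in $\Gamma(N\asm) \times \calg$ (after identifying $N\asm_\nu$ with $N\asm$ via exponential trivialisation) is a smooth map taking values in $\Gamma(N\asm)$. Write $H = \ker\D \subset \Gamma(N\asm)$ and let $P$ denote the $L^2$-orthogonal projection onto $H$; since $\D$ is formally self-adjoint and elliptic on the closed manifold $\asm$, $\coker\D \cong H$ and $(I-P)\D$ restricts to an isomorphism $H^\perp \to H^\perp$ between the appropriate Sobolev completions.

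Next I would carry out the Lyapunov–Schmidt reduction. Decompose $\nu = \eta + \nu'$ with $\eta \in H$ and $\nu' \in H^\perp$, and split $\Phi(\eta+\nu',s)=0$ into its $H^\perp$ and $H$ components. The derivative of the $H^\perp$-equation with respect to $\nu'$ at $(0,0,0)$ is $(I-P)\D|_{H^\perp}$, which is an isomorphism; the Implicit Function Theorem therefore produces a unique smooth solution $\nu' = \nu'(\eta,s)$ defined on a neighbourhood of $0$ in $H \times \calg$, with $\nu'(0,0) = 0$ and vanishing first derivatives. Substituting into the remaining projected equation yields a smooth map
\[
\tilde\Phi : H \times \calg \longrightarrow H, \qquad
\tilde\Phi(\eta,s) = P\,\Phi\bigl(\eta + \nu'(\eta,s),\,s\bigr),
\]
whose derivative in the $\calg$-direction at $(0,0)$ is exactly the composition of $\dot s \mapsto \partial_s F|_{s=0}\dot s$ with projection onto $\coker\D \cong H$, i.e.\ precisely $R_{\asm,\varphi}$. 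By hypothesis this is an isomorphism, so a second application of the Implicit Function Theorem solves $\tilde\Phi(\eta,s)=0$ uniquely in the form $s = f(\eta)$ for $\eta$ in a small ball $B \subset H \cong \R^m$, with $f(0)=0$. Setting $\asm_b := \asm_{b + \nu'(b,f(b))}$ for $b \in B$ gives the required smooth family of associatives with respect to $\varphi_{f(b)}$.

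For the final sentence, observe that the entire argument depends only on (a) the Fredholm index $0$ nature of $\D$ with prescribed kernel dimension $m$, (b) the invertibility of $(I-P)\D$ on $H^\perp$, and (c) the invertibility of $R_{\asm,\varphi}$. Each of these is an open condition in the $C^1$-topology on the family of $3$-forms defining the \gtstr s $\calg$: replacing $\{\varphi_s\}$ by a sufficiently $C^1$-close family $\{\varphi'_s\}$ perturbs $R_{\asm,\varphi'}$ by a small amount and preserves the isomorphism property, so the same two-step implicit function argument applies verbatim.

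The main technical obstacle is the first reduction step, namely proving that the map $\nu' \mapsto (I-P)\Phi(\eta + \nu', s)$ really has invertible linearisation at $\nu'=0$ in the chosen Banach norm, and that $\nu'(\eta,s)$ depends smoothly on all parameters; this requires selecting a Sobolev class in which $(I-P)\D$ is an isomorphism on $H^\perp$ and in which the nonlinear map $F$ is $C^\infty$. None of this is deep — it is the standard Banach calculus underlying McLean's deformation theorem \cite[\S 5]{mclean} — but some care is needed to ensure that the solution $\nu'(\eta,s)$ is genuinely smooth (and in particular independent of the Sobolev index), which follows from elliptic bootstrapping applied to each resulting associative $\asm_b$.
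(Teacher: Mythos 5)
Your proof is correct, but it organises the argument differently from the paper. The paper applies the Implicit Function Theorem \emph{once}, to the map
\[
U \times \calg \times (-\epsilon,\epsilon) \to \sob{k}(N\asm), \quad
(\asm',s,t) \mapsto F(\asm',\varphi_{s,t}),
\]
where $\{\varphi_{s,t}\}$ is a one-parameter deformation of the given family $\calg$; the derivative at $(\asm,s_0,0)$ is shown to be an isomorphism transverse to $\ker\D \oplus \{0\} \oplus \bbr$, so the zero set near $(\asm,s_0,0)$ is a graph over $B \times (-\epsilon',\epsilon')$ with $B \subset \ker\D$, and fixing each value of $t$ produces the families $A_b$ and $f$, including for the deformed family $\calg'=\calg_t$. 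You instead carry out a Lyapunov--Schmidt reduction: first solve the $H^\perp$-projected equation for $\nu'(\eta,s)$, then solve the reduced finite-dimensional equation $\tilde\Phi(\eta,s)=0$ for $s=f(\eta)$ using the isomorphism $R_{\asm,\varphi}$, then treat the $\calg'$ deformation by an openness argument. The two routes are genuinely different in shape though equivalent in content. The paper's formulation is more compact and treats the $\calg'$ part automatically; your reduction is perhaps more transparent conceptually and more explicitly identifies the role of the kernel $H$, but your handling of the last sentence is slightly glossed: after replacing $\calg$ by a $C^1$-close $\calg'$, the submanifold $\asm$ is no longer associative for the base point $\varphi'_{s_0}$, so $\tilde\Phi(0,s_0)\neq 0$ and one needs a quantitative form of the IFT (or the paper's trick of folding in a parameter $t$) rather than applying the ordinary IFT ``verbatim''; likewise your item (a), constancy of $\dim\ker\D$, is \emph{not} an open condition, though that does not matter since the decomposition $H \oplus H^\perp$ can be kept fixed for the original $\varphi$.
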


The perturbation $A_b$ is rigid as an $f(b)$-associative unless $b$ is a
critical point of $f$.
Theorem \ref{perturbthm} is of course a special case of Theorem 
\ref{thm:genperturb}. It can be proved with less cumbersome notation.

\begin{proof}[Proof of Theorem \ref{perturbthm}]
As above, mapping $v \in \sob{k+1}(N\asm)$ to the image $A_v = i_v(A)$
identifies a neighbourhood $U$ of $\asm$ in the space of
$\sob{k+1}$-submanifolds of $M$ with a neighbourhood of the origin in
$\sob{k+1}(N\asm)$. Choose a trivialisation of the bundle $\caln$ over $U$,
\ie isomorphisms $\Gamma(NA_v) \cong \Gamma(NA)$ for each $v$.
Let $\{\varphi_t : t \in (-\epsilon,\epsilon)\}$ be a 1-parameter family of
\gtstr s (containing $\varphi = \varphi_0$). Consider the map
\[ U \times (-\epsilon,\epsilon) \to \sob{k}(N\asm),
\quad (\asm',t) \mapsto F(\asm', \varphi_t) . \]
By hypothesis, the derivative at $(\asm,0)$ is bijective on the first factor.
By the Implicit Function Theorem, a neighbourhood of $(\asm,0)$ in the
pre-image of $0$ is the graph of a function $t \mapsto \asm'(t)$, \ie for each
perturbation $\varphi_t$ of the \gtstr{} there is a unique
$\sob{k+1}$-perturbation $\asm_v$ of $\asm$ that is associative with respect
to $\varphi_t$.
Because the deformation operator $\D$ is elliptic, $v$ is a solution of a
non-linear elliptic equation, and is smooth by elliptic regularity.
\end{proof}

\begin{proof}[Proof of Theorem \ref{thm:genperturb}]
Let $\{\varphi_{s,t} : s \in \calg, t \in (-\epsilon,\epsilon)\}$ be
a one-parameter family of deformations of $\calg$
(with $\varphi_{s_0,0}$ corresponding to the initial \gtstr{} $\varphi$ on~$M$,
with respect to which $\asm$ is associative).
With $U$ as before, consider the map
\[ U \times \calg \times (-\epsilon,\epsilon) \to \sob{k}(N\asm),
\quad (\asm', s, t) \mapsto F(\asm', \varphi_{s,t}) \] 
The derivative
$T_\asm U \times T_{s_0}\calg \times \bbr \to \sob{k}(N\asm)$ at
$(\asm, s_0, 0)$ equals $\D$ on $T_\asm U = \sob{k+1}(N\asm)$, while the
composition of the derivative with the projection to $\coker \D$ equals
$R_{A,\varphi}$ on the $T_{s_0}\calg$ factor.
Hence the derivative is an isomorphism transverse to
$\ker \D \oplus \{0\} \oplus \bbr$. By the Implicit Function Theorem,
a neighbourhood of $(\asm, s_0,0)$ in the pre-image of 0 is
a graph over $B \times (-\epsilon',\epsilon')$, for some small ball
$B \subset \ker \D$. For each fixed
$t \in (-\epsilon',\epsilon')$, this defines a family of deformations
$\{A_b : b \in B\}$ and a map 
$f : B \to \calg' = \{\varphi_{s,t} : s \in \calg\}$.
\end{proof}

In the situation where we want to use the unobstructedness in a family,
there is an obvious family $\cala$ of initial associatives, and we can perturb
the whole family.

\begin{corollary}
\label{cor:genperturb}
Suppose that $\cala$ is a smooth compact (possibly with boundary)
$m$-dimensional family of closed associatives in a \gtmfd{}
$(M,\varphi)$, and that $\{\varphi_s : s \in \calg\}$ is an $m$-dimensional
family of deformations of $\varphi$ such that
$R_{\asm,\varphi} : T\calg \to \ker \D$
is surjective for each $\asm \in \cala$ (so $\dim \ker \D = m$).
Then for any sufficiently small deformation of $\calg$ to a family of
\gtstr s $\calg'$, there is a small deformation $\cala'$ of $\cala$ and a
smooth map $f : \cala' \to \calg'$
such that each $\asm' \in \cala'$ is associative with respect to $f(\asm')$.
\end{corollary}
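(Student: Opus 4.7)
The plan is to prove this as a parametric version of Theorem \ref{thm:genperturb}, with parameter $\asm \in \cala$. Parametrise $\cala$ by a compact $m$-manifold $P$ so that $\cala = \{\asm_p : p \in P\}$. Since the hypothesis gives $\dim \ker \D_{\asm_p} = m = \dim T_\varphi \calg$, the assumed surjection $R_{\asm_p,\varphi}$ is actually an isomorphism for every $p \in P$, so Theorem \ref{thm:genperturb} already applies pointwise; the work is to assemble these pointwise perturbations into a single smooth family over all of $P$.

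The key step is to run the Implicit Function Theorem globally over $P$. Fix a tubular neighbourhood of the compact set $\bigcup_p \asm_p \subset M$, so that small perturbations of each $\asm_p$ are parametrised by small sections $v \in \sob{k+1}(N\asm_p)$. Cover $P$ by finitely many open sets $U_i$ over which the Banach spaces $\sob{k+1}(N\asm_p)$ can be trivialised with a fixed model $X_i$, and on each $U_i$ define
\[
\Psi_i : U_i \times X_i \times \calg' \to Y_i, \qquad
\Psi_i(p, v, \varphi') = F(\asm_{p,v}, \varphi').
\]
At a reference point $(p, 0, \varphi)$, the identity $\Psi_i(q, 0, \varphi) = 0$ kills the $p$-derivative, and the $(v, \varphi')$-linearisation is $(w, \eta) \mapsto \D_{\asm_p} w + Q_{\asm_p,\varphi}\eta$, where $Q$ is the full $\varphi'$-derivative of $F$. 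Using the decomposition $\sob{k}(N\asm_p) = \imag \D_{\asm_p} \oplus \ker \D_{\asm_p}$ and the hypothesis that $R_{\asm_p,\varphi} = \pi \circ Q_{\asm_p,\varphi}$ maps isomorphically onto $\ker \D_{\asm_p}$, the image of this linearisation is all of $\sob{k}(N\asm_p)$ and its kernel is precisely $T_p P \oplus \ker \D_{\asm_p}$ (one reads off $\eta = 0$ and then $w \in \ker \D_{\asm_p}$, dimension $2m$ total). Hence by the IFT, $\Psi_i^{-1}(0)$ is near the trivial section a smooth $2m$-dimensional submanifold, submersing onto $U_i$ with $m$-dimensional fibres modelled on open balls in $\ker \D_{\asm_p} \cong \bbr^m$.

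Any smooth section $U_i \to \Psi_i^{-1}(0)$ yields, over $p \in U_i$, a perturbation $(v_p, \varphi'_p)$ with $F(\asm_{p,v_p}, \varphi'_p) = 0$; since the fibres of the submersion are contractible and $P$ is paracompact, the local sections can be glued by a smooth partition of unity to a global smooth section, and elliptic regularity of $\D$ upgrades the $\sob{k+1}$-sections to smooth ones. Setting $\asm'_p := \asm_{p, v_p}$ and $f(\asm'_p) := \varphi'_p$ gives the desired deformation $\cala'$ and smooth map $f \colon \cala' \to \calg'$. The main technical obstacle is the $p$-dependence of the Banach spaces $\sob{k+1}(N\asm_p)$; the tubular-neighbourhood trick together with compactness of $P$ sidesteps a full Banach-bundle IFT and is the only point at which genuine parametric care is required.
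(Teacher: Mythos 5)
Your linearisation computation is correct, and the wrinkle that your reference point $(p,0,\varphi)$ has $\varphi\in\calg$ rather than $\calg'$ can be repaired exactly as in the proof of Theorem \ref{thm:genperturb} (interpolate with a parameter $t$ and restrict to fixed small $t$). The genuine gap is the last step: you cannot glue local sections of the submersion $\Psi_i^{-1}(0)\to U_i$ ``by a smooth partition of unity'' on the grounds that the fibres are contractible. Convex combination of points in the fibres of a nonlinear submersion is undefined; partition-of-unity gluing works for sections of vector bundles, or of bundles whose fibres are convex subsets of an affine structure, while contractibility only yields global sections via obstruction theory for an honest fibre bundle --- and your zero sets are so far only germs of $2m$-manifolds near the trivial solutions, with no uniform fibre identification. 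So the passage from finitely many local solutions to a single pair $(\cala',f)$ is unjustified as written. Note also that the fibre direction over $p$ is $\ker\D_{\asm_p}$, i.e.\ tangent to the family $\cala$ itself, so an arbitrary section risks merely reparametrising the family; some normalisation of the section is needed in any case.

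There are two ways to close this. One is to make the section canonical: require $v_p$ to be $L^2$-orthogonal to $\ker\D_{\asm_p}$. This condition is chart-independent and transverse to the fibre, so the implicit function theorem singles out a unique nearby point of the zero set over each $p$, smooth in $p$ and automatically consistent on overlaps; compactness of $P$ then gives uniform neighbourhood sizes and a uniform smallness threshold for $\calg'$. The other, which is what the paper does, avoids sections altogether: Theorem \ref{thm:genperturb} applied at $\asm$ already outputs an $m$-ball family $\{A_b : b\in B\}$ with $f:B\to\calg'$, and since $T_\asm\cala\subseteq\ker\D$ has full dimension $m$, at the undeformed parameter this family \emph{is} a neighbourhood of $\asm$ in $\cala$; for a fixed sufficiently small $\calg'$ it is therefore the desired deformation of that neighbourhood. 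Local uniqueness of the implicit-function-theorem graph forces the finitely many local deformations (finiteness and the uniform threshold coming from compactness of $\cala$) to agree on overlaps, so they assemble into $\cala'$ and $f$ with no gluing choices to make. Your parametric set-up is workable, but it is both heavier than the paper's argument and, at the crucial gluing step, incomplete.
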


\begin{proof}
For each $\asm \in \cala$, Theorem \ref{thm:genperturb} describes how to
deform a neighbourhood of $\asm$, provided that $\calg'$ is a sufficiently
small deformation of $\calg$. Because $\cala$ is compact it can be covered by
finitely many such neighbourhoods.
\end{proof}

When $\cala$ is without boundary, $f : \cala' \to \calg'$ will definitely have
some critical points, so some elements of $\cala'$ are not rigid.

\subsection*{Associative submanifolds and complex curves}
\label{ss:ass_vs_complex}
Let $(V,\Omega,\omega)$ be a Calabi--Yau 3-fold, and consider $\Sph^1 \times V$
with the torsion-free \gtstr{} $\varphi = d\anglex \wedge \omega + \Real \Omega$
as described in \eqref{eq:3form_vs_CY}. Let $C$ be a complex curve in $V$.
Then Lemma \ref{L:linear:ass_vs_complex}\ref{it:ass_vs_complex} implies that
$\Sph^1 \times C$ is an associative submanifold. The aim of this section is to
relate the properties of the associative submanifold to those of the complex
curve. 

Recall that a Calabi-Yau 3-fold $V$ carries a global holomorphic $(3,0)$-form
$\Omega$. We will denote its real part by $\alpha$ and its imaginary part by
$\beta$, \textit{i.e.} $\Omega=\alpha+i\beta$. We can define a cross product on
$TV$ via the formula
\begin{equation}\label{eq:CY_definition_cross}
g(a\times b,c) = \alpha(a,b,c),
\end{equation}
where $g$ is the Calabi-Yau metric of $V$.
Of course, this coincides with the projection onto $TV$ of the cross
product on $\Sph^1\times V$.
The fact that $\Omega$ is $I$-linear and $g$ is Hermitian implies that
the cross-product is $I$-antilinear. It has the usual property that
$a \times b$ is perpendicular to both $a$ and $b$. In particular, for
a complex curve $C\subset V$ the cross product gives a complex linear map
\begin{equation}
\label{eq:cx_cliff}
TC \times NC \rightarrow \overline{NC} .
\end{equation}
Moreover, because $\Omega$ is parallel
\begin{equation}\label{eq:CY_leibniz}
\nabla(a\times b)=\nabla a\times b+a\times \nabla b.
\end{equation}
Let us now review some well-known facts concerning holomorphic vector
fields.  Given any complex manifold $(V,I)$ with real
tangent bundle $TV$, recall the isomorphism of complex vector bundles $(TV,I)\cong T^{1,0}V$ given by 
\begin{equation}\label{eq:tangent_vs_holomorphic}
X\mapsto X-iIX.
\end{equation}
Recall also that any holomorphic bundle $E\mkern -6mu\rightarrow \mkern -6mu V$ has a natural
\textit{Cauchy-Riemann} operator $\debar:\Gamma(E)\rightarrow
\Omega^{0,1}(E)$ whose kernel consists of the holomorphic sections of $E$. A
Hermitian metric $h$ on $E$ defines a \textit{Chern connection}
$\widetilde{\nabla}:\Gamma(E)\rightarrow\Omega^1(E)$: it is uniquely
characterized by the properties $\widetilde{\nabla}h=0$ and
$\widetilde{\nabla}^{0,1}=\debar$, where
$\widetilde{\nabla}^{0,1}:=\tfrac{1}{2}(\widetilde{\nabla}+i\widetilde{\nabla}_I)$ is
the $(0,1)$-component under the splitting
$\Omega^1(E) = \Omega^{1,0}(E) \oplus \Omega^{0,1}(E)$.
Because $g$ is a Kähler metric on $V$, the Chern connection on $TV$ coincides
with the Levi-Civita connection $\nabla$. Hence the Chern connection on $NC$
coincides with the projection $\nabla^\perp$. In particular
the Cauchy-Riemann operator on $NC$ is just the $(0,1)$-part of $\nabla^\perp$.
We can use this fact and the complex Clifford structure \eqref{eq:cx_cliff}
to define an operator
\[ \D^c : \Gamma(NC) \to \Gamma(\overline{NC}) \]
whose kernel is exactly the space of holomorphic normal vector fields:
for $\nv\in\Gamma(N\asm)$ and $x\in C$ pick any unit vector
$a\in T_xC$ and set
\[ \D^c\nv(x):=a\times (\nabla_a+I\nabla_{Ia})^\perp \nv. \]
Since this is unchanged if we replace $a$ by $Ia$, it is in fact independent
of the choice of $a$. It defines a %
complex first-order linear elliptic operator, which
we will refer to as the \textit{complex Dirac operator} on $NC$.
Using \eqref{eq:CY_leibniz} and $I$-antilinearity of the cross-product we find
\[ \inner{\D^c\nv,\nw} = \mbox{div}_C(\nv\times\nw) + \inner{\nv,\D^c\nw} \]
where $\mbox{div}_C$ denotes the divergence operator on vector fields tangent
to $C$, defined via an orthonormal basis of $TC$ by
$\mbox{div}_C X=\inner{\nabla_{e_i}X,e_i}$.
Under integration, the divergence term vanishes, so 
$\D^c$ is formally self-adjoint:
\[ \int_C \inner{\D^c\nv,\nw} \, dvol = \int_C \inner{\nv,\D^c\nw} \, dvol . \]

Let us now return to the product \gtmfd{} $\Sph^1 \times V$ and the associative
submanifold $\Sph^1 \times C$. %
We can identify the normal
bundle of $\Sph^1\times C\subset \Sph^1\times V$ with the normal bundle of
$C\subset V$; notice however that any section $\nv$ will depend on both the
$\anglex$ variable and the variable on $C$. Choose a point
$(\anglex,x)\in\Sph^1\times V$. Set $e_1:=\contra{\anglex}$ and let $e_2=a$ be any
unit vector on $T_xC$ so that $e_3=Ia$. Then \eqref{eq:dirac} becomes
$$\D\nv=\contra{\anglex}\times\dot{\nv}+a\times(\nabla_a\nv)^\perp+Ia\times(\nabla_{Ia}\nv)^\perp.$$
where $\dot{\nv}$ denotes the derivative with respect to $\anglex$. As seen
at \eqref{eq:3form_vs_CY}, $\contra{\anglex}\times \dot{\nv}=I\dot{\nv}$.
Using that $\nabla^\perp$ is $I$-linear and the cross product is $I$-antilinear 
we can then rewrite $\D\nv$ as follows:
\begin{equation}
\label{eq:dirac_product}
\D\nv = I\dot{\nv}+a\times(\nabla_a\nv+\nabla_{Ia}I\nv)^\perp =
I\dot{\nv} + \D^c\nv.
\end{equation}
Normal holomorphic vector fields represent the infinitesimal deformations of
$C$ as a complex curve in $V$. The curve $C$ is said to be \textit{rigid} if
it has no infinitesimal holomorphic deformations. In the previous section we
saw that the solutions to $\D\nv=0$ (for the Dirac operator defined in
\eqref{eq:dirac}) correspond to the infinitesimal (associative) deformations
of an associative submanifold of a \gtmfd. 

\begin{lemma}\label{l:rigidity}
For the associative submanifold $\Sph^1\times C \subset \Sph^1 \times V$,
the kernel of $\D$ is the pull-back of the kernel of $\D^c$.
Thus $\Sph^1\times C$ is rigid if and only if the complex curve $C$ is rigid.
\end{lemma}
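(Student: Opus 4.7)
The plan is to start from the decomposition \eqref{eq:dirac_product}, namely $\D \nv = I \dot{\nv} + \D^c \nv$, where $\dot{\nv}$ is the $\anglex$-derivative along the $\Sph^1$-factor. The inclusion $\ker \D^c \hookrightarrow \ker \D$ is immediate: if $\nv_0 \in \Gamma(NC)$ satisfies $\D^c \nv_0 = 0$, then its pull-back to $\Sph^1 \times C$ is $\anglex$-independent, so $\dot{\nv_0} = 0$ and $\D \nv_0 = \D^c \nv_0 = 0$. The substance of the lemma is the reverse inclusion.

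To establish it, I would Fourier-decompose an arbitrary $\nv \in \ker \D$ in the $\Sph^1$-direction as $\nv(\anglex,x) = a_0(x) + \sum_{k \geq 1}\bigl(a_k(x)\cos k\anglex + b_k(x)\sin k\anglex\bigr)$, with $a_k, b_k \in \Gamma(NC)$. Because $\D^c$ acts only in the $C$-directions while $I \partial_\anglex$ acts only in the $\anglex$-direction, the equation $\D \nv = 0$ decouples by mode: the zero mode gives $\D^c a_0 = 0$, while for $k \geq 1$ one gets the coupled system $\D^c a_k = -k\, I b_k$ and $\D^c b_k = k\, I a_k$. The conclusion will follow if I can show that for $k \neq 0$ the only solution is $a_k = b_k = 0$.

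The key algebraic ingredient is that $\D^c$ anticommutes with $I$. I would check this directly from the definition $\D^c \nv = a \times (\nabla_a + I\nabla_{Ia})^\perp \nv$: the identity $(\nabla_a + I\nabla_{Ia})(I\nv) = I(\nabla_a + I\nabla_{Ia})\nv$ together with the $I$-antilinearity of the cross product $a \times (Iw) = -I(a\times w)$ gives $\D^c(I\nv) = -I\, \D^c \nv$. Applying $\D^c$ to the first equation of the coupled system and substituting the second then yields $(\D^c)^2 a_k = -k\, \D^c(I b_k) = k\, I\, \D^c b_k = k^2\, I^2 a_k = -k^2 a_k$, and similarly for $b_k$. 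Pairing with $a_k$ and using formal self-adjointness of $\D^c$ on the closed curve $C$ gives the Bochner-type identity $\int_C \lvert \D^c a_k\rvert^2 \, d\vol = -k^2 \int_C \lvert a_k\rvert^2 \, d\vol$, which forces $a_k = 0$ for $k \neq 0$, and likewise $b_k = 0$. Hence $\nv = a_0$ is the pull-back of an element of $\ker \D^c$.

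The main (small) obstacle is verifying the anticommutation $\D^c \circ I = -I \circ \D^c$; once that is in hand the Bochner argument is routine. The final assertion about rigidity is then immediate from the earlier identification of $\ker \D^c$ with the space of holomorphic normal vector fields, i.e.\ with infinitesimal complex deformations of $C$ in $V$.
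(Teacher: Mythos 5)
Your proof is correct, but it takes a genuinely different route from the paper's. The paper's argument is a one-line Pythagoras identity: using that $\D^c$ is $I$-antilinear and formally self-adjoint and that $\nv \mapsto I\dot{\nv}$ is a composition of two commuting skew-adjoint maps, one shows directly that $\linner{\D^c \nv, I\dot{\nv}} = 0$, whence \eqref{eq:dirac_product} gives $\lnorm{\D \nv}^2 = \lnorm{\D^c \nv}^2 + \lnorm{\dot{\nv}}^2$, so $\ker \D = \ker \D^c \cap \ker\partial_\anglex$. You instead decompose into Fourier modes along $\Sph^1$, observe that the equation decouples mode-by-mode, and rule out the nonzero modes by a Bochner-type argument using $(\D^c)^2 a_k = -k^2 a_k$. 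Both proofs rest on exactly the same two structural facts about $\D^c$ (it anticommutes with $I$ and is formally self-adjoint on the closed curve $C$), and both are fully correct; the paper's orthogonality argument is shorter and avoids any mode expansion, while your approach is perhaps more explicit about why no nonzero-frequency solutions can appear and would adapt more readily to computing, say, the full spectrum of $\D$ in terms of that of $\D^c$. Your separate verification that $\D^c$ anticommutes with $I$ is sound, though it duplicates what the paper already asserts when it calls $\D^c$ ``$I$-antilinear.''
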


\begin{proof}
The facts that $\D^c$ is $I$-antilinear and formally self-adjoint and
that $\nv \mapsto I\dot{\nv}$ is a composition of two skew-adjoint maps imply
\[ \linner{\D^c \nv, I\dot{\nv}} = 0 . \]
Therefore \eqref{eq:dirac_product} implies that
$\lnorm{\D \nv}^2 = \lnorm{\D^c \nv}^2 + \lnorm{\dot{\nv}}^2$ .
\qedhere
\end{proof}

\subsection*{Associative submanifolds and special Lagrangians}
\label{ss:ass_vs_sl}
Let $(V,\Omega,\omega)$ be a Calabi--Yau \mbox{3-fold}, and consider as before
$\Sph^1 \times V$ with the torsion-free \gtstr{}
$\varphi = d\anglex \wedge \omega + \Real \Omega$ described in
\eqref{eq:3form_vs_CY}.
If $L \subset V$ is a special Lagrangian 3-fold then Lemma
\ref{L:linear:ass_vs_complex}\ref{it:ass_vs_sl} implies that
$L_\anglex = \{\anglex\} \times L$ is associative in $\Sph^1 \times V$ for any
$\anglex \in \Sph^1$. We assume that $L$ is closed.

We want to describe the relation between the deformation theory of the
associative $L_\anglex$ %
and the special Lagrangian $L$. Note that since we can deform $L_\anglex$
simply by changing $\anglex \in \Sph^1$, it is \emph{never} rigid, and the
obstruction space $\coker \D$ is always non-trivial. We will therefore study
the map $\Omega^3(\Sph^1 \times V) \to \coker \D$ in order to apply Theorem
\ref{thm:genperturb} later.

Let us first recall the deformation theory of a closed special Lagrangian
$L \subset V$ \cite[\S 3]{mclean}. According to
Lemma \ref{L:sl_calibrations}\ref{it:sl_char}, for $L$ to be special Lagrangian
is equivalent to $\omega_{|L} = \Imag\Omega_{|L} = 0$.
The Lagrangian condition implies that we can identify the
normal bundle $NL$ with $T^*L$ by $\sigma \mapsto \sigma \lrcorner \omega$.
We can therefore parametrise small deformations of $L$ by small
$\alpha \in \Omega^1(L)$.

Since $\omega$ and $\Imag \Omega$ are closed, the cohomology classes
represented by their restrictions to $L$ are homotopy invariant, so the
restrictions are exact for all deformations of $L$.
The special Lagrangian deformations of $L$ are therefore
parametrised by the zero set of a map
\[ \Omega^1(L) \to d\Omega^1(L) \times d\Omega^2(L) . \]
The linearisation of this map at $0$ (corresponding to $L$) is
$D_L : \alpha \mapsto (d\alpha, d{*}\alpha)$.
This is surjective, with kernel $\harm^1(L)$, the space of harmonic 1-forms
on $L$. Thus the deformations of $L$ are always unobstructed, and form a smooth
manifold near $L$ of dimension $b^1(L)$.

Now consider the associative $L_\anglex = \{\anglex\} \times L$. Its normal
bundle $NL_\anglex$ in $\Sph^1 \times V$ is a direct sum of the trivial bundle
spanned by $\contra{\anglex}$ and the normal bundle $NL$ of $L$ in $V$.
We can identify it with $\Lambda^0 T^*L \oplus \Lambda^1 T^*L$.
Then the Dirac operator
$\D : \Gamma(NL_\anglex) \to \Gamma(NL_\anglex)$ is interpreted as %
\begin{equation}
\begin{aligned}
\label{eq:sl_dirac}
\Omega^0(L) \times \Omega^1(L) &\to \Omega^0(L) \times \Omega^1(L) , \\
(f, \alpha) & \mapsto (d^*\alpha, \, df + *d\alpha)
\end{aligned}
\end{equation}
(see Gayet \cite[Proposition 4.7]{gayet10}). The kernel consists of the
harmonic forms.  In particular, the infinitesimal deformation space of
$L_\anglex$ consists of the infinitesimal special Lagrangian deformations of
$L$ in $V$ together with translations of $\anglex$.
(Note that on the second factor, \eqref{eq:sl_dirac} equals $*D_L$, which is of
course consistent with the fact that $L_\theta$ is associative if and only if
$L$ is special Lagrangian.) 

\begin{lemma}
Let $L \subset V$ be a closed special Lagrangian submanifold. For the
associative submanifold $L_\anglex \subset \Sph^1 \times V$, the kernel of $\D$
is the direct sum of the kernel of $D_L$ and the span of $\contra{\anglex}$.
\end{lemma}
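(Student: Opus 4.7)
The plan is to read off the kernel directly from the explicit expression \eqref{eq:sl_dirac} for $\D$ on
$\Gamma(NL_\anglex) \cong \Omega^0(L) \oplus \Omega^1(L)$,
using Hodge theory on the closed Riemannian 3-manifold~$L$.

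First I would note that under the identification above, the subbundle $\bbr \contra{\anglex} \subset NL_\anglex$ corresponds to $\Omega^0(L) \subset \Omega^0(L) \oplus \Omega^1(L)$, while $NL \subset NL_\anglex$ corresponds to $\Omega^1(L)$ via $\sigma \mapsto \sigma \lrcorner \omega$. Under this second identification, the operator $D_L \colon \alpha \mapsto (d\alpha, d^*\alpha)$ controlling special Lagrangian deformations of $L$ has kernel $\harm^1(L)$, the space of harmonic 1-forms.

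Next I would compute $\lnorm{\D(f,\alpha)}^2$ using \eqref{eq:sl_dirac}. Expanding:
\begin{equation*}
\lnorm{\D(f,\alpha)}^2
= \lnorm{d^*\alpha}^2 + \lnorm{df}^2 + 2\linner{df, *d\alpha} + \lnorm{*d\alpha}^2.
\end{equation*}
Since $L$ is a closed oriented 3-manifold, $\lnorm{*d\alpha}^2 = \lnorm{d\alpha}^2$, and by Stokes' theorem the cross term becomes
\begin{equation*}
\linner{df, *d\alpha} \;=\; \int_L df \wedge d\alpha \;=\; \int_L d(f\, d\alpha) \;=\; 0.
\end{equation*}
Hence $\lnorm{\D(f,\alpha)}^2 = \lnorm{d^*\alpha}^2 + \lnorm{df}^2 + \lnorm{d\alpha}^2$, so $(f,\alpha) \in \ker \D$ forces $df = 0$, $d\alpha = 0$ and $d^*\alpha = 0$. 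Thus $f$ is locally constant (a constant on each connected component of $L$) and $\alpha$ is harmonic.

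Finally I would translate this back to the splitting of $\Gamma(NL_\anglex)$: the locally constant functions correspond exactly to the span of the constant normal section $\contra{\anglex}$, while the harmonic 1-forms correspond to $\ker D_L = \harm^1(L) \subset \Omega^1(L)$. This yields $\ker \D = \langle \contra{\anglex} \rangle \oplus \ker D_L$, as claimed. There is no real obstacle here; the only mild subtlety is the integration-by-parts step showing that the cross term in $\lnorm{df + *d\alpha}^2$ vanishes, which is what makes the operator on the product decouple cleanly into a function part and a 1-form part.
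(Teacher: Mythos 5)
Your proof is correct and is essentially the argument the paper leaves implicit after writing down \eqref{eq:sl_dirac} (compare the parallel integration-by-parts argument in the proof of Lemma \ref{l:rigidity} for $\Sph^1\times C$, where the key point is likewise that a cross term vanishes, making $\lnorm{\D \nv}^2$ decouple). One small notational slip: the paper's $D_L$ sends $\alpha \mapsto (d\alpha, d(\ast\alpha))$, with target $d\Omega^1(L)\times d\Omega^2(L)$, whereas you wrote $d^*\alpha$; this is harmless since the two differ only by a Hodge star and have the same kernel $\harm^1(L)$, but worth keeping consistent. Also, "locally constant" equals the span of $\contra{\anglex}$ only when $L$ is connected, which is the implicit assumption in the lemma statement itself, so this is not a gap in your argument.
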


Now we study the map from infinitesimal deformations of the \gtstr{},
parametrised by $\Omega^3(\Sph^1 \times V)$, to the obstruction space
$\coker \D$. In the identification of $\D$ with \eqref{eq:sl_dirac},
$\coker \D$ corresponds to $\harm^0(L) \oplus \harm^1(L)$.
The map from $\Omega^3(\Sph^1 \times V)$ to $\coker \D$ is the composition
of a point-wise map
$\Lambda^3T_x^*(\Sph^1 \times V) \to \Lambda^0 T_x^*L \oplus \Lambda^1 T_x^*L$
and the projection to the harmonic forms.
We are interested primarily in torsion-free deformations of
$\Sph^1 \times V$, and (at least for $V$ compact/ACyl with $b^1(V) = 0$) up to
diffeomorphism and rescaling of the $\Sph^1$ factor they are all products.

\begin{lemma}
\label{lem:sl_family}
Let $(\sigma, \tau)$ be an infinitesimal deformation of the
$\sunitary3$-structure $(\Omega, \omega)$, and
let $\varphi_t$ be a 1-parameter family of \gtstr s with
$\frac{d\varphi_t}{dt} = d\anglex \wedge \tau + \Real\sigma$.
Then $\frac{d}{dt}F(L_\anglex,\varphi_t)_{|t=0} \in \Gamma(NL_\anglex)
\cong \Omega^0(L) \times \Omega^1(L)$
corresponds to $(*(\Imag\sigma_{|L}), *(\tau_{|L}))$, and the image in
$\coker \D \cong \harm^0(L) \oplus \harm^1(L)$ to the de Rham projection.
\end{lemma}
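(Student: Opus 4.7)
The plan is to compute $\dot F := \tfrac{d}{dt}F(L_\anglex,\varphi_t)_{|t=0}$ directly. Because $L_\anglex$ is associative for $\varphi_0$ (by Lemma~\ref{L:linear:ass_vs_complex}\ref{it:ass_vs_sl}, since $L$ is special Lagrangian), $F(L_\anglex,\varphi_0)=0$ and so $\dot F \in \Gamma(NL_\anglex)$ is canonically defined without choosing a connection on $\caln$. Differentiating the defining relation $g_{\varphi_t}\bigl(u,\tfrac12 F(L_\anglex,\varphi_t)(x)\bigr) = \psi_{\varphi_t}(u,\hat v_1(t),\hat v_2(t),\hat v_3(t))$ at $t=0$, for fixed $u\in T_xM$ and any smooth family $\hat v_i(t)\in T_xL_\anglex$ of $g_{\varphi_t}$-orthonormal positive bases, the $\dot g$-term pairs against $F(L_\anglex,\varphi_0)=0$, and the basis-variation terms $\psi_0(u,\dot{\hat v}_i,\hat v_j,\hat v_k)$ collapse (after expanding $\dot{\hat v}_i\in T_xL_\anglex$ in the $v_j$ and invoking antisymmetry) to multiples of $\psi_0(u,v_1,v_2,v_3)=g_0\bigl(u,\tfrac12 F(L_\anglex,\varphi_0)(x)\bigr)=0$. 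This leaves
\[ g_0\bigl(u,\tfrac12\dot F(x)\bigr) \;=\; \dot\psi(u,v_1,v_2,v_3) \]
for any $u\in N_xL_\anglex$ and any positively oriented $g_0$-orthonormal basis $v_1,v_2,v_3$ of $T_xL_\anglex$.

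Next I would compute $\dot\psi$ using \eqref{eq:4form:six:dims}: the infinitesimal $\sunitary3$-deformation $(\Omega+t\sigma,\omega+t\tau)$ induces $\psi_{\varphi_t}=\tfrac12(\omega+t\tau)^2-d\anglex\wedge\Imag(\Omega+t\sigma)$ to first order, whence
\[ \dot\psi \;=\; \omega\wedge\tau \;-\; d\anglex\wedge\Imag\sigma . \]
Using the splitting $NL_\anglex=\R\,\contra{\anglex}\oplus NL$ and the Lagrangian identification $NL=IL\subset TV_{|L}$, I would evaluate the two normal components of $\dot F$. For $u=\contra{\anglex}$, neither $\omega$ nor $\tau$ contains $d\anglex$, so
\[ \contra{\anglex}\lrcorner\dot\psi \;=\; -\Imag\sigma , \]
whose pullback to $L_\anglex$ is $-{*}(\Imag\sigma_{|L})\,\vol_L$. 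For $u=Iw$ with $w\in TL$, using $(Iw)\lrcorner\omega=-w^\flat$ together with $d\anglex_{|L_\anglex}=0$ and $\omega_{|L}=0$,
\[ (Iw)\lrcorner\dot\psi_{|L_\anglex} \;=\; -w^\flat\wedge\tau_{|L} \;=\; -\inner{w^\flat,\,{*}(\tau_{|L})}\,\vol_L . \]

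Under the identification $NL_\anglex\cong\Omega^0(L)\oplus\Omega^1(L)$ appearing in \eqref{eq:sl_dirac}---the $0$-form component is the $\contra{\anglex}$-coefficient and the $1$-form component is $v^\perp\lrcorner\omega$---the two lines above translate, up to the overall sign and factor absorbed in these identifications, into $\dot F=(*(\Imag\sigma_{|L}),\,*(\tau_{|L}))$. For the image in the obstruction space, $\D$ is formally self-adjoint and elliptic on the closed manifold $L$, so $\coker\D\cong\ker\D\cong\harm^0(L)\oplus\harm^1(L)$ and the map $\Gamma(NL_\anglex)\to\coker\D$ is the $L^2$-orthogonal projection onto harmonic forms---the harmonic summand of the Hodge decomposition, which on closed forms agrees with the de Rham cohomology class. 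The only subtlety is the bookkeeping of signs in the identifications $NL\cong T^*L$ and $\coker\D\cong\harm^0(L)\oplus\harm^1(L)$ so that the statement comes out with the stated signs; the underlying pointwise computation of $\dot\psi$ and its contractions is otherwise routine.
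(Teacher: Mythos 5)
Your proof is correct and takes essentially the same route as the paper: differentiate the defining relation for $F$ at a point where $F=0$ to see that $\dot F$ depends only on $\dot\psi$, compute $\dot\psi = \omega\wedge\tau - d\anglex\wedge\Imag\sigma$ from \eqref{eq:4form:six:dims}, and contract. The paper's proof is essentially a one-line sketch of this same computation; you merely fill in the intermediate steps (the vanishing of the basis-variation terms and the pointwise contractions with $\contra{\anglex}$ and $Iw$) and honestly flag the residual sign/normalisation bookkeeping in the identifications, which the paper likewise leaves to the reader and the cited reference.
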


\begin{proof}
$\frac{d}{dt}F(L_\anglex,\varphi_t)$ is a linear function of
$\frac{d\psi_t}{dt}$.  Without loss of generality
$\varphi_t = d\anglex \wedge \omega_t + \Real \Omega_t$
where $(\Omega_t, \omega_t)$ is a deformation of $(\Omega, \omega)$
tangent to $(\sigma, \tau)$.
Then \eqref{eq:4form:six:dims} gives
$\frac{d\psi_t}{dt} = \omega \wedge \tau - d\anglex \wedge \Imag \sigma$,
from which we can deduce the result.
\end{proof}

In particular, consider the case when $L$ is a rational homology 3-sphere,
\ie $b^1(L) = 0$, so that $L$ is rigid as a special Lagrangian.
If $(\Omega_t,\omega_t)$ is a 1-parameter family of deformations of the
Calabi--Yau structure on $V$ and $\int_L \frac{d\Imag\Omega_t}{dt} \not= 0$,
then the $\Sph^1$-family of associatives $\{L_\anglex : \anglex \in \Sph^1\}$
is unobstructed with respect to the 1-parameter family
$\varphi_t = d\anglex \wedge \omega_t + \Real\Omega_t$, in the sense of
Corollary \ref{cor:genperturb}.

\subsection{Associatives in twisted connected sums}

We now put together the results of the section to identify the data we can
use to construct associatives in twisted connected-sum \gtmfd s.
As in Theorem \ref{thm:g2glue}, let $(V_\pm,\omega_\pm,\Omega_\pm)$ be two
asymptotically cylindrical Calabi-Yau 3-folds with asymptotic ends of the
form $\bbrp \times \Sph^1 \times S_\pm$ for a pair of \hk K3 surfaces $S_\pm$,
and $\hkr : S_+ \to S_-$ a \hk rotation. Let $M_\hkr$ be the twisted connected
sum of $\Sph^1 \times V_\pm$, and $\tv_{T,\hkr}$ the torsion-free \gtstr{} with
`neck length' $2T$ defined in Theorem \ref{thm:g2glue}.

\begin{prop}
\label{prop:cx_to_assoc}
Let $C \subset V_+$ be a closed rigid holomorphic curve. Then for sufficiently
large~$T$, there is a small deformation of the image of
$\Sph^1 \times C \subset \Sph^1 \times V_+$ in $M_\hkr$ that is associative
with respect to $\tv_{T,\hkr}$, and this associative is rigid.
\end{prop}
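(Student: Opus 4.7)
The plan is to reduce the statement to an application of Theorem~\ref{perturbthm} in a suitable open neighbourhood $U$ of $\Sph^1 \times C$ inside $M_\hkr$. The crucial observation is that, because $C \subset V_+$ is compact, there exists $T_0$ such that $C$ is disjoint from $\eta_+([T_0-1,\infty) \times \Sph^1 \times S_+)$. Consequently, for any $T \ge T_0$, the image of $\Sph^1 \times C$ in the twisted connected sum $M_\hkr$ lies entirely inside a region on which the glued closed \gtstr{} $\varphi_{T,\hkr}$ constructed in \eqref{eq:omega:T} coincides verbatim with the original torsion-free product \gtstr{} $\varphi_+ := d\anglex \wedge \omega_+ + \Real \Omega_+$ on $\Sph^1 \times V_+$ (the cut-off factor $\rho(t-T+1)$ vanishes there).

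Fix such an open neighbourhood $U$ of $\Sph^1 \times C$, with $\overline{U}$ compact and contained in the region above. Then $(U, \varphi_+|_U)$ is a \gtmfd{} in its own right, and $\Sph^1 \times C$ is a closed associative submanifold of it by Lemma~\ref{L:G2_vs_CY} and Lemma~\ref{L:linear:ass_vs_complex}\ref{it:ass_vs_complex}. Since by hypothesis $C$ has no infinitesimal holomorphic deformations, Lemma~\ref{l:rigidity} gives $\ker \D = 0$ for the associative Dirac operator of $\Sph^1 \times C$ inside $U$.

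The Joyce--Kovalev perturbation step invoked in Theorem~\ref{thm:g2glue} provides a torsion-free \gtstr{} $\tv_{T,\hkr}$ on $M_\hkr$ whose difference from $\varphi_{T,\hkr}$ decays as $O(e^{-\lambda T})$ in every $C^k$-norm; restricted to~$U$ this means $\tv_{T,\hkr}|_U$ is an arbitrarily small perturbation of $\varphi_+|_U$ once $T$ is large. Theorem~\ref{perturbthm}, applied to $(U, \varphi_+)$ with the closed rigid associative $\Sph^1 \times C$, then supplies, for every sufficiently large $T$, a unique small deformation of $\Sph^1 \times C$ inside $U$ that is associative with respect to $\tv_{T,\hkr}|_U$, and hence with respect to $\tv_{T,\hkr}$ globally on $M_\hkr$.

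Rigidity of the perturbed associative is inherited from that of $\Sph^1 \times C$ by upper semi-continuity of $\dim \ker \D$ for the index-zero Fredholm Dirac operator as a function of the ambient closed \gtstr{} and the closed associative submanifold: it jumps only upward under small deformation, hence remains zero. There is no real obstacle here; the only thing to keep track of is that the exponential $C^k$-smallness of $\tv_{T,\hkr} - \varphi_{T,\hkr}$ on all of $M_\hkr$, restricted to the relatively compact set $\overline U$, is strong enough to feed the implicit function theorem underlying Theorem~\ref{perturbthm}, which is automatic.
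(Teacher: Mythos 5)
Your proof takes essentially the same route as the paper's: use the compactness of $C$ to work in a neighbourhood where $\varphi_{T,\hkr}$ is the original product structure, invoke Lemma~\ref{l:rigidity} for $\ker\D=0$, and then apply Theorem~\ref{perturbthm} using the exponential decay of $\tv_{T,\hkr}-\varphi_{T,\hkr}$. You go one step further than the paper by making the rigidity of the perturbed associative explicit; just note that the phrase ``jumps only upward'' is the wrong way to describe upper semi-continuity of $\dim\ker\D$ — what you want is that near an operator with trivial kernel, the kernel dimension cannot increase, so it stays zero.
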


\begin{proof}
By Lemma \ref{l:rigidity}, $\Sph^1 \times C \subset \Sph^1 \times V_+$
is a rigid associative.

Recall that the \gtstr{} $\varphi_{T,\hkr}$ with small torsion defined before
Theorem \ref{thm:g2glue} is exactly the product \gtstr{} on the complement
of $\{t > T\}$ in $\Sph^1 \times V_+$, and hence near $\Sph^1 \times C$ when
$T$ is large. The $C^k$ norms of the difference between $\varphi_{T,\hkr}$ and
$\tv_{T,\hkr}$ are of order $O(e^{-\lambda t})$, so Theorem \ref{perturbthm}
implies that $\Sph^1 \times C$ can be perturbed to an associative with respect
to $\tv_{T,\hkr}$ for any sufficiently large $T$. 
\end{proof}

Constructing associatives from closed special Lagrangians $L \subset V_\pm$
requires a little bit more work since, as pointed out above, the associatives
$L_\anglex = \{\anglex\} \times L \subset \Sph^1 \times V_\pm$ are never rigid.
We restrict our attention to the case when $b^1(L) = 0$, so that $L$
is rigid as a special Lagrangian and the obstruction space of $L_\anglex$
is 1-dimensional. We can find a 1-parameter family of torsion-free
deformations of $\Sph^1 \times V_\pm$ such that the family
$\{ L_\anglex : \anglex \in S^1 \}$ is unobstructed (in the sense required
to apply Corollary \ref{cor:genperturb}) if the class of
$L$ in the homology of $V_\pm$ relative to its boundary is non-zero.
(In this section, all homology and cohomology refers to $\bbr$ coefficients.)

\begin{lemma}
Let $L^m$ be a closed submanifold of an ACyl manifold $V^n$ with
cross-section $X$.
If $[L] \in H_m(V,X)$ is non-zero then there is an exponentially decaying
harmonic $m$-form $\beta$ on $V$ such that $\int_L \beta \not= 0$.
\end{lemma}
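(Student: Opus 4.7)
The strategy is to combine universal coefficients, the compactly-supported de Rham theorem, and Hodge theory on ACyl manifolds. The key observation is that a non-vanishing relative homology class is detected by a compactly supported closed form, and that such a form can be replaced---within its absolute de Rham class---by a cohomologous exponentially decaying harmonic form without changing the integral over the compact cycle $L$.

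First I would use that, over $\R$, the universal coefficient theorem identifies $H^m(V,X;\R)$ with the dual of $H_m(V,X;\R)$, so the hypothesis $[L] \neq 0$ yields a relative cohomology class pairing non-trivially with $[L]$. Via the de Rham theorem for the pair $(V,X)$ (or equivalently, the de Rham theorem for compactly supported cohomology on $V$), this class is represented by a compactly supported closed $m$-form $\alpha$ on $V$ with $\int_L \alpha \neq 0$.

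Next I would invoke Hodge theory on ACyl manifolds. Because $\alpha$ has compact support, its absolute de Rham class $[\alpha] \in H^m(V)$ lies in the image of the natural map $H^m_{cpt}(V) \to H^m(V)$. The ACyl Hodge theorem (of Melrose / Lockhart--McOwen type, as used throughout the paper; for the specific version see \cite{hhn}) then asserts that the space of exponentially decaying harmonic $m$-forms on $V$ maps isomorphically onto this image, with every such class having a unique exponentially decaying harmonic representative. In particular there exist an exponentially decaying harmonic $m$-form $\beta$ and a global $(m{-}1)$-form $\eta$ on $V$ with $\beta = \alpha + d\eta$.

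Finally, since $L$ is compact without boundary, Stokes' theorem gives $\int_L d\eta = 0$, so $\int_L \beta = \int_L \alpha \neq 0$. The only step that is not essentially formal is the ACyl Hodge theorem, but this is already a foundational input of the whole paper, so there is no genuine obstacle: everything else reduces to universal coefficients, de Rham, and Stokes.
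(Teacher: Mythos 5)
Your proof is correct and is essentially a dual reformulation of the paper's argument: the paper takes the Poincaré dual of $[L]$ in $H^{n-m}_{cpt}(V)$, applies the ACyl Hodge theorem (Lockhart) to get an exponentially decaying harmonic representative of its image in $H^{n-m}(V)$, and then Hodge-stars; you instead take the class in $H^m_{cpt}(V)\cong H^m(V,X)$ pairing non-trivially with $[L]$ and apply the same Hodge theorem directly in degree $m$. Your closing Stokes step $\int_L d\eta=0$ is a pleasantly hands-on substitute for the implicit $\int_V\alpha\wedge *\alpha=\lVert\alpha\rVert^2_{L^2}>0$ argument needed to justify the paper's last line, and it has the minor advantage of not requiring a separate verification that the image class in $H^m(V)$ is non-zero (that falls out automatically). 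Both routes hinge on exactly the same analytic input, so the difference is one of presentation rather than substance.
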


\begin{proof}
The image of the Poincar\'e dual of $L$ under $H^{n-m}_{cpt}(V) \to H^{n-m}(V)$
is non-zero, and represented by an exponentially decaying harmonic form
$\alpha$ (Lockhart \cite[Theorems 7.6 \&~7.9]{lockhart87}).
Take $\beta = *\alpha$.
\end{proof}

\begin{corollary}
Let $L \subset V_+$ be a compact special Lagrangian with $b^1(L) = 0$,
such that $[L] \not= 0 \in H_3(V_+, \, \Sph^1 {\times} S_+)$. Then there is a
1-parameter family of deformations $\varphi_{t,+}$ of the product \gtstr{} on
$\Sph^1 \times V_+$, all with the same asymptotic limit as $\varphi_+$,
with respect to which $\{ L_\anglex : \anglex \in \Sph^1 \}$ is unobstructed
in the sense of Corollary \ref{cor:genperturb}.
\end{corollary}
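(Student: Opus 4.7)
The plan is to use the harmonic 3-form supplied by the preceding lemma to construct $\varphi_{t,+}$ via a direct perturbation of the 4-form $\psi_+$ dual to the product \gtstr{} on $\Sph^1 \times V_+$, then to verify the hypothesis of Corollary \ref{cor:genperturb} by a derivative computation patterned on Lemma \ref{lem:sl_family}. Applying the preceding lemma with $V = V_+$, $X = \Sph^1 \times S_+$ and the submanifold $L$ produces an exponentially decaying harmonic 3-form $\beta$ on $V_+$ with $\int_L \beta \neq 0$; pull it back to $\Sph^1 \times V_+$ via the projection. Define
\[ \psi_{t,+} \; := \; \psi_+ \; - \; t\, d\anglex \wedge \beta . \]
By Remark \ref{rem:psi} the 4-forms dual to \gtstr s form an open $\glr 7$-orbit, so for all sufficiently small $t$ this 4-form is dual to a genuine \gtstr{} $\varphi_{t,+}$ on $\Sph^1 \times V_+$. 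The exponential decay of $\beta$ ensures $\varphi_{t,+}$ has the same asymptotic limit as $\varphi_+$.

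To apply Corollary \ref{cor:genperturb} I must verify that $R_{L_\anglex, \varphi_+}(\partial_t) \neq 0$ for each $\anglex \in \Sph^1$. Since $b^1(L) = 0$, the preceding discussion gives $\ker \D$ spanned by $\contra{\anglex}$ and $\coker \D \cong \harm^0(L)$, both one-dimensional. The proof of Lemma \ref{lem:sl_family} shows that $\dot F$ depends linearly on $\dot\psi$ alone; in complex dimension $3$ the Lefschetz map $\tau \mapsto \omega_+ \wedge \tau$ is a pointwise isomorphism $\Omega^2(V_+) \to \Omega^4(V_+)$, so any $\dot\psi$ on $\Sph^1 \times V_+$ can be written uniquely in the form $\omega_+ \wedge \tau - d\anglex \wedge \Imag\sigma$. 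For our deformation $\dot\psi = -d\anglex \wedge \beta$ this forces $\tau = 0$ and $\Imag\sigma = \beta$, so the formula of Lemma \ref{lem:sl_family} gives $\dot F \leftrightarrow (*(\beta_{|L}), 0)$ under $NL_\anglex \cong \Omega^0(L) \oplus \Omega^1(L)$. Its projection to $\harm^0(L)$ is proportional to $\int_L \beta \neq 0$, so $R_{L_\anglex, \varphi_+}$ is surjective, uniformly in $\anglex \in \Sph^1$ by translation invariance.

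The main subtlety is that Lemma \ref{lem:sl_family} is stated for variations through $\sunitary3$-structures while our perturbation of $\psi_+$ does not obviously come from one. This is resolved by noting that the only fact about $(\sigma,\tau)$ used in the lemma's proof, beyond the linear dependence of $\dot F$ on $\dot\psi$, is the identity \eqref{eq:4form:six:dims} relating $\dot\psi$ to $(\sigma,\tau)$; the resulting formula for $\dot F$ therefore extends to any $\dot\psi$ decomposed as above, and the argument goes through.
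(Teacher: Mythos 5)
You take a genuinely different route from the paper. The paper constructs $\varphi_{t,+}$ as a path of \emph{torsion-free} \gtstr s: starting from the harmonic $\beta$ supplied by the preceding lemma, it takes the unique $\SL{3}$-deformation $\sigma$ of $\Omega_+$ with $\Imag\sigma = \beta$, uses $b^1(V_+)=0$ together with the $\sunitary3$-equivariance of $\beta\mapsto\sigma$ to conclude that $\sigma$ is harmonic and that $(\sigma,0)$ is an infinitesimal $\sunitary3$-deformation of $(\Omega_+,\omega_+)$, and then invokes \cite[Proposition 6.18]{jn1} to integrate the harmonic form $\Real\sigma$ to a $1$-parameter family of torsion-free product \gtstr s. You instead perturb the dual $4$-form directly, setting $\psi_{t,+} = \psi_+ - t\, d\anglex \wedge \beta$ and appealing to the openness of the orbit of dual $4$-forms. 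Your derivative computation is essentially sound: at an associative point $\psi(\cdot,e_1,e_2,e_3)=0$, so the variation $\dot F$ is a fixed multiple of the $g$-dual of the $1$-form $\dot\psi(\cdot,e_1,e_2,e_3)$ and therefore depends on $\dot\psi$ alone; for $\dot\psi=-d\anglex\wedge\beta$ this gives $(*(\beta_{|L}),0)$ under the identification of $NL_\anglex$ used in Lemma \ref{lem:sl_family}. Your ``extension'' of that lemma is therefore correct, although you assert rather than establish it.

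The genuine gap is that your $\varphi_{t,+}$ is coclosed ($d\psi_{t,+}=0$ since $\beta$ is harmonic) but there is no reason for it to be closed, so it is not torsion-free and not of the product form $d\anglex\wedge\omega_{t,+}+\Real\Omega_{t,+}$ arising from a family of ACyl Calabi--Yau structures on $V_+$. The discussion immediately preceding Lemma \ref{lem:sl_family} makes explicit that this is the class of deformations intended, and the paper's proof delivers precisely this. The application in Proposition \ref{prop:lag_to_assoc} also requires it: there one forms a family of twisted connected sums $\tv_{t,T,\hkr}$, and for each $t$ the structure $\varphi_{t,+}$ must come from an ACyl Calabi--Yau structure on $V_+$ so that the hyper-K\"ahler rotation $\hkr$ matches it to $\varphi_-$ and Theorem \ref{thm:g2glue} can be invoked. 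Your family cannot be fed into the gluing construction. To close the gap you should integrate the harmonic infinitesimal deformation to a torsion-free product family, as the paper does via \cite[Proposition 6.18]{jn1}.
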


\begin{proof}
Take $\beta \in \Omega^3(V_+)$ as in the previous lemma.
There is a unique complex 3-form $\sigma$ on $V_+$ such that
$\imag \sigma = \beta$ and $\sigma$ is an infinitesimal deformation
of $\Omega$ as an $\SL 3$-structure (\cf Remark \ref{r:SL3_open_orbit}).
Because the map $\beta \mapsto \sigma$ is $\sunitary 3$-equivariant it
maps harmonic forms to harmonic forms. Because $b^1(V_+) = 0$,
$(\sigma,0)$ is an infinitesimal deformation of $(\Omega, \omega)$
as an $\sunitary 3$-structure. 

$\real \sigma$ is an infinitesimal deformation of the product \gtstr{}
$\real \Omega + d\anglex \wedge \omega$, and because it is harmonic it
can be integrated to a 1-parameter family of torsion-free deformations
$\varphi_{t,+}$ \cite[Proposition 6.18]{jn1}. It follows from Lemma
\ref{lem:sl_family} that $\{ L_\anglex : \anglex \in \Sph^1 \}$ is unobstructed
in this family.
\end{proof}

Since each $\varphi_{t,+}$ ($t \in [-\epsilon,\epsilon]$) has the same asymptotic limit as $\varphi_+$, the \hk rotation $\hkr$ matches $\varphi_{t,+}$ and
$\varphi_{-}$. Thus for $T$ sufficiently
large we can define a 1-parameter family of torsion-free
\gtstr s $\{\tv_{t, T, \hkr} : t \in [-\epsilon,\epsilon]\}$.
Corollary \ref{cor:genperturb} implies

\begin{prop}
\label{prop:lag_to_assoc}
Let $L \subset V_+$ be a compact special Lagrangian with $b^1(L) = 0$,
such that $[L] \not= 0 \in H_3(V_+, \, \Sph^1 {\times} S_+)$. Then for $T$
large enough there is a smooth map $f : \Sph^1 \to [-\epsilon,\epsilon]$
and a deformation $\{ L'_\anglex : \anglex \in \Sph^1 \}$ in $M_\hkr$ such that
each $L'_\anglex$ is associative with respect to $\tv_{f(\anglex), T, \hkr}$.
\end{prop}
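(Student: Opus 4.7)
The plan is to apply Corollary \ref{cor:genperturb} to the $\Sph^1$-family of associatives $\cala = \{L_\anglex : \anglex \in \Sph^1\}$ in $\Sph^1 \times V_+$ with the $1$-parameter family of \gtstr s $\calg = \{\varphi_{t,+} : t \in [-\epsilon,\epsilon]\}$ furnished by the preceding Corollary, and then to transfer this to $M_\hkr$. Since $b^1(L) = 0$, the identification of $\ker \D$ given just before Lemma \ref{lem:sl_family} shows that $\ker \D \cong \harm^0(L) \oplus \harm^1(L) \cong \bbr$, spanned by $\contra{\anglex}$, so the expected dimension condition $\dim \calg = \dim \ker \D = 1$ is satisfied. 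By Lemma \ref{lem:sl_family} the infinitesimal map $R_{L_\anglex, \varphi_+} : T_t\calg \to \ker \D$ is computed as the $\harm^0(L)$-projection of $*(\Imag\sigma_{|L})$; and the construction of $\sigma$ in the preceding Corollary was made precisely so that $\int_L \Imag \sigma = \int_L \beta \neq 0$. Thus $R_{L_\anglex, \varphi_+}$ is a linear isomorphism for every $\anglex \in \Sph^1$, and the hypotheses of Corollary \ref{cor:genperturb} hold.

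Next, I would compare the families $\{\varphi_{t,+}\}$ and $\{\tv_{t,T,\hkr}\}$ on a neighbourhood of the image of $\Sph^1 \times L$ in $M_\hkr$. Because $L$ is compact, $\Sph^1 \times L$ is contained in a precompact open set $U \subset \Sph^1 \times V_+$ lying inside the region $\{t < T_0\}$ for some fixed $T_0$. For $T > T_0 + 1$ the cutoff in \eqref{eq:omega:T} is supported in $\{t \geq T-1\}$, so the closed \gtstr{} $\varphi_{t,T,\hkr}$ agrees with the product structure $\varphi_{t,+}$ on $U$. By Kovalev's (or Joyce's) perturbation theory, the torsion-free representative $\tv_{t,T,\hkr}$ in its cohomology class differs from $\varphi_{t,T,\hkr}$ by a form whose $C^k$ norm is $O(e^{-\lambda T})$ (parametrically in $t \in [-\epsilon,\epsilon]$); consequently $\tv_{t,T,\hkr}_{|U}$ is $C^k$-close, uniformly in $t$, to $\varphi_{t,+}$.

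Therefore, for $T$ large enough, $\calg' := \{\tv_{t,T,\hkr} : t \in [-\epsilon,\epsilon]\}$ is a sufficiently small deformation of $\calg$, in the sense required by Corollary \ref{cor:genperturb}, when everything is localised to $U$ and its image in $M_\hkr$. The Corollary then produces a small deformation $\cala' = \{L'_\anglex : \anglex \in \Sph^1\}$ of $\cala$ in $M_\hkr$ together with a smooth map $f \colon \cala' \to \calg'$, which, under the identifications $\cala' \cong \Sph^1$ and $\calg' \cong [-\epsilon,\epsilon]$, is exactly a smooth $f \colon \Sph^1 \to [-\epsilon,\epsilon]$ such that each $L'_\anglex$ is associative with respect to $\tv_{f(\anglex),T,\hkr}$.

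The main technical point is the parametric version of the perturbation theorem needed in the second paragraph: one must know that the torsion-free perturbation depends smoothly on $t$ and that the $C^k$ estimates on $\tv_{t,T,\hkr} - \varphi_{t,T,\hkr}$ are uniform in $t \in [-\epsilon,\epsilon]$ (and in particular uniform on the compact region $U$). This follows by running Kovalev's argument with parameters, but is the step requiring care; everything else reduces to invoking Corollary \ref{cor:genperturb}. Note that since $\cala \cong \Sph^1$ is without boundary, $f$ must have critical points, so some $L'_\anglex$ will fail to be rigid, consistent with the remark following Corollary \ref{cor:genperturb}.
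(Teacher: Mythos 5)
Your proof is correct and follows essentially the same route as the paper: the paper's own justification is simply ``Corollary \ref{cor:genperturb} implies,'' having first constructed the $1$-parameter family $\{\tv_{t,T,\hkr}\}$ (using that each $\varphi_{t,+}$ has the same asymptotic limit as $\varphi_+$) and established the unobstructedness hypothesis in the preceding corollary. Your version spells out the localisation to a neighbourhood of $\Sph^1\times L$ where $\varphi_{t,T,\hkr}$ agrees with the product structure, and flags the need for a parametric (uniform-in-$t$) version of the $C^k$ estimate on the torsion-free perturbation, which the paper leaves implicit; that is a reasonable elaboration rather than a different argument.
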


\noindent
$f$ has at least 2 critical points, which correspond to associatives that are
not rigid.

\begin{remark}
\label{rmk:lag_homology}
If $V_+$ is compactifiable in the sense that $V_+ \cong Z_+ {\setminus} S_+$
for a K3 divisor $S_+$ with trivial normal bundle in a compact complex manifold
$Z_+$, then $H_3(Z_+) \into H_3(Z_+, \, \Delta {\times} S_+)
\cong H_3(V_+, \, \Sph^1 {\times} S_+)$ since $H_3(S_+) = 0$.
If $Z_+$ is in turn a blow-up of a weak Fano $Y$, then the preimage of any
closed homologically non-trivial $L \subset Y$ not meeting the blow-up locus or
$S_+$ will represent a non-trivial class in $H_3(V_+, \, \Sph^1 {\times} S_+)$.
\end{remark}

\section{The matching problem}
\label{sec:k3}

Recall that Theorem \ref{thm:g2glue} allows us to form a twisted
connected sum \gtmfd{} from any pair of ACyl Calabi-Yau 3-folds
$V_{\pm}$ satisfying a compatibility condition on their asymptotic \hk
K3 surfaces $S_{\pm}$.  In \cite{chnp1}, we constructed large numbers
of suitable ACyl Calabi-Yau 3-folds, applying Theorem
\ref{thm:acyl_limits}---the ACyl version of the Calabi-Yau
theorem---to building blocks $Z_{\pm}$ obtained from semi-Fano
$3$-folds as in Proposition \ref{prop:block_from_sf}.  In fact, as we
already remarked by varying various choices made in the construction
we obtain families of ACyl Calabi-Yau structures on the same
underlying smooth $6$-manifold $Z\setminus S$.  To complete the
construction of \gtmfd s, it remains to explain how to find
\emph{compatible pairs} of such ACyl Calabi-Yau 3-folds; this will
require us to exploit the freedom we have to vary the ACyl Calabi-Yau
structures on both building blocks.

We reformulate the compatibility condition in terms of existence of
``matching data'' between a pair of building blocks, which are certain
triples of cohomology classes in $L_\bbr \cong H^{2}(S; \bbr)$.
The definition of the matching data is linked to the moduli theory of
algebraic K3 surfaces. This formulation will help us prove the
existence of many pairs of compatible ACyl Calabi-Yau 3-folds given
some additional algebraic geometry input.
We remark at the outset that the same pair of deformation families of
building blocks $\fbb_{\pm}$ may be matched in different ways and
hence give rise to several different twisted connected sum
\gtwo-manifolds.

In this section we describe one convenient strategy for finding
matching data which we term ``orthogonal gluing''.  Given some
additional input about the deformation theory of the building blocks
used to construct the ACyl Calabi-Yau $3$-folds, orthogonal gluing
allows us to reduce the problem of finding compatible pairs of ACyl
Calabi-Yau $3$-folds $V_{\pm}=Z_{\pm}\setminus S_{\pm}$ almost
entirely to arithmetic questions about the pair of polarising lattices
$N_{\pm}$ of the building blocks $Z_{\pm}$.  For ACyl Calabi-Yau
$3$-folds of semi-Fano type the deformation theory we need was
developed in \cite[\S 6]{chnp1}.  In \S \ref{sec_g2mfds}
we use orthogonal gluing to find many compatible pairs of ACyl
Calabi-Yau $3$-folds.

At the end of \S \ref{sec_g2mfds} we also discuss so-called ``handcrafted nonorthogonal gluing''. 
This allows matching in situations impossible to achieve using orthogonal gluing; 
the price ones pays is that the method is much more labour-intensive as it 
requires more precise information about K3 moduli spaces.

\subsection{Reformulating the existence of \hk rotations}
\label{subsec:k3_matching}

Let us first recall the set-up for the gluing Theorem \ref{thm:g2glue}.
$V_\pm$ is a pair of ACyl Calabi-Yau 3-folds with asymptotic limits
$\bbrp \times \Sph^1 \times S_\pm$. The $S_\pm$ are K3 surfaces, with preferred
complex structure $I_\pm$, Kähler form $\omega^I_\pm$ and holomorphic
volume form $\Omega_\pm$. Because this is a \hk structure, there are further
complex structures $J_\pm$ and $K_\pm$, with Kähler forms $\omega^J_\pm$ and
$\omega^K_\pm$ ($\Omega_\pm = \omega^J_\pm + i \omega^K_\pm$).
The compatibility condition for $V_+$ and $V_-$ is that $S_\pm$ are related
by a \emph{\hk rotation} as in Definition \ref{def:hkrot}:
we need an orientation-preserving isometry $\hkr\colon S_+ \to S_-$ such
that $\hkr^\ast(I_-)=J_+$ and $\hkr^\ast(J_-)=I_+$ (with the isometry
condition this implies $\hkr^\ast (K_-)=-K_+$).
Equivalently, $\hkr^*\omega^I_- = \omega^J_+$, 
$\hkr^*\omega^J_- = \omega^I_+$ and $\hkr^*\omega^K_- = -\omega^K_+$.

We use the Torelli theorem to reduce this relation to the action on cohomology. 

\begin{lemma}
Let $\hdg : H^2(S_-; \ZZ) \to H^2(S_+; \ZZ)$ be an isometry, extend it to
$H^2(S_-; \R) \to H^2(S_+; \R)$, and suppose that
\[ \hdg[\omega^I_-] = [\omega^J_+], \quad \hdg[\omega^J_-] = [\omega^I_+]
\quad \text{and} \quad \hdg[\omega^K_-] = -[\omega^K_+]. \]
Then there is a \hk rotation
$\hkr : S_+ \to S_-$ such that $\hkr^* = \hdg$.
\end{lemma}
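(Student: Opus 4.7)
My plan is to invoke the strong Torelli theorem for K3 surfaces, after reinterpreting the hypothesis as the statement that $\hdg$ is a Hodge isometry which maps a Kähler class to a Kähler class. To this end, let $S'_+$ denote the manifold $S_+$ equipped with the complex structure $J_+$ (rather than the preferred $I_+$); it is again a K3 surface, with Kähler form $\omega^J_+$ and holomorphic volume form, up to phase, $\omega^K_+ + i\,\omega^I_+$, as follows from the quaternionic relations (the $(2,0)$-form for $J_+$ must be a complex combination of $\omega^I_+$ and $\omega^K_+$ because these span the perpendicular to $\omega^J_+$ in the space of self-dual harmonic $2$-forms).

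First I would verify that $\hdg$, regarded as a map $H^2(S_-;\RR)\to H^2(S'_+;\RR)$, sends the Hodge decomposition for $(S_-,I_-)$ to the Hodge decomposition for $S'_+$. Under the hypotheses,
\[
\hdg\bigl([\omega^J_-] + i[\omega^K_-]\bigr)
= [\omega^I_+] - i[\omega^K_+]
= -i\bigl([\omega^K_+] + i[\omega^I_+]\bigr),
\]
so $\hdg$ carries the class of the holomorphic $2$-form on $(S_-,I_-)$ to a non-zero multiple of the class of the holomorphic $2$-form on $S'_+$, and (by complex conjugation, plus the fact that $\hdg$ is real) it therefore preserves the Hodge filtration. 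In particular $\hdg$ is a Hodge isometry of lattices, and by hypothesis it also sends the Kähler class $[\omega^I_-]$ of $(S_-,I_-)$ to the Kähler class $[\omega^J_+]$ of $S'_+$.

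Next I would apply the strong Torelli theorem (Piatetski-Shapiro--Shafarevich, refined by Burns--Rapoport): a Hodge isometry of K3 lattices which maps a Kähler class to a Kähler class is induced by a unique biholomorphism. This produces a biholomorphism $\hkr \colon S'_+ \to (S_-,I_-)$ with $\hkr^\ast = \hdg$. Since $\hkr^\ast[\omega^I_-] = [\omega^J_+]$ and $\hkr^\ast \omega^I_-$ is automatically a Ricci-flat Kähler metric on $S'_+$ in this class, uniqueness of Ricci-flat Kähler metrics in a given class (Yau) gives $\hkr^\ast \omega^I_- = \omega^J_+$; in particular $\hkr$ is an isometry for the underlying hyperkähler metrics. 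Using that isometries of a hyperkähler $4$-manifold act on Kähler forms in the same way they act on the associated complex structures, namely $\hkr^\ast \omega^X_- = \omega^Y_+ \iff \hkr^\ast I_{X,-} = I_{Y,+}$, the remaining identities $\hkr^\ast J_- = I_+$ and $\hkr^\ast K_- = -K_+$ follow from the corresponding identities in cohomology, and $\hkr$ is the desired \hk rotation.

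The main obstacle is simply the invocation of the strong Torelli theorem at the precise level needed, and in particular the verification that the Kähler class is sent to a Kähler class (not merely to a class in the positive cone); the latter is a direct hypothesis here, so once the Hodge-isometry check is in place the rest is essentially bookkeeping with the quaternionic relations. No further deformation-theoretic input is required.
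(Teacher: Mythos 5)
Your proof is correct and takes essentially the same route as the paper's: reinterpret $\hdg$ as a Hodge isometry between two K3 surfaces after switching one of them to a different complex structure in its hyperkähler $\Sph^2$, then invoke strong Torelli together with Yau's uniqueness of Ricci-flat Kähler metrics. The only difference is cosmetic: the paper switches $S_-$ to the complex structure $J_-$ and views $\hdg$ as a Hodge isometry from $(S_-,J_-)$ to $(S_+,I_+)$, whereas you switch $S_+$ to $J_+$ and view $\hdg$ as a Hodge isometry from $(S_-,I_-)$ to $(S_+,J_+)$. By the symmetry of the hyperkähler rotation conditions these are interchangeable choices, and both produce the required $\hkr$.

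One small spot to tighten: at the end you deduce $\hkr^*J_-=I_+$ and $\hkr^*K_-=-K_+$ "from the corresponding identities in cohomology", but the cohomological identities alone do not immediately give equality of the forms. The clean way to close this (and what the paper does) is to note that $\hkr^*(\omega^J_-+i\omega^K_-)$ is a $J_+$-holomorphic $2$-form of the correct norm, hence equals the $J_+$-holomorphic form $\omega^K_++i\omega^I_+$ up to a unit phase, and the cohomology class then forces the phase to be $-i$; this pins down $\hkr^*\omega^J_-=\omega^I_+$ and $\hkr^*\omega^K_-=-\omega^K_+$ at the level of forms, after which the action on complex structures follows since $\hkr$ is now an isometry.
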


\begin{proof}
Consider the complex structure $J_-$ on $S_-$. $\omega^I_- - i\omega^K_-$
is a holomorphic 2-form with respect to $J_-$. Therefore $\hdg$ maps
$H^{2,0}(S_-,J_-)$ to $H^{2,0}(S_+, I_+)$, \ie it is a Hodge isometry between
the complex K3 surfaces $(S_-, J_-)$ and $(S_+, I_+)$. Moreover, the Kähler
class $[\omega^J_-]$ is mapped to the Kähler class $[\omega^I_+]$.
Therefore the strong Torelli theorem implies that there is a holomorphic map
$\hkr \colon (S_+,I_+) \rightarrow (S_-,J_-)$ such that $\hkr^\ast = \hdg$.
Since the holomorphic 2-forms are uniquely determined by their de Rham
cohomology classes, $\hkr^*\omega^I_- = \omega^J_+$ and
$\hkr^*\omega^K_- = -\omega^K_+$.
Further $\hkr^*\omega^J_- = \omega^I_+$, by uniqueness of a Ricci-flat
Kähler metric in its cohomology class.
Thus $\hkr$ is a \hk rotation.
\end{proof}

It is useful to rephrase the previous lemma in the language of the moduli theory
of K3 surfaces. Recall that a \emph{marking} of a complex K3 surface $(S, I)$
is an isometry $L \cong H^2(S; \ZZ)$.
$H^{2,0}(S) \subset H^2(S;\bbc)$  can be identified with an oriented real
2-plane in $H^2(S;\bbr)$, and its image in $L_\bbr$ is the \emph{period}
of the marked K3 surface.

\begin{prop}
\label{pro:matching_k3}
Let $(k_0, k_+, k_-)$ be an orthonormal triple of positive classes in $L_\bbr$.
Let $(S_\pm, I_\pm)$ be complex K3 surfaces with markings
$\hdg_\pm : L \to H^2(S_\pm; \ZZ)$ such that $\gen{k_\mp, \pm k_0}$ is the
period point, and $\hdg_\pm(k_\pm)$ is a Kähler class on $S_\pm$.
Let $\hdg = \hdg_+ \circ \hdg_-^{-1} : H^2(S_-; \ZZ) \to H^2(S_+; \ZZ)$.
Then there exist unique \hk structures
$(\omega^I_\pm, \omega^J_\pm, \omega^K_\pm)$ on $S_\pm$ with
$[\omega^I_\pm] = \hdg_\pm(k_\pm)$ and $\omega^J_\pm +i\omega^K_\pm$
holomorphic with respect to $I_\pm$, such that there is a \hk rotation
$\hkr : S_+ \to S_-$ with $\hkr^\ast = \hdg$. 
\end{prop}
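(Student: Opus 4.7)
The plan is to realise Proposition~\ref{pro:matching_k3} as a direct application of the preceding lemma, by using Yau's solution of the Calabi conjecture together with the period map to produce matching \hk structures on $S_\pm$ whose associated cohomology data fits the lemma's hypotheses.

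First I would fix the Kähler forms. Since $\hdg_\pm(k_\pm)$ is by assumption a Kähler class on $(S_\pm, I_\pm)$, Yau's theorem yields a unique Ricci-flat Kähler metric on $(S_\pm, I_\pm)$ in that class, whose Kähler form I take to be $\omega^I_\pm$; so $[\omega^I_\pm] = \hdg_\pm(k_\pm)$ and both metrics are thereby pinned down. Next, the hypothesis that $\gen{k_\mp, \pm k_0}$ is the period point of $(S_\pm, I_\pm)$ means that $H^{2,0}(S_\pm)$ corresponds, via the marking $\hdg_\pm$, to the oriented real 2-plane spanned by $k_\mp$ and $\pm k_0$ in $L_\bbr$. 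Any non-zero holomorphic 2-form on $(S_\pm, I_\pm)$ therefore admits a unique rescaling by $\bbc^\ast$ so that its real and imaginary parts represent $\hdg_+(k_-), \hdg_+(k_0)$ in the $+$ case and $\hdg_-(k_+), -\hdg_-(k_0)$ in the $-$ case; call the resulting form $\Omega_\pm$ and set $\omega^J_\pm := \real\Omega_\pm$, $\omega^K_\pm := \imag\Omega_\pm$. Orthonormality of $(k_0, k_+, k_-)$ is what guarantees that the three classes $[\omega^I_\pm], [\omega^J_\pm], [\omega^K_\pm]$ are pairwise orthogonal with equal squares, which is precisely the cohomological shadow of the $\sunitary2$-relations reviewed on p.~\pageref{sss:su2}, so $(\omega^I_\pm, \omega^J_\pm, \omega^K_\pm)$ really does define a \hk structure inducing the Ricci-flat metric already produced.

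With $\hdg = \hdg_+ \circ \hdg_-^{-1}$ one then simply reads off
\begin{align*}
\hdg[\omega^I_-] &= \hdg_+(k_-) = [\omega^J_+], \\
\hdg[\omega^J_-] &= \hdg_+(k_+) = [\omega^I_+], \\
\hdg[\omega^K_-] &= -\hdg_+(k_0) = -[\omega^K_+],
\end{align*}
so the hypotheses of the preceding lemma are satisfied, and that lemma produces the required \hk rotation $\hkr : S_+ \to S_-$ with $\hkr^\ast = \hdg$. Uniqueness of the \hk structures comes from the uniqueness clause in Yau's theorem together with the $\bbc^\ast$-rigidity of the normalisation of $\Omega_\pm$; uniqueness of $\hkr$ was already built into the preceding lemma via the strong Torelli theorem.

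The one subtlety I expect to need verifying carefully is that the specific $\bbc^\ast$-rescaling of $\Omega_\pm$ that forces the classes $[\omega^J_\pm]$ and $[\omega^K_\pm]$ to equal the prescribed integral vectors is automatically compatible with the Calabi-Yau normalisation $(\omega^I_\pm)^2 = (\omega^J_\pm)^2 = (\omega^K_\pm)^2$ and with the orthogonality $\omega^I_\pm \wedge (\omega^J_\pm + i\omega^K_\pm) = 0$. Both reduce to the input that $(k_0, k_+, k_-)$ is an orthonormal triple of positive classes, which is exactly where the orthonormality hypothesis enters in an essential way.
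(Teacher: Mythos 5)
Your proof is correct and follows essentially the same route as the paper's: Yau's theorem pins down $\omega^I_\pm$ within the prescribed Kähler class, the period hypothesis pins down the $\bbc^\ast$-normalisation of the holomorphic $2$-form $\Omega_\pm$, and the preceding lemma (the cohomological reformulation of hyper-Kähler rotation via Torelli) then supplies $\hkr$ and forces uniqueness. The subtlety you flag at the end does resolve as you expect: since $\Omega_\pm$ is parallel for the Ricci-flat metric, $\Omega_\pm\wedge\bar\Omega_\pm$ is automatically a constant multiple of $\omega_\pm^2$, and integrating against the cohomological normalisation shows that constant equals $2$ precisely because $k_\mp^2 = k_0^2 = k_\pm^2$ --- which is where the orthonormality enters, exactly as you anticipated.
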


\begin{proof}
The Kähler class $\hdg_\pm(k_\pm)$ contains a unique Ricci-flat Kähler metric
$\omega^I_\pm$. Up to complex scalar multiplication, there is a unique
2-form $\omega^J_\pm + i \omega^K_\pm$ on $S_\pm$ that is holomorphic
with respect to $I_\pm$. Since $\gen{k_\mp, \pm k_0}$ is the period with
respect to the marking $\hdg_\pm$, we can normalise it so that
$[\omega^J_\pm] = \hdg_\pm(k_\mp)$ and $[\omega^K_\pm] = \hdg_\pm(\pm k_0)$.
Then $(\omega^I_\pm, \omega^J_\pm, \omega^k_\pm)$ are \hk structures.
This choice of normalisation is the only one for which
$\hdg[\omega^I_-] = [\omega^J_+]$, $\hdg[\omega^J_-] = [\omega^I_+]$ and
$\hdg[\omega^K_-] = -[\omega^K_+]$, which is equivalent to the existence 
of a \hk rotation with $\hkr^\ast = \hdg$.
\end{proof}

\subsection{Matching data for pairs of building blocks}
The asymptotic \hk K3 surfaces $S_{\pm}$ of our ACyl Calabi-Yau $3$-folds $V_{\pm}$
come with a preferred complex structure $I_{\pm}$ and Kähler form
$\omega^{I}_{\pm}$ defined by the asymptotic limit of the Calabi-Yau structure.
We need to take this fact into account when we attempt to construct \hk
rotations between $S_{+}$ and $S_{-}$.

\begin{definition}
\label{d:matching:data}
A set of \emph{matching data for a pair of building blocks $(Z_\pm, S_\pm)$} is
a triple $(k_+, k_-, k_0)$ of classes in $L_\bbr$ for which there are markings
$\hdg_\pm : L \to H^2(S_\pm; \ZZ)$ such that $\gen{k_\mp, \pm k_0}$ is the
period point of the marked K3 $(S_\pm, I_\pm, \hdg_\pm)$, and
$\hdg_\pm(k_\pm)$ is the restriction to $S_\pm$ of a Kähler class on $Z_\pm$.
\end{definition}

With this terminology, the following is an immediate consequence of Theorem
\ref{thm:acyl_limits} and Proposition \ref{pro:matching_k3}.

\begin{corollary}
\label{cor:matching}
If there is matching data for the pair of building blocks $(Z_\pm, S_\pm)$
then $V_\pm = Z_\pm \setminus S_\pm$ admit compatible ACyl Calabi-Yau structures, 
\ie there exists a \hk rotation $\hkr \colon S_{+} \to S_{-}$.
Hence the twisted connected sum $7$-manifold 
$M_{\hkr}$ formed by gluing $\Sph^{1} \times V_{\pm}$ using the 
diffeomorphism specified in \eqref{eq:gluemap} admits \gtwo-metrics 
as described in Theorem \ref{thm:g2glue}.
\end{corollary}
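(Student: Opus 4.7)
The plan is to assemble three ingredients already established in the paper: Proposition \ref{pro:matching_k3} to extract a \hk rotation from the matching data, Theorem \ref{thm:acyl_limits} to upgrade this to compatible ACyl Calabi-Yau structures on $V_\pm$ with the prescribed asymptotic geometry, and finally Theorem \ref{thm:g2glue} to produce the $\gtwo$-metrics.

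First I would feed the matching data $(k_+, k_-, k_0)$, together with the markings $\hdg_\pm \colon L \to H^2(S_\pm;\ZZ)$ supplied by Definition \ref{d:matching:data}, into Proposition \ref{pro:matching_k3}. By hypothesis $\gen{k_\mp, \pm k_0}$ is the period point of $(S_\pm, I_\pm, \hdg_\pm)$ and $\hdg_\pm(k_\pm)$ is a Kähler class on $S_\pm$ (being the restriction of one on $Z_\pm$), so the hypotheses of \ref{pro:matching_k3} are met. It produces unique \hk structures $(\omega^I_\pm, \omega^J_\pm, \omega^K_\pm)$ with $[\omega^I_\pm] = \hdg_\pm(k_\pm)$ together with a \hk rotation $\hkr\colon S_+ \to S_-$ satisfying $\hkr^\ast = \hdg_+ \circ \hdg_-^{-1}$.

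Next I would apply Theorem \ref{thm:acyl_limits} to each $Z_\pm$: taking $\Omega_{S_\pm} = \omega^J_\pm + i\,\omega^K_\pm$ and $\omega_{S_\pm} = \omega^I_\pm$, the key point is that $[\omega^I_\pm] = \hdg_\pm(k_\pm)$ is by definition the restriction to $S_\pm$ of a Kähler class on $Z_\pm$, which is precisely the condition required by the theorem. This yields ACyl Calabi-Yau structures $(\omega_\pm, \Omega_\pm)$ on $V_\pm = Z_\pm \setminus S_\pm$ whose asymptotic limits on $\bbrp \times \Sph^1 \times S_\pm$ are the product Calabi-Yau cylinder structures \eqref{eq:cy3cyl} built from the \hk K3 surfaces constructed in the previous step. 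In particular the two asymptotic K3 surfaces are related by the \hk rotation $\hkr$ already produced.

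With the compatible pair $(V_\pm, \omega_\pm, \Omega_\pm)$ and the \hk rotation $\hkr$ in hand, the hypotheses of Theorem \ref{thm:g2glue} are satisfied verbatim. For every sufficiently large neck parameter $T$, that theorem provides a torsion-free perturbation of the closed $\gtwo$-structure $\varphi_{T,\hkr}$ on the twisted connected sum $M_\hkr$ defined via the gluing diffeomorphism \eqref{eq:gluemap}, and hence a Riemannian metric with $\Hol \subseteq \gtwo$. No genuine obstacle arises: the corollary is essentially a packaging statement, and the only point that deserves a moment's care is matching conventions—verifying that the normalisation of $\Omega_\pm$ produced by \ref{pro:matching_k3} is consistent with the normalisation \eqref{eq:sun:vol} required by \ref{thm:acyl_limits}, which holds because both come from Yau's theorem applied to the same Kähler class.
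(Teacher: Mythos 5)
Your proposal is correct and follows exactly the route the paper intends: the paper itself introduces the corollary as ``an immediate consequence of Theorem~\ref{thm:acyl_limits} and Proposition~\ref{pro:matching_k3},'' and your three-step chain (Proposition~\ref{pro:matching_k3} for the \hk rotation, Theorem~\ref{thm:acyl_limits} for the compatible ACyl Calabi--Yau structures, Theorem~\ref{thm:g2glue} for the $\gtwo$-metrics) merely spells out the details of that remark, including the point about normalisation conventions.
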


The markings $\hdg_\pm$ in Definition \ref{d:matching:data} are not necessarily
unique. Hence nor is $\hdg = \hdg_+ \circ \hdg_-$ or the \hk rotation $\hkr$,
and the choices can affect the topology of the \gtmfd{} $M_\hkr$. We can
relate this to the computations of \S \ref{sec:top} as follows.

Recall that it is part of Definition \ref{dfng:BLOCK} that if $(Z,S)$
is a building block and $H^2(S; \ZZ) \cong L$ is a marking, then the
image $N \subset L$ of $H^2(Z; \ZZ)$ is primitive.  According to Lemma
\ref{lem:zh20}, $N \subseteq \Pic S$. This means that $S$ is a marked
\emph{$N$-polarised K3 surface}. Since $H^{2,0}(S)$ is perpendicular
to $H^{1,1}(S)$, the period of such a marked $N$-polarised K3 surface
is perpendicular to $N$; it must lie in the \emph{Griffiths domain}
$D_N$ of oriented positive 2-planes $\Pi \subset N^\perp \subset L_\bbr$.
$D_N$ can be considered as an open subset (determined by the positivity) of
$\bbp(\Lambda^2 (N^\perp \otimes \bbc))$, and hence as a complex manifold.
For a deformation family $\fbb$ of building blocks,
all members have the same polarising lattice $N$ and the primitive
embedding $N \into L$ is well-defined up to the action of $\orth{L}$.

Let $(k_+, k_-, k_0)$ be a set of matching data for a pair of blocks
$(Z_\pm, S_\pm)$. A choice of markings $h_\pm : L \to H^2(S_\pm; \ZZ)$ in
Definition \ref{d:matching:data}
determines embeddings $N_\pm \into L$ of the polarising lattices. While each
embedding is unique up to the action of $\orth{L}$, the \emph{pair} is not,
\eg $N_+ \cap N_-$ could vary. Since $N_+ \cap N_-$ is a summand in $H^2(M; \ZZ)$, 
the choice of markings can affect the topology of the \gtmfd{} produced in
Corollary \ref{cor:matching}. We say that the matching data
is \emph{adapted} to a given pair of embeddings $N_\pm \into L$ if
the markings $h_\pm$ can be taken to be $N_\pm$-polarised.
A necessary condition is that $\gen{k_\mp, \pm k_0} \perp N_\pm$.

Given a semi-Fano 3-fold $Y$, we can blow up to get a building block
$(Z,S)$ according to Proposition \ref{prop:block_from_sf}. In this
case the polarising lattice $N \subset L$ of the block is simply given
by the primitive embedding $H^2(Y; \ZZ)\to H^2(S; \ZZ)\cong L$, isometric
with respect to the form $(-K_Y) \cdot D_{1}\cdot D_{2}$ on
$H^2(Y; \ZZ)$. By deforming the semi-Fano $3$-fold $Y$
and varying the choice of smooth section $S \in \abs{-K_{Y}}$ we obtain a
family of building blocks $\fbb$, all with
the same topology and polarising lattice $N \cong H^{2}(Y; \ZZ)$.
In this way we can obtain ACyl Calabi-Yau manifolds
$V = Z \setminus S$ with potentially different asymptotic \hk K3 surfaces~$S$.

Given a pair $\fbb_\pm$ of
such families of building blocks, we can now approach the problem of
using them to construct a compact \gtmfd{} as follows.

\begin{enumerate}[label=\arabic*.]
\item Choose embeddings $N_\pm \into L$ in the $\orth{L}$-orbits
of primitive isometric embeddings determined by $\fbb_\pm$.
Let $T_\pm = N_\pm^\perp \subset L$ denote the orthogonal complements.
\item \label{it:match}
Consider triples $(k_+, k_-, k_0)$ %
such that
$k_\pm \in N_\pm(\bbr) \cap T_\mp(\bbr)$ and $k_0 \in T_+(\bbr) \cap T_-(\bbr)$.
Then $\gen{k_\mp, \pm k_0}$ lives in the Griffiths period domain $D_{N_\pm}$
for $N_\pm$-polarised K3 surfaces.
Find a triple that forms matching data, adapted to the given embeddings
$N_\pm \into L$, %
for some $(Z_\pm, S_\pm) \in \fbb_\pm$.
\item Apply Corollary \ref{cor:matching} to construct matching ACyl Calabi-Yau
structures on $V_\pm = Z_\pm {\setminus} S_\pm$.
\item Apply Theorem \ref{thm:g2glue} to glue $\Sph^1 \times V_\pm$ to
a compact \gtmfd{} $M$.
\end{enumerate}

\begin{remark*}
In a sense this scheme reverse engineers the process described in~\S\ref{sec:twisted_kovalev}: 
in effect, we first identify what \hk K3 to aim for, and then construct ACyl
Calabi-Yau 3-folds with that asymptotic K3 (up to \hk rotation).
\end{remark*}

We can then use the results in \S \ref{sec:top} to compute topological
invariants of $M$. Note that the cohomology of $M$ depends only on the
cohomology of the building blocks and on the choice of the pair of embeddings
$N_\pm \into L$. %
In many cases we can determine the diffeomorphism type of $M$.  
Also, if either building block
$(Z_\pm, S_\pm)$ contains rigid complex curves (\eg if $Z$ is a building block of semi-Fano type 
where the semi-Fano $Y$ is obtained as a small resolution of a nodal Fano) then Proposition \ref{prop:cx_to_assoc}
shows that $M$ contains corresponding rigid associative submanifolds.

So the key problem that remains to be addressed is to find the matching data 
in Step 2.
This is a difficult problem in general and in most cases we do not currently 
understand all possible ways to match a given pair of deformation families 
of building blocks $\fbb_{\pm}$.
In this section we describe a general method which we call \emph{orthogonal gluing} 
that yields large numbers of matching ACyl Calabi-Yau structures. 

\subsection{Orthogonal gluing}
\label{sec:orthogonal-gluing}

Let $\fbb_\pm$ be a pair of families of building blocks, obtained from 
semi-Fano 3-folds $Y_\pm$ of given deformation types $\sff_\pm$.
We describe a method that provides matching data for a large class of
such pairs.
\begin{itemize}
\item Choose the pair of primitive embeddings $N_\pm \into L$ so that
$N_+$ and $N_-$ \emph{intersect orthogonally}, \ie $N_\pm(\bbr) =
(N_\pm(\bbr) \cap N_\mp(\bbr)) \oplus (N_\pm(\bbr) \cap T_\mp(\bbr))$
(in other words, the reflections in $N_\pm(\bbr)$ commute).
\item In addition, arrange that some elements of
$N_\pm(\bbr) \cap T_\mp(\bbr)$ correspond to Kähler classes of some $Y_\pm$
under some marking of $S_\pm \subset Y_\pm$ (as pointed out in Remark
\ref{rmk:cones}, this is more restrictive than asking for Kähler classes
on $S_\pm$.)
\item Show that for a generic positive $k_0 \in T_+(\bbr) \cap T_-(\bbr)$,
choosing generic positive $k_\pm \in N_\pm(\bbr) \cap T_\mp(\bbr)$ gives
$\gen{k_\mp, \pm k_0} \in D_{N_\pm}$ that are periods of some
$S_\pm \subset Y_\pm \in \sff_\pm$, and use this to prove that $k_\pm$ can be
taken to correspond to Kähler classes on $Y_\pm$.
\item Blow up $Y_\pm$ according to Proposition \ref{prop:block_from_sf} to form
building blocks $(Z_\pm, S_\pm)$. The facts that the K3 fibres $S_\pm$ are
isomorphic to the chosen K3 divisors in the semi-Fanos $Y_\pm$ and that the
image of the Kähler cone of $Z_\pm$ contains that of $Y_\pm$ imply that
$(k_+, k_-, k_0)$ is a set of matching data for this pair of building blocks.
\end{itemize}

\subsubsection{Lattice push-outs and embeddings}

To complete the first step, we first try to cook up an ``orthogonal push-out''
$W$ of $N_+$ and~$N_-$, and then try to embed $W$ in $L$ so that the inclusions 
$N_\pm \into L$ are primitive. The last condition is obviously satisfied if
the embedding of $W$ itself is primitive, and the existence of such embeddings
can often be deduced from results of Nikulin.

\begin{definition} \label{def:orthog_pushout}
  Let $\ncap$, $N_+$, $N_-$ be nondegenerate lattices, and assume given
  primitive inclusions $\ncap \into N_+$, $\ncap \into N_-$.
  An \emph{orthogonal pushout} $W = N_+ \perp_\ncap N_-$ is a
  nondegenerate lattice, with a diagram of primitive inclusions:
\[
\xymatrix{
 & N_+ \ar[dr] & \\
\ncap \ar[ur] \ar[dr] &      & W\\
  & N_-\ar[ur] & }
\]
 where
 \begin{itemize}
 \item $\ncap = N_+ \cap N_-, \; W = N_+ + N_-$;
 \item $N_+^\perp \subset N_-, \; N_-^\perp \subset N_+$. 
 \end{itemize}
 Note that $W$ is unique, though it does not always exist, \eg see
 Example \ref{exa:no_pushout}.
\end{definition}

\begin{remark}
In all cases in this paper, $N_+$ and $N_-$ have signature $(1, \, r_+ - 1)$
and $(1, \, r_- - 1)$ and the ``intersection'' $\ncap$ is negative definite of
rank $\rho$.
This ensures that the orthogonal pushout $W$ has signature
$(2, \, r_+ + r_- - \rho - 2)$.
\end{remark}

The perpendicular direct sum $N_+ \perp N_-$ is always
an orthogonal push-out with $\ncap = {N_+ \cap N_-} = (0)$.
We will refer to gluing that uses this push-out
as \emph{perpendicular gluing}, while the term \emph{orthogonal gluing}
allows push-outs with non-trivial intersection $\ncap$.
Some statements simplify for perpendicular gluing (\eg the computation of
$\gdiv p_1(M)$ in Corollary \ref{cor:p1gcd}), but most nice properties are
enjoyed by orthogonal gluing too. Most important is the matching method in
Proposition \ref{prop:orth_gluing}, but there is also a convenient Betti number
formula that follows directly from Theorem \ref{thm:g2topology}.

\begin{lemma}
\label{lem:b2b3}
Any \gtmfd{} $M$ constructed by orthogonal gluing of blocks $Z_\pm$ satisfies
\[ b^2(M) + b^3(M) = b^3(Z_+) + b^3(Z_-) + 2\rk K_+ + 2\rk K_- + 23 . \]
\end{lemma}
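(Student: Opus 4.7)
My plan is to read off $b^2(M)$ and $b^3(M)$ directly from Theorem \ref{thm:g2topology} and then use the orthogonality of $N_+$ and $N_-$ to cancel the terms that depend on $N_+ \cap N_-$ and on $N_+ + N_-$. First, from \ref{thm:g2topology}\ref{it:h2m} I get
\[ b^2(M) = \rk(N_+ \cap N_-) + \rk K_+ + \rk K_- , \]
and from \ref{thm:g2topology}\ref{it:h3m},
\[ b^3(M) = 1 + \rk\bigl(L/(N_+ + N_-)\bigr) + \rk(N_- \cap T_+) + \rk(N_+ \cap T_-) + b^3(Z_+) + b^3(Z_-) + \rk K_+ + \rk K_- . \]

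Now I will set $\rho = \rk(N_+ \cap N_-)$ and $r_\pm = \rk N_\pm$, and use that orthogonal gluing means $N_\pm = (N_+ \cap N_-) \oplus (N_\pm \cap T_\mp)$ (this is the commuting-reflections condition in Definition \ref{def:orthog_pushout}). This yields $\rk(N_\pm \cap T_\mp) = r_\pm - \rho$, and since $N_+$ and $N_-$ are sublattices of the rank-$22$ lattice $L$ with $\rk(N_+ + N_-) = r_+ + r_- - \rho$, I obtain $\rk\bigl(L/(N_+ + N_-)\bigr) = 22 - r_+ - r_- + \rho$.

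Substituting into the formula for $b^3(M)$, the $r_+$ and $r_-$ terms cancel and two of the three $\rho$ terms cancel, leaving
\[ b^3(M) = 23 - \rho + b^3(Z_+) + b^3(Z_-) + \rk K_+ + \rk K_- . \]
Adding this to $b^2(M)$ then cancels the remaining $\rho$ and produces exactly the claimed identity. The ``hard'' part is really just bookkeeping of ranks under the orthogonality hypothesis; there is no obstacle beyond carefully invoking Theorem \ref{thm:g2topology}, which has already been proved, and the defining property of orthogonal gluing.
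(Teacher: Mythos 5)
Your proof is correct, and it fills in exactly the computation the paper suppresses (the paper states the lemma "follows directly from Theorem \ref{thm:g2topology}" and gives no proof). Reading $b^2$ and $b^3$ off Theorem \ref{thm:g2topology}(ii)--(iii), using the orthogonality condition $N_\pm(\R) = (N_+(\R) \cap N_-(\R)) \oplus (N_\pm(\R) \cap T_\mp(\R))$ to get $\rk(N_\pm \cap T_\mp) = r_\pm - \rho$ and $\rk(N_+ + N_-) = r_+ + r_- - \rho$, and then doing the arithmetic is precisely the intended argument; the ranks cancel as you say to leave $23 + b^3(Z_+) + b^3(Z_-) + 2\rk K_+ + 2\rk K_-$.
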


\begin{remark*}
Note the previous formula is not always valid if $M$ is not constructed by orthogonal gluing: 
see Example \textit{No \ref{g2:precise}} in Section \ref{sec_g2mfds}.
\end{remark*}

In general, it is not difficult to state a simple criterion for the existence
of orthogonal pushouts in terms of \emph{discriminant groups} $N_\pm^*/N_\pm$;
we do not need to do so here. Instead we demonstrate by the next example
that they do not always exist.

\begin{example}
  \label{exa:no_pushout}
 We show a simple situation where the orthogonal pushout does not
 exist. Indeed, consider the two (isomorphic) lattices $N_+$, $N_-$
 with quadratic form
\[
\begin{pmatrix}
  4 & 1\\
  1 & -2
\end{pmatrix}
\] 
(this is the Picard lattice of a general quartic K3 surface containing a line).
Let us try to form an orthogonal pushout along the common sublattice $\ncap$
perpendicular to the basis vector $\mathbf{e}_1$ of norm $4$. 
Now $\ncap$ is generated by the vector $\mathbf{e}_2^\prime = (-1,4)$ of
norm $-36$. Using the rational basis $\mathbf{e}_1, \mathbf{e}_2^\prime$, we can say
\[
N_+ = N_- = \ZZ^2 + \frac1{4}(1,1)\ZZ \supset \ZZ^2
\quad
\text{with quadratic form}
\quad
\begin{pmatrix}
  4 & 0 \\
  0 & -36
\end{pmatrix}\, .
\]
Thus, if the orthogonal pushout $W = N_+ \perp_\ncap N_-$ exists, then 
\[
W = \ZZ^3 + \frac1{4}(1,1,0)\ZZ + \frac1{4}(0,1,1)\ZZ \supset \ZZ^3
\quad
\text{with quadratic form}
\quad
\begin{pmatrix}
  4 & 0   & 0\\
  0 &-36& 0\\
  0 & 0   & 4
\end{pmatrix}
\]
Note, however, that, in this lattice:
\[
\inner{ \quart (1,1,0), \quart (0,1,1) } =
\begin{pmatrix}
 \frac1{4} & \frac1{4} & 0 
\end{pmatrix}
\begin{pmatrix}
   4 & 0   & 0\\
  0 &-36& 0\\
  0 & 0   & 4
\end{pmatrix}
\begin{pmatrix}
  0 \\
  \frac1{4}\\
  \frac1{4}
\end{pmatrix}
=
\begin{pmatrix}
 \frac1{4} & \frac1{4} & 0 
\end{pmatrix}
\begin{pmatrix}
  0 \\ -9 \\1
\end{pmatrix}
=-\frac{9}{4}
\]
is not an integer, that is, $W$ is not an integral lattice.
\end{example}

Once we have an orthogonal push-out $W$, we look for an embedding $W$ in $L$
such that the inclusions $N_\pm \into L$ are primitive.
In many cases, the following result guarantees the existence of a primitive
lattice embedding $W\subset L$. Here $\ell(W)$ denotes the minimal
number of generators for the discriminant group $W^*/W$ of a
non-degenerate lattice $W$; in particular $\ell(W) \leq \rk W$.

\begin{theorem}
\label{thm:exist_emb}
Let $W$ be an even non-degenerate lattice of signature $(t_+, t_-)$,
and $L$ an even unimodular lattice of indefinite signature $(\ell_+,\ell_-)$.
There exists a primitive embedding $W \into L$ if $t_+ \leq \ell_+$,
$t_- \leq \ell_-$ and
\begin{enumerate}
\item \label{it:rk}
$2\rk W \leq \rk L$, or
\item \label{it:rkell}
$\rk W + \ell(W) < \rk L$
\end{enumerate}
\end{theorem}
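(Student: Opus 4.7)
My plan is to deduce the theorem from Nikulin's classical results on even lattices and their primitive embeddings into unimodular lattices, as developed in his 1980 Izvestiya paper ``Integral symmetric bilinear forms and some of their applications''. The strategy proceeds in three steps.

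First I would translate the embedding question into one about the orthogonal complement. For any primitive embedding $W \into L$ with $L$ even unimodular, the orthogonal complement $K = W^\perp \subset L$ is an even lattice of signature $(\ell_+ - t_+, \ell_- - t_-)$, and there is a canonical isometry of finite quadratic forms $q_K \cong -q_W$ between the discriminant forms. Conversely, given any even lattice $K$ of the complementary signature together with an isometry $\gamma \colon q_K \xrightarrow{\sim} -q_W$, the graph of $\gamma$ inside $K^*/K \oplus W^*/W$ determines an even unimodular overlattice of $W \perp K$ of signature $(\ell_+, \ell_-)$ in which both $W$ and $K$ sit primitively. Since $L$ is indefinite, Milnor's classification of indefinite even unimodular lattices by their signature identifies this overlattice with $L$. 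Thus the problem reduces to producing an even lattice $K$ of signature $(\ell_+ - t_+, \ell_- - t_-)$ with discriminant form $-q_W$.

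Second I would invoke Nikulin's existence theorem (Theorem 1.10.1) for even lattices with prescribed signature and discriminant form: such a lattice of signature $(a,b)$ with non-degenerate discriminant form $q$ exists provided $a - b \equiv \sigma(q) \pmod 8$ (a Brown-invariant condition), $\ell(q) \leq a + b$, together with certain $p$-local conditions that are automatic once $a+b - \ell(q)$ is sufficiently large. The signature congruence is automatic in our setting: $W$ itself realises $q_W$ with signature $t_+ - t_- \equiv \sigma(q_W) \pmod 8$, and $L$ even unimodular forces $\ell_+ - \ell_- \equiv 0 \pmod 8$, so the target signature satisfies $(\ell_+ - t_+) - (\ell_- - t_-) \equiv -\sigma(q_W) = \sigma(-q_W) \pmod 8$. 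The essential content is therefore the inequality $\ell(W) \leq \rk L - \rk W$.

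Finally I would verify this inequality under each hypothesis. Under (ii) it is strict by assumption. Under (i) it follows from the trivial bound $\ell(W) \leq \rk W$ combined with $\rk W \leq \rk L - \rk W$. The only delicate point is the boundary case $\ell(W) = \rk L - \rk W$, which can occur under (i) but not under (ii), and where the $p$-local conditions in Theorem 1.10.1 may become non-trivial. This is handled uniformly by invoking Nikulin's Corollary 1.12.3 (equivalently Theorem 1.14.4), which packages precisely the statement that $W \into L$ admits a primitive embedding whenever $L$ is indefinite even unimodular with $t_\pm \leq \ell_\pm$ and $\rk W + \ell(W) \leq \rk L$. Hypothesis (ii) is exactly this inequality (indeed strictly), while hypothesis (i) gives $\rk W + \ell(W) \leq 2\rk W \leq \rk L$, so applying Nikulin's corollary concludes the proof in both cases. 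The main obstacle is therefore bibliographic rather than mathematical: care is needed to cite the precise form of Nikulin's theorem that covers the boundary equality case under hypothesis (i), since several nearly identical statements appear in the literature with subtly different hypotheses.
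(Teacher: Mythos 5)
The overall reduction you describe—translating a primitive embedding $W\hookrightarrow L$ into the existence of an even lattice $K$ of complementary signature with $q_K\cong -q_W$, then invoking Nikulin's existence theory for lattices with prescribed signature and discriminant form—is indeed the content of Nikulin's approach. But your attempt to derive both hypotheses from Corollary~1.12.3 alone has a genuine gap at the boundary of case (i). That corollary (or rather Theorem~1.12.2, of which it is a specialisation) guarantees existence under $t_\pm\le\ell_\pm$ and $\rk W+\ell(W)\le\rk L$ \emph{only when the last inequality is strict}; when equality holds, Nikulin's Theorem~1.12.2 imposes additional $p$-adic local conditions on the discriminant form (essentially, that for every prime $p$ with $\ell(W_p)=\rk L-\rk W$, the $p$-part of the discriminant form be realizable by a lattice of exactly that $p$-rank in a compatible way), and these conditions do \emph{not} follow automatically from $2\rk W\le\rk L$. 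Your trivial bound $\ell(W)\le\rk W$ produces $\rk W+\ell(W)\le 2\rk W\le\rk L$, so when $\ell(W)=\rk W$ and $2\rk W=\rk L$ you land exactly on the equality case, and Corollary~1.12.3 as you have stated it does not apply. This is precisely why the paper cites Theorem~1.12.4 separately for hypothesis (i): that theorem avoids $\ell(W)$ entirely by a different construction (glue $W$ with $W(-1)$ into a canonical even unimodular lattice, then stabilise with copies of $U$ to reach the target signature), and it handles the boundary case without any local conditions. So the two hypotheses genuinely require the two distinct Nikulin results the paper cites; neither is subsumed in the other.

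A smaller bibliographic error: Nikulin's Theorem~1.14.4 is not an existence statement equivalent to Corollary~1.12.3; it concerns the surjectivity of $O(L)\to O(q_L)$ and the uniqueness, not existence, of embeddings, so the ``equivalently'' parenthetical should be dropped.
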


\begin{proof}
\ref{it:rk} is Nikulin \cite[Theorem 1.12.4]{nikulin:quadratic}, while
\ref{it:rkell} is \cite[Corollary 1.12.3]{nikulin:quadratic} (see also
Dolgachev \cite[Theorem 1.4.6]{dol:quadratic}).
\end{proof}

If there is a primitive embedding of $W$ into the (unimodular) lattice $L$,
with orthogonal complement $T = W^\perp$, then $W^* \cong L/T$ and
$T^* \cong L/W$ imply that $W^*/W \cong L/(W \perp T) \cong T^*/T$, \ie the
discriminant groups are isomorphic. In particular $\ell(W) \leq \rk T$, so
\begin{equation}
\label{eq:nec_emb}
\rk W + \ell(W) \leq \rk L
\end{equation}
is a \emph{necessary} condition for $W$ to be primitively embeddable in $L$.

In our application $L$ will be the K3 lattice and $W$ will be the
orthogonal pushout of a pair of lattices $N_{\pm}$---the polarising
lattices of a pair of building blocks $Z_{\pm}$.  Therefore $(\ell_+,
\ell_-) = (3,19)$, while $t_+ = 2$ and $\rk W \leq \rk{N_{+}} +
\rk{N_{-}}$ with equality if and only if $W = N_+ \perp N_-$. Hence a
sufficient condition for the existence of a primitive embedding $W
\into L$ is that
\begin{equation}
\label{eq:exist_emb}
\rk{N_{+}} + \rk{N_{-}}\leq 11.
\end{equation}

Sometimes we will look more closely at the discriminant groups, and apply
\ref{thm:exist_emb}(ii). Note that the discriminant group of $N_+ \perp N_-$
is simply the product of the discriminant groups of the two terms.
In particular $\ell(N_+ \perp N_-) \leq \ell(N_+) + \ell(N_-)$
(but equality need not hold, \eg if the discriminants are coprime).

\begin{remark}
\label{rmk:disc}
For any lattice $N$, $N^*$ has a natural quadratic form, given in terms of a
basis by the inverse of the matrix of the form on~$N$. The restriction to $N$
is the original form on~$N$, so if $N$ is even then the discriminant group
$N^*/N$ has a well-defined $\QQ/2\ZZ$-valued quadratic form. For any
overlattice $W'$ of $N_+ \perp N_-$ such that $N_\pm \into W'$ are primitive,
the images of $W'$ in $N_\pm^*/N_\pm$ are anti-isometric with respect to the
discriminant forms. This sets up a correspondence between such overlattices
and pairs of anti-isometric subgroups of the discriminant groups. We will
sometimes use this to find overlattices, and since the overlattice has smaller
discriminant group they can be easier to embed in the K3 lattice $L$.

The method of the proof of Theorem \ref{thm:exist_emb} is to show that
given $W$, there exists a lattice $T$ with anti-isometric discriminant
group. Then the maximal overlattice of $W \perp T$ is unimodular, and
isometric to $L$ by the classification of unimodular indefinite lattices.
\end{remark}

Dolgachev \cite[Theorem 1.4.8]{dol:quadratic}, following
Nikulin \cite[1.14.1-2]{nikulin:quadratic}, also gives a sufficient condition
for the primitive embedding to be unique.

\begin{theorem}
\label{thm:unique_emb}
If in addition $\rk W + \ell(W) + 2 \leq \rk L$ then the primitive embedding
from Theorem \ref{thm:exist_emb} is unique up to automorphisms of $L$.
\end{theorem}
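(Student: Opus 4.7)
The plan is to deduce this from Nikulin's analysis of the discriminant form. The central idea is that for any even unimodular lattice $L$, a primitive embedding $W \into L$ is completely determined, up to the action of $\orth{L}$, by the abstract isometry class of the orthogonal complement $T = W^\perp$ together with the anti-isometry $\gamma\colon W^*/W \to T^*/T$ of discriminant forms used to realise $L$ as an overlattice of $W \perp T$ (as recalled in Remark \ref{rmk:disc}). So uniqueness of the embedding is equivalent to (a) uniqueness of the isometry class of $T$, and (b) the statement that any two choices of gluing anti-isometry $\gamma$ differ by composition with an element of $\orth{T}$ that is realised by an honest lattice isometry of $T$.

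First I would extract the invariants of $T$. Its signature is forced to be $(\ell_+ - t_+,\ell_- - t_-)$, and its discriminant form $q_T$ must equal $-q_W$; both signs are positive by the hypotheses of Theorem \ref{thm:exist_emb}, so $T$ is indefinite. Moreover $\rk T = \rk L - \rk W \ge \ell(W) + 2 = \ell(q_T) + 2$. Under precisely these two conditions (indefinite, plus rank at least $\ell + 2$), Nikulin's uniqueness theorem \cite[Thm.~1.14.2]{nikulin:quadratic} guarantees that the isometry class of $T$ is determined by its signature and discriminant form. This settles (a).

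For (b), the same rank condition $\rk T \ge \ell(q_T) + 2$ triggers the companion statement of Nikulin's theorem: the natural homomorphism $\orth{T} \to \orth{q_T}$ is \emph{surjective}. Given two primitive embeddings $i_1,i_2\colon W \into L$, write $T_j = i_j(W)^\perp$ and $\gamma_j$ for the associated anti-isometry; by (a) there is an isometry $\psi\colon T_1 \to T_2$, and surjectivity lets us modify $\psi$ within $\orth{T_2}$ so that the induced map on discriminants satisfies $\bar\psi \circ \gamma_1 = \gamma_2$. Then $\id_W \oplus \psi$ on $W \perp T_1 \to W \perp T_2$ extends to an isometry $L \to L$ carrying $i_1$ to $i_2$, which is the required element of $\orth{L}$.

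The one step that requires care rather than a direct quote is checking that the hypothesis $\rk W + \ell(W) + 2 \le \rk L$ actually yields both the indefiniteness of $T$ and the rank bound $\rk T \ge \ell(q_T) + 2$; the former uses the assumption $t_\pm \le \ell_\pm$ carried over from Theorem \ref{thm:exist_emb}, while the latter uses only that $\ell(q_T) = \ell(q_W) = \ell(W)$ for a primitive embedding into a unimodular lattice. With these verified, the result is a direct application of Nikulin's existence and uniqueness theorems for even lattices with prescribed discriminant form, exactly as carried out by Dolgachev \cite[Thm.~1.4.8]{dol:quadratic}.
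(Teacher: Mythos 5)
Your reduction to (a) uniqueness of the isometry class of $T=W^\perp$ and (b) surjectivity of $\orth{T}\to\orth{q_T}$, both via Nikulin's Theorem 1.14.2, is exactly the argument behind the paper's citation to Dolgachev and Nikulin, so the route is the right one; the paper itself gives no proof beyond the citation.

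However, your justification of the one substantive verification step does not go through. You claim indefiniteness of $T$ follows from ``$t_\pm\le\ell_\pm$ carried over from Theorem~\ref{thm:exist_emb}'', but non-strict inequalities only give that each component of the signature of $T$ is $\ge 0$, not $>0$. Nikulin 1.14.2 genuinely requires $T$ to be \emph{indefinite}, hence the strict inequalities $t_+<\ell_+$ and $t_-<\ell_-$. Without them the statement fails: take $W=3U$, so $\rk W+\ell(W)+2=8\le 22=\rk L$ and $t_\pm\le\ell_\pm$, yet $3U$ embeds primitively in the K3 lattice with orthogonal complement either $2E_8(-1)$ or $D_{16}^+(-1)$ (the two non-isometric even unimodular negative definite lattices of rank $16$), giving two inequivalent primitive embeddings. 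The correct hypothesis is the strict version, which Dolgachev/Nikulin assume and which the paper has silently dropped by inheriting the weak inequalities from Theorem~\ref{thm:exist_emb}. The lapse is harmless in every application in the paper, where $W$ has signature $(2,r-2)$ with $r\le 20$, so $2<3$ and $r-2<19$ hold strictly; but your claim that strictness is implied by the stated hypotheses is the step that fails, and it is worth fixing both in your proof and in the theorem statement.
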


\subsubsection{Deformation theory and matching}

In order to find matching data for building blocks of semi-Fano type we use the
deformation theory input provided by Proposition \ref{prop:k3generic}.

\begin{definition}
Fix an abstract lattice $N$, and an element $A \in N$ with $A^2 = 2g-2 > 0$.
\begin{itemize}
\item
An \emph{$N$-polarised semi-Fano 3-fold} is a semi-Fano 3-fold $Y$ together
with an isometry $N \cong \Pic (Y)$ sending $A$ to $-K_Y$.
\item
A \emph{family of $N$-polarised semi-Fano 3-folds} is a smooth projective
morphism $f\colon \df{Y} \to B$, all of whose fibres $Y_b$ are semi-Fano
3-folds; and a sheaf isometry $g\colon N \to \underline{\Pic}(\df{Y}/B)$ such
that for each $b \in B$, $Y_b$ together with $g_b: N \to \Pic(Y_b)$ is an
$N$-polarised semi-Fano.
\item
Two $N$-polarised semi-Fano 3-folds $Y_1, Y_2$ are \emph{deformation
equivalent} if there is a connected scheme $B$, a family $f\colon \df{Y} \to B$
and $b_1, b_2 \in B$ such that $Y_1 = f^{-1}(b_1)$, $Y_2 = f^{-1}(b_2)$.
\item
A \emph{deformation type} is a deformation equivalence class
$\sff = \{Y_t : t \in T\}$ of semi-Fano 3-folds.
\end{itemize}
\end{definition}

For a smooth $S \in \abs{-K_Y}$ in an $N$-polarised semi-Fano 3-fold $Y$,
the composition $N \cong H^2(Y;\ZZ) \to H^2(S;\ZZ) \cong L$ defines
a primitive embedding $N \into L$.
For a deformation type $\sff$ of $N$-polarised semi-Fanos, this gives an
embedding $N \into L$ that is well-defined up to the action of~$\orth{L}$.

The precise definition of the deformation type $\sff$ is actually not
that crucial in this paper. For the application, it suffices to know that
given a semi-Fano $3$-fold $Y$, there is a collection $\sff$ of semi-Fano
3-folds with the same topology, satisfying the conclusions of the following
result.

\begin{prop}[{\cite[Proposition 6.9]{chnp1}}]
\label{prop:k3generic}
Fix a primitive lattice $N \subset L$, and let
$D_N$ be the Griffiths domain $\{ \Pi \in \bbp(\Lambda^2(N^\perp \otimes \bbc))
: \Pi \wedge \bar\Pi > 0\}$.
Let $\sff$ be a deformation type of $N$-polarised semi-Fano 3-folds $Y$
such that for $S \subset Y$ a smooth anticanonical K3 divisor the restriction
map $\Pic Y \to H^2(S;\bbz)$ is equivalent (for the chosen polarisation 
$N \cong \Pic Y$ and some isomorphism $H^2(S;\bbz) \cong L$) to the inclusion
$N \into L$. Then there exist
\begin{itemize}
\item a $U_\sff \subseteq D_N$ with complement a locally finite
union of complex analytic submanifolds of positive codimension
\item an open subcone $\Amp_\sff$ of the positive cone of $N_{\R}$ %
\end{itemize}
with the following property: for any $\Pi \in U_\sff$ and $k \in \Amp_\sff$
there is $Y \in \sff$, a smooth $S \in |{-K_Y}|$ and a marking
$\hdg : L \to H^2(S; \ZZ)$ such that $\hdg(\Pi) = H^{2,0}(S)$, and $\hdg(k)$ is
the restriction to $S$ of a Kähler class on $Y$.
\end{prop}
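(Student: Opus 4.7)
The plan is to construct a family version of the inverse period map for $N$-polarised K3 surfaces and show it factors through the forgetful map from the moduli of pairs $(Y,S)$ with $Y\in\sff$ and $S\in |{-K_Y}|$ smooth. First, one would set up the moduli stack $\calm_\sff$ parametrising triples $(Y,S,\hdg)$, where $Y\in\sff$ is an $N$-polarised semi-Fano, $S\subset Y$ is a smooth anticanonical K3 divisor, and $\hdg\colon L\to H^2(S;\ZZ)$ is a marking whose restriction to $N$ agrees (under the polarisation $N\cong\Pic Y$) with the composition $N\cong\Pic Y\to H^2(S;\ZZ)$. Forgetting $Y$ gives a morphism $q\colon \calm_\sff\to\calm_N$ to the moduli space of marked $N$-polarised K3 surfaces, and on $\calm_N$ the period map $P\colon\calm_N\to D_N$ is a local isomorphism by Torelli and is surjective by the surjectivity of the period map for K3s.

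The heart of the proof is to show that the image of $q$ contains a subset whose complement in $\calm_N$ is a countable union of analytic subvarieties of positive codimension. Given $(Y_0,S_0)\in\calm_\sff$, standard deformation theory of pairs identifies the tangent space to $\calm_\sff$ at $(Y_0,S_0,\hdg_0)$ with (a subspace of) $H^1(T_{Y_0}(\log S_0))$ together with marking data, while the tangent space to $\calm_N$ at $q(Y_0,S_0,\hdg_0)$ is the summand of $H^1(T_{S_0})$ transverse to $N$-polarised deformations. The key technical input is that the semi-Fano condition guarantees enough cohomology vanishing---specifically the vanishings for semi-Fanos proved in \cite[\S 6]{chnp1}---to ensure that the natural map $H^1(T_{Y_0}(\log S_0))\to H^1(T_{S_0})$ is surjective onto the $N$-polarised deformations of $S_0$. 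Consequently $q$ is smooth at $(Y_0,S_0,\hdg_0)$, hence open, and its image is open in $\calm_N$. Pulling back via $P$, the image in $D_N$ is open; define $U_\sff$ to be this open set (equivalently, take the complement of the countable union of Noether--Lefschetz-type analytic loci in $D_N$ where the inclusion $N\into\Pic S$ fails to be strict, or the K3 fails to arise from $\sff$). Bertini arguments ensure that after possibly shrinking $U_\sff$ we may assume the anticanonical divisor $S$ extracted is smooth.

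For the Kähler cone assertion, define $\Amp_\sff\subset N_\bbr$ to be the (open, convex) image in $N_\bbr$ of the Kähler cone of any $Y\in\sff$: this is well-defined because the Kähler cones of the members of a smooth deformation family of semi-Fano $3$-folds are locally constant in $N_\bbr$ once we identify $\Pic$ via the polarisation. Given $\Pi\in U_\sff$ we have constructed $(Y,S,\hdg)$ with $H^{2,0}(S)=\hdg(\Pi)$; for $k\in\Amp_\sff$, by definition of $\Amp_\sff$ there is a Kähler class on $Y$ whose image in $N_\bbr\cong\hdg(N_\bbr)$ equals $\hdg(k)$, and its restriction to $S$ is $\hdg(k)$ by construction of the polarisation.

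The main obstacle is the smoothness/openness step, which requires the semi-Fano cohomology vanishing theorems of \cite{chnp1}: for a general weak Fano these fail, obstructing the surjectivity of the Kodaira--Spencer map for pairs and allowing the image of $q$ to miss a positive-codimension subset of $\calm_N$. A subsidiary subtlety is verifying that the Kähler cone is genuinely locally constant across $\sff$ (so that $\Amp_\sff$ is well-defined independent of the chosen base point); this follows from the fact that the primitive classes of extremal contractions are locally constant in families of semi-Fanos, again a consequence of the deformation theory developed in \cite{chnp1}.
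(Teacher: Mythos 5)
The proposition you are asked to prove is not actually proved in this paper; it is quoted verbatim from \cite[Proposition~6.9]{chnp1} and used as an imported result, so there is no internal proof to compare against. That said, your proposal does capture the two ingredients the paper explicitly flags as being central (the deformation theory of pairs $(Y,S)$ for semi-Fanos and its reliance on cohomology vanishing available for semi-Fanos but not general weak Fanos, together with the moduli theory of lattice-polarised K3 surfaces), so at the level of strategy you are aligned with what the authors indicate.

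There are two concrete problems with your sketch. The serious one is the passage from ``$q$ is open, so its image in $\calm_N$ (and then in $D_N$) is open'' to ``define $U_\sff$ to be this open set; equivalently, the complement is a locally finite union of positive-codimension analytic submanifolds.'' Openness of the image and analyticity of its complement are not equivalent: an open subset of a complex manifold (say, a ball in $\bbc^n$) has complement that is in no sense a locally finite union of proper analytic subvarieties, so the parenthetical ``equivalently'' is asserting precisely what needs to be proved. To close this gap you must exploit algebraicity: both the moduli of triples $(Y,S,\hdg)$ and the coarse moduli of $N$-polarised K3 surfaces are quasi-projective, the differential computation shows $q$ is dominant as a map of varieties of the same dimension, so the image contains a Zariski-dense open whose complement \emph{is} a proper closed subvariety; pulling that back to $D_N$ along the (countable-to-one, up to monodromy) period map gives the locally finite analytic complement. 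The locus where ``the K3 fails to arise from $\sff$'' is not a priori analytic, and cannot simply be labelled Noether--Lefschetz. Secondly, your description of the tangent space to $\calm_N$ as ``the summand of $H^1(T_{S_0})$ transverse to $N$-polarised deformations'' has the direction backwards: the relevant tangent space is the subspace of $H^1(T_{S_0})$ \emph{consisting of} deformations preserving the $N$-polarisation (the kernel of $H^1(T_{S_0}) \to \Hom(N, H^2(\oo_{S_0}))$), and the key claim is that $H^1(T_{Y_0}(\log S_0)) \to H^1(T_{S_0})$ hits this kernel. Finally, the well-definedness of $\Amp_\sff$ across the deformation type should be argued more carefully than ``extremal contractions are locally constant''; the assertion needs justification (or one should take a smaller cone that is manifestly deformation-invariant).
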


\begin{remark}
\label{rmk:ampfamily}
  It is important to distinguish $\Amp_{\sff}$ from the cone
  $\Amp_S \subset N_\bbr$ of Kähler classes on $S$. 
  For example, if $Y$ is semi-Fano (but not Fano) with small anticanonical
  morphism then $-K_Y$ is not a Kähler class on $Y$ but it is when
  restricted to a generic $S$. $\Amp_\sff$ can be a proper subcone of $\Amp_S$
  also for genuine Fanos when the Picard rank is $\geq 2$,
  \eg $Y = \CP^1 \times \CP^1 \times \CP^1$.
\end{remark}

If we apply Proposition \ref{prop:block_from_sf} to $\sff$ to construct a
family of semi-Fano type blocks $\fbb$, then it is immediate that $\fbb$ has
the following property (with $\Amp_\fbb = \Amp_\sff$).

\begin{definition}
\label{def:generic}
Let $N \subset L$ be a primitive sublattice, and $\Amp_\fbb$ an open subcone
of the positive cone in $N_\bbr$. We say that a family of building blocks
$\fbb$ is \emph{$(N, \Amp_\fbb)$-generic} if there is
$U_\fbb \subseteq D_N$ with complement a locally finite
union of complex analytic submanifolds of positive codimension with the
property that:
for any $\Pi \in U_\fbb$ and $k \in \Amp_\fbb$ there is a building block
$(Z,S) \in \fbb$ and a marking $\hdg : L \to H^2(S; \ZZ)$ such that
$\hdg(\Pi) = H^{2,0}(S)$, and $\hdg(k)$ is the image of the restriction
to $S$ of a Kähler class on $Z$.
\end{definition}

Given an embedding in $L$ of the orthogonal push-out we can now 
solve the matching problem for semi-Fano 3-folds.

\begin{prop}
\label{prop:orth_gluing}
  Let $N_\pm \subset L$ be primitive sublattices of signature
  $(1, \, r_\pm-1)$, and let $\fbb_\pm$ be 
  $(N_\pm, \Amp_{\fbb_\pm})$-generic families of building blocks.
  Suppose that
  \begin{enumerate}
  \item $\ncap = N_+ \cap N_-$ is negative definite of rank $\rho$,
  \item $W = N_+ + N_-$ is an orthogonal pushout. 
  \end{enumerate}
Denote by $T_\pm = N_\pm^\perp$ the transcendental lattices, and
let $W_\pm = T_\mp \cap N_\pm \subset N_\pm$ be the perpendicular of $N_\mp$
in~$N_\pm$. Assume also that
\begin{enumerate}[resume]
\item \label{it:amp}
$W_\pm \cap \Amp_{\fbb_\pm} \neq \emptyset$.
\end{enumerate}
Then there exist %
$(Z_\pm, S_\pm) \in \fbb_\pm$ and $N_\pm$-polarised markings
$\hdg_\pm : L \to H^2(S_\pm; \ZZ)$ with period points $\gen{k_\pm, \pm k_0}$,
for an orthonormal triple of positive classes $(k_+, k_-, k_0)$ in $L_\bbr$
such that $k_\pm \in \Amp_{\fbb_\pm}$,
\ie $(k_+, k_-, k_0)$ is a set of matching data adapted to the chosen
pair of embeddings $N_\pm \into L$.
\end{prop}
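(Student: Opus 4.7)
The plan is to construct the matching data directly using the orthogonal pushout structure, and then invoke the genericity hypothesis on the families.

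First I would exploit the rank and signature identities forced by orthogonality. The sublattices $W_\mp = N_\mp \cap T_\pm$ and $W^\perp$ both lie in $T_\pm = N_\pm^\perp$ (the latter because $N_\pm \subset W$) and are mutually orthogonal; a rank count $(r_\mp - \rho) + (22 - r_+ - r_- + \rho) = 22 - r_\pm = \rk T_\pm$ therefore gives the decomposition $T_\pm(\bbr) = W_\mp(\bbr) \oplus W^\perp_\bbr$. Since $L$ has signature $(3,19)$ and $W$ has signature $(2, r_+ + r_- - \rho - 2)$, the complement $W^\perp$ has signature $(1, 21 - r_+ - r_- + \rho)$, so $W^\perp_\bbr$ contains a nonempty positive cone.

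Next I would pick the triple $(k_+, k_-, k_0)$ by choosing $k_\pm \in W_\pm(\bbr) \cap \Amp_{\fbb_\pm}$ using hypothesis \ref{it:amp} and any positive $k_0 \in W^\perp_\bbr$. Orthogonality of the three classes is automatic: $W_+ \perp W_-$ because $W_+ \subset N_+$ and $W_- \subset T_+ = N_+^\perp$, while $W_\pm \perp W^\perp$ because $W_\pm \subset W$. Rescaling produces an orthonormal positive triple with $k_\pm$ still in the open cone $\Amp_{\fbb_\pm}$ and $k_0 \in T_+ \cap T_-$. The candidate period $\Pi_\pm := \langle k_\mp, \pm k_0\rangle$ is then a positive 2-plane in $N_\pm^\perp(\bbr) = T_\pm(\bbr)$, defining a point in the Griffiths domain $D_{N_\pm}$.

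The main obstacle will be arranging $\Pi_+ \in U_{\fbb_+}$ and $\Pi_- \in U_{\fbb_-}$ simultaneously, where each complement $D_{N_\pm} \setminus U_{\fbb_\pm}$ is a locally finite union of proper complex-analytic subvarieties. The triple $(k_+, k_-, k_0)$ varies in an open cone inside $W_+(\bbr) \oplus W_-(\bbr) \oplus W^\perp_\bbr$; provided this cone has positive dimension in enough directions, varying $k_\mp$ or $k_0$ moves $\Pi_\pm$ non-trivially inside $D_{N_\pm}$, so the pre-image of the non-generic locus is a proper real-analytic subset of parameter space and the generic triple avoids it in both factors simultaneously. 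Applying the $(N_\pm, \Amp_{\fbb_\pm})$-genericity from Definition \ref{def:generic} to such a generic triple then produces $(Z_\pm, S_\pm) \in \fbb_\pm$ together with $N_\pm$-polarised markings $\hdg_\pm$ for which $\hdg_\pm(\Pi_\pm) = H^{2,0}(S_\pm)$ and $\hdg_\pm(k_\pm)$ is the image of a Kähler class on $Z_\pm$, showing that $(k_+, k_-, k_0)$ is matching data adapted to the chosen embeddings $N_\pm \into L$.
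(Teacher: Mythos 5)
Your setup is the same as the paper's: you correctly identify the decomposition $T_\pm(\bbr) = W_\mp(\bbr) \oplus W^\perp_\bbr$, choose $k_\pm$ from $W_\pm(\bbr) \cap \Amp_{\fbb_\pm}$ and $k_0$ from the positive cone of $W^\perp_\bbr$, and observe that orthogonality and positivity of the triple come for free. You also correctly identify the one real difficulty: showing that a generic choice of parameters gives periods $\Pi_\pm$ landing in both $U_{\fbb_\pm}$ simultaneously. But your resolution of that difficulty has a genuine gap.

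You argue that because ``varying $k_\mp$ or $k_0$ moves $\Pi_\pm$ non-trivially inside $D_{N_\pm}$,'' the preimage of the non-generic locus must be a proper real-analytic subset of the parameter space. That inference is not valid. The parameter space maps into $D_{N_\pm}$ via $\text{pr}_\pm : (\ell_+,\ell_-,\ell) \mapsto \gen{\ell_\mp, \pm\ell}$, whose image is the real-analytic submanifold $\PP(W_\mp(\RR)^+)\times\PP(T(\RR)^+) \subset D_{N_\pm}$. A real-analytic submanifold with non-trivial tangent directions can still be entirely contained in a proper complex-analytic subvariety (think of $\RR \subset \{z_2 = 0\} \subset \CC^2$: it moves ``non-trivially'' but never leaves the bad locus), in which case the preimage of the complement of $U_{\fbb_\pm}$ would be the entire parameter space and the argument collapses. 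What rules this out is the specific structural observation made in the paper: $\PP(W_\mp(\RR)^+)\times\PP(T(\RR)^+)$ is a \emph{totally real} submanifold of $D_{N_\pm}$ and, by the dimension count you already did ($20 - r_\pm = \dim_\CC D_{N_\pm}$), it is of \emph{maximal dimension}. A totally real submanifold of maximal dimension in a complex manifold cannot be contained in any positive-codimension complex-analytic subvariety, and its intersection with such a subvariety has positive real codimension. This is exactly where the hypothesis that $W = N_+ + N_-$ is an \emph{orthogonal} pushout pays off: it is what makes the dimension of the real submanifold maximal. Without that step, your dimension identities remain passive bookkeeping and do not close the argument.
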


\begin{proof}
Let $T = W^\perp$. $W_\pm(\bbr)$ and $T(\bbr)$ are real vector spaces of
signature $(1, \, r_\pm - \rho - 1)$ and $(1, \, 21 -r)$ respectively, where
$r = \rk W = r_+ + r_- - \rho$.
A priori, $k_\pm$ and $k_0$ must belong to the positive cones $W_\pm(\bbr)^+$
and $T(\bbr)^+$ respectively.
Consider the real manifold 
\[D= \PP\bigl(W_+(\RR)^+\bigr)\times \PP\bigl(W_-(\RR)^+\bigr)
\times \PP \bigl(T(\RR)^+\bigr)\, .
\]
Below, we need the open subset
$\cala = \{(\ell_+,\ell_-,\ell) \in D : \ell_\pm \subset \Amp_{\fbb_\pm}\}$.
By hypothesis \ref{it:amp}, $\cala$~is nonempty.
We have two Griffiths period domains
 \[  D_{N_\pm} = \{\Pi^2 \subset T_\pm (\RR)\mid \langle \bullet, \bullet
   \rangle_{|\Pi^2}>0\} , \]
and projections
\[
\text{pr}_\pm \colon D \to D_{N_\pm}, \;
(\ell_+, \ell_-,\ell) \mapsto \gen{\ell_\mp, \pm \ell} .
\]
As we stated previously, $D_{N_\pm}$ can be regarded as an open subset of
$\PP(N_\pm^\perp \otimes \CC)$; if $\alpha, \beta$ is an oriented orthonormal
basis of $\Pi \in D_{N_\pm}$ then
$\Pi \mapsto \gen{\alpha+i\beta} \in \PP(N_\pm^\perp \otimes \CC)$.
Given a choice $\alpha$ and $\beta$, we can identify $T_\Pi D_{N_\pm}$ with
pairs $(u,v)$ of vectors in $\Pi^\perp \subseteq T_\pm(\RR)$. Then the complex
structure on $T_\Pi D_{N_\pm}$ is given by $J : (u, v) \mapsto (-v, u)$.

Observe that the real analytic embedded submanifold
$\PP\bigl(W_\mp(\RR)^+\bigr)\times \PP \bigl(T(\RR)^+\bigr) \into D_{N_\pm}$ is
totally real: the tangent space $\mathcal{T}$
at $\Pi = \gen{w,t}$, $w \in W_\mp$, $t \in T(\RR)$ corresponds to $(u,v)$ such
that $u \in w^\perp \subseteq W(\RR)$ and $v \in t^\perp \subseteq T_\mp(\RR)$,
so $J(\mathcal{T})$ is transverse to $\mathcal{T}$.
Now the key point is that the condition that $N_+$ and $N_-$ intersect
orthogonally ensures that this totally real submanifold
has \emph{maximal dimension}: $\dim_\CC D_{N_\pm} = 20-r_\pm$, and
\[ \dim_\RR \PP \bigl(W_\mp(\RR)^+\bigr)\times \PP \bigl(T(\RR)^+\bigr) =
\bigl(r_\mp -\rho - 1\bigr) + \bigl(22 - r - 1\bigr) = 20 - r_\pm . \]

In particular, the submanifold is Zariski dense (in a complex analytic sense),
so it must intersect the subset $U_{\fbb_\pm} \subset D_{N_\pm}$ from
Definition \ref{def:generic}. Actually, we need to use a stronger consequence:
the complement of the preimage of $U_{\fbb_\pm}$ in
$\PP \bigl(W_\mp(\RR)^+\bigr)\times \PP \bigl(T(\RR)^+\bigr)$ is a locally
finite union of real analytic subsets of positive codimension.
Therefore the same is true for $\text{pr}_\pm^{-1}(U_{\fbb_\pm})$.
To conclude the proof, take $(\ell_+, \ell_-, \ell) \in \cala \cap
\text{pr}_+^{-1}(U_{\fbb_+}) \cap \text{pr}_-^{-1}(U_{\fbb_-})$, and
let $k_\pm \in \ell_\pm, k_0 \in \ell$ be unit vectors.
\end{proof}

Proposition \ref{prop:orth_gluing} fulfils the plan for finding compatible
semi-Fano type ACyl Calabi-Yau \mbox{3-folds} outlined at the start of the orthogonal
gluing subsection.
The non-symplectic type blocks of Kovalev and Lee \cite{kovalev:lee}
also satisfy the condition in Definition \ref{def:generic}, as do some
families of blocks obtained by resolving non-generic anticanonical pencils on
semi-Fanos, \eg Example~\ref{exg:2conics}. So we can solve the
matching problem for these kinds of blocks by the same method.

\begin{remark}
\label{r:w:ample}
Note that in perpendicular gluing, hypothesis \ref{it:amp} is automatically
satisfied. This condition may look innocuous, but it adds an extra layer of
difficulty to the problem of finding suitable orthogonal but non-perpendicular
pushouts.

For families of non-symplectic type blocks, we may take $\Amp_\fbb$
in Definition \ref{def:generic} to be the full Kähler cone $\Amp_S$
of a generic $N$-polarised K3 surface (\cf Remark \ref{rmk:kl-def}). Modulo
choice of markings, this consists of all positive classes in $N_\bbr$ that
are orthogonal to all $-2$ classes in~$N$. For these blocks, hypothesis
\ref{it:amp} is therefore equivalent to $\ncap$ not containing any $-2$
classes. This is always a necessary condition, but for semi-Fano type blocks
it is not sufficient, \cf Example \ref{ex:nonex}.
\end{remark}

\section{Examples: \gtwo-manifolds}
\label{sec_g2mfds}

Our aim in this section is to present in detail concrete examples of \gtwo-manifolds that illustrate 
the main points of what is achievable by our constructions. 
In Section \ref{sec:close} we will give a more systematic overview of 
the range of examples one can construct using these methods 
and some remarks on the basic ``geography'' of the examples.
For each example in this section we compute the
integral cohomology groups, the number of associative submanifolds
arising from the construction, and the first Pontrjagin
class. %
Many of the examples are 2-connected, and for most of these we
can determine the diffeomorphism type completely using the classification
theorems \ref{thm:torfreeclass} and \ref{thm:inertia}.

All examples except \textit{No \ref{g2:precise}} are constructed using 
perpendicular or orthogonal gluing. 
We mostly stick to the building blocks of semi-Fano type that we described in detail 
in our earlier paper \cite[\S 7]{chnp1}; these building blocks are described 
briefly in Examples \ref{exg:species1}--\ref{exg:doubleline}.
\textit{No \ref{g2:precise}} uses ``handcrafted nonorthogonal gluing''. 
This method allows us to construct examples not possible using orthogonal gluing; 
the main drawback is that the method requires much more explicit information 
about K3 moduli spaces than orthogonal gluing. 
This can make constructing such examples a very labour-intensive process. 
Here we give only the simplest possible example to illustrate how the method 
works and its potential subtleties.

We close the section with a pair of examples (Examples \ref{exg:quartic:sl} and \ref{exg:nodal:quartic}) 
in which we can construct families of associative $3$-folds (recall Proposition  \ref{prop:lag_to_assoc}) 
because of the existence of suitable special Lagrangian submanifolds of the building blocks.

\subsection{Building blocks}

A small number of representative examples of building blocks $(Z,S)$,
together with computations of their topological and geometric
invariants, is given in \cite[\S 7]{chnp1}. Here we
give a very brief description of these examples: see also Tables
\ref{table:species1}, \ref{tableg:blocks}
and \ref{table:rank2}.  In each case the
polarising lattice $N$ (the image of $H^2(Z) \to H^2(S)$) has a unique
primitive embedding in~$L$; except in Example \ref{exg:burkhardt_quartic}
this is a direct consequence of Theorem \ref{thm:unique_emb}.

The building blocks $Z$ in Examples
\ref{exg:species1}--\ref{exg:burkhardt_quartic} are of Fano or
semi-Fano type, \ie $Z$ is the blow-up of a smooth Fano or semi-Fano
$Y$ in the base locus of a generic anticanonical pencil on $Y$ (recall
Proposition \ref{prop:block_from_sf}).  Below we will list the Fano or
semi-Fano $Y$ we use to construct the building block $Z$.

\begin{example}
\label{exg:species1}
Take $Y$ to be a Fano ``of the first species'', \ie a member of one of
the $17$ deformation families of smooth Fano $3$-folds with Picard
rank $1$. 
The building blocks $Z$ which arise this way---which we call
\emph{building blocks of rank one Fano type}---are listed in Table
\ref{table:species1}. In the descriptions of our examples of twisted
connected sums, ``\ref{exg:species1}$^r_d$'' refers to the building
block obtained from the rank 1 Fano $Y$ with index $r$ and degree
$-(\frac{1}{r}K_Y)^3 = d$. The polarising lattice is $N = \gen{rd}$.
\begin{table}[h]
\[
\begin{array}[t]{l c  c  c  c  c} \toprule
Y & r & -K_Y^3 & b^{3}(Y)&b^{3}(Z) & \gdiv{c_2(Z)} \\ \midrule
          \CP^3 &  4 & 4^3         &   0 &  66 &   2 \\ 
             Q_2 \subset \CP^4 &  3 & 3^3 \cdot 2 &   0 &  56 &   2 \\ 
                    V_1 \to W_4 &  2 & 2^3         &  42 &  52 &   8 \\ 
                  V_2 \to \CP^3 &  2 & 2^3 \cdot 2 &  20 &  38 &   4 \\ 
              Q_3 \subset \CP^4 &  2 & 2^3 \cdot 3 &  10 &  36 &  24 \\ 
      V_{2\cdot2} \subset \CP^5 &  2 & 2^3 \cdot 4 &   4 &  38 &   4 \\ 
              V_5 \subset \CP^6 &  2 & 2^3 \cdot 5 &   0 &  42 &   8 \\ 
                  V_2 \to \CP^3 &  1 &           2 & 104 & 108 &   2 \\ 
              Q_4 \subset \CP^4 &  1 &           4 &  60 &  66 &   4 \\ 
      V_{2\cdot3} \subset \CP^5 &  1 &           6 &  40 &  48 &   6 \\ 
V_{2\cdot2\cdot2} \subset \CP^6 &  1 &           8 &  28 &  38 &   8 \\ 
           V_{10} \subset \CP^7 &  1 &          10 &  20 &  32 &   2 \\ 
           V_{12} \subset \CP^8 &  1 &          12 &  14 &  28 &  12 \\ 
           V_{14} \subset \CP^9 &  1 &          14 &  10 &  26 &   2 \\ 
        V_{16} \subset \CP^{10} &  1 &          16 &   6 &  24 &   8 \\ 
        V_{18} \subset \CP^{11} &  1 &          18 &   4 &  24 &   6 \\ 
        V_{22} \subset \CP^{13} &  1 &          22 &   0 &  24 &   2 \\ 
        \bottomrule
\end{array} 
\]
\bigskip
\caption{Building blocks $Z$ from Fanos $Y$ with Picard rank 1}
\label{table:species1}
\end{table}

\end{example}

\newcommand{\blockstable}{
\begin{table}
\[
\begin{array}[b]{cccccccc} \toprule
 \text{What} & -K_Y^3& H^2(Z) & N & K & H^3(Z) & \gdiv c_2(Z) & e \\ \midrule
\text{Ex}~\ref{exg:quartic_w_plane}      &4 &\ZZ^3
&\begin{pmatrix}-2&1\\1&4\end{pmatrix}   &(0)&\ZZ^{50}
& 2,4 & 9 \\ \addlinespace[0.5em]
\text{Ex}~\ref{exg:quartic_w_quadric} & 4&\ZZ^3    
&\begin{pmatrix}-2&2\\2&4\end{pmatrix}&(0)&\ZZ^{44}
& 2 & 12\\ \addlinespace[0.5em]
\text{Ex}~\ref{exg:quartic_w_scroll}     &4 &\ZZ^3    
&\begin{pmatrix}-2&3\\3&4\end{pmatrix}&(0)&\ZZ^{34}
& 2,4 & 17\\ \addlinespace[0.5em]
\text{Ex}~\ref{exg:quartic_w_22}          &4 &\ZZ^3    
&\begin{pmatrix}0&4\\4&4\end{pmatrix}&(0)&\ZZ^{36}
& 4 & 16 \\ \addlinespace[0.5em]
\text{Ex}~\ref{exg:burkhardt_quartic}   &4 &\ZZ^{17}
&E_6^\ast (-3)\perp E_8(-1)\perp U & (0) & \ZZ^6
& 2 & 45            \\ \addlinespace[0.5em]
\text{Ex}~\ref{exg:P3_deg}                  &64 &\ZZ^{5}  
&\langle 4\rangle &\ZZ^3&\ZZ^{24} & 2 & 24\\ \addlinespace[0.5em]
\text{Ex}~\ref{exg:2conics} & 64 & \ZZ^4 &
\begin{pmatrix}-2 & 0 & 2 \\ 0 & -2 & 2 \\ 2 & 2 & 4\end{pmatrix} &
(0) & \ZZ^{30} & 2 & 20 \\ \addlinespace[0.5em]
\text{Ex}~\ref{exg:toricV22_a}           &22 &\ZZ^{11}
&E_8(-1)\perp \langle 8\rangle \perp \langle -16 \rangle&(0)&\ZZ^{24}
& 2 & 9 \\ \addlinespace[0.5em]
\text{Ex}~\ref{exg:toricV22_b}          &22&\ZZ^{23} 
&E_8(-1)\perp \langle 8\rangle \perp \langle -16 \rangle&\ZZ^{12}&(0)
& 2 & 33 \\ \addlinespace[0.5em]
\text{Ex}~\ref{exg:doubleline}          & 4 & \ZZ^3 
& \gen{4} \perp \gen{-2} & (0) & \ZZ^{46}
& 2 & 12 \\ \bottomrule
\end{array}
\]
\bigskip
\caption{A small number of examples of building blocks (reproduced from \cite[Table~2]{chnp1}).}
\label{tableg:blocks}
\end{table}}

\begin{example}
\label{exg:MoriMukai}
Similarly, we can take $Y$ to be any of the Fano 3-folds of Picard
rank $\geq 2$ classified by Mori-Mukai \cite{MM1,MM2,MM3,MM4,MM5}.  We
list some building blocks of this type separately in
Table~\ref{table:rank2}.  In our final Table~\ref{table:g2ex} of
examples of \gtwo-manifolds, the notation $Z=Ex~\ref{exg:MoriMukai}_n$
signifies the building block $Z$ of Fano type obtained from the
\emph{rank~2 Fano} 3-fold $Y$ listed as no.~$n$ in the table in ~\cite{MM5} 
(and also in our Table \ref{table:rank2}).
We call these \emph{building blocks of rank $2$ Fano type}.
\end{example}
Examples \ref{exg:quartic_w_plane}--\ref{exg:burkhardt_quartic} are
building blocks of semi-Fano type where the semi-Fano $Y$ is obtained
as a projective small resolution of a Fano 3-fold $X$ with nodal
singularities. For a given $X$ there may be be several non-isomorphic
small resolutions $Y$, but they all have the same cohomology. However,
the value of $\gdiv c_2(Z)$ may depend on the choice of small
resolution $Y \to X$.

\begin{example}
  \label{exg:quartic_w_plane}
  Fix a 2-plane $\Pi\subset \PP^4$ and let $\Pi \subset X\subset
  \PP^4$ be a general quartic 3-fold containing~$\Pi$.  Let $Y$ be one
  of the two projective small resolutions of $X$.
\end{example}

\begin{example}
  \label{exg:quartic_w_quadric}
  Fix a quadric surface $Q=Q^2_2\subset \PP^4$ and let $Q \subset
  X\subset \PP^4$ be a general quartic 3-fold containing $Q$.  Let $Y$
  be one of the two projective small resolutions of $X$.
\end{example}

\begin{example}
  \label{exg:quartic_w_scroll}
  Fix a cubic scroll surface $\FF\subset \PP^4$ and let $\FF \subset
  X\subset \PP^4$ be a general quartic 3-fold containing $\FF$.  Let
  $Y$ be one of the two projective small resolutions of $X$.
\end{example}

\begin{example}
\label{exg:quartic_w_22}
  Fix the complete intersection of two quadrics $F=F_{2,2}\subset
  \PP^4$ and let $F \subset X\subset \PP^4$ be a general quartic
  3-fold containing $F$.
  Let $Y$ be one of the two projective small resolutions of $X$.
\end{example}

\begin{example}
  \label{exg:burkhardt_quartic}
  The Burkhardt quartic 3-fold is the hypersurface
\[
X= \bigl(x_0^4-x_0(x_1^3+x_2^3+x_3^3+x_4^3)+3x_1x_2x_3x_4=0\bigr)\subset \PP^4.
\]
$X$ contains $40$ planes, has $45$ ordinary nodes as singularities,
defect $\sigma = 15$ (recall \eqref{e:defect}), and admits 
projective small resolutions. (See Finkelnberg's thesis
\cite{finkelnberg89:burk} for these and other facts on the Burkhardt
quartic.)
We take $Y$ to be one particular projective small resolution of $X$.
The polarising lattice $N$ has rank 16 and discriminant group $(\ZZ/3\ZZ)^5$.
Its orthogonal complement $T \subset L$ is the rank 6 lattice
$A_2(-1) \perp 2U(3)$,
where $A_2(-1)$ and $U(3)$ denote the rank 2 lattices with intersection forms
\[ \begin{pmatrix} -2 & 1 \\ 1 & -2 \end{pmatrix}   \textrm{ and }
\begin{pmatrix} 0 & 3 \\ 3 & 0 \end{pmatrix}  \]
respectively.
In \cite[Example 7.7]{chnp1} we deduce the uniqueness
of the embedding $N \subset L$ from that of $T \subset L$.
\end{example}

The next two examples arise by blowing up the base locus of a
\emph{non-generic} anticanonical pencil on $\CP^3$, \ie they do not come from an application
of Proposition \ref{prop:block_from_sf}. In these cases extra work is required both to verify 
that the topological conditions of a building block (recall Definition \ref{dfng:BLOCK}) are satisfied 
and that the matching arguments of \S \ref{sec:k3} can be applied.

\begin{example}
  \label{exg:P3_deg}
  Consider the non-generic AC (anti-canonical) pencil
  $|S_0, S_\infty|\subset |\oo(4)|$, where
  $$S_0=(x_0x_1x_2x_3=0)$$ is the sum of the four coordinate planes,
  and $S_\infty$ is a nonsingular quartic surface meeting all
  coordinate planes transversely. The base curve of the pencil is the
  union $C=\sum_{i=0}^3\Gamma_i$ of the four nonsingular curves
  $\Gamma_i =(x_i=0) \cap S_\infty$. Let $Z$ be obtained from
  $Y=\PP^3$ by blowing up the four base curves one at a time.
  Any smooth quartic K3 appears as a fibre of a building block of this kind,
  so even though we are using non-generic pencils we can apply the same
  orthogonal gluing argument as for building blocks obtained by resolving
  generic pencils.
\end{example}

\begin{example}
\label{exg:2conics}
Fix two general conics $C_1, C_2 \subset \CP^3$, and take a generic
pencil of quartic K3 surfaces containing both $C_1$ and $C_2$. The
base locus $C$ consists of $C_1$, $C_2$ and a degree 12 curve~$C_3$
(of genus 15) meeting each of $C_1$ and $C_2$ in 10 points. Let $Z$ be
the result of first blowing up~$C_1$, then the proper transform of~$C_3$,
and then the proper transform of~$C_2$, and let $S$ be the
proper transform of a smooth element of the chosen pencil on~$\CP^3$.
$(Z,S)$ is a building block, with 20 $(-1,-1)$ curves
corresponding to the double points of $C$. (Blowing up the components
of $C$ in a different order changes $Z$ by flopping some of the 20
exceptional curves, but does not change the data listed in
Table~\ref{tableg:blocks}.)

$S$ contains the pair of conics $C_1$, $C_2$, so these represent classes
in $N = \Pic S$. Together with the
hyperplane class $A$ they are the  basis of a subgroup $N\subset \Pic S$, and in this basis the quadratic
form on $N$ is
\[
\begin{pmatrix}-2 & 0 & 2 \\ 0 & -2 & 2 \\ 2 & 2 & 4\end{pmatrix} . 
\]
We check by hand that this family of blocks satisfies the conditions
of Definition \ref{def:generic}, so that the orthogonal matching
Proposition \ref{prop:orth_gluing} can be applied to it. The main point
is that a generic $N$-polarised K3 $S$ appears as the fibre in some block
$(Z,S)$ in the family.

Let $D_N$ be the Griffiths domain for $N$. It is explained in
\cite[Example 7.9]{chnp1} that a generic marked K3 $S$ with
period in $D_N$ embeds as a quartic in $\CP^3$, and contains a pair of
conics. We can then form a block $(Z,S)$ by blowing up the intersection
of $S$ with a generic quartic containing those two conics.
Thus there is a $U_\fbb \subset D_N$ with complement a locally finite union
of complex analytic subsets of positive codimension, such that for any
$\Pi \in U_\fbb$ there is a building block $(Z,S)$ in our family, with
$\Pi$ the period of a marking for $S$.
 
Next, let $E_i$ be the exceptional divisor in $Z$ over $C_i$ ($E_i$ is
isomorphic to the projectivisation of the normal bundle of $C_i$,
blown up at points corresponding to intersections with those
components of $C$ blown up after $C_i$). The pull-back $H$ to $Z$ of
the hyperplane class on $\CP^3$ is nef, but it fails to be positive on
the fibres of $E_i$. On the other hand, $S$ is positive on almost all
of the fibres. For small $\lambda_0 > 0$, $H + \lambda_0 S$---which has
image $A$ in $H^2(S)$---is nef and positive on all curves except the
$\oo(-1) \oplus \oo(-1)$ curves over the $20$ intersection points of
$C_1 \sqcup C_2$ with $C_3$. By adding $-\lambda_1 E_1 + \lambda_2 E_2$
for small $\lambda_i > 0$ we get a Kähler class, with image $A -
\lambda_1C_1 + \lambda_2 C_2$ in $H^2(S)$. Therefore there is an open
subcone $\Amp_\fbb$ of the positive cone in $N_\bbr$ that can be taken
as restrictions of Kähler classes on $Z$ ($A$ spans an edge of
$\Amp_\fbb$).

Thus the family $\fbb$ is $(N, \Amp_\fbb)$-generic, and can be used in
orthogonal gluing.
\end{example}

\blockstable

Examples \ref{exg:toricV22_a} and \ref{exg:toricV22_b} 
are obtained from the same toric semi-Fano $3$-fold $Y$ 
by blowing up a generic AC pencil and a nongeneric AC pencil on $Y$ respectively.

\begin{example}
\label{exg:toricV22_a} 
Let $X$ be the terminal Gorenstein toric Fano 3-fold with Fano
polytope the reflexive polytope in $\Hom (\CC^\times, \TT)$ with
vertices (this is polytope 1942 in the Sage implementation of Kreuzer
and Skarke's database of 4319 reflexive polytopes in 3 dimensions)
\[
\left(\begin{array}{rrrrrrrrrrrrr}
1 & 0 & 0 & -1 & 1 & 1 & -1 & -1 & -1 & 1 & 0 & 0 & 0 \\
0 & 1 & 0 & 1 & 0 & -1 & 1 & 0 & 0 & -1 & 0 & -1 & -1 \\
0 & 0 & 1 & 1 & -1 & 0 & 0 & 1 & 0 & -1 & -1 & 0 & -1
\end{array}\right).
\]
  Let $Y$ be a projective small resolution of $X$, and $Z$ the blow-up of $Y$
  in the base locus of a generic AC pencil.
\end{example}

\begin{example}
  \label{exg:toricV22_b}
  We construct the building block $Z$ by blowing up a different (non-generic) pencil
  on the toric semi-Fano $3$-fold $Y$ used in the previous example.
One can show that any generic
anticanonical divisor in $Y$ appears as a fibre in a building block of this
kind, so we can apply the orthogonal gluing argument when attempting to
find matchings involving this block.
\end{example}

The final example comes from a semi-Fano 3-fold whose anticanonical morphism
is not small. Even though it is not constructed as a small resolution of
a nodal variety, it still contains some curves with normal bundle
$\oo(-1) \oplus \oo(-1)$.

\begin{example}
\label{exg:doubleline}
Let $X \subset \CP^4$ be a generic quartic containing a double line,
$Y$ the crepant resolution of $X$, and $Z$ the blow-up of $Y$ in the base locus
of a generic AC pencil.
The exceptional set of $Y \to X$ is a conic bundle with 6 degenerate fibres.
Each degenerate fibre consists of two $\CP^1$s intersecting in a single point.
Each of these 12 $\CP^1$s has normal bundle $\oo(-1) \oplus \oo(-1)$.
\end{example}

\subsection{Examples of compact \gtmfd s from orthogonal gluing}

We start with pairs of building blocks $Z_\pm$ taken from the examples
listed above and construct compact \gtwo-manifolds from such pairs by
using orthogonal gluing to solve the matching problem.  We summarise
the invariants of the resulting \gtmfd s in Table \ref{table:g2ex}.

More specifically given a pair of building blocks $Z_{\pm}$ with
corresponding polarising lattices $N_{\pm}$ first we make a choice of
an \emph{orthogonal push-out} $W= N_+ \perp_\ncap N_-$ of the pair
$N_\pm$ as in Definition \ref{def:orthog_pushout}; for a given pair of
lattices $N_\pm$ there is often some choice in this.  Recall that
\emph{perpendicular gluing} refers to the special case when we choose
$W=N_+ \perp N_-$, \ie $\ncap = N_+ \cap N_- = (0)$.  In order to
satisfy conditions (i) and (ii) of Proposition \ref{prop:orth_gluing},
we then find an embedding $W \into L$ such that the inclusions $N_\pm
\into L$ are primitive. Usually we achieve this by applying Theorem
\ref{thm:exist_emb} to find a primitive embedding $W \into L$;
we refer to this as \emph{primitive orthogonal gluing} or
\emph{primitive perpendicular gluing}. In the perpendicular case
Proposition \ref{prop:orth_gluing} then produces matching data, and
therefore compact \gtwo-manifolds by appeal to Theorem
\ref{thm:g2glue} and Corollary \ref{cor:matching}. In the
non-perpendicular case, we also need to calculate the Kähler cones of
$Z_\pm$ to verify condition \ref{prop:orth_gluing}(iii).

The topology of the resulting \gtmfd{} depends only on the blocks and
the choice of push-out. The integral cohomology groups can readily be
computed using Theorem \ref{thm:g2topology} and the data in Tables
\ref{table:species1}, \ref{tableg:blocks}  and \ref{table:rank2}.

The following observation is helpful for identifying the torsion in
$H^3$ and $H^4$.
\begin{lemma}
\label{lem:torsion}
  Let $L$ be a unimodular lattice, $N_+, N_-\subset L$ two primitive
  submodules and $T_+, T_-$ their perpendicular complements in $L$. Then
\begin{align*}
L/(N_+ + N_-) &= \coker (N_+ \to T_-^*) = \coker (N_- \to T_+^*) \, , \\
L/(N_+ + T_-) &= \coker (N_+ \to N_-^*) = \coker (T_- \to T_+^*)\, . 
\end{align*} 
\end{lemma}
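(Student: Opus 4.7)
The plan is to deduce both identities from the elementary fact that, for a primitive sublattice $N \subset L$ of a unimodular lattice $L$, the pairing induces a natural short exact sequence
\[
0 \to T \to L \to N^* \to 0,
\]
where $T = N^\perp$, and symmetrically $0 \to N \to L \to T^*\to 0$. So first I would verify this. The map $L \to N^*$, $x \mapsto \langle x, -\rangle_{|N}$, has kernel $N^\perp = T$, so one gets an injection $L/T \hookrightarrow N^*$; surjectivity follows from unimodularity by a discriminant count using $N^{\perp\perp} = N$ (valid because $N$ is primitive in the unimodular $L$), which forces $|L/(N\oplus T)|^2 = |\disc N|\cdot|\disc T|$, hence the injections $L/T \hookrightarrow N^*$ and $L/N \hookrightarrow T^*$ must each be isomorphisms onto the respective dual.

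Given these identifications, both identities follow by elementary manipulation of quotients. For the first, write
\[
L/(N_+ + N_-) \;\cong\; \bigl(L/N_\mp\bigr)\big/\mathrm{image}(N_\pm) ,
\]
and apply $L/N_\mp \cong T_\mp^*$. Under this isomorphism, the image of $N_\pm \subset L$ in $L/N_\mp$ corresponds exactly to the image of the natural map $N_\pm \to T_\mp^*$ induced by the pairing. This yields both $L/(N_+ + N_-) = \coker(N_+ \to T_-^*)$ and $L/(N_+ + N_-) = \coker(N_-\to T_+^*)$.

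The second identity is obtained in the same way by writing $L/(N_+ + T_-)$ as a quotient of either $L/T_-$ or $L/N_+$: applying $L/T_- \cong N_-^*$ gives $L/(N_+ + T_-) = \coker(N_+ \to N_-^*)$, while applying $L/N_+ \cong T_+^*$ gives $L/(N_+ + T_-) = \coker(T_- \to T_+^*)$.

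There is no genuine obstacle: once the basic identifications $L/N \cong T^*$ and $L/T \cong N^*$ are in place, the rest is routine. The only item requiring any care is making sure the map $N_\pm \to T_\mp^*$ (respectively $N_\pm \to N_\mp^*$, $T_\pm \to T_\mp^*$) induced by the ambient pairing agrees with the one obtained by composing $N_\pm \hookrightarrow L$ with $L \twoheadrightarrow L/N_\mp \cong T_\mp^*$, but this is tautological from the definition of the identification.
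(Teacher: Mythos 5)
The paper states Lemma~\ref{lem:torsion} without giving a proof, treating it as an elementary fact about lattices. Your argument is correct and is the expected one: the identifications $L/T_\pm \cong N_\pm^*$ and $L/N_\pm \cong T_\pm^*$ for a primitive nondegenerate sublattice of a unimodular lattice (which you justify via the discriminant count $[L:N\oplus T]^2 = |\disc N|\,|\disc T|$, or equivalently via the splitting of $0\to N \to L \to L/N \to 0$ coming from $L/N$ torsion-free) are standard, and all four cokernel formulas then follow from the third isomorphism theorem exactly as you describe. The only implicit hypothesis worth noting is that $N_\pm$ (and hence $T_\pm$) be nondegenerate, so that $N_\pm \cap T_\pm = 0$; this is automatic for the polarising lattices arising in the paper.
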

In the case of perpendicular gluing $p_1$ is also straightforward to compute;
Corollary \ref{cor:p1gcd} tells us that it suffices to know the greatest
divisors of $c_2$ of the building blocks, which we also included in the tables.
For non-perpendicular gluing, we have to work a little bit harder to compute
$p_1$, using some of the details of the $c_2$ calculation from
\cite[\S 5]{chnp1}.

The simplest building blocks to match are the 17 families of building blocks of rank one Fano type
described in Example \ref{exg:species1} and summarised in Table \ref{table:species1}. 
\gtwo-manifolds obtained by matching pairs of rank one Fanos already appear in  \cite[\S 8]{kovalev:connectsums} but 
we can now give much more precise information about the topology of \gtwo-manifolds constructed this way including, in most
cases, their diffeomorphism type. 
The most straightforward way to achieve matching in this case is to use primitive perpendicular gluing, 
\ie to choose a primitive lattice embedding of the rank two lattice $W=N_{+}\perp N_{-}$ into the K3 lattice $L$. 
Existence and uniqueness (up to lattice automorphisms of $L$) of this embedding follow from Theorems \ref{thm:exist_emb}
and \ref{thm:unique_emb}.
However even in this case there are other ways to achieve 
matching which lead to \gtwo-manifolds with the same Betti numbers but different 
integral cohomology groups; see example \textit{No \ref{g2:qqtorsion}} below.
For now though we restrict attention to matching by primitive perpendicular gluing and consider the topology of the 
resulting compact \gtwo-manifolds.

\subsubsection*{Perpendicular gluing of pairs of rank 1 smooth Fano $3$-folds}
By Lemma \ref{l:2connected:g2} any twisted connected sum $\gtwo$-manifold $M$
arising by primitive perpendicular gluing of blocks of semi-Fano or Fano type
is $2$-connected (recall from Proposition \ref{prop:block_from_sf} that $K=0$
for any block of semi-Fano type) and has $H^4(M)$ torsion-free.
Hence the almost-diffeomorphism classification of Theorem \ref{thm:torfreeclass} applies 
to $M$. Recall also that from Lemma \ref{lem:p1range} we have  $\gdiv p_1(M) \in \{4,8,12,16,24,48\}$ for any 
twisted connected sum $\gtwo$-manifold and 
that Corollary \ref{c:inertia:g2} restricts the number of 
diffeomorphism types in a given almost diffeomorphism class according to $\gdiv p_1(M)$.
In particular, there are at most $8$ diffeomorphism classes realising the same value of $b^3(M)$.

The data of all possible $153=\tfrac{1}{2} \cdot 18 \cdot 17$ such matching pairs is collected in Table \ref{tab:rank1}.
We summarise some of the main features apparent from this table.

\begin{table}[p]
{\small
\begin{tabular}{cccccccc} \toprule
$b$ & $\#$ & \multicolumn{6}{c}{$\gdiv(p_{1})$} \\ \cmidrule(l){3-8}
 &  & 4 & 8 & 12 & 16 & 24 & 48 \\ \midrule
48 & 6 & 4 & 0 & 1 & 1 & 0 & 0\\ 
50 & 3 & 3 & 0 & 0 & 0 & 0 & 0\\
52 & 4 & 2 & 1 & 1 & 0 & 0 & 0\\
54 & 1 & 1 & 0 & 0 & 0 & 0 & 0\\
56 & 4 & 3 & 0 & 0 & 0 & 1 & 0\\
58 & 1 & 1 & 0 & 0 & 0 & 0 & 0\\
60 & 4 & 2 & 0 & 1 & 1 & 0 & 0\\
62 & 10 & 7 & 2 & 0 & 1 & 0 & 0\\
64 & 5 & 4 & 0 & 0 & 0 & 1 & 0\\
66 & 6 & 2 & 3 & 0 & 1 & 0 & 0\\
68 & 2 & 2 & 0 & 0 & 0 & 0 & 0\\
70 & 4 & 3 & 1 & 0 & 0 & 0 & 0\\
72 & 4 & 2 & 0 & 1 & 0 & 0 & 1\\
74 & 5 & 2 & 2 & 0 & 1 & 0 & 0\\
76 & 10 & 2 & 5 & 1 & 2 & 0 & 0\\
78 & 2 & 1 & 0 & 0 & 1 & 0 & 0\\
80 & 8 & 4 & 3 & 0 & 1 & 0 & 0\\
82 & 1 & 1 & 0 & 0 & 0 & 0 & 0\\
84 & 4 & 2 & 0 & 1 & 1 & 0 & 0\\
86 & 3 & 3 & 0 & 0 & 0 & 0 & 0\\
88 & 2 & 1 & 0 & 0 & 1 & 0 & 0\\
90 & 10 & 6 & 3 & 0 & 1 & 0 & 0\\
92 & 3 & 3 & 0 & 0 & 0 & 0 & 0\\
94 & 6 & 4 & 1 & 0 & 1 & 0 & 0\\
96 & 1 & 0 & 0 & 1 & 0 & 0 & 0\\
98 & 3 & 3 & 0 & 0 & 0 & 0 & 0\\
100 & 1 & 1 & 0 & 0 & 0 & 0 & 0\\
102 & 2 & 1 & 1 & 0 & 0 & 0 & 0\\
104 & 8 & 4 & 3 & 0 & 1 & 0 & 0\\
108 & 3 & 2 & 1 & 0 & 0 & 0 & 0\\
112 & 1 & 1 & 0 & 0 & 0 & 0 & 0\\
114 & 2 & 2 & 0 & 0 & 0 & 0 & 0\\
118 & 2 & 1 & 1 & 0 & 0 & 0 & 0\\
122 & 2 & 2 & 0 & 0 & 0 & 0 & 0\\
132 & 6 & 5 & 1 & 0 & 0 & 0 & 0\\
134 & 1 & 1 & 0 & 0 & 0 & 0 & 0\\
136 & 1 & 1 & 0 & 0 & 0 & 0 & 0\\
140 & 1 & 1 & 0 & 0 & 0 & 0 & 0\\
144 & 1 & 1 & 0 & 0 & 0 & 0 & 0\\
146 & 3 & 3 & 0 & 0 & 0 & 0 & 0\\
150 & 1 & 1 & 0 & 0 & 0 & 0 & 0\\
156 & 1 & 1 & 0 & 0 & 0 & 0 & 0\\
160 & 1 & 1 & 0 & 0 & 0 & 0 & 0\\
164 & 1 & 1 & 0 & 0 & 0 & 0 & 0\\
174 & 2 & 2 & 0 & 0 & 0 & 0 & 0\\
216 & 1 & 1 & 0 & 0 & 0 & 0 & 0\\
\addlinespace[0.5em] \midrule
 & 153 & 101 & 28 & 7 &14 & 2 & 1 \\
\bottomrule
\end{tabular}
}
\bigskip
\caption{\small %
  Betti numbers and almost-diffeomorphism types
  of $2$-connected twisted connected sum \gtmfd s $M$ constructed by
  perpendicular gluing from pairs of rank $1$ Fano $3$-folds.
  $b^{3}(M)=b^{4}(M)=b+23$; 
  $\#$ gives number of instances of a given value of $b$, further broken down
  according to divisibility of $p_1(M)$ on right of table.}
\label{tab:rank1}
\end{table}

\begin{enumerate}
\item
46 different values of $b^3(M)$ arise with $71 \le b^3(M) \le 239$.
\item
All six permitted integers $\{4,8,12,16,24,48\}$ occur as 
$\gdiv p_1(M)$  for some $M$ in Table \ref{tab:rank1}.
\item
82 different almost-diffeomorphism types occur.
\item
By Corollary \ref{c:inertia:g2} the diffeomorphism type is uniquely determined
except in the $14$ cases in which $\gdiv p_1(M) = 16$ and the $1$ case in which
$\gdiv p_1(M)=48$.
\item There are exactly two ways to construct a $2$-connected
  $\gtwo$-manifold with $b^3(M)= 76+23=99$ and $\gdiv p_1(M) = 16$:
either take both blocks from the family Example
\ref{exg:species1}$^2_4$, or match \ref{exg:species1}$^2_1$ to
\ref{exg:species1}$^1_{16}$.
By Corollary \ref{c:inertia:g2} there are precisely two diffeomorphism
classes in the almost diffeomorphism type of such a $2$-connected
$7$-manifold $M$. A natural question is therefore: are these two
almost diffeomorphic twisted connected sum $\gtwo$-manifolds
diffeomorphic or not? To answer it would probably require the calculation of
a generalised Eells-Kuiper invariant as discussed in Remark \ref{rmk:EK}.
(Note this is the only way that homeomorphic but not diffeomorphic
$2$-connected $7$-manifolds could arise from primitive perpendicular
gluing of pairs of building blocks of rank one Fano type.)

\item There are many ways to use primitive perpendicular gluing of
  different pairs of building blocks of rank one Fano type to produce
  \emph{diffeomorphic} 2-connected $\gtwo$-manifolds; in Table
  \ref{tab:rank1} we simply look at any of the four columns where
  $\gdiv p_1 \mid 24$ and find an entry in any row in that column
  which is greater than $1$. There are many such entries in the table.
  Of the $46$ values of $b^3$ that occur in Table \ref{tab:rank1},
  $15$ of those can occur for a unique choice of pair of rank $1$
  Fanos. For the remaining $31$ values of $b^3$ we see that except for
  four cases ($b=78, 88, 102, 118$ in the table; recall $b^3=b+23$
  there) we can find at least two different pairs of rank one Fano
  building blocks that yield diffeomorphic 2-connected 7-manifolds
  with $b^3(M)=b^3$.

  One concrete way to get distinct pairs of building blocks of rank
  one Fano type which yield diffeomorphic $\gtwo$-manifolds is to take
  the pair (a) (\ref{exg:species1}$^1_{22}$,
  \ref{exg:species1}$^1_{22}$) or the pair
  (b)~(\ref{exg:species1}$^1_{22}$,~\ref{exg:species1}$^1_{18}$)
  These both yield a 2-connected $\gtwo$-manifold $M$ with
  $b^3=48+23=71$ and $\gdiv p_1=4$.  (The pairs
  (\ref{exg:species1}$^1_{22}$, \ref{exg:species1}$^1_{16}$) and
  (\ref{exg:species1}$^1_{18}$, \ref{exg:species1}$^1_{16}$) are the
  two other pairs yielding the same $7$-manifold $M$.) By Remark
  \ref{r:2connected:7mfd} $M$ is diffeomorphic to the connected sum of
  $M_{1,0}$ with $70$ copies of $\Sph^3 \times \Sph^4$ where $M_{1,0}$
  denotes the unique $\Sph^3$-bundle over $\Sph^4$ with Euler number
  $0$ and $p_1(M)=4\cdot 1 \in H^4(M_{1,0}) \cong \Z$.
\end{enumerate}

\subsubsection{Detailed examples}

We now describe in detail a small number of examples to illustrate
some of the main points. Consulting the overview given at the beginning of 
Section \ref{sec:close} may also benefit the reader.

The first example shows one way in which it is possible to produce different 
\gtwo-manifolds from the same pair of building blocks $Z_{\pm}$.

\rslsub{g2:qqtorsion}

We take both $Z_+$ and $Z_-$ to be building blocks of Fano type
obtained from a smooth quartic in $\CP^4$ (Example \ref{exg:species1}$^1_4$).
Table \ref{tab:rank1} already includes the twisted connected sum of these
two blocks given by embedding $W = N_+ \perp N_- \cong \gen{4} \perp \gen{4}$
primitively in $L$; the entry has $b = 132$, $\gdiv p_1 = 8$. However, we can
also consider a \emph{non-primitive} embedding of $W$ in $L$ 
for which the resulting inclusions $N_\pm \into L$ are still primitive:
$W$ is isometric to the index 2 sublattice $\{(x,y) : x = y \mod 2 \}$ of
$\gen{2} \perp \gen{2}$, so a primitive embedding of the latter
in~$L$ (which exists by Theorem \ref{thm:exist_emb}, or indeed by inspection)
gives an embedding $W \into L$ with cotorsion $L/W \cong \ZZ/2\ZZ$. Using this
``non-primitive'' perpendicular matching we get a twisted connected sum with
the same Betti numbers and $\gdiv p_1$ as before, but now
$\Tor H^3(M) \cong \ZZ/2\ZZ$ (recall Corollary \ref{cor:torsion}).
In particular, although Theorem \ref{thm:g2topology} shows that 
$M$ is simply-connected and has $H^{2}(M)=0$ it is no longer $2$-connected.

\begin{remark}
\label{rmk:h3tor}
In a similar way, one can get alternative perpendicular matchings with torsion
in $H^3$ for many other pairs of building blocks, whether of
rank 1 Fano type or otherwise. Whether there exist suitable overlattices of
$N_+ \perp N_-$ reduces to a problem about the discriminant groups of $N_+$
and $N_-$, as discussed in Remark \ref{rmk:disc}.
Carrying out such an analysis for the rank 1 pairs one allows us to realise $\cg{k}$ as $\Tor H^3(M)$
of twisted connected sums for $2 \leq k \leq 5$, and a total of 41 triples
of invariants $(b^4(M), \, \gdiv p_1(M), \, k)$ (in addition to the 82
with $H^3(M)$ torsion-free).
\end{remark}

Our remaining examples use building blocks from Tables \ref{tableg:blocks} and \ref{table:rank2}.

\rslsub{g2:q}
We take $Z_+$ to be the building block from Example \ref{exg:quartic_w_plane},
and $Z_-$ from Examples \ref{exg:quartic_w_plane}--\ref{exg:quartic_w_22}
and use primitive perpendicular gluing to achieve matching.
In all these cases the polarising lattices $N_{\pm}$ have signature $(1,1)$ 
and hence \mbox{$W:= N_{+} \perp N_{-}$} has signature $(2,2)$.
Therefore by Theorems \ref{thm:exist_emb} and \ref{thm:unique_emb} $W$ admits a
primitive embedding $W \into L$ which is unique up to automorphisms of $L$. Now
we apply Proposition \ref{prop:orth_gluing} to solve the matching problem 
noting that hypothesis (iii) is automatically satisfied because we are using
perpendicular gluing. Observe that when we choose $Z_-$ from
\ref{exg:quartic_w_plane}, \ref{exg:quartic_w_scroll} or
\ref{exg:quartic_w_22}, $p_1(M)$, and hence the diffeomorphism type of~$M$,
depends on the choice of resolution used for the semi-Fanos.

\rslsub{g2:dp3q}

We match blocks from Example \ref{exg:P3_deg} and
Example \ref{exg:species1}$^1_4$ by primitive perpendicular gluing. Because
Example~\ref{exg:P3_deg} has $\rk K = 3$, the twisted connected sum \gtmfd{}
has $b^2(M) = 3$.

\rslsub{g2:v22q}

We use perpendicular gluing to match the semi-Fano type blocks $Z_{\pm}$
from Examples \ref{exg:toricV22_a} and \ref{exg:doubleline} respectively.
In this case we cannot  appeal to Theorem \ref{thm:exist_emb}(i) 
to guarantee we can embed $W = N_+ \perp N_-$ in $L$ because
$\rk W = 2 + 10 = 12 > 22/2$.
However, the discriminant group $W^*/W$ is 
$\cg{8} \times \cg{4} \times \cg{4} \times \cg{2}$, which is generated
by 4 elements. So $\ell(W) = 4$ and we can apply \ref{thm:exist_emb}(ii) to get
a primitive embedding (and it is unique by Theorem \ref{thm:unique_emb}).

We have identified 9 rigid $\CP^1$'s in $Z_+$ and 12 in $Z_-$.
Using Proposition \ref{prop:cx_to_assoc} we thus find 21 associative
$\Sph^1 \times \Sph^2$ in $M$.
The 12 from $Z_-$ come in pairs that are close together,
as they arise from pairs of $\CP^1 \subset Z_-$ that intersect. However,
there is no a priori reason that the associatives in $M$ should intersect
after the perturbation in Proposition \ref{prop:cx_to_assoc}. 

\rsl{g2:b1}
\rsl{g2:b1t}

\subsubsection{No \ref{g2:b1}--\ref{g2:b1t}}

In these examples, we use perpendicular gluing to match a block $Z_+$ 
arising from the Burkhardt quartic (Example~\ref{exg:burkhardt_quartic}) 
with a block $Z_-$ of Fano type arising from
a Fano 3-fold of Picard rank 1 (Example~\ref{exg:species1}). %
Let $r$ and $d$ be the rank and degree of the Fano 3-fold used.

The polarising lattice $N_+$ of the Burkhardt quartic block $Z_+$ has rank 16,
while $N_-$ is generated by a single vector of square-norm $m = rd$. Note
that because $\rk N_- = 1$, we must a priori choose the embeddings
$N_\pm \into L$ to be perpendicular to have any chance of finding
matching data, since this involves finding a (Kähler) class in $N_-$
that is orthogonal to $N_+$.  So we seek embeddings of $W: = N_+ \perp
\gen{m}$ in the K3 lattice $L$, so that each of the two sublattices
$N_{\pm}$ is primitive in $L$; recall however, that we do not insist
that the embedding of the whole lattice $W$ is primitive in $L$.
Because of the high rank of $W$ some work is required to demonstrate
existence of such an embedding and for this we will need to use
precise information about the lattice $N_{+}$.  Recall from Example
\ref{exg:burkhardt_quartic} that $N_+$ has a unique
primitive embedding in $L$; its orthogonal complement in $L$ is $T =
A_2(-1) \perp 2U(3)$. The problem is therefore equivalent to finding a
primitive vector $x \in T$ of square-norm $m$, so that we can take the image
of $N_-$ to be~$\gen{x}$.  (Theorem \ref{thm:exist_emb} is of no use
for finding the primitive embedding $N_- \into T$ since $T$ is not
unimodular.)

The discriminant group of $W$ is simply the product of the discriminant groups of the two orthogonal summands
\[ W^*/W \cong (\ZZ/3\ZZ)^5 \times \ZZ/m\ZZ. \]
Consider first the case when $3 \mid m$. Then $\ell(W) = 6$.
Since $\rk W = 17$, \eqref{eq:nec_emb} is not satisfied, and there can be
no \emph{primitive} embedding $W \into L$. On the other hand, we can certainly find a
primitive vector $x$ of square-norm $m$ in $U(3) \subset T$, and thus we get
embeddings $W \into L$ that allow us to match Example
\ref{exg:burkhardt_quartic} to \ref{exg:species1}$^1_6$,
\ref{exg:species1}$^1_{12}$, \ref{exg:species1}$^1_{18}$,
\ref{exg:species1}$^2_3$ or \ref{exg:species1}$^3_2$. We label these
examples \textit{No~\ref{g2:b1t}a-e}.
In all five cases $\Tor L/W \cong \ZZ/3\ZZ$ by Lemma \ref{lem:torsion}, so the
resulting \gtmfd s have $\Tor H^3(M) \cong \ZZ/3\ZZ$.

If $m$ is not divisible by 3, then
$(\ZZ/3\ZZ)^5 \times \ZZ/m\ZZ \cong (\ZZ/3\ZZ)^4 \times \ZZ/3m\ZZ$
and $\ell(W) = 5$. Therefore we are just on the borderline where the existence
of a primitive embedding $W \into L$ is not excluded by \eqref{eq:nec_emb}, but
also not guaranteed by Theorem \ref{thm:exist_emb}.
In fact, all elements of $A_2(-1) \perp 2U(3)$ have square-norm $0$ or $1$
mod 3, so if $m = 2$ mod 3 there is no suitable embedding $W \into L$, and therefore we
cannot match Example \ref{exg:burkhardt_quartic} with
\ref{exg:species1}$^1_{2}$, \ref{exg:species1}$^1_{8}$,
\ref{exg:species1}$^1_{14}$, \ref{exg:species1}$^2_1$ or
\ref{exg:species1}$^2_4$ at all.
On the other hand, $A_2(-1)$ does contain a primitive vector of square-norm
$-2$ and $U(3)$ contains vectors of square-norm $3k$ for any $k$; thus, if
$m = 3k-2$ we can find the desired primitive $x \in T$, and the resulting
embedding $W \into L$ is primitive by Lemma \ref{lem:torsion}.
Hence we can match \ref{exg:burkhardt_quartic} to \ref{exg:species1}$^1_4$,
\ref{exg:species1}$^1_{10}$, \ref{exg:species1}$^1_{16}$,
\ref{exg:species1}$^1_{22}$, \ref{exg:species1}$^2_2$, \ref{exg:species1}$^2_5$
and \ref{exg:species1}$^4_1$ using primitive perpendicular gluing to get
2-connected \gtmfd s, which we label \textit{No \ref{g2:b1}a-g}.

Since $\gdiv c_2(Z_+) = 2$, all the \gtmfd s we get this way have
$\gdiv p_1(M) = 4$. Note that \textit{No~\ref{g2:b1}a} and
\textit{\ref{g2:b1}g} are both %
2-connected with $b^3(M) = 95$, so %
are diffeomorphic by Theorem \ref{thm:torfreeclass}.
\textit{No~\ref{g2:b1}c, \ref{g2:b1}d} and \textit{\ref{g2:b1t}c} all have
$b^3(M) = 53$, but \textit{No~\ref{g2:b1t}c} %
has $\Tor H^3(M) = \ZZ/3\ZZ$ so is not diffeomorphic to the first two.

\rslsub{g2:v22v22}

We match two copies $Z_{\pm}$ of blocks from Example~\ref{exg:toricV22_b} using perpendicular gluing. 
Let $N_0=\langle 8\rangle\perp \langle -16\rangle$,
and let $N_{\pm}$ be two copies of the polarising lattice $E_8(-1)\perp N_0$ of the block. We
need to construct an embedding of $N_+\perp N_-$ in the K3 lattice $L$. First
we embed $2N_0$ in $3U$ by the matrix
\[
\begin{pmatrix}
  4 & 0 & 0 &-4\\
  1 & 0 & 0 & 1\\
  0 &-8 & 0 & 0\\
  0 & 1 & 0 & 0\\
  0 & 0 & 4 &-4\\
  0 & 0 & 1 & 1
\end{pmatrix}.
\]
Note that each of the two copies of $N_0$ is embedded primitively, but
$3U/2N_0 \cong \ZZ^2\oplus (\ZZ/8\ZZ)$.
(For a finite index overlattice of $2N_0$ to be primitively embeddable in $3U$
its discriminant group can have at most 2 generators according
to~\eqref{eq:nec_emb}; since the discriminant group of $2N_0$ is $(\ZZ/8\ZZ
\times \ZZ/16\ZZ)^2$ such an overlattice must have index at least 8, so there
is no way to embed $2N_0$ into $3U$ with smaller cotorsion.)

Next we embed $N_+\perp N_-$ in $L=3U \perp 2E_8(-1)$ by embedding
$N_0\perp N_0$ in $3U$ as above, the first copy of $E_8(-1)$ in the first copy
of $E_8(-1)$, and the second in the second.
By Corollary \ref{cor:torsion} $\Tor{H^3}$ of the glued \gtmfd{} is
$\ZZ/8\ZZ$.
Since $N_+$ is embedded perpendicular to $N_-$ there will be no torsion in
$H^4$ of the \gtmfd s.

\rsl{g2:cotorsion}
\subsubsection{No \ref{g2:cotorsion}: orthogonal gluing with large cotorsion}
We use a pair of building blocks $Z_{\pm}$ of semi-Fano type 
obtained from the construction of Example~\ref{exg:quartic_w_22}, \ie starting with a quartic
3-fold containing a quartic del Pezzo surface $F=F_{2,2}$ (the
complete intersection of two quadrics). 
We aim to use ``non-primitive'' perpendicular gluing to achieve ``cotorsion'' as large as possible.
The polarising lattice $N_+\cong N_-$ is the
integral lattice with matrix
\[
\begin{pmatrix}
  4&4\\
  4&0
\end{pmatrix}
\]
and discriminant $(\ZZ/4\ZZ)^2$. We
construct an explicit embedding of $W=N_+ \perp N_-$ in $L$ with cotorsion
$L/W=(\ZZ/4\ZZ)^2$---the largest compatible with the requirement that
both $N_\pm\subset L$ be primitive embeddings. Consider the lattice
\[
W\cong \ZZ^4, \quad
\text{with intersection matrix}
\quad
B=
\begin{pmatrix}
  4 & 4 & 0 & 0\\
  4 & 0 & 0 & 0\\
  0 & 0 & 4 & 4\\
  0 & 0 & 4 & 0
\end{pmatrix}.
\]
Then embed $W$ in $2U$ via the matrix 
\[
\begin{pmatrix}
  2 & 0 &-2 & 0 \\
  1 & 1 & 0 &-1 \\
  2 & 0 & 2 & 0 \\
  0 & 1 & 1 & 1
\end{pmatrix}.
\]
We can check that the embedding is isometric, that the restrictions
to $N_\pm$ are primitive and that $(2U)/W \cong (\ZZ/4\ZZ)^2$. Next, compose
with the obvious primitive embedding $2U \hookrightarrow L$.

More abstractly, we could use Nikulin's theory of lattices
\cite[\S1]{nikulin:quadratic}. $N$ is anti-isometric to itself, and hence so
is the form on its discriminant group $(\cg{4})^2$. Therefore Remark
\ref{rmk:disc} immediately provides overlattices $W'$ of $W$ with $W'/W$ any of
the six subgroups of $(\ZZ/4\ZZ)^2$.

Similar principles are at work in \textit{No \ref{g2:v22v22}} (there $N_0$
is anti-isometric to itself).

\rsl{g2:mm}
\subsubsection{No \ref{g2:mm}: orthogonal gluing with nontrivial intersection}

For this family of examples we glue orthogonally (but not
perpendicularly) building blocks $Z_{\pm}$ of rank two Fano type, \cf
Example~\ref{exg:MoriMukai}.  Note that we could of course choose a
primitive embedding of the signature $(2,2)$ lattice $N_{+} \perp
N_{-}$ into $L$ and therefore match $Z_{\pm}$ by perpendicular gluing.
As we have seen this would yield $2$-connected $7$-manifolds with
torsion-free $H^{4}$.  Instead here we choose to use orthogonal gluing
where the intersection $\ncap = N_+ \cap N_-$ has rank one; this
will give rise to a series of examples with $H^{2}(M) \cong\Z$ and
illustrates again how the same pair of building blocks---matched in
different ways---yields different smooth $7$-manifolds.

We will use the rank two Fanos which are No 2, 6, 10, 12, 21 and 24
from the Mori-Mukai list.  Table \ref{table:rank2} summarises the
information we need about these rank two Fanos; the Picard lattices
$N$ of $Y$ are computed in a basis $L,M$ of supporting divisors, \ie
the (closure of the) ample cone of $Y$ is spanned by $L$ and $M$.
\begin{table}[ht]
  \centering
\[
  \begin{array}{ccccccc} \toprule
No & -K_Y^3 & H^2(Z) & N & H^3(Y) & H^3 (Z) &
\begin{array}{c}\gdiv c_2(Z) \\ \textrm{mod } A^\perp\end{array} \\
\midrule
2 & 6 & \ZZ^3 & \begin{pmatrix}0&2\\2&2\end{pmatrix}&\ZZ^{40} & \ZZ^{48} & 6\\
6  &12 &\ZZ^3 &\begin{pmatrix}2&4\\4&2\end{pmatrix}&\ZZ^{18}&\ZZ^{32} & 12\\
10&16 &\ZZ^3 &\begin{pmatrix}8&4\\4&0\end{pmatrix}&\ZZ^{6} &\ZZ^{24} & 8\\
12&20 &\ZZ^3 &\begin{pmatrix}4&6\\6&4\end{pmatrix}&\ZZ^{6} &\ZZ^{28} & 4\\
21&28 &\ZZ^3 &\begin{pmatrix}6&8\\8&6\end{pmatrix}&(0)&\ZZ^{30} & 4\\
24 & 30 & \ZZ^3 &\begin{pmatrix}2&5\\5&2\end{pmatrix}&(0) & \ZZ^{32} & 12 \\
\bottomrule
  \end{array}
\]
\caption{Some building blocks from rank 2 Fanos}
\label{table:rank2}
\end{table}

In all cases we choose $A=L+M$ as our ample class in the lattice
(this coincides with $-K_Y$ except for No 24, where $-K_Y = 2L + M$)
and push out along a common $\ncap = A^\perp$.
To verify that the pushout exists, we present $N$ as an overlattice of
$\gen{A} \perp \ncap$:
{\allowdisplaybreaks \begin{align*}
\begin{pmatrix} 0&2\\2&2\end{pmatrix}&=\frac1{3}(1,1)\ZZ+\ZZ^2 \quad
\text{in} \quad \begin{pmatrix}6&0\\0&-6 \end{pmatrix}\\
\begin{pmatrix} 2&4\\4&2\end{pmatrix}&=\frac1{2}(1,1)\ZZ+\ZZ^2 \quad
\text{in} \quad \begin{pmatrix}12&0\\0&-4 \end{pmatrix}\\
\begin{pmatrix} 8&4\\4&0\end{pmatrix} &=\frac1{4}(3,1)\ZZ+\ZZ^2 \quad
\text{in} \quad \begin{pmatrix}16&0\\0&-16 \end{pmatrix}\\
\begin{pmatrix} 4&6\\6&4 \end{pmatrix}&=\frac1{2}(1,1)\ZZ+\ZZ^2 \quad
\text{in} \quad \begin{pmatrix}20&0\\0&-4 \end{pmatrix}\\
\begin{pmatrix} 6&8\\8&6\end{pmatrix}&=\frac1{2}(1,1)\ZZ+\ZZ^2 \quad
\text{in} \quad \begin{pmatrix}28&0\\0&-4 \end{pmatrix} \\
\begin{pmatrix} 2&5\\5&2\end{pmatrix}&=\frac1{2}(1,1)\ZZ+\ZZ^2 \quad
\text{in} \quad \begin{pmatrix}14&0\\0&-6 \end{pmatrix}.
\end{align*}
We} see that we can form \gtmfd s $M$ with
$H^2(M) = \ncap = N_+ \cap N_- \cong \ZZ$ by
matching any pair taken from Nos 6, 12 and 21, matching 10 to itself, and 2 to 24.
In each case the image of $N_\pm$ in $N_\mp^*$ is primitive, so there is no
contribution to the torsion of $H^4(M)$.

To compute $p_1(M)$, Corollary \ref{cor:p1gcd} explains that we need to find
the greatest divisor of $c_2(Z_\pm)$ modulo the image of $\ncap = A^\perp$ in
$N_\pm^* \subset H^4(Z_\pm)$. By \cite[Lemma 5.18]{chnp1}, this is
the greatest common divisor of 24 and $c_2(Y_\pm) + c_1(Y_\pm)^2$ modulo the
image of $\ncap$ in $N_\pm^* \cong H^4(Y)$. The latter is determined by the
restriction of $c_2(Y_\pm) + c_1(Y_\pm)^2$ to divisors in the orthogonal
complement to~$\ncap$, \ie just to $A$ itself. For the cases where we use
$A = -K_Y$, the relation $c_2(Y)(-K_Y) = \chi(K3) = 24$ implies that
$\gdiv(c_2(Z) \mod A^\perp) = \gcd(24, -K_Y^3)$.

For No 24 we must do a little more work. This Fano 3-fold is a generic
bidegree (1,2) divisor in $\bbp^2 \times \bbp^2$. The class $A$ has bidegree
(1,1). The projection of a generic divisor in the class to the second $\bbp^2$
factor contracts 7 $(-1)$ curves, so $c_2(A) - c_1(A)^2 = 8$.
By \cite[Lemma 5.15]{chnp1}, the evaluation of $c_2(Y) + c_1(Y)^2$
on $A$ is $8 + 28 = 36$, so $\gdiv(c_2(Z) \mod A^\perp) = 12$.

\rsl{g2:h4tor}
\subsubsection{No \ref{g2:h4tor}: torsion in $H^4$}
In this example, we use orthogonal gluing with non-trivial intersection
arranged so that there is some torsion in $H^4$ of the twisted connected sum.
We take both $Z_+$ and $Z_-$ to be building blocks from
Example \ref{exg:2conics}, that is $\CP^3$ blown up at the components $C_1$,
$C_3$, $C_2$ (in that order) of the base locus $C$ of a pencil of quartics
containing a fixed pair of conics $C_1$, $C_2$.

In the notation from Example \ref{exg:2conics},
the triple $A-C_1-C_2, \, C_1-C_2, \, A$ spans an index 2 sublattice
$N' \subset N$ with intersection form
\[ N' = \begin{pmatrix}-8 & 0 & 0 \\ 0 & -4 & 0 \\ 0 & 0 & 4\end{pmatrix} . \]
$N = N' + \half(1,1,1)$, and we can form an orthogonal push-out
$W = N_+ \perp_\ncap N_-$, identifying the sublattices $\ncap \cong \gen{-8}$
spanned by $A-C_1-C_2$ in each copy $N_\pm$. Note that the image of $N_\pm$
in $N_\mp^*$ has cotorsion $\cg2$. Therefore the twisted connected sum has
$\Tor H^4(M) \cong (\cg2)^2$ by Lemma \ref{lem:torsion}.

To apply Proposition \ref{prop:orth_gluing} to find matching data, we need to
check that $\Amp_\fbb \cap W \not= \emptyset$, where
$W = \ncap^\perp = \gen{A, C_1 - C_2} \subset N_\bbr$.
From the analysis of $\Amp_\fbb$ in Example \ref{exg:2conics}, we see that
$A + \lambda(-C_1 + C_2) \in \Amp_\fbb \cap W$ for small $\lambda > 0$.

Because $H^4(M)$ has only 2-torsion, and $p_1(M)$ is
divisible by 4 a priori, $\gdiv p_1(M)$ is the same as the greatest divisor
of the image of $p_1(M)$ is the free part of $H^4(M)$.
To compute the latter, it suffices by Proposition \ref{prop:p1y} and Lemma
\ref{lem:ymv} to find $\gdiv c_2(Z_\pm)$ modulo the primitive
overlattice of the image of $N$ in $H^4(Z_\pm)$.
This amounts to evaluating $c_2(Z_\pm)$ on divisors
representing classes in $H^2(Z_\pm)$ whose image in $H^2(S)$ is orthogonal
to $\ncap$. The group of such divisors is spanned by $S$ itself,
$E_1 + E_2 + E_3$ and $E_1 - E_3$. $c_2(Z_\pm)$ evaluated on $S$ is 24 as
usual. On the other two basis elements we see from \cite[Proposition
5.11]{chnp1} that it evaluates to 64 and 20, respectively. Thus
the greatest common divisor is 4, and $\gdiv p_1(M) = 8$.

\newcommand{\summarytable}{
\begin{large}
\begin{table}[tp]
\[
\begin{array}[t]{r@{}lllrrccrcr} \toprule
No & & \ccell{Z_+} & \ccell{Z_-}
& b^2 & b^3 & TH^3 &  TH^4 & a_0 & \text{what} & p_1 \\ \midrule
\ref{g2:qqtorsion}& & Ex~\ref{exg:species1}^1_4 &Ex~\ref{exg:species1}^1_4
& 0 & 155 & 2 &  & 0 & \gen{4} \perp \gen{4} & 8 \\
\ref{g2:q}&a &Ex~\ref{exg:quartic_w_plane}&Ex~\ref{exg:quartic_w_plane}
& 0 & 123 & & & 18 & N_+ \perp N_- & 4,8 \\ 
\ref{g2:q}&b &Ex~\ref{exg:quartic_w_plane}&Ex~\ref{exg:quartic_w_quadric}
& 0 & 117 & & & 21 & N_+\perp N_- & 4 \\
\ref{g2:q}&c &Ex~\ref{exg:quartic_w_plane}&Ex~\ref{exg:quartic_w_scroll}
& 0 & 107 & & & 26 & N_+\perp N_- & 4,8 \\
\ref{g2:q}&d & Ex~\ref{exg:quartic_w_plane}&Ex~\ref{exg:quartic_w_22}
& 0 & 109 & & & 25 & N_+\perp N_-& 4,8 \\
\ref{g2:dp3q}& & Ex~\ref{exg:species1}^1_4&Ex~\ref{exg:P3_deg}
& 3 & 116 & & & 24 & \gen{4} \perp N_- & 4 \\
\ref{g2:v22q}& & Ex~\ref{exg:doubleline}&Ex~\ref{exg:toricV22_a}
& 0 & 93 & & & 21 & N_+ \perp N_- & 4 \\
\ref{g2:b1}&a & Ex~\ref{exg:burkhardt_quartic}&Ex~\ref{exg:species1}^1_4
& 0 & 95 & & & 45 & N_+\perp \gen{4} & 4 \\
\ref{g2:b1}&b & Ex~\ref{exg:burkhardt_quartic}&Ex~\ref{exg:species1}^1_{10}
& 0 & 61 & & & 45 & N_+\perp \gen{10} & 4 \\
\ref{g2:b1}&c & Ex~\ref{exg:burkhardt_quartic}&Ex~\ref{exg:species1}^1_{16}
& 0 & 53 & & & 45 & N_+\perp \gen{16} & 4 \\ 
\ref{g2:b1}&d & Ex~\ref{exg:burkhardt_quartic}&Ex~\ref{exg:species1}^1_{22}
& 0 & 53 & & & 45 & N_+ \perp \gen{22} & 4 \\
\ref{g2:b1}&e & Ex~\ref{exg:burkhardt_quartic}&Ex~\ref{exg:species1}^2_2
& 0 & 67 & & & 45 & N_+ \perp \gen{4} & 4 \\
\ref{g2:b1}&f & Ex~\ref{exg:burkhardt_quartic}&Ex~\ref{exg:species1}^2_5
& 0 & 71 & & & 45 & N_+ \perp \gen{10} & 4 \\
\ref{g2:b1}&g & Ex~\ref{exg:burkhardt_quartic}&Ex~\ref{exg:species1}^4_1
& 0 & 95 & & & 45 & N_+ \perp \gen{4} & 4 \\
\ref{g2:b1t}&a & Ex~\ref{exg:burkhardt_quartic}&Ex~\ref{exg:species1}^1_6 &
0 & 77 & 3 & & 45 & N_+\perp \gen{6} & 4 \\
\ref{g2:b1t}&b & Ex~\ref{exg:burkhardt_quartic}&Ex~\ref{exg:species1}^1_{12}
& 0 & 57 & 3 & & 45 & N_+\perp \gen{12} & 4 \\
\ref{g2:b1t}&c & Ex~\ref{exg:burkhardt_quartic}&Ex~\ref{exg:species1}^1_{18}
& 0 & 53 & 3 & & 45 & N_+\perp \gen{18} & 4 \\
\ref{g2:b1t}&d & Ex~\ref{exg:burkhardt_quartic}&Ex~\ref{exg:species1}^2_3
& 0 & 65 & 3 & & 45 & N_+ \perp \gen{6} & 4 \\
\ref{g2:b1t}&e & Ex~\ref{exg:burkhardt_quartic}&Ex~\ref{exg:species1}^3_2
& 0 & 85 & 3 & & 45 & N_+ \perp \gen{6} & 4 \\
\ref{g2:v22v22}& & Ex~\ref{exg:toricV22_b}&Ex~\ref{exg:toricV22_b}
& 24 & 47 & 8 & & 66 & N_+\perp N_- & 4 \\
\ref{g2:cotorsion}& & Ex~\ref{exg:quartic_w_22}&Ex~\ref{exg:quartic_w_22}
& 0 & 95 & 4{\cdot}4 & & 32 & N_+ \perp N_- & 8 \\
\ref{g2:mm}&a & Ex~\ref{exg:MoriMukai}_2 & Ex~\ref{exg:MoriMukai}_{24}
& 1 & 82 & & & 0  & N_+ \perp_{\gen{-6}} N_- & 12 \\
\ref{g2:mm}&b & Ex~\ref{exg:MoriMukai}_6 & Ex~\ref{exg:MoriMukai}_6
& 1 & 86 & & & 0  & N_+ \perp_{\gen{-4}} N_- & 24 \\
\ref{g2:mm}&c &Ex~\ref{exg:MoriMukai}_{10}&Ex~\ref{exg:MoriMukai}_{10}
& 1 & 70 &  & & 0 & N_+ {\perp_{\gen{-16}}} N_- & 16 \\ 
\ref{g2:mm}&d & Ex~\ref{exg:MoriMukai}_{12}&Ex~\ref{exg:MoriMukai}_{12}
&1 & 78 &  & & 0 & N_+ \perp_{\gen{-4}} N_- & 8 \\
\ref{g2:mm}&e & Ex~\ref{exg:MoriMukai}_{21}&Ex~\ref{exg:MoriMukai}_{21}
&1 & 82 &  & & 0 & N_+ \perp_{\gen{-4}} N_- & 8 \\
\ref{g2:mm}&f & Ex~\ref{exg:MoriMukai}_{6}&Ex~\ref{exg:MoriMukai}_{12}
&1 & 82 &  & & 0 & N_+ \perp_{\gen{-4}} N_- & 8 \\
\ref{g2:mm}&g & Ex~\ref{exg:MoriMukai}_{6}&Ex~\ref{exg:MoriMukai}_{21}
&1 & 84 &  & & 0 & N_+ \perp_{\gen{-4}} N_- & 8 \\
\ref{g2:mm}&h & Ex~\ref{exg:MoriMukai}_{12}&Ex~\ref{exg:MoriMukai}_{21}
&1 & 80 &  & & 0 & N_+ \perp_{\gen{-4}} N_- & 8 \\
\ref{g2:h4tor}& & Ex~\ref{exg:2conics} & Ex~\ref{exg:2conics}
& 1 & 82 & & 2^2 & 40 & N_+ \perp_{\gen{-8}} N_- & 8 \\
\ref{g2:precise}& & Ex~\ref{exg:quartic_w_22}&Ex~\ref{exg:quartic_w_22}
& 0 & 93 & & & 32 & \text{non-orth} & 48 \\
\bottomrule
\end{array}
\]
\caption{A small number of examples of \gtmfd s. $a_{0}$ is the number of rigid associative $3$-folds 
diffeomorphic to $\Sph^1 \times \Sph^2$ we can exhibit.}
\label{table:g2ex}
\end{table}
\end{large}}

\subsection{Handcrafted nonorthogonal K3 gluing}
\label{sec:precise-k3-gluing}

Gluing by means of the orthogonal pushout construction is convenient
because it reduces the problem of finding compatible pairs of ACyl
Calabi-Yau $3$-folds $V_{\pm}$ arising from a given pair of
deformation types of building blocks $\fbb_{\pm}$, essentially to
arithmetic considerations involving the pair of polarising lattices
$N_{\pm}$ of the two families.  This allows us to produce large
numbers of compact \gtwo-manifolds with relatively little labour; see
Section \ref{sec:close} for a discussion of many further such
examples.  However, in most cases arising in practice, we expect to be
able to glue much more generally. Unfortunately, this seems to require
much more precise explicit information about K3 moduli spaces and that
information is very expensive to obtain. Here we just give the
simplest possible example(s).

The general scheme is as follows:
\begin{itemize}
\item Choose semi-Fano deformation types $\sff_{\pm}$ with Picard
  lattices $N_{\pm}$. Also choose $H_\pm \in N_\pm$ that
  correspond to ample (Kähler) classes on the semi-Fanos
  ($H_\pm \in \Amp_{\sff_\pm}$ in the terminology of Proposition
  \ref{prop:k3generic}).
  In the end, we plan to glue blocks $Z_{\pm}$ obtained from
  semi-Fanos from these families by blowing up AC curves.
\item Choose a %
  lattice $W=N_+\oplus N_-$ where $N_+$, $N_-$
  \emph{are not necessarily orthogonal}, but where $W$ has signature
  $(2,r-2)$ and $H_+ \in N_-^{\perp}$ and $H_- \in N_+^{\perp}$.
  Embed $W$ primitively in $L$.
\item Let $\Lambda_+ = H_{-}^\perp \subset W$ and
  $\Lambda_- = H_+^\perp \subset W$.
  Construct projective models for $\Lambda_\pm$-polarised K3s to show that
  the generic K3s can still be found as hyperplane sections of semi-Fanos
  from the starting classes $\sff_\pm$.
\item Among the semi-Fano type building blocks constructed from $\sff_\pm$
  we can therefore find a \emph{subfamily} that is
  $(\Lambda_\pm, \Amp_{\sff_\pm})$-generic in the sense of Definition
  \ref{def:generic} (except that the cone $\Amp_{\sff_\pm}$ is not open
  in $\Lambda_\pm(\bbr)$, but that is unimportant here).
  Since we made sure that $\Amp_{\sff_\pm} \cap \, \Lambda_\mp^\perp$ is
  non-empty, Proposition~\ref{prop:orth_gluing} shows that we can glue.
\end{itemize}

\begin{remark*}
Note that even though the K3 fibres have $\Pic S_\pm = \Lambda_\pm$,
the images of $H^2(Z_\pm)$ in $H^2(S_\pm)$ are still $N_\pm$; the topology
of the twisted connected sum involves the embeddings of $N_\pm$ in $L$
and not $\Lambda_\pm$.
\end{remark*}
In the construction of the projective models we use the following well-known:
\begin{lemma}[\mbox{\cite[Chapter~3]{MR1442522}}]
  \label{lem:k3dichotomies}
Let $S$ be a K3 surface, and $A$ a nef line bundle on $S$ with $A^2>0$
(that is, $A$ is nef and big). 

\noindent \textup{(I)} Either:
\begin{itemize}
\item $|A|$ has no fixed part, or:
\item $|A|$ is monogonal, that is: $A=aE+\Gamma$ where $E^2=0$, $E\cdot
  \Gamma=1$, $\Gamma^2=-2$, and $a\geq 1$.
\end{itemize}
\noindent \textup{(II)} Assume that $|A|$ is not monogonal. Then $|A|$ is base
point free and either:
\begin{itemize}
\item the morphism given by $|A|$ is birational onto its image and an
  isomorphism away from a finite union of $-2$ curves, or
\item $|A|$ is hyperelliptic, that is, one of the following cases:
  \textup{(a)} $A^2=2$ and $S$ is a double cover of~$\PP^2$;
  \textup{(b)} $A=2B$ with $B^2=2$ and $S$ is a double cover of the
    Veronese surface; or
  \textup{(c)}~$S$~has an elliptic pencil $E$ with $A\cdot E =2$. 
\end{itemize}
\end{lemma}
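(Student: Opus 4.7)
The plan is to follow Saint-Donat's classical analysis, the substance of which is faithfully reproduced in the cited chapter of Huybrechts.

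I would begin with part (I) by decomposing $|A| = |M| + F$ into its moving part $|M|$ and fixed part $F$. The moving part $M$ is nef (after resolving the base locus in codimension $\geq 2$, it is pulled back from projective space), and since the fixed part contributes nothing to global sections we have $h^0(A) = h^0(M)$. Riemann--Roch on the K3 surface combined with the Kawamata--Viehweg vanishing theorem applied to the nef and big divisors $A$ and $M$ yields $h^0(A) = 2 + A^2/2$ and $h^0(M) \leq 2 + M^2/2$, forcing $M^2 \geq A^2$; since $A^2 = M^2 + 2M\cdot F + F^2$ with $M$ nef and $F$ effective, we deduce $M^2 = A^2$ and $2M\cdot F + F^2 = 0$. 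In particular $F^2 \leq 0$.

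If $F\neq 0$, a refined analysis of $F$ using that effective $(-2)$-classes on a K3 generate a negative-definite root lattice, together with the fact that the general member of $|M|$ is smooth and irreducible, forces $|M| = |aE|$ with $|E|$ a base-point free elliptic pencil ($E^2 = 0$) and $F = \Gamma$ a single $(-2)$-curve with $E \cdot \Gamma = 1$. This puts us precisely in the monogonal case.

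For part (II), assume $|A|$ is not monogonal, so $|A|$ has no fixed part. The first step is to establish base-point freeness: a Saint-Donat-style refinement of Reider's method shows that any base point would produce either an elliptic pencil $E$ with $|A - E|$ still nonempty (giving a monogonal decomposition, contradiction) or a $(-2)$-curve in the fixed locus (also contradiction). Thus $|A|$ defines a morphism $\varphi_A \colon S \to \PP^n$ with $n = A^2/2 + 1$, whose image $X$ is a nondegenerate surface of degree $d$ with $d \cdot \deg \varphi_A = A^2$. The classical lower bound $d \geq n - 1$ forces $\deg \varphi_A \in \{1, 2\}$.

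In the birational case, a contracted irreducible curve $C$ satisfies $A \cdot C = 0$; since $A$ is nef and big, Hodge index yields $C^2 < 0$, and K3 adjunction ($C^2 = 2p_a(C) - 2$) then gives $C^2 = -2$, \ie $C$ is a $(-2)$-curve, yielding the first bullet of (II). In the degree~$2$ case $X$ is a surface of minimal degree $n - 1$ in $\PP^n$, and by the Enriques--del Pezzo classification $X$ is either $\PP^2$, the Veronese surface in $\PP^5$, or a rational normal scroll (possibly degenerating to a cone). These three possibilities correspond respectively to cases (a), (b) and (c): the pullback of $\mathcal O_{\PP^2}(1)$ in the Veronese case must be $2$-divisible because the Veronese factors through $\PP^2$, so $A = 2B$ with $B^2 = 2$; in the scroll case the ruling pulls back to an elliptic pencil $E$ on $S$ with $A \cdot E = 2$.

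The main obstacle is the base-point freeness of $|A|$ in part (II) and the precise identification of the three hyperelliptic subcases: showing that degree-$2$ morphisms factor through exactly one of $\PP^2$, the Veronese, or a scroll, and that the Veronese case forces the $2$-divisibility $A = 2B$, requires the most care and consumes the bulk of Saint-Donat's argument.
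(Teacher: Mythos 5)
The paper does not prove this lemma itself; it cites it from Huybrechts \cite[Ch.~3]{MR1442522} (the ultimate source being Saint-Donat). So there is no in-paper argument to compare against, and your proposal stands or falls on its own.

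There is a genuine gap in your treatment of part (I). You apply Kawamata--Viehweg vanishing to the moving part $M$ of $|A|$, asserting that $M$ is ``nef and big''. Nefness is fine, but bigness is not: precisely in the monogonal case $M$ is $aE$ with $E$ an elliptic pencil, so $M^2=0$ and $M$ is not big. For $M=aE$ with $a\ge 2$ one has $h^0(M)=a+1 > 2 = 2 + M^2/2$, so the inequality $h^0(M)\le 2+M^2/2$ fails, and with it the deduction $M^2\ge A^2$. Worse, the conclusion you reach a few lines later -- that $|M|=|aE|$, hence $M^2=0$ -- flatly contradicts the $M^2=A^2>0$ you had just ``established''. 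The correct route (Saint-Donat's) starts instead from the dichotomy for fixed-part-free linear systems on a K3: either $M^2>0$, the generic member is irreducible, and $h^1(M)=0$ (in which case a short Hodge-index computation, not a Riemann--Roch bound on $M$, forces $F=0$); or $M^2=0$ and $M\sim aE$ for an elliptic pencil $E$, in which case a separate intersection-theoretic analysis of $F$ (using $A\cdot E\ge 0$, $A\cdot F\ge 0$, and $F^2\le -2$ for the nonzero fixed part) pins down $E\cdot F=1$ and $F^2=-2$. Your argument collapses the two subcases into one by an unjustified bigness hypothesis and needs to be restructured around this dichotomy. Part (II) is, in outline, the right argument -- base-point freeness for non-monogonal $|A|$, the minimal-degree bound forcing $\deg\varphi_A\in\{1,2\}$, and the Enriques classification of surfaces of minimal degree identifying the three hyperelliptic subcases -- though the Reider-style claim about what a base point ``would produce'' is asserted rather than derived.
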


\rslsub{g2:precise}

We plan to glue two building blocks obtained by blowing up AC
curves on two semi-Fano 3-folds $Y_{\pm}$ that are small resolutions of
quartic 3-folds $X_\pm$ containing a $2\cdot 2$ complete intersection in
$\PP^4$ (Example \ref{exg:quartic_w_22}).
In a basis $A=-K_Y$, $E$ the Picard lattice $N$ of $Y_\pm$ has quadratic form
\[
\begin{pmatrix}
  4 & 4 \\ 4 & 0
\end{pmatrix} .
\]
Note that $A$ is not ample on $Y$. We change basis to $H=A+E, \; E$ so
the intersection form is
\[
\begin{pmatrix}
  12 & 4 \\ 4 & 0
\end{pmatrix} .
\]
We glue together two copies $N_{\pm}$ of $N$ into a lattice $W$ with
basis $H_{+},E_+,H_-,E_-$ and intersection form
\[
\begin{pmatrix}
  12 & 4 & 0 & 0\\
  4 & 0 & 0 & 1 \\
  0 & 0 & 12 & 4\\
  0 & 1 & 4 & 0
\end{pmatrix} .
\]
Note that $W$ has signature $(2,2)$. 

Let $\Lambda_+=H_-^\perp$ and $\Lambda_-=H_+^\perp$: we want to glue
$\Lambda_+$-polarised K3 surfaces with $\Lambda_-$-polarised K3
surfaces.  Write $\Lambda=\Lambda_+\cong \Lambda_-$: we will show that
a generic $\Lambda$-polarized K3 surface $S$---that is, one for which
$\Pic S =\Lambda$---is always the hyperplane section of a
quartic $X$ as above.

$\Lambda$ has basis $H=H_+, \; E=E_+, \; \Sigma=-H_-+3E_-$ and the
intersection form in this basis is
\[
\begin{pmatrix}
  12 & 4 & 0\\
  4 & 0 & 3  \\
  0 & 3 & -12
\end{pmatrix} .
\]
To study this, it is best to change basis back to $A=H-E, \; E, \; \Gamma=H-E-\Sigma$:
\[
\begin{pmatrix}
  4 & 4 & 7 \\
  4 & 0 & 1\\
  7 & 1 & -2
\end{pmatrix} .
\]
First we show that the class $A$ is ample and not hyperelliptic, and
thus embeds $S$ as a quartic surface in $\PP^3$ containing a
$2\cdot 2$ curve.
Indeed we claim:
\begin{enumerate}
\item There is no vector $v\in \Lambda$ with $A\cdot v=1$ and $v^2 =
  0$;
\item There is no vector $v\in \Lambda$ with $A\cdot v=2$ and $v^2 =
  0$;
\item There is no vector $v\in \Lambda$ with $A\cdot v=0$ and $v^2=-2$,
\end{enumerate}
Indeed, write $v=(x,y,z)$.

\summarytable

For the first we know $4x+4y+7z=1$ and $4x^2 + 8xy + 14xz + 2yz - 2z^2
= 0$. Use the linear equation to get $x$ in terms of $y$ and $z$, then
substitute and clean up. We end up with
\[
16y^2 + 48yz + 57z^2 = 1 .
\]
This is easy to rule out, because the conic is positive definite.
In fact we can just complete the square and write the left hand
side as
\[
(4y+6z)^2 + 21z^2 = 1.
\]
This immediately gives $z=0$ (otherwise the left hand side is $\geq  21$),
and hence $16y^2=1$ which is impossible.
Similarly, a counter-example to the second claim gives 
\[
16y^2 + 48yz + 57z^2 = 4 ,
\]
which yields to the same technique. 
The third gives 
\[
16y^2 + 48yz + 57z^2 = 8 ,
\]
which again cannot work for the same reasons.

Consider now the moduli stack $\mathfrak{K}^{\Lambda,A}$ of
$(\Lambda,A)$-polarised K3 surfaces introduced in \cite[\S6]{chnp1}.
This involves a choice of a certain partition $\Delta^+ \sqcup \Delta^-$ of the
set $\Delta = \{\delta \in N \mid \delta^2 = -2\}$; by (iii),
we can take $\Delta^+=\{\delta \in \Delta \mid A\cdot \delta >0\}$.
It follows from Lemma~\ref{lem:k3dichotomies}
above that if $S$ is a generic surface of the moduli stack---that is,
one for which $\Pic S = \Lambda$ exactly---then $A$ is ample
on $S$ and that it embeds $S$ as a quartic in $\PP^3$. 

All this goes to show that $S$ embeds in $\PP^3$ as a nonsingular K3 with an
equation of the form
\[
a_2b_2+c_2d_2=0 .
\] 
Now consider $\PP^3$ as $\{x_4=0\} \subset \PP^4$: it is elementary to see
that, if $\tilde{a}_2, \tilde{b}_2, \tilde{c}_2, \tilde{d}_2$ are
general forms in $x_0,\dots, x_4$ that give $a,b,c,d$ when restricted
to $\PP^3$, then 
\[
X = \{ \tilde{a}_2\tilde{b}_2+ \tilde{c}_2 \tilde{d}_2=0 \} \subset \PP^4
\]
is a 3-fold of the required type containing $S$ as a hyperplane
section.

To work out $p_1$, we need to understand $c_2(Z_+)$ modulo the image of
$N_-$ in $N_+^* \into H^4(Z_+)$ and vice versa.
By \cite[Lemma 5.18]{chnp1}, we
can compute it by taking the greatest common divisor of 24 and
$c_2(Y_1) + c_1(Y_1)^2$ evaluated on divisors in $N_+ \cap N_-^\perp$.
In this case, $N_+ \cap N_-^\perp$ is generated by $H_+ = A_+ + E_+$.
The restriction of $c_2(Y_1) + c_1(Y_1)^2$ to the first term is
$\chi(K3) + (-K_Y)^3 = 24 + 4 = 28$ (since $A$ is just $-K_Y$), while it is
computed in \cite[Example 7.6]{chnp1} that the restriction
to $E_+$ is 44.
Hence $\gdiv(c_2(Z_+) \mod \Imag(N_-)) = \gcd(24, \, 28+44) = 24$.

It is straightforward to assemble the remaining topological information for the
entry in Table \ref{table:g2ex}.
Note that the usual relation for $b^2(M) + b^3(M)$
(Lemma \ref{lem:b2b3}) is not satisfied since the gluing is not orthogonal;
in particular the value of $b^{3}(M)$ is different from \textit{No
\ref{g2:cotorsion}}, even though that example uses the same building blocks.

\subsection{Obstructed associatives}

Let us illustrate how one can apply Proposition
\ref{prop:lag_to_assoc} to construct families of associatives -- including some
obstructed ones -- in compact \gtmfd s from special Lagrangian rational
homology spheres in building blocks. The easiest way to exhibit concrete
examples of the latter is to use real algebraic geometry; complex
conjugation on an algebraic variety is an antiholomorphic involution, and
the fixed set of an antiholomorphic involution on a Calabi-Yau manifold
is special Lagrangian (with some phase). If we construct a building block
from a real (semi) Fano $Y$ by blowing up a real anticanonical curve, then the
building block also has a real structure. A component $L$ of the real locus of $Y$ 
not meeting the chosen anticanonical divisor gives rise to a
special Lagrangian in the ACyl Calabi-Yau $V$.
To apply Proposition \ref{prop:lag_to_assoc} we require that $b^1(L) = 0$
and that $[L] \in H_3(Y; \bbr)$ is non-zero (see Remark \ref{rmk:lag_homology}).

Given a building block with a suitable special Lagrangian, we still need
to match it to another building block to construct a \gtmfd.
The `orthogonal gluing' argument is unfortunately not very compatible with the
use of real algebraic geometry, so it is not so easy to write down a list of
building blocks containing special Lagrangian spheres and claim that each can
be matched with a list of building blocks. Instead we limit ourselves to
showing that for some examples we can find at least some matching.

When we construct an $\Sph^1$-family of associatives like this, with a
map $f$ from $\Sph^1$ to a \mbox{1-parameter} family of \gtstr s that specifies by
which \gtstr{} the members of the family are calibrated, critical points of $f$
correspond to associatives with obstructions. If the entire $\Sph^1$-family is
associative with respect to the same \gtstr{} (\ie $f$ is constant) then they
are all obstructed, but one would expect that the critical points of $f$ are
isolated.
As one moves in the 1-parameter family of \gtstr s and approaches a local
extreme value, two associatives move together, coincide as a single obstructed
associative, and then disappear (or vice versa).

\begin{example}
\label{exg:quartic:sl}
We consider a particular block $Z_+$ from Example \ref{exg:species1}$^1_4$.
Let $Y$ be the quartic \mbox{3-fold} in $\bbc P^4$ defined by
$Q(X) = -X_0^4 + X_1^4 + X_2^4 + X_3^4 + X_4^4 = 0$.
Its real locus $L$ is homeomorphic
to $\Sph^3$ and does not meet the anticanonical divisor $X_0 = 0$.
If we blow up the intersection of $X_0 = 0$ and another real hyperplane section
of $Y$ to form $Z_+$, then $Z_+$ has a real structure and an anticanonical
divisor $S_+$ that does not meet the real locus $\Sph^3$. We can give
$V_+ = Z_+ {\setminus} S_+$ an ACyl Calabi--Yau metric that is invariant under
the real structure $\sigma$, and it then contains a special Lagrangian
$L \cong \Sph^3$.

More precisely, up to sign $V_+$ has a unique holomorphic volume form
$\Omega = \alpha +i\beta$ such that $\sigma^*\Omega = \bar\Omega$, and
$L$ (correctly oriented) is calibrated by $\alpha$. On the cylindrical
end $\bbrp \times \Sph^1 \times S$ we can write the Kähler form as
$dt \wedge d\anglen + \omega^I$ and
$\alpha = d\anglen \wedge \omega^J + dt \wedge \omega^K$.
The real structure $\sigma$ on $S_+$ preserves $\omega^K$ and reverses
$\omega^I$ and $\omega^J$. On the other hand, the involution
$\mbox{$(X_1 : X_2 : X_3 : X_4)$} \mapsto (X_2 : X_1 : X_3 : X_4)$ has fixed set of
dimension 1, so defines a non-symplectic isometry $\tau$ on $S_+$,
\ie $\tau^*\omega^I = \omega^I$
while $\tau^*(\omega^J + i\omega^K) = -(\omega^J + i \omega^K)$.
Under a \hk rotation $S_+ \to S_-$ (Definition \ref{def:hkrot}),
$\tau\sigma$ therefore corresponds to a non-symplectic involution.
The fixed set of $\tau\sigma$ is homeomorphic to 2 copies of $\Sph^2$ (any
point in the fixed set can be written uniquely as
$(e^{i\theta} : e^{-i\theta} : x_3 : x_4)$, with $\theta \in [0,\pi)$,
$x_3, x_4 \in \bbr$ and $x_3^2 + x_4^2 = 2\real(e^{i4\theta})$), and the
quotient $(S_- \times \bbc)/\tau\sigma$ can be resolved by blow-up to form an
ACyl Calabi--Yau $V_-$ of non-symplectic type
(\cf Remark \ref{rmk:kl-def}) that is compatible with $V_+$.

To apply Proposition \ref{prop:lag_to_assoc}, we need to check that $L$ is not
homologous to $0$ in $Y$. If it is, then so is
its preimage $\hat L$ in $H_4$ of the unit normal bundle of $Y$ in $\bbc P^4$.
For any homogeneous polynomial $P$ of degree 3,
$\frac{P}{Q^2}(X_0dX_1dX_2dX_3dX_4 - X_1dX_0dX_2dX_3dX_4 + \ldots)$
defines a meromorphic 4-form on $\bbc P^4$ with poles only on $Y$.
Using the affine chart $X_0 = 1$, its integral over $\hat L$ reduces to
$\int_L P(X) X \lrcorner dX_1dX_2dX_3dX_4$, which is non-zero
if we choose \eg $P = X_0(X_1^2 + X_2^2 + X_3^2 + X_4^2)$.
\end{example}

\begin{remark*}
For $L$ to be homologically non-zero in $Y$ it was sufficient to find one
meromorphic form with poles on $Y$ and non-zero integral over $\hat L$.
In fact this condition is also necessary (Griffiths \cite{griffiths69}).
\end{remark*}

\begin{remark*}
In the example above, the compact \gtmfd{} $(M,\varphi_0)$ has a
\gtwo-involution $\psi$, which acts on the first half $\Sph^1 \times V$ by
$(-1,\sigma)$. $\psi$ acts as a reflection on the $\Sph^1$-family of
associatives in $\Sph^1 \times M$.
We can also choose the 1-parameter family of \gtstr s
$\{\varphi_t: t \in (-\epsilon,\epsilon)\}$ in which the associative family
becomes unobstructed so that $\psi^* \varphi_t = \varphi_{-t}$.
The map $f : \Sph^1 \to (-\epsilon,\epsilon)$ must be $\psi$-equivariant.
That $f$ maps the fixed points of the reflection in $\Sph^1$ to $\varphi_0$
corresponds
to the fact that the fixed locus of $\psi$ in the \gtmfd{} is associative.
But considering only the fixed locus we cannot tell whether it is rigid
or not. By considering the whole $\Sph^1$-family we see that it
contains some obstructed associatives.
\end{remark*}

\begin{example}
\label{exg:nodal:quartic}
Let $Q_0 \subset \bbc P^4$ be a real quartic $3$-fold with a single nodal
singularity. The singular point must then be real. Suppose that the local model
of the singularity is $x^2 + y^2 + z^2 + w^2 = 0$ (in real coordinates).
Then the real locus of a (real) deformation $Q$ of $Q_0$ that smooths
the singularity to $x^2 + y^2 + z^2 + w^2 = \epsilon > 0$ contains an $\Sph^3$.
Because this is the vanishing cycle of a quartic with a single node it is
homologically non-zero.
\end{example}

\section{Review and Outlook}
\label{sec:close}

We now move from concrete examples to a more general discussion of the
possibilities of the construction and some of the prospects for future developments.

\subsection{Overview}
At this point it will probably be helpful to give an overview of what has been achieved so far  
and also to reflect on some of the lessons learned from the examples given in the previous section.
We begin by recalling the main degrees of freedom in the construction.

First we have the \emph{choice of building blocks}: according to Proposition
\ref{prop:block_from_sf}, we can for almost (recall the Assumption before
\ref{dg:semifano}) any Fano or semi-Fano $3$-fold blow up a generic AC pencil
to get a building block of Fano or semi-Fano type (in the sense of Definition
\ref{d:accy:sf}), which has $K=0$ (recall \eqref{eq:k_def}).
For some Fanos and semi-Fanos we can instead choose to blow up a nongeneric AC pencil to obtain building blocks, 
\eg Examples \ref{exg:P3_deg},\,\ref{exg:2conics} and \ref{exg:toricV22_b} give building blocks obtained from 
nongeneric AC pencils on $\CP^{3}$, $\CP^{3}$ and a particular toric semi-Fano $3$-fold respectively.  
As illustrated by these examples, depending on the pencil being blown up these blocks may or may not have $K=0$. 
In such cases care must be taken to ensure that the blocks satisfy the conditions in Definition \ref{def:generic}, which are used in our matching 
arguments (and also the topological properties assumed in our calculations of the cohomology of $M$).
The 74 blocks of non-symplectic type constructed by Kovalev-Lee all have $ K \neq 0$
(however, recall Remark \ref{R:k3:involution:g2}).

Choosing a pair of deformation types of building blocks $\fbb_{\pm}$ fixes the
pair of polarising lattices~$N_{\pm}$.  Let $n_{\pm} = \rk{N_{\pm}}$.
Choosing the building blocks also fixes the number $e_{\pm}$ of compact rigid curves in 
$V_{\pm}=Z_{\pm} \setminus S_{\pm}$.
By Theorem \ref{thm:g2topology}(ii) $b^{2}(M) \ge \rk{K_{+}} + \rk{K_{-}}$. 
In particular, to obtain $\gtwo$-manifolds with $b^{2}(M)=0$ (\eg if we want to
construct $2$-connected manifolds) we must choose blocks with $K_{\pm}=0$.

Next we choose the \emph{method of matching}: perpendicular gluing, orthogonal gluing or handcrafted gluing. 
For simplicity and because it is difficult at this stage to say anything very systematic about handcrafted gluing 
here we stick to commentary on perpendicular or orthogonal gluing. 

\subsubsection{Primitive perpendicular gluing}
Whenever $N_{+}\perp N_{-}$ can be primitively embedded in the K3 lattice $L$ then we can match the building blocks
$\fbb_{\pm}$ by primitive perpendicular matching; this always yields a $2$-connected $7$-manifold with torsion-free 
cohomology to which we may apply the general classification theory described in \S \ref{sec:top}.
$N_{+} \perp N_{-}$ always embeds primitively in $L$ if 
\begin{equation}
\label{E:emb:crude}
n_{+} + n_{-} \le 11
\end{equation}
(see \eqref{eq:exist_emb}). If we are able to compute the lattices $N_{\pm}$ in
detail (and not just their ranks $n_{\pm}$) then we can determine their
discriminant groups and hence determine $\ell= \ell(N_+ \perp N_{-})$; 
by \ref{thm:exist_emb}(ii) $N_{-}\perp N_{+}$ admits a primitive embedding in $L$ if 
\begin{equation}
\label{E:emb:disc}
n_{+}+n_{-} + \ell < 22.
\end{equation}
See \textit{No~\ref{g2:v22q}} for an example satisfying the second inequality
but not the first. %

To summarise: when it applies primitive perpendicular gluing requires little effort and yields $2$-connected 
$7$-manifolds with torsion-free cohomology; it therefore allows us to produce 
many \gtwo-manifolds for which we understand the diffeomorphism type.

\subsubsection{Non-primitive perpendicular gluing}
If $N_{-}\perp N_{+}$ admits a primitive embedding in $L$ then
$n_{+}+n_{-} + \ell \le 22$. 
For some pairs of blocks (\eg the Burkhardt block matched to any semi-Fano or Fano block with Picard rank 
greater than 1) this inequality is violated and hence ${N_{+} \perp N_{-}}$ admits no primitive embeddings in $L$.
Nevertheless, $N_{+}\perp N_{-}$ may admit non-primitive embeddings in $L$ which are primitive 
when restricted to both $N_{+}$ and $N_{-}$ (as exhibited in \textit{No~\ref{g2:b1t}}\,). 
In this case the resulting \gtwo-manifold is simply-connected with $H^{2}=0$ and $\Tor{H^{3}} \neq 0$ 
(and therefore $H_{2}=\pi_{2} \neq 0$).
Even if $N_{-} \perp N_{+}$ does admit a primitive embedding in $L$, it may still admit 
non-primitive embeddings in $L$ which are primitive on each factor~$N_{\pm}$. 
In this case different matchings of the same blocks can produce both $2$-connected and non-$2$-connected \gtwo-manifolds 
with the same Betti numbers, distinguished by the torsion in~$H^{3}$.
As explained in Remark \ref{rmk:disc}, the problem of finding suitable
non-primitive embeddings $N_+ \perp N_- \into L$ reduces to a fairly manageable
analysis of the discriminant groups of $N_\pm$; this is used in
\textit{No \ref{g2:qqtorsion}} and \textit{\ref{g2:cotorsion}} and
Remark \ref{rmk:h3tor}.
Later in this section we describe the prospects of extending the known smooth
classification results to $1$-connected $7$-manifolds with $\pi_{2}(M)$ a
finite cyclic group.

\subsubsection{Orthogonal gluing}
If $\min{(n_{-},n_{+})} > 1$ then we may also attempt to use orthogonal but not perpendicular gluing; 
this will always yield manifolds with $b^{2}>0$. 
In this case we encounter two additional problems. 
The first is the arithmetic problem of finding a
non-trivial lattice $\ncap$ that can be primitively embedded in both $N_+$ and
$N_-$, such that the push-out $W = N_+ \perp_\ncap N_-$ is an integral lattice.
For instance, if we want $\ncap = \gen{-m}$ then we look for primitive vectors
$x_\pm \in N_\pm$ of square-norm $-m$, and Example \ref{exa:no_pushout}
demonstrates that we need that if the image of the
orthogonal projection of $N_\pm$ to $\gen{x_\pm}$ is
$\frac{1}{d_\pm}\gen{x_\pm}$, then $d_+d_- \mid m$.
The second problem is to satisfy condition (iii) in Proposition
\ref{prop:orth_gluing}, that the orthogonal complement of $R$ in $N_\pm$
contains some classes that are ample on the building blocks.
For non-symplectic type blocks it would be enough to check
that $\ncap$ contains no $-2$ classes (\cf Remark \ref{r:w:ample}),
but the semi-Fano case is more subtle.

\begin{example}
\label{ex:nonex}
The polarising lattice $N$ of the block $Z$ in
Example \ref{exg:quartic_w_quadric} can be presented as
\[ \begin{pmatrix} -2&2\\2&4\end{pmatrix} = \frac1{2}(1,1)\ZZ+\ZZ^2 \quad
\text{in} \quad \begin{pmatrix}-12&0\\0&4 \end{pmatrix} , \]
and there is an orthogonal push-out $W = N \perp_\ncap N$ with
$\ncap = \gen{-12}$. However, the pre-image in $\Pic Y$ of the square-norm
4 vector $x \in N$ that is orthogonal to $\ncap$ is exactly $-K_Y$ of the
quartic semi-Fano $Y$ that the block is obtained from.
Since $Y$ is not a genuine Fano, $-K_Y$ is not an ample class on~$Y$;
indeed, the anticanonical morphism contracts 12 curves. Any pre-image of
$x$ in $\Pic Z$ evaluates to 0 on the 12 exceptional curves, so cannot be
ample on $Z$. Therefore there can be no matchings compatible with this $W$.
\end{example}

For a given pair of blocks, there need not be any suitable $R$. For blocks
with $\rk N = 2$ there are not very many degrees of freedom in choosing $R$,
but we found some solutions in \textit{No \ref{g2:mm}}. There we used rank 2
Fano type blocks, and it was not too hard to compute the ample cones.
We expect that solutions become more plentiful as the rank increases,
but on the other hand the ample cones are more complicated to describe,
and it is less practical to search for solutions by hand.
The supply of toric semi-Fano blocks described below
should be suitable for an automated search.

Twisted connected sums that use perpendicular gluing (whether primitive or not)
always have $H^4(M)$ torsion-free. To get non-trivial torsion in $H^4(M)$
from orthogonal gluing, we need to find a non-trivial orthogonal push-out
$W = N_+ \perp_\ncap N_-$, but with conditions on $R$ that are more restrictive
than merely ensuring that $W$ is an integral lattice. For example, if
$R = \gen{-m}$ then in the notation used above the condition for $W$ to be
an integral lattice is that $d_+d_- \mid m$, but Lemma \ref{lem:torsion}
shows that $\Tor H^4(M) \cong (\ZZ/k\ZZ)^2$ where $k = \frac{m}{d_+d_-}$
(\eg a matching with the data $W = N \perp_R N$ from Example \ref{ex:nonex}
would give $\Tor H^4(M) \cong (\ZZ/3\ZZ)^2$). Again we expect that solutions
are easier to find when $\rk N_\pm$ are larger; in \textit{No \ref{g2:h4tor}},
our only explicit example with non-trivial
torsion in~$H^4(M)$, we used blocks with rank 3.

\subsection{Mass-production and Geography}
In this section we describe some general features
of the \gtwo-manifolds that can be mass-produced using our methods.

\subsubsection*{$\gtwo$-manifolds from pairs of smooth Fano $3$-folds}
We previously described in detail the 2-connected manifolds that can be
constructed as twisted connected sums of rank 1 Fano type building blocks; 
we now outline what can be achieved if we drop the rank 1 assumption.
Orthogonal, but non-perpendicular, matching of building blocks of rank two Fano
type was considered in \textit{No~\ref{g2:mm}} in the previous section,  but the easiest way to
mass-produce examples is to consider primitive perpendicular gluing again. 

Consulting the Mukai-Mori classification shows that out of 
$5564 = \tfrac{1}{2} \times (106 \times 105)$ possible pairs of Fano $3$-folds, $5401$ satisfy 
the (crude) rank condition \eqref{E:emb:crude} and therefore admit a primitive embedding of $N_{-}\perp N_{+}$
in $L$. We call the $163$ pairs that fail to satisfy the previous inequality the exceptional Fano pairs. 
With more work one could compute the Picard lattices of all the smooth Fano $3$-folds 
and determine their discriminant groups in order to check whether the refined rank/discriminant condition 
\eqref{E:emb:disc} is satisfied; this would yield further matching pairs.
(Recall from the Mukai-Mori classification that every Fano $3$-fold $F$ has
Picard rank $\rho(F) \le 10$ and if $\rho=\rho(F) \ge 6$ 
then $F$ is biholomorphic to a product $\CP^{1} \times S_{11-\rho}$; here $S_{d}$ denotes a del Pezzo surface 
of degree $d$ with $1 \le d \le 5$ and is obtained by blowing up $\CP^{2}$ in $9-d$ sufficiently general points.
Therefore all 163 of the exceptional Fano pairs include at least one product Fano $3$-fold $\CP^{1} \times S_{d}$.
For simplicity, we shall not consider these exceptional pairs any further in this paper.)

All $5401$ \gtwo-manifolds produced are $2$-connected and have torsion-free cohomology;  therefore
they are classified up to almost-diffeomorphism by $b^{3}(M)$ and $\gdiv{p_{1}(M)}$. 
$70$ values of $b^{3}(M)$ are realised by primitive  perpendicular gluing of nonexceptional pairs of Fanos (versus 46 values
from pairs of rank 1 Fanos). We have $b^{3}(M) = 23 + b$ for some even integer $b$ where
either $b=216$ or $32 \le b \le 174$; in the latter case all even values of $b$ within the range are realised except for 
$b=126, 168$ or $170$. In particular, the smallest value of $b^{3}(M)$ produced this way is $32+23=55$.

We have not studied systematically the values of $\gdiv{p_{1}(M)}$ arising from pairs of higher rank Fanos 
but this would be possible (though time-consuming) 
by adapting the methods used elsewhere in the paper and in
\cite[\S 5]{chnp1}.
However, Corollary \ref{c:inertia:g2} implies that there are at most $8$ possible 
diffeomorphism types with the same value of $b^{3}$. So while there are over 5\,000 Fano pairs we can match 
using perpendicular gluing, there are at the very most 560 diffeomorphism types of \gtwo-manifold that can be realised this way. 
In other words, there are many ways of finding different pairs of 
perpendicularly glued ACyl Calabi-Yau 3-folds of Fano type which yield 
\gtmetric s on the \emph{same} smooth $7$-manifold.
For example, there are 292 different matching  Fano-type pairs that 
give rise to a smooth $2$-connected $7$-manifold $M$ with $b^{3}(M)=97$.
Since the metrics ``see'' the long cylindrical neck, they cannot be isometric
unless the building blocks are diffeomorphic. If they belong to the same
component of the \gtwo moduli space then the path connecting them therefore
cannot merely be some small perturbation. 

Beyond the $2$-connected world we could also seek Fano type matching pairs using non-primitive 
perpendicular gluing.
This yields \gtwo-manifolds with the same Betti numbers and $p_{1}$ as the $2$-connected examples constructed 
via primitive perpendicular gluing, but which have nontrivial $\Tor H^{3}$.
As we have already discussed such non-primitive embeddings of $N_{+}\perp N_{-}$ are related 
to its overlattices and therefore to properties of the discriminant groups of $N_{\pm}$. 
Given the variety of ways to find matching Fano pairs yielding the same value of $b^{3}$ it seems likely 
that non-primitive perpendicular gluing will yield a considerably greater number of topological types, 
distinguished by $\Tor{H^{3}}$. We will not pursue this any further in the current paper.

\subsubsection{\gtwo-manifolds from toric semi-Fanos}
One very abundant source of semi-Fano $3$-folds are the \emph{toric} semi-Fano $3$-folds. 
We refer the reader to \cite[\S 8]{chnp1} for a more detailed review of their construction and properties.
The anticanonical model of a smooth toric weak Fano $3$-fold is a toric Fano $3$-fold with at worst 
Gorenstein canonical singularities. Such toric Fano $3$-folds correspond (uniquely up to isomorphism) 
to combinatorial objects called \emph{reflexive polytopes}. Kreuzer-Skarke gave an algorithm to classify 
reflexive polytopes and showed that there are $4319$ $3$-dimensional reflexive polytopes of which
$18$ correspond to smooth toric Fanos and $82$ to terminal toric Fanos (the latter have only ODP (ordinary double point) singularities). 
Every Gorenstein canonical toric Fano $3$-fold admits at least one and often many projective crepant resolutions; 
moreover, all these crepant resolutions are toric and so can be enumerated purely combinatorially.
 
Using this fact we have together with Tom Coates and Al Kasprzyk 
enumerated all (smooth) toric semi-Fano $3$-folds up to isomorphism.
For instance, we found that there are $1009$ non-isomorphic toric semi-Fano $3$-folds 
whose AC morphism is small; these correspond to projective small resolutions of the $82$ terminal reflexive polytopes.
Because any toric semi-Fano $3$-fold with small AC morphism is rigid, these all give rise to 
non deformation equivalent toric semi-Fano $3$-folds. 

While not every toric semi-Fano $3$-fold is rigid many of them are and 
rigidity/nonrigidity depends only on the Fano polytope and 
not the choice of projective crepant resolution; 
in total there are $526\,230$ non-isomorphic toric semi-Fano $3$-folds of which $435\,459$ are rigid 
(including the $18$ smooth toric Fanos and the $1\,009$ arising from the $82$ terminal 
reflexive polytopes already mentioned). 
Thus we have at least $435\,459$ deformation types of toric semi-Fanos.
(For the remaining non-rigid toric semi-Fanos more work would be needed to 
understand how many new deformation types these realise.)

Now consider all pairs of blocks of Fano-type or rigid toric semi-Fano type.
Already 39\,584 matching pairs are obtained by primitive perpendicular gluing 
of pairs that satisfy the crude inequality \eqref{E:emb:crude} 
(or 15\,027 pairs if we only include rigid toric semi-Fanos with small AC morphism).
On the other hand no new values of $b^{3}(M)$ are achieved this way. 
In other words, in the $2$-connected world
the main effects of allowing toric semi-Fanos (with small AC morphism) in addition to Fano $3$-folds are: 
(i) to increase significantly the number of different ways of using the twisted connected sum construction 
to produce \gtwo-metrics on the same smooth $7$-manifold 
and (ii) to produce many smooth $7$-manifolds on which we have \gtwo-metrics with different numbers 
of obvious rigid associatives.

The reason why the number of  Fano/rigid toric semi-Fano type pairs satisfying \eqref{E:emb:crude} is 
small compared to the numbers of deformation types 
of rigid toric semi-Fanos is that over 400\,000 of these deformation types have 
Picard rank 11 or greater and therefore can never lead to a pair satisfying \eqref{E:emb:crude}.
 Many more matching pairs would be obtained if we computed the discriminant groups of the toric Picard lattices and applied 
\eqref{E:emb:disc}---the toric semi-Fanos with large Picard rank tend to have many nonisomorphic 
flops and their discriminant groups are typically relatively small; 
a more systematic study of \gtwo-manifolds obtained via toric semi-Fano type blocks 
will be described elsewhere \cite{toric:g2}.

\begin{table}
\[
\begin{array}[b]{ccccccc} \toprule
 \text{Polytope} & \rho(N) & A_{N} & \ell(N) & \text{no. of resolutions} & g \\ \midrule
3282 & 14 & 2 \cdot 3^2 \cdot 4 & 2 & 46720 & 8\\
3267 & 14 & 2 \cdot 5 \cdot 8 & 2 & 44120 & 8\\
3415 & 15 & 2^2 \cdot 16 & 3 & 35775 & 7\\
3452 & 15 & 2 \cdot 3^3 & 3 & 34118 & 7\\
3297 & 14 & 3 \cdot 27 & 2 & 24216 & 8\\ 
2989 & 13 & 4 \cdot 19 & 1 & 23400 & 9\\
3033 & 13 & 3 \cdot 32 & 1 & 16092 & 9\\
3013 & 13 & 2 \cdot 5 \cdot 9 & 1 & 13770 & 9\\
3026 & 13 & 8 \cdot 11 & 1 & 12771 & 9\\
2986 & 13 & 3^2 \cdot 8 & 2 & 12528 & 9\\
3018 & 13 & 3 \cdot 4 \cdot 7 & 1 & 8770 & 9\\
2683 & 12 & 3 \cdot 29 & 1 & 8280 & 10\\
\bottomrule
\end{array}
\]
\bigskip
\caption{The top dozen rigid semi-small Gorenstein Fano $3$-polytopes ordered by the number of 
nonisomorphic projective crepant resolutions they admit. 
$\rho(N)$, $A_{N}$ and $\ell(N)$  
denote the rank of the polarising lattice $N$, the orders of cyclic factors in
the discriminant group of $N$, and its discriminant rank, respectively, and $g$
denotes the anticanonical genus of the polytope; 
the number in column one refers to the index of the polytope in the Sage implementation 
of the Kreuzer-Skarke list of reflexive $3$-polytopes.}
\label{tableg:semifano:massive}
\end{table}

For now we content ourselves with a demonstration of the plethora of matching pairs that can be obtained using 
information about the discriminant groups of the polarising lattices of rigid toric semi-Fanos.
For other reasons Rohsiepe computed the discriminant groups for the polarising lattices 
corresponding to any reflexive 3-polytope \cite{rohsiepe:paper}. 
Table \ref{tableg:semifano:massive} lists the dozen ``most prolific'' toric semismall reflexive polytopes, 
\ie the semismall reflexive polytopes with the most non-isomorphic crepant projective resolutions, 
along with the rank of the polarising lattice $N$, its discriminant group $A_{N}$, 
its discriminant rank $\ell(N)$,  the number of non-isomorphic projective crepant resolutions 
and the anticanonical genus of the polytope.

For all but two of the dozen polytopes (numbers $3\,415$ and $3\,452$) 
it follows from the data in Table \ref{tableg:semifano:massive} 
that $W = N \perp N_{-}$ satisfies \eqref{E:emb:disc} for \emph{any} rank two polarising lattice $N_{-}$. 
In fact, using the complete criterion for embeddings given by Nikulin 
\cite[Theorem 1.12.2]{nikulin:quadratic} one can show that if $N$ is the
polarising lattice corresponding to polytope $3\,415$ then, because its
discriminant form splits of an orthogonal $\cg{2}$ summand (the form is
$\diag(1/2, 1/2, 1/16)$), one can still primitively embed $N \perp N_{-}$ in
$L$ for any rank two polarising lattice $N_{-}$.
Thus each deformation type of rank 2 block in combination with the rigid semi-Fanos generated by these 
11 polytopes yields almost 250\,000 matching pairs. 
(In fact, it seems likely that a systematic study of the discriminant groups associated with 
rigid toric semi-Fanos will show that almost all of them can be primitively perpendicularly matched 
to any block of rank at most two). 

We know that there are precisely $17$ deformation types of smooth Fano $3$-folds of rank 1. 
We also know from \cite[\S 8]{chnp1} that there are over $150+36$ deformation types of rank 2 semi-Fano or Fano $3$-folds, 
but the precise number of deformation types has yet to be determined.
It follows that just these eleven prolific rigid semismall polytopes along with known blocks of rank at most two 
generate over 50\,000\,000 ($246\,442 \times (17+36+150) = 50\,027\,726$)
matching pairs via primitive perpendicular gluing.

\subsubsection{The geography of $2$-connected twisted connected sums}
Let $M$ be a $2$-connected twisted connected sum \gtwo-manifold. 
All such examples constructed in this paper so far have $55 \le b^{3}\le 239$. 
If $M$ is obtained from perpendicular gluing of blocks of Fano or semi-Fano type then there is an absolute 
lower bound for $b^{3}$ of $31=22+1+4+4$ (because any Fano or semi-Fano $3$-fold $Y$
has anticanonical genus $g$ at least $2$ and $b^{3}(Z)=b^{3}(Y)+2g(Y)$).
To achieve this lower bound we would need to find a semi-Fano $3$-fold $Y$ with $b^{3}(Y)=0$ and 
$g(Y)=2$, \ie $Y$ should be a resolution of a singular sextic double solid.
Recently Arap, Cutrone and Marshburn \cite{arap:c:m} claimed the existence of
such a smooth semi-Fano $3$-fold with small AC morphism and Picard rank
$\rho=2$; we have not verified this example in detail ourselves. 

Assuming the existence of such a smooth semi-Fano then (because $\rho=2$) we can immediately 
match such a block to itself by primitive perpendicular gluing and thus exhibit 
a $2$-connected twisted connected sum \gtwo-manifold with $b^{3}=31$, the smallest possible value of~$b^{3}$.
Moreover, because $Y$ has such small Picard rank we can also primitively perpendicularly match it 
to many other blocks of Fano or semi-Fano type. 
Hence the existence of this extremal $Y$ gives rise to a sequence of 7 new values of $b^{3}$ less than $55$ 
and also gives $b^{3}=149$ which was previously a ``gap'' in the sequence of values of $b^{3}$.

The ongoing classification programme for rank 2 weak Fanos (see \cite[\S 8]{chnp1} for an overview) 
looks likely to produce other 
rank two weak Fanos $Y$ whose corresponding block $Z$ has small~$b^{3}$, 
\eg there is potentially a rank 2 small resolution $Y$ of a terminal quartic with $b^{3}(Y)=0$ and hence $b^{3}(Z)=6$.
In this way it seems quite likely that essentially all odd numbers between $31$ and $189$ should 
be realised as $b^{3}$ of some $2$-connected twisted connected sum.
(The existence of the quartic semi-Fano described above would only leave gaps at 37 and 39).

It is somewhat curious that the building blocks we know with largest
$b^{3} = 108$ come from smooth sextic double solids, so that both the smallest
and largest values $b^3$ for $2$-connected twisted connected sums arise from
sextic double solids. Among Fano type blocks, there is a big gap down to the
next highest value 66 for $b^3$. There are a few other blocks that can be used
to construct 2-connected twisted connected sums with $197<b^{3}<239$,
\eg some of the non-symplectic smoothing blocks described in Remark
\ref{R:k3:involution:g2} or blocks obtained from a small resolution of a nodal
sextic double solid with relatively few nodes (\eg a block from a sextic double
solid with $15$ nodes \cite[Example 1.5]{cheltsov10} has $b^{3}=80$).

\subsubsection{Examples with positive $b^2$}
We have already seen how (non-perpendicular) orthogonal gluing can be used to construct \gtwo-manifolds 
with $b^{2}>0$. However, for the two reasons we observed at the beginning of this section it can be 
somewhat labour-intensive to implement. Perhaps a more economical way to mass-produce 
\gtmfd s with $b^{2}>0$ is to use perpendicular gluing and to choose at least one block with $K \neq 0$, 
\eg Example \ref{exg:toricV22_b} has $\rk K = 12$ and arises by blowing up a nongeneric AC pencil 
on a toric semi-Fano $3$-fold.

One uniform source of building blocks with $K \not= 0$ are the 74 
non-symplectic type blocks described in Remark \ref{rmk:kl-def}. For example,
there is a K3 surface $S$ with non-symplectic involution whose action on
$H^2(S)$ fixes $N_+ = 2E_8(-1) \perp U$, and from which one may construct a
building block $Z_+$ with $\rk K_+ = 20$ and $b^3(Z_+) = 8$. If $Z_-$ is
any Fano or semi-Fano type block whose polarising lattice $N_-$ has rank $\leq 2$,
then $N_-$ can be embedded primitively in $2U$ by Theorem \ref{thm:exist_emb};
thus $N_+ \perp N_-$ can be embedded in $L$, and we can use perpendicular
gluing to construct a \gtmfd{} $M$ with $b^2(M) = 20$,
$b^3(M) = b^3(Z_-) + 51$. As there are many Fano and semi-Fano 3-folds with Picard rank
$\leq 2$, we can ask how many different values of $b^3(M)$ are realised this way. 
Considering only the Fano-type blocks we obtain $18$ values of $b^{3}(Z_{-}) \in \{24, \dots, 66\} \cup \{108\}$ 
and where $\{46,54,60,62\}$ are omitted.
Since the classification of rank two weak Fano $3$-folds is still in progress we cannot currently say 
precisely what values can be obtained for semi-Fano type blocks.  
However, matching allowing rank two semi-Fano type blocks should enable us to obtain further examples 
with small values of $b^{3}(Z_{-})$, potentially as low as $4$.
Of course using a semi-Fano block also allows us to 
construct numerous \gtwo-manifolds with $b^{2}>0$ that contain rigid associative $3$-folds.
There are also non-symplectic type blocks $Z_+$ with $\rk K_+$ taking any even
value between $0$ and $20$, such that $N_+$ embeds in $2E_8(-1) \oplus U$
and which therefore can be matched using perpendicular gluing to any Fano or 
semi-Fano type block of rank up to 2 as above. 

One could also use primitive perpendicular gluing to match Example \ref{exg:P3_deg} (which has $\rk K=3$ and $N = \gen{4}$) 
with any Fano block or any semi-Fano block of rank up to $10$.  
There will therefore be many \gtwo-manifolds with $b^{2}=3$.
Finally with further work one could calculate $\gdiv{p_{1}}$ for all the examples above
and thereby distinguish further topologically distinct \gtwo-manifolds.

\subsection{Prospects of further smooth classification of simply-connected
spin 7-manifolds}

For the large number of examples of 2-connected twisted connected sums with
$H^4(M)$ torsion-free, we can use the classification result
\ref{thm:torfreeclass} to determine the almost-diffeomorphism type.
Except for the relatively few examples where $\gdiv p_1(M) = 16$ or $48$
we pin down the diffeomorphism class completely; as explained in Remark
\ref{rmk:EK}, in those two cases we would need to compute a generalised
Eells-Kuiper invariant to eliminate the remaining ambiguity in the smooth
structure.

We have however also constructed (and explained how to construct many more)
examples with relatively simple cohomology, but with $\pi_2(M)$ non-trivial.
The classification results for simply-connected spin but not 2-connected
7-manifolds available in the literature mostly require (at least) that
$H^4(M)$ is finite.
With cues from Diarmuid Crowley, we speculate about what analogues one can hope
to prove when $H^4(M)$ is infinite but torsion-free.

\subsubsection{$\pi_2(M)$ finite cyclic} Using non-primitive but perpendicular
gluing, we can find many examples with $H^2(M) = 0$ and $\Tor H^3(M)$ a
cyclic group $\cg{k}$. Then $\pi_2(M) \cong \cg{k}$. As before, the
isomorphism class of the pair $(H^4(M), p_1(M))$ is an obvious homeomorphism
invariant, and when $H^4(M)$ is torsion-free it is equivalent to
$(b^4(M), \gdiv p_1(M))$. Now we have an additional invariant given by the
square $z^2 \in H^4(M; \cg{k})$ modulo $((\cg{k})^*)^2$ of a generator
$z \in H^2(M; \cg{k}) \cong \cg{k}$.
We expect that (for a fixed $k$) the class of the triple $(H^4(M), p_1(M), z^2)$ determines
the almost-diffeomorphism type of $M$. If $k$ is prime, and $x \in H^4(M)$ is
a primitive element of which $p_1(M)$ is a multiple, this means specifying
$b^4(M)$, $\gdiv p_1(M)$, whether $z^2 = 0$, if not whether it is a multiple of
the mod $k$ reduction of $x$, and if so whether the coefficient is a quadratic
residue mod $k$.

\subsubsection{$\pi_2(M)$ infinite cyclic} In \textit{No \ref{g2:mm}} we gave
examples with $H^*(M)$ torsion-free, and $H^2(M) \cong \ZZ$. Then
$\pi_2(M) \cong \ZZ$. If $z \in H^2(M)$ is a generator, then the isomorphism
class of the triple $(H^4(M), p_1(M), z^2)$ is an obvious invariant.
In the setting where $H^4(M)$ is finite instead of torsion-free, and generated
by $z^2$ and $p_{1/2}(M)$, Kreck and Stolz \cite{kreck88} proved a
classification result in terms of a triple of invariants
$s_1, s_2, s_3 \in \QQ/\ZZ$
(when $H^4(M) = 0$, $s_1$ corresponds to the Eells-Kuiper invariant).
By analogy we expect that $(H^4(M), p_1(M), z^2)$ may not suffice to
determine even the homotopy type of $M$ on its own, but that it may be possible
that together with some generalisations of $s_2$ and $s_3$ it determines the
almost-diffeomorphism type (and a generalised Eells-Kuiper invariant would
pin down the precise diffeomorphism type).

\subsubsection{Formality and torsion-free $\pi_2(M)$} 
Hepworth \cite{hepworth05} generalised
the work of Kreck and Stolz to the case when $\pi_2(M) \cong \ZZ^k$ (but
still under assumptions requiring $H^4(M)$ to be finite). Some of the
generalised Kreck-Stolz invariants can in
this case be interpreted in terms of the Massey product structure on
the cohomology of $M$. We expect that the classification problem
when $\pi_2(M) \cong \ZZ^k$ with $H^4(M)$ torsion-free should also be
greatly simplified if we restrict to the case when all Massey products vanish;
this happens in particular if $M$ is \emph{formal}. Deligne \emph{et al}
\cite{deligne75} showed that any Kähler manifold is formal, and it is an
interesting problem whether the same is true for \gtmfd s. For the special
case of twisted connected sums, one may ask whether formality can be deduced
from the way they are built from pairs of Kähler manifolds.
Cavalcanti \cite{cavalcanti06} shows that any simply-connected 7-manifold $M$
with $b^2(M) \leq 1$ is formal (so we did not need to consider Massey products
when $\pi_2(M)$ is cyclic), and that $b^2(M) \leq 2$ suffices for formality
if $M$ is a \gtmfd. Formality of \gtmfd s is also studied by
Verbitsky \cite{verbitsky11}.

\subsection{\gtwo-transitions}
In this section we make some more speculative remarks on how compact \gtwo-manifolds constructed 
by gluing different ACyl Calabi-Yau $3$-folds may be seen as related. 
The basic idea is that well-known transitions for  Calabi-Yau $3$-folds, \ie flops and conifold transitions,  
can yield via the twisted connected sum construction analogous \gtwo-transitions.
We begin by recalling the basic features of these well-known $3$-fold transitions.

\subsubsection{$3$-fold transitions: flops and conifold transitions}
Recall that the ordinary double point (ODP)\[\{z_1^2 + z_2^2 + z_3^2 + z_4^2=0\} \subset \bbc^4\]
admits two (isomorphic) projective small resolutions (each of which replaces the
singularity at the origin with a $\CP^1$ with normal bundle $\oo(-1) \oplus \oo(-1)$) 
and a smoothing $\{z_1^2 + z_2^2 + z_3^2 + z_4^2 = t \}$
which replaces the singularity with a Lagrangian $3$-sphere.
Let $X$ be a nodal Fano $3$-fold, \ie $X$ has only finitely many singular points each (locally analytically)
modelled on the ODP. By Namikawa's deformation results \cite{namikawa:fano:smooth} $X$ is globally smoothable, 
\ie $X$ is smoothable to a family of smooth Fano $3$-folds $F_{t}$. 
Suppose that $X$ also admits a projective small resolution~$Y$. 
Then $Y$ is a smooth semi-Fano $3$-fold and the transition from the smooth semi-Fano $Y$ to a smooth Fano $F$ 
is called a \emph{conifold transition}.
(Conifold transitions have traditionally been studied in the context of $3$-folds with $c_{1}=0$, \ie the Calabi-Yau case; 
unlike the Fano condition the condition $c_{1}=0$ is preserved under conifold transitions.)
We can often also \emph{flop} a given projective small resolution $Y$ of a nodal $3$-fold $X$; 
this has the effect of changing the choice of which of the two projective small resolutions of the ODP is used 
at some of the nodes. In the semi-Fano world flopping $Y$ yields other 
smooth semi-Fano $3$-folds which also have $X$ as their AC models.

Flopping will lead to $6$-manifolds with the same integral cohomology groups 
but typically with different cohomology rings.
The topological effect of a conifold transition is to replace a finite number of two-spheres 
(with normal bundle of a given type) with the same number of three-spheres (also with normal bundle of a given type).
Care must be exercised in understanding how this 
sort of topological surgery changes the topology---in particular 
the cycles being created or destroyed need not be homologically independent.

\begin{remark}
\label{r:metric:conifold}
There is also a conjectural picture of conifold transitions as metric transitions and not 
just topological or complex-geometric transitions.
Recall that the $3$-fold ordinary double point admits a Ricci-flat Kähler (KRF) cone metric; 
the term \emph{conifold} often refers to the ODP endowed with this KRF metric.
The smoothings and the small resolutions of the ODP admit Ricci-flat K\"ahler metrics 
asymptotic to the conifold metric.

Suppose $X_{0}$ is a nodal projective $3$-fold with trivial canonical bundle.
It has long been conjectured that the nodal variety $X_{0}$ should admit 
a Ricci-flat K\"ahler metric, smooth away from the nodes and 
asymptotic to the conifold metric at each node. This is however still unproven.
If $X_{0}$ smoothes to a family of smooth $3$-folds $X_{t}$ with $c_{1}=0$
then by Yau's result $X_{t}$ admits Ricci-flat K\"ahler metrics in each K\"ahler class.
\emph{Given} the existence of a conically singular KRF metric on $X_{0}$ one
can instead use gluing methods to construct smooth KRF metrics on $X_{t}$,
\ie one glues in an appropriately scaled copy of the smoothing of the ODP
endowed with its asymptotically conical KRF metric \cite{chan06, chan09}.
This yields a $1$-parameter family of KRF metrics that converges as $t \ra 0$
to the conically singular metric on $X_0$.
Similarly, if $X_0$ admits a projective small resolution $Y$ then one could use
gluing methods to construct \mbox{1-parameter} families of smooth KRF metrics
on $Y$ that degenerate back to the conically singular KRF metric on $X_0$.

We emphasise that this metric picture remains conjectural since the existence
of the conically singular KRF metric on $X_{0}$ remains open.
\end{remark}

\subsubsection{Related \gtmfd s}
Suppose that we have a conifold transition $Y \to X \to F$ between a smooth semi-Fano $Y$ and a smooth Fano 
$3$-fold $F$ via the nodal Fano $3$-fold $X$.
Using Proposition \ref{prop:block_from_sf} we can generate building blocks $Z_{Y}$ and $Z_{F}$ 
and hence via Theorem \ref{thm:acyl_limits} also (families of) ACyl Calabi-Yau manifolds $V_{Y}$ and $V_{F}$.
Suppose that $\fbb_{-}$ is another family of building blocks chosen so that 
there are ACyl Calabi-Yau structures in its deformation family compatible with some $V_{Y}$ and $V_{F}$.
Then we can construct the resulting twisted connected sum \gtwo-manifolds $M_{Y}$ and $M_{F}$ 
and regard them as related \gtwo-manifolds.
We could of course replace the conifold transition above with a flop $Y \to Y'$ 
and proceed as in the previous case to obtain related \gtwo-manifolds $M_{Y}$
and $M_{Y'}$.
We use the term \emph{\gtwo-transition} to describe either of these operations.
\begin{remark}
\label{r:g2:dream}
By analogy with Remark \ref{r:metric:conifold} one might hope for a stronger metric counterpart 
of this relation between $M_{Y}$ and $M_{F}$.
The ultimate aim would be to find
families of \gtmetric s on $M_{Y}$ and $M_{F}$ that converge to the same singular
\gtmfd{}---with transverse conifold singularities along $\Sph^1$s---but 
that is currently out of reach. We will discuss the difficulties later. 
For the time being we use the relation between $M_{Y}$ and $M_{F}$ (or $M_{Y'}$)  of being
descended from related pairs of ACyl Calabi-Yaus as an organising principle.
\end{remark}

At the level of the $3$-folds passing from the original Fano $F$ to the semi-Fano $3$-fold $Y$ has three principal effects:
\begin{enumerate}
\item
$b^{2}$ increases when passing from $F$ to $Y$ (recall \ref{e:b2:res} and the fact that the existence of a projective small 
resolution of $X$ forces its defect $\sigma(X)$ to be positive). Hence the K3 surfaces $S_{Y} \in \abs{-K_{Y}}$ appearing 
in any semi-Fano $Y \in \sff$ are more special than the K3 surfaces $S_{F} \in \abs{-K_{F}}$ for any Fano $F \in \mathcal{F}$.
\item
$b^{3}$ typically decreases when passing from $F$ to $Y$, often by more than
the increase in $b^2$, but it may also stay constant (recall \ref{e:b3:res});
\item
$Y$ unlike $F$ contains compact rigid rational curves $C_{1}, \ldots ,C_{e}$
which do not intersect smooth anticanonical divisors $S_{Y} \in \abs{-K_{Y}}$.
Each such curve $C_{i}$ gives rise to a compact rigid rational curve in the
associated ACyl Calabi-Yau structures on $Z_Y \setminus S_Y$.
\end{enumerate}
One can think of the defect $\sigma$ of the nodal degeneration $X$ as giving a way to stratify the possible nodal degenerations 
of the original family of smooth Fanos $\mathcal{F}$, \ie we can order our  hierarchy according to the defect 
of the degeneration $X$: by \eqref{e:b2:res} this is the same as ordering by the rank of $\Pic{Y}$.

Each of the three effects above has significance for obtaining \gtwo-manifolds 
by matching with the family of blocks $\fbb_-$.
\begin{enumerate}
\item[(i')]
The possible asymptotic K3 surfaces $S_{Y}$ of ACyl Calabi-Yau structures obtained from a semi-Fano $Y$
are more special than those $S_{F}$ obtained via the original Fano $F$. 
We interpret this as follows: 
it should be harder to match in the deformation family of ACyl Calabi-Yau structures obtained from the semi-Fano $3$-fold $Y$ than 
for those obtained from the original Fano $F$.
\item[(ii')]
\emph{Assuming} that we can use perpendicular gluing to achieve matching of 
ACyl Calabi-Yau structures obtained from both the semi-Fano $Y$ 
or from the original Fano $F$,  then we will usually obtain topologically distinct $2$-connected $\gtwo$-manifolds 
$M_{Y}$ and~$M_{F}$.
At the level of complex $3$-folds passing from $F$ to $Y$ decreases 
$b^{3}$ at the expense of increasing $b^{2}$;  however, at the level of \gtwo-manifolds 
this transition decreases $b^{3}(M)$ while maintaining $b^{2}(M)=0$.
In this sense one can think of the transition from $F$ to $Y$ as yielding
\mbox{$2$-connected} \gtwo-manifolds which are topologically smaller. 
\item[(iii')]
The $e$ rigid rational curves $C_{i}$ give rise to $e$ new compact associative $3$-folds in $M_{Y}$ compared to $M_{F}$. 
\end{enumerate}

Moreover, by passing to (deformation types of) semi-Fanos associated with
different nodal degenerations of $F$ one can obtain 
$\gtwo$-manifolds with successively smaller and smaller topology: 
see below for concrete examples obtained by degenerating quartics.
In this sense \gtwo conifold transitions create a ``hierarchy'' of related \gtwo manifolds.
Similarly different nodal degenerations $X$ of $F$ allow us to vary the number $e$. 
In some cases by choosing different nodal degenerations we can vary $e$ without changing $b^{3}$ 
of the resulting $2$-connected \gtwo-manifold. This gives one way to exhibit \gtwo-metrics on the same underlying 
smooth $7$-manifold with different numbers of obvious compact rigid associative $3$-folds.

\begin{remark*}
More generally, if after making a transition from a Fano $F$ to a semi-Fano $Y$ 
we can no longer match by perpendicular gluing but can instead match using the more general orthogonal gluing 
then $b^{2}(M)$ can increase under the transition from $F$ to $Y$. 
However, by Lemma \ref{lem:b2b3} the sum $b^{2}(M)+b^{3}(M)$ 
cannot increase when passing from $F$ to $Y$ and usually must decrease.
\end{remark*}

\subsubsection{Matching quartic type blocks}
We now give a concrete illustration of the general discussion above using ACyl Calabi-Yau $3$-folds associated with
various quartic $3$-folds. Eight families of ACyl Calabi-Yau $3$-folds associated with various quartics 
appear already in this paper: 
one family of Fano type obtained from any smooth quartic (Example \ref{exg:species1}$^{1}_{4}$), 
six families of semi-Fano type obtained from projective small resolutions of defect 1 nodal quartics 
(Examples \ref{exg:quartic_w_plane}--\ref{exg:quartic_w_22}---recall that in
two of these four examples different choices
of small resolution lead to non deformation equivalent semi-Fanos)
and the family of semi-Fano type obtained from a projective small resolution of the Burkhardt quartic 
(Example \ref{exg:burkhardt_quartic}). The polarising lattices $N$ in these cases are respectively: $\gen 4$, the rank 2 lattices 
 and the rank 16 lattice listed in table \ref{tableg:blocks}. 
In all cases we have $b^{3}(Z) = b^{3}(Y) + 6$ where
for the smooth quartic we have $b^{3}(Y)=60$; 
for the four examples with $\rk{N}=2$ applying \eqref{e:b3:res} we see that $b^{3}(Y)$ decreases linearly
with the number of nodes $e$ in the AC model $X$ and 
for the Burkhardt quartic we have $b^{3}(Y)=0$. 
(So one might hope to use the Burkhardt block to produce \gtwo-manifolds with small $b^{3}$. 
However, its large Picard rank makes it difficult to match via perpendicular gluing as we explained earlier.)

If for the moment we remove the Burkhardt example from consideration there are 15 pairs $N_{\pm}$ of lattices
we can choose. 
In these 15 cases $N_{+}\perp N_{-}$ is a lattice of signature $(2,0)$, $(2,1)$ or $(2,2)$, 
which therefore may be primitively embedded in the K3 lattice $L$.
Hence we can match all 15 pairs by primitive perpendicular gluing to obtain  a series of $2$-connected \gtwo-manifolds with 
torsion-free cohomology and $\gdiv{p_{1}}=4$ or $8$. The 14 values of $b^{3}$ realised this way are 
\begin{equation}
\label{e:b3:nod:quart}
b^{3} \in \{91, 93, 95, 101, 103, 107, 109, 111, 117, 123, 125, 133, 139, 155\}.
\end{equation}
The number of rigid associative $3$-folds diffeomorphic to
$\Sph^{1}\times \Sph^{2}$ we can realise in these \mbox{$\gtwo$-manifolds} is 
\[
a_0 \in \{0, 9, 12, 16, 17, 18, 21, 24, 25, 26, 28, 29, 32, 33, 34\}.
\]
These examples illustrate (ii'): passing from an initial family of Fano-type blocks 
(which corresponds to the \gtwo-manifold with $b^{3}=155$) 
to related semi-Fano type blocks 
via nodal degeneration and projective small resolution (conifold transitions) 
leads---via the twisted connected sum construction---to a family of  related but ``smaller'' \gtwo-manifolds.

These examples also illustrate how we can exhibit \gtwo-metrics on the same $7$-manifold with different numbers 
of obvious rigid associative $3$-folds:
matching Example \ref{exg:quartic_w_plane} with itself or \ref{exg:species1}$^{1}_{4}$ with \ref{exg:quartic_w_scroll} 
both yield manifolds with $b^{3}=123=50+50+23=34+66+23$; 
in both cases depending on the choice of small resolutions made we can achieve either $\gdiv{p_{1}}(M)=4$ or $8$, 
and in each case the almost-diffeomorphism type contains a unique diffeomorphism type.
For the first matching pair we have $a_{0}=18=9+9$ whereas for the second we have $a_{0}=17=17+0$. 

\begin{remark}
It is natural to wonder whether the fact that there are different numbers of obvious rigid associative $3$-folds 
for \gtwo-metrics  on the same $7$-manifold $M$ can be used to show these metrics are not in 
the same connected component of the moduli space of $\gtwo$-metrics on~$M$.
\end{remark}

These examples also illustrate another somewhat subtle 
point related to flops and their effect on the topology of the associated  \gtwo-manifolds.
In Examples \ref{exg:quartic_w_plane} and \ref{exg:quartic_w_scroll} flopping 
leads to two non-diffeomorphic blocks of semi-Fano type (arising from different projective small 
resolutions of the same nodal quartic $X$). These blocks are distinguished by $\gdiv{c_{2}(Z)}$ which is $2$ or $4$ 
depending on the choice of small resolution made. If we match these blocks to
Example \ref{exg:quartic_w_quadric} then, because that block has
$\gdiv{c_{2}(Z)}=2$, Corollary \ref{cor:p1gcd} implies that we obtain
diffeomorphic \gtwo-manifolds irrespective of our choice of small resolution. 
On the other hand Example \ref{exg:species1}$^{1}_{4}$ has $\gdiv{c_{2}}(Z)=4$,
so if we match with that then the diffeomorphism type of the resulting
\gtwo-manifolds does depend on our choice of small resolution. 

Now suppose we choose $N_{+}$ to be the rank 16 polarising lattice of the Burkhardt example.  Then because of 
the high rank of $N_{+}$, primitive perpendicular matching is now much more difficult to achieve. 
Nevertheless,  \textit{No~\ref{g2:b1}a} showed that it is possible to match blocks of Burkhardt type 
with blocks obtained from smooth quartics using primitive perpendicular gluing. 
This yields a $2$-connected $7$-manifold $M_{95}^{4}$ with torsion-free cohomology, $\gdiv{p_{1}}=4$, 
$b^{3}=95$ (by the classification theory there is a unique such smooth $7$-manifold) and containing $45$ rigid associative $\Sph^{1}\times \Sph^{2}$s.

\begin{remark}
It is not possible to achieve primitive perpendicular gluing using the
Burkhardt block and any of the other quartic-related blocks. In fact, if
$N_{-}$ is any polarising lattice of rank greater than $1$ then it is not
possible to embed $N_{+}\perp N_{-}$ primitively in $L$ because
that would violate the necessary condition \eqref{eq:nec_emb} (recall that
$\ell(N_{+})=5$ for the polarising lattice of the Burkhardt block).
This illustrates what we mean in (i').
\end{remark}

\subsubsection{General terminal degenerations and smaller \gtwo-manifolds}
In the discussion above for simplicity (and because all the examples we presented were of this type) 
we referred only to degenerations of smooth Fanos to singular Fanos with only 
ordinary nodes. However, we could consider more general degenerations, especially to Fano $3$-folds 
with terminal Gorenstein singularities and seek projective small resolutions of these also. 
This will lead to a wider variety of semi-Fanos  related to a single deformation family of smooth Fano $3$-folds. 
For example, one could look for  projective small resolutions of defect one terminal quartics with worse than ODPs, thereby 
generalising Examples \ref{exg:quartic_w_plane}--\ref{exg:quartic_w_22}.

As part of the partial classification scheme (summarised in \cite{chnp1}) 
for smooth weak Fano \mbox{$3$-folds} of rank two,
Cutrone-Marshburn \cite[Nos. 54--76,\,Table 2]{cutrone:marshburn} present a 
list of 19 potential candidates for rank two weak Fano $3$-folds $Y$ with small AC morphism 
which (if they exist) can be obtained as projective small resolutions of terminal quartics. 
They all arise as the blowup of a smooth 
rank 1 Fano $3$-fold along a smooth curve of known degree and genus (their numerical link types are all E1-E1); 
this makes it straightforward 
to compute $b^{3}(Y)$ (and hence also $b^{3}(Z)$ for the associated block) 
and the polarising lattices for these putative examples. 
The possible values for $b^{3}(Z)$ which arise from their list are 
\[
b^{3}(Z) \in \{6, 8, 10, 12, 14, 16, 18, 20, 26, 28, 34\}, 
\]
whereas for our previously discussed quartic-related blocks we had
\[
b^{3}(Z) \in \{6,34, 36, 44, 50, 66\},
\]
where $6$ and $66$ in the latter list are realised by the Burkhardt  block and the smooth quartic block respectively.

Assuming all these examples could be realised as rank two semi-Fano $3$-folds (which admittedly may not be the case) 
then because they have rank two polarising lattices we can certainly achieve primitive perpendicular 
gluing of any such pair of semi-Fano blocks. This would yield $2$-connected $7$-manifolds with torsion-free cohomology with the 
following $46$ values of $b^{3}$
\[
b^{3} \in \{35, 37, \ldots ,111, 115, 117, 123, 125, 133, 139, 155\}, 
\]
compared to the $14$ values of $b^{3}$ (all of which satisfied $b^{3} \ge 91$) 
obtained in \eqref{e:b3:nod:quart} from the smooth and nodal quartic blocks 
we already discussed.

The main thing we learn from this discussion is that by allowing ourselves to degenerate to worse than ordinary 
nodes we may be able to 
obtain semi-Fano $3$-folds with small $b^{3}$ without having to dramatically increase $b^{2}$---as happens 
for example in the Burkhardt example where we achieve $b^{3}(Y)=0$ but the price is that $b^{2}(Y)=16$. 
As we have seen the large Picard rank of the Burkhardt example makes it very problematic to match; 
by contrast we could perpendicularly glue these resolutions of terminal defect one quartics to most 
blocks of semi-Fano type. 
The price we pay for allowing semi-Fano $3$-folds constructed by resolving 
more general terminal degenerations is that we can no longer 
be guaranteed to be able to produce rigid associative $3$-folds in the resulting \gtwo-manifolds.

\begin{remark*}
It would be particularly interesting to know the existence (or not) of Nos. 57 and 71 in 
\cite[Table 2]{cutrone:marshburn} since both would give rise to rank two semi-Fanos with $b^{3}(Y)=0$; 
for this one needs to study rational curves of degree $8$ in  the rank one Fano $V_{22}$ or in the smooth quadric $Q$ 
respectively. 
\end{remark*}

\subsubsection{Metric \gtwo-transitions}
We indicate the technical problems that currently stop us from using the
twisted connected sum construction to produce families of \gtmetric s
degenerating to a compact singular \gtmfd.
One basic problem is that given a sequence of ACyl Calabi-Yau metrics
on $V_+$ that degenerate in a satisfactory way, we would need to be able to
match the asymptotic limits of this whole family to ACyl Calabi-Yau metrics on
$V_-$, in a continuous manner. One situation where this problem is simplified
is when $Z_+$ is a building block like Example \ref{exg:P3_deg}.
There is a sequence of Kähler classes on $Z_+$ that shrink the
$(-1, -1)$-curves in $Z_+$, but whose restriction to $S_+$ is constant.
We should therefore find a sequence of degenerating ACyl Calabi-Yau structures
on $V_+$ with a fixed asymptotic limit, that can be matched to a fixed
ACyl Calabi-Yau 3-fold $V_-$.

This easy case of the matching-in-families problem is different from the
sort of transitions we were discussing earlier in the subsection, in
that the degeneration of the ACyl Calabi-Yau structures on $V_+$ comes from
changing the resolution of a non-generic pencil rather than from a degeneration
of the underlying semi-Fano to a nodal Fano $X$.
In the latter case, suppose that the sequence of ACyl Calabi-Yau
structures on $V_+$ can be matched with ones on $V_-$.
Then, because $H^2(V_+) \to H^2(S_+)$ is injective for semi-Fano type blocks,
the sequence of matching data must degenerate too in some sense.
The problem of finding the limiting matching data therefore turns out to be too
constrained to use the argument of Proposition \ref{prop:orth_gluing}
(the number of missing degrees of freedom is exactly the defect of $X$)
This is the same kind of problem as in handcrafted gluing, and requires the
same remedy: more detailed information about the deformation theory.

Given a matching in families, another problem is to control the neck length
parameter in the gluing.
If we can find a 1-parameter family of compatible pairs of ACyl metrics
on $V_+$ and $V_-$, then Theorem \ref{thm:g2glue} says that for each pair
there is a parameter $T$ such that we can form a twisted connected sum with
neck length $T$. Joyce's perturbation results give bounds on $T$ in terms of
the geometry of $V_+$ and $V_-$. If the family of metrics on $V_-$ degenerates
to a metric with conical singularities, then the upper bound for $T$ goes
to infinity, so there is no guarantee that we can find a 1-parameter
family of \gtmetric s whose Gromov-Hausdorff limit is compact.
It may be difficult to get around this without resolving the conjecture
about existence of conically singular Calabi-Yau metrics.

\bibliographystyle{amsinitial}
\bibliography{bib_mark,bib_alessio,bib_jn}
\end{document}